\documentclass[12pt,oneside,reqno,english]{amsart}
\usepackage[T1]{fontenc}
\usepackage[latin9]{inputenc}
\usepackage{geometry}
\geometry{verbose,lmargin=3cm}
\setcounter{secnumdepth}{2}
\setcounter{tocdepth}{2}
\usepackage{babel}
\usepackage{array}
\usepackage{float}
\usepackage{units}
\usepackage{mathrsfs}
\usepackage{enumitem}
\usepackage{amstext}
\usepackage{amsthm}
\usepackage{amssymb}
\usepackage{stmaryrd}
\usepackage{graphicx}
\usepackage[all]{xy}
\PassOptionsToPackage{normalem}{ulem}
\usepackage{ulem}
\usepackage[unicode=true,pdfusetitle,
 bookmarks=true,bookmarksnumbered=false,bookmarksopen=false,
 breaklinks=false,pdfborder={0 0 1},backref=false,colorlinks=false]
 {hyperref}
\hypersetup{
 colorlinks=true,citecolor=blue,linkcolor=blue,linktocpage=true}

\makeatletter

\providecommand{\tabularnewline}{\\}

\numberwithin{equation}{section}
\numberwithin{figure}{section}
\numberwithin{table}{section}
\theoremstyle{plain}
\newtheorem{thm}{\protect\theoremname}[section]
\theoremstyle{plain}
\newtheorem{lem}[thm]{\protect\lemmaname}
\theoremstyle{definition}
\newtheorem{defn}[thm]{\protect\definitionname}
\theoremstyle{remark}
\newtheorem{rem}[thm]{\protect\remarkname}
\theoremstyle{plain}
\newtheorem{cor}[thm]{\protect\corollaryname}
\theoremstyle{definition}
\newtheorem{example}[thm]{\protect\examplename}
\theoremstyle{plain}
\newtheorem{prop}[thm]{\protect\propositionname}
\theoremstyle{remark}
\newtheorem*{acknowledgement*}{\protect\acknowledgementname}

\@ifundefined{date}{}{\date{}}
\allowdisplaybreaks
\usepackage{needspace}
\usepackage{refstyle}
\usepackage{enumitem}
\usepackage{bbm}

\newref{lem}{refcmd={Lemma \ref{#1}}}
\newref{thm}{refcmd={Theorem \ref{#1}}}
\newref{cor}{refcmd={Corollary \ref{#1}}}
\newref{sec}{refcmd={Section \ref{#1}}}
\newref{sub}{refcmd={Section \ref{#1}}}
\newref{subsec}{refcmd={Section \ref{#1}}}
\newref{chap}{refcmd={Chapter \ref{#1}}}
\newref{prop}{refcmd={Proposition \ref{#1}}}
\newref{exa}{refcmd={Example \ref{#1}}}
\newref{tab}{refcmd={Table \ref{#1}}}
\newref{rem}{refcmd={Remark \ref{#1}}}
\newref{def}{refcmd={Definition \ref{#1}}}
\newref{fig}{refcmd={Figure \ref{#1}}}

\setlist[enumerate]{itemsep=5pt,topsep=3pt}
\setlist[enumerate,1]{label=\textup{(}\roman*\textup{)},ref=\roman*}
\setlist[enumerate,2]{label=(\alph*),ref=\theenumi \alph*}

\@ifundefined{showcaptionsetup}{}{%
 \PassOptionsToPackage{caption=false}{subfig}}
\usepackage{subfig}
\AtBeginDocument{
  
}

\makeatother

\providecommand{\acknowledgementname}{Acknowledgement}
\providecommand{\corollaryname}{Corollary}
\providecommand{\definitionname}{Definition}
\providecommand{\examplename}{Example}
\providecommand{\lemmaname}{Lemma}
\providecommand{\propositionname}{Proposition}
\providecommand{\remarkname}{Remark}
\providecommand{\theoremname}{Theorem}

\begin{document}

\title{Martin boundaries of representations of the Cuntz algebra}

\dedicatory{Dedicated to the memory of J\o rgen Hoffmann-J\o rgensen.}

\author{Palle Jorgensen and Feng Tian}

\address{(Palle E.T. Jorgensen) Department of Mathematics, The University
of Iowa, Iowa City, IA 52242-1419, U.S.A. }

\email{palle-jorgensen@uiowa.edu}

\urladdr{http://www.math.uiowa.edu/\textasciitilde jorgen/}

\address{(Feng Tian) Department of Mathematics, Hampton University, Hampton,
VA 23668, U.S.A.}

\email{feng.tian@hamptonu.edu}
\begin{abstract}
In a number of recent papers, the idea of generalized boundaries has
found use in fractal and in multiresolution analysis; many of the
papers having a focus on specific examples. Parallel with this new
insight, and motivated by quantum probability, there has also been
much research which seeks to study fractal and multiresolution structures
with the use of certain systems of non-commutative operators; non-commutative
harmonic/stochastic analysis. This in turn entails combinatorial,
graph operations, and branching laws. The most versatile, of these
non-commutative algebras are the Cuntz algebras; denoted $\mathcal{O}_{N}$,
$N$ for the number of isometry generators. $N$ is at least 2. Our
focus is on the representations of $\mathcal{O}_{N}$. We aim to develop
new non-commutative tools, involving both representation theory and
stochastic processes. They serve to connect these parallel developments.

In outline, boundaries, Poisson, or Martin, are certain measure spaces
(often associated to random walk models), designed to encode the asymptotic
behavior, e.g., how trajectories diverge when the number of steps
goes to infinity. We stress that our present boundaries (commutative
or non-commutative) are purely measure-theoretical objects. Although,
as we show, in some cases our boundaries may be compared with more
familiar topological boundaries. 
\end{abstract}

\subjclass[2000]{Primary 47L60, 46N30, 81S25, 81R15, 81T05, 81T75; Secondary 60D05,
60G15, 60J25, 65R10, 58J65. }

\keywords{Kolmogorov consistency, path space measures, boundaries, quantum
probability, orthogonality, Cuntz algebras, representations, non-commutative
Markov chains, martingales, dynamics, reproducing kernel Hilbert space,
Hilbert space of equivalence classes, self-reproducing kernel, attractors,
iterated function system, fractals, bit-representations, fractional
calculus, endomorphisms, finitely correlated states, boundary representations.}

\maketitle

\tableofcontents{}

\section{Introduction}

We propose a new notion of Martin boundary for representations of
the Cuntz algebras. It bridges two ideas which have been studied extensively
in the literature, but so far have not been connected in a systematic
fashion. In summary, they are: (i) \emph{the non-commutativity of
the Cuntz algebras} (see, e.g., \cite{MR0467330,MR1913212,MR2030387}),
and the subtleties of their representations \cite{Gli60,Gli61}, on
the one hand; and (ii) \emph{symbolic representations} of Markov chains
and their classical Martin boundaries, on the other (see, e.g., \cite{MR3450534,MR2357627,MR2384480,MR2851247}).
Applications include an harmonic analysis of \emph{iterated function
system} (IFS) measures. (See \cite{MR3124323,MR3459161,MR3375595,MR2176941,MR2129258,MR3368972}.)

In the study of representations of $\mathcal{O}_{N}$ on a Hilbert
space $\mathscr{H}$, an identification of suitably closed invariant
subspaces of $\mathscr{H}$ plays a central role. Here we refer to
a representation in the form of a system operators $S_{i}$ and their
adjoints $S_{i}^{*}$ satisfying the Cuntz relations. Of the possibilities
for subspaces, invariance under the $S_{i}^{*}$ operators is more
interesting: i.e., invariance under a system of generalized backwards
shifts. In many cases, these invariant subspaces have small dimension,
and they help us define new isomorphism invariants for the representations
under discussion. For example, a permutative representation is one
with the property that the vectors in some choice of ONB are permuted
by the $S_{i}^{*}$ operators. Moreover, in important applications
to quantum statistical mechanics, certain subspaces of states that
are invariant under the adjoints $S_{i}^{*}$, are often finite-dimensional.
They are called \emph{finitely correlated states}. And they are one
of the main features of interest in statistical mechanics, see e.g.,
\cite{MR1158756,MR1266319,MR1444086,MR1665280,MR2294415,MR1740897}.
They are analogues of \textquotedblleft attractors\textquotedblright{}
in classical (commutative) symbolic dynamics.

As we show in Section \ref{sec:IFS} in the present paper, there is
a way to associate families of representations of the Cuntz algebras
to a certain analyses of iterated function systems (IFSs), and for
this reason, some of the earlier work on IFSs is relevant to our present
considerations. For this part of the literature, we refer the reader
to the papers \cite{MR2833578,MR3130523,MR3709130,MR2891314,MR2954650,MR3601650,2017arXiv170900592F,2018arXiv180308779F,2018arXiv180403455F},
and the papers cited there.

\section{\label{sec:Pre}Preliminaries}

We begin with a technical lemma regarding projections in Hilbert space;
to be used inside the arguments throughout the paper.

Let $\mathscr{H}$ be a Hilbert space. By an orthogonal projection
$P$ on $\mathscr{H}$, we mean an operator satisfying $P=P^{*}=P^{2}$.
There is a bijective correspondence between projections $P$ (we shall
assume that $P$ is orthogonal even if not stated) on the one hand,
and closed subspaces $\mathscr{F}=\mathscr{F}_{P}$ in $\mathscr{H}$
on the other, given by $\mathscr{F}=P\mathscr{H}$; see e.g., \cite{MR3642406}.

We shall use the following 
\begin{lem}
\label{lem:pp}Let $P$ and $Q$ be projections, and let $\mathscr{F}_{P}$
and $\mathscr{F}_{Q}$ be the corresponding closed subspaces, then
TFAE:
\begin{enumerate}
\item $P=PQ$;
\item $P=QP$;
\item $\mathscr{F}_{P}\subseteq\mathscr{F}_{Q}$;
\item $\left\Vert Ph\right\Vert \leq\left\Vert Qh\right\Vert $, $\forall h\in\mathscr{H}$;
\item $\left\langle h,Ph\right\rangle \leq\left\langle h,Qh\right\rangle $,
$\forall h\in\mathscr{H}$. 
\end{enumerate}
When the conditions hold we say that $P\leq Q$.

\end{lem}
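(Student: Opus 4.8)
The plan is to prove the five conditions equivalent by establishing a cycle of implications, threading through the algebraic conditions first and then picking up the norm and quadratic-form conditions via the obvious inequalities. I would argue (i) $\Rightarrow$ (iii) $\Rightarrow$ (ii) $\Rightarrow$ (i) to close the loop among the three "strong" conditions, and then (i) $\Rightarrow$ (iv) $\Rightarrow$ (v) $\Rightarrow$ (i), so that every condition sits on a directed path to every other.

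For the first cycle: assuming (i) $P=PQ$, take any $h\in\mathscr{H}$ and note $Ph = PQh$, so every vector in $\mathscr{F}_{P}=P\mathscr{H}$ lies in $P(\mathscr{F}_{Q})\subseteq \mathscr{F}_{Q}$; that gives (iii). Assuming (iii) $\mathscr{F}_{P}\subseteq\mathscr{F}_{Q}$, for any $h$ the vector $Ph\in\mathscr{F}_{P}\subseteq\mathscr{F}_{Q}$ is fixed by $Q$, so $QPh = Ph$, i.e.\ (ii) $QP = P$. Assuming (ii) $QP=P$, take adjoints: $P^{*}Q^{*}=P^{*}$, and since $P,Q$ are self-adjoint this reads $PQ=P$, which is (i). For the second cycle: (i) $\Rightarrow$ (iv) follows from $\|Ph\| = \|PQh\|\le\|P\|\,\|Qh\| = \|Qh\|$ since $\|P\|\le 1$. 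For (iv) $\Rightarrow$ (v), use that $P$ and $Q$ are projections, so $\langle h,Ph\rangle = \langle Ph,Ph\rangle = \|Ph\|^{2}$ and likewise $\langle h,Qh\rangle = \|Qh\|^{2}$; the pointwise norm inequality squares to the quadratic-form inequality. Finally (v) $\Rightarrow$ (i): from $\|Ph\|^{2}\le\|Qh\|^{2}$ for all $h$, apply this to $h\in\mathscr{F}_{P}^{\perp}\oplus$ -- more directly, restrict to $h\in\mathscr{F}_{P}$; then $\|h\|^{2} = \|Ph\|^{2}\le\|Qh\|^{2}\le\|h\|^{2}$ forces $\|Qh\|=\|h\|$, and since $\|Qh\|^{2} = \langle h,Qh\rangle$ and $\|(I-Q)h\|^{2} = \|h\|^{2}-\|Qh\|^{2}=0$, we get $Qh = h$ for all $h\in\mathscr{F}_{P}$, i.e.\ $\mathscr{F}_{P}\subseteq\mathscr{F}_{Q}$, which is (iii), and we have already shown (iii) $\Rightarrow$ (i) (or: $Q$ fixes $\mathscr{F}_{P}=P\mathscr{H}$ pointwise means $QP = P$, then adjoint).

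The one slightly delicate point -- and the only place I would slow down -- is the implication (v) $\Rightarrow$ (i) (equivalently (iv) $\Rightarrow$ (i)): the quadratic-form or norm inequality is a statement for \emph{all} $h$, but the useful consequence comes from specializing to $h\in\mathscr{F}_{P}$, where the inequality degenerates to an equality $\|Qh\| = \|h\|$; one must then invoke the Pythagorean identity $\|h\|^{2} = \|Qh\|^{2} + \|(I-Q)h\|^{2}$ (valid because $Q$ is an orthogonal projection) to conclude $(I-Q)h = 0$. This is the step that genuinely uses orthogonality of the projections, not merely idempotency, so it is worth stating explicitly.

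All remaining steps are one-line manipulations, so I would present the argument compactly as the chain of implications above, remarking at the end that the common condition is what we denote $P\le Q$, consistent with the usual operator ordering since (v) is literally the statement $Q-P\ge 0$.
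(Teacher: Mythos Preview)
Your argument is essentially sound and in fact more detailed than the paper's own treatment, which simply reads ``This is standard in operator theory'' and defers to a reference without supplying any steps.

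One small gap to fix: in your direct implication (i) $\Rightarrow$ (iii) you assert $P(\mathscr{F}_{Q})\subseteq\mathscr{F}_{Q}$, but this inclusion is not immediate from $P=PQ$ alone---it in fact requires $P=QP$, which is your condition (ii). (Under (i) it \emph{is} true, but only after taking adjoints, which you have not yet done at that point in the chain.) The cleanest repair is to reorder the first cycle: observe (i) $\Leftrightarrow$ (ii) directly by taking adjoints of $P=PQ$, then prove (ii) $\Rightarrow$ (iii) (if $QP=P$ then for $h\in\mathscr{F}_{P}$ one has $h=Ph=QPh\in\mathscr{F}_{Q}$); your (iii) $\Rightarrow$ (ii) already closes that loop. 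The second cycle (i) $\Rightarrow$ (iv) $\Rightarrow$ (v) $\Rightarrow$ (iii) is correct as written, and your remark that the Pythagorean step in (v) $\Rightarrow$ (iii) genuinely uses orthogonality of the projections is apt.
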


\begin{proof}
This is standard in operator theory. We refer to \cite{MR3642406}
for details. 
\end{proof}
\begin{defn}
\label{def:peq}~
\begin{enumerate}
\item \label{enu:peq1}Let $\mathscr{H}$ be a Hilbert space, and $V$ an
operator in $\mathscr{H}$. If $P:=V^{*}V$ is a projection, we say
that $V$ is a \emph{partial isometry}. In that case, $Q=VV^{*}$
is also a projection: We say that $P$ is the initial projection of
$V$, and that $Q$ is the final projection. 
\item \label{enu:peq2}If $\mathfrak{A}$ is a $C^{*}$-algebra, and $V$,
$P$, $Q$ are as above. If $V$ is in $\mathfrak{A}$, then we say
that the two projections $P$ and $Q$ are \emph{$\mathfrak{A}$-equivalent. }
\end{enumerate}
\end{defn}

\begin{lem}
\label{lem:mp}Let $\left\{ P_{k}\right\} _{k\in\mathbb{N}}$ be monotone,
i.e., 
\begin{equation}
P_{1}\leq P_{2}\leq\cdots,\label{eq:a11}
\end{equation}
then the limit 
\begin{equation}
P_{\infty}:=\lim_{k\rightarrow\infty}P_{k}\label{eq:a12}
\end{equation}
$($in the strong operator topology of $\mathscr{B}\left(\mathscr{H}\right)$$)$
exists, and $P_{\infty}$ is the projection onto the closed span of
the subspaces $\left\{ \mathscr{F}_{P_{k}}\right\} _{k\in\mathbb{N}}$. 

The analogous conclusion holds for monotone decreasing sequence of
projections
\begin{equation}
\cdots\leq Q_{n+1}\leq Q_{n}\leq\cdots\leq Q_{2}\leq Q_{1}.\label{eq:a13}
\end{equation}
In this case 
\begin{equation}
Q_{\infty}=\lim_{k\rightarrow\infty}Q_{k}\label{eq:a14}
\end{equation}
is the projection onto $\bigcap_{k}\mathscr{F}_{Q_{k}}$.
\end{lem}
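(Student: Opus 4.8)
I would prove the increasing statement \eqref{eq:a12} first and then obtain the decreasing statement \eqref{eq:a14} by passing to orthogonal complements. For \eqref{eq:a11}, fix $h\in\mathscr{H}$. By \lemref{pp}, condition (iv), the numbers $\left\Vert P_{k}h\right\Vert$ are non-decreasing in $k$ and bounded above by $\left\Vert h\right\Vert$, hence convergent. The key algebraic observation is that for $k\le m$ the difference $P_{m}-P_{k}$ is again an orthogonal projection: it is self-adjoint, and, using $P_{k}=P_{k}P_{m}=P_{m}P_{k}$ from \lemref{pp} (i)--(ii), one has $\left(P_{m}-P_{k}\right)^{2}=P_{m}-P_{m}P_{k}-P_{k}P_{m}+P_{k}=P_{m}-P_{k}$. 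Therefore
\[
\left\Vert \left(P_{m}-P_{k}\right)h\right\Vert ^{2}=\left\langle h,\left(P_{m}-P_{k}\right)h\right\rangle =\left\Vert P_{m}h\right\Vert ^{2}-\left\Vert P_{k}h\right\Vert ^{2}\xrightarrow[k,m\to\infty]{}0,
\]
so $\left\{ P_{k}h\right\} _{k}$ is Cauchy in $\mathscr{H}$. This lets me define $P_{\infty}h:=\lim_{k}P_{k}h$; the resulting map $P_{\infty}$ is linear with $\left\Vert P_{\infty}\right\Vert \le1$, and $P_{k}\to P_{\infty}$ in the strong operator topology.

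Next I would check that $P_{\infty}$ is an orthogonal projection and identify its range. Self-adjointness follows by passing to the limit in $\left\langle P_{k}h,g\right\rangle =\left\langle h,P_{k}g\right\rangle $. For idempotency: fixing $m$, one has $P_{k}P_{m}=P_{m}$ for all $k\ge m$ by \lemref{pp}, so $P_{\infty}P_{m}h=P_{m}h$; letting $m\to\infty$ and using continuity of $P_{\infty}$ gives $P_{\infty}^{2}h=P_{\infty}\left(\lim_{m}P_{m}h\right)=\lim_{m}P_{m}h=P_{\infty}h$. For the range: each $P_{k}h\in\mathscr{F}_{P_{k}}$, and since by \lemref{pp} the subspaces $\mathscr{F}_{P_{k}}$ are nested and increasing, their union is a subspace whose closure is precisely $\mathscr{F}:=\overline{\operatorname{span}}\left\{ \mathscr{F}_{P_{k}}\right\}$; hence $P_{\infty}h=\lim_{k}P_{k}h\in\mathscr{F}$, i.e. $\operatorname{ran}P_{\infty}\subseteq\mathscr{F}$. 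Conversely, if $h\in\mathscr{F}_{P_{k}}$ then $P_{m}h=h$ for all $m\ge k$, so $P_{\infty}h=h$; by linearity and continuity $P_{\infty}$ fixes all of $\mathscr{F}$. Since $P_{\infty}$ is a projection, its range equals its fixed-point set, so $\operatorname{ran}P_{\infty}=\mathscr{F}$, which is \eqref{eq:a12}.

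For the decreasing sequence \eqref{eq:a13}, I would set $P_{k}:=I-Q_{k}$. By \lemref{pp} the relation $Q_{n+1}\le Q_{n}$ is equivalent to $P_{n}\le P_{n+1}$, so the first part applies: $P_{k}\to P_{\infty}$ strongly, hence $Q_{k}=I-P_{k}\to I-P_{\infty}=:Q_{\infty}$ strongly, and $Q_{\infty}$ is a projection. Since $\mathscr{F}_{P_{k}}=\mathscr{F}_{Q_{k}}^{\perp}$, the identification just obtained gives $\mathscr{F}_{P_{\infty}}=\overline{\operatorname{span}}\left\{ \mathscr{F}_{Q_{k}}^{\perp}\right\} =\bigl(\bigcap_{k}\mathscr{F}_{Q_{k}}\bigr)^{\perp}$, using that $\overline{\operatorname{span}}\bigcup_{k}A_{k}^{\perp}=\bigl(\bigcap_{k}A_{k}\bigr)^{\perp}$ for closed subspaces. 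Hence $Q_{\infty}=I-P_{\infty}$ is the projection onto $\bigcap_{k}\mathscr{F}_{Q_{k}}$, which is \eqref{eq:a14}. I do not expect a genuine obstacle in this argument; the one point requiring care is the Cauchy estimate in the first paragraph, which rests entirely on the fact that the difference of two comparable projections is a projection — everything afterward is routine bookkeeping with strong limits and the equivalences in \lemref{pp}.
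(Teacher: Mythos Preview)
The paper does not actually supply a proof of \lemref{mp}; the lemma is stated as a standard fact and the text moves directly on to Section~\ref{sec:PVR}. Your argument is correct and is the standard one: the Cauchy estimate via the observation that $P_{m}-P_{k}$ is itself a projection when $P_{k}\le P_{m}$, followed by routine verification that the strong limit is self-adjoint, idempotent, and has the asserted range, with the decreasing case handled by passing to complements. There is nothing to compare it against here.
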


\section{\label{sec:PVR}A Projection Valued Random Variable}

The theme of our paper falls at the crossroads of \emph{representation
theory} and the study of \emph{fractal measures} and their stochastic
processes.

The past two decades has seen a burst of research dealing with representations
of classes of \emph{infinite} $C^{*}$-algebras, which includes the
Cuntz algebras \cite{MR0467330}, $\mathcal{O}_{N}$ (see (\ref{eq:a5}))
as well as other graph-$C^{*}$-algebras \cite{2017arXiv170900592F,2018arXiv180308779F}.
(For details on a number of such earlier studies and applications,
readers are referred to the papers cited below.) A source of motivation
for our present work includes more recent research which includes
both pure and applied mathematics: branching laws for endomorphisms,
subshifts, endomorphisms from measurable partitions, Markov measures
and topological Markov chains, wavelets and multiresolutions, signal
processing and filters, iterated function systems (IFSs) and fractals,
complex projective spaces, quasi-crystals, orbit equivalence, and
substitution dynamical systems, and tiling systems \cite{MR3402823,MR3450534,MR3687240,MR3642406,MR3796644}.

A projection $P$ is said to be \emph{infinite} iff (Def) it contains
proper subprojections, say $Q$, $Q\lneqq P$, such that $P$ and
$Q$ are\emph{ equivalent}; (see Definition \ref{def:peq} (\ref{enu:peq2})).
The Cuntz algebras $\mathcal{O}_{N}$ contain infinite projections;
see Sections \ref{sec:PVM}-\ref{sec:IFS}.

The questions considered here for \emph{representations of the Cuntz
algebras} are of independent interest as part of \emph{non-commutative
harmonic analysis}, i.e., the study of representations of non-abelian
groups and $C^{*}$-algebras. A basic question in representation theory
is that of determining parameters for the \emph{equivalence classes
of representations}, where \textquotedblleft \emph{equivalence}\textquotedblright{}
refers to unitary equivalence. Since analysis and synthesis of representations
must entail direct integral decompositions, a minimal requirement
for a list of parameters for the equivalence classes of representations,
is that it be Borel. When such a choice is possible, we say that there
is a \emph{Borel cross section} for the representations under consideration.

A pioneering paper by J. Glimm \cite{Gli60} showed that there are
infinite $C^{*}$-algebras whose representations do not have Borel
cross sections. (Loosely speaking, the representations do not admit
classification.) It is known that the Cuntz algebras, and $C^{*}$-algebras
of higher-rank graphs, fall in this class. Hence, the approach to
representations must narrow to suitable and amenable classes of representations
which arise naturally in \emph{applications}, and which do admit \emph{Borel
cross sections}.

A leading theme in our paper is a formulation of a \emph{boundary
theory} for representations of the Cuntz algebra. This in turn ties
in with multiresolutions and with iterated function system (IFS) measures.
A boundary theory for the latter has recently been suggested in various
special cases.

A multiresolution approach to the study of representations of the
Cuntz algebras was initiated by the first named author and O. Brattelli
\cite{MR1869063,MR1889566,MR1913212,MR2030387}; and it includes such
applications as construction of new \emph{multiresolution wavelets},
and of wavelet algorithms from multi-band wavelet filters. And yet
other applications studied by the first named author and D. Dutkay
lead to the study of such classes of representations as \emph{monic},
and \emph{permutative} \cite{MR3217056,2018arXiv180403455F}; and
their use in fractal analysis. The introduction of these classes begins
with the fact that every representation of the Cuntz algebra corresponds
in a canonical fashion to a certain \emph{projection valued measure}.
We begin with these projection valued measures in this and the next
section below.\\

Let $N$ be a positive integer, and let $A$ be an alphabet with $\left|A\right|=N$;
set 
\begin{equation}
\Omega_{N}:=A^{\mathbb{N}}=\underset{\aleph_{0}-\text{infinite Cartesian product}}{\underbrace{A\times A\times A\times\cdots\cdots\cdots}}.\label{eq:a1}
\end{equation}
Points in $\Omega_{N}$ are denoted $\omega:=\left(x_{1},x_{2},\cdots\right)$,
and we set 
\begin{equation}
\pi_{n}\left(\omega\right):=x_{n},\quad\forall\omega\in\Omega_{N}.\label{eq:a2}
\end{equation}
When $k\in\mathbb{N}$ is fixed, and $\omega=\left(x_{i}\right)\in\Omega_{N}$,
we set 
\begin{equation}
\omega|_{k}=\left(x_{1},x_{2},\cdots,x_{k}\right)=\text{the \ensuremath{k}-truncated (\emph{finite}) word.}\label{eq:a3}
\end{equation}

Let $\mathscr{H}$ be a separable Hilbert space, $\dim\mathscr{H}=\aleph_{0}$,
and let $\mathfrak{M}$ be a commutative family of orthogonal projections
in $\mathscr{H}$. 

By an $\mathfrak{M}$-valued random variable $X$, we mean a measurable
function
\begin{equation}
X:\Omega_{N}\longrightarrow\mathfrak{M}.\label{eq:a4}
\end{equation}
See, e.g., \cite{MR3402823,MR3687240,MR3796644}.

Let $\mathcal{O}_{N}$ denote the Cuntz algebra with $N$ generators
\cite{MR0467330}, i.e., the $C^{*}$-algebra on symbols $\left\{ s_{i}\right\} _{i=1}^{N}$,
satisfying the following two relations: 
\begin{equation}
s_{i}^{*}s_{j}=\delta_{ij}\mathbbm{1},\quad\text{and}\quad\sum_{i=1}^{N}s_{i}s_{i}^{*}=\mathbbm{1},\label{eq:a5}
\end{equation}
where $\mathbbm{1}$ denotes the unit element in $\mathcal{O}_{N}$. 

By a representation of $\mathcal{O}_{N}$ we mean a function $s_{i}\mapsto S_{i}=\pi\left(s_{i}\right)$
such that 
\begin{equation}
S_{i}^{*}S_{j}=\delta_{ij}I,\quad\text{and}\quad\sum_{i=1}^{N}S_{i}S_{i}^{*}=I\label{eq:a7}
\end{equation}
where $\delta_{ij}$ denotes the Kronecker delta, and $I$ denotes
the identity operator in $\mathscr{H}$; we say that $\pi\in Rep\left(\mathcal{O}_{N},\mathscr{H}\right)$
if (\ref{eq:a7}) holds. 

The following lemma is basic and will be used throughout in the remaining
of the paper.
\begin{lem}
\label{lem:con}Let $\pi=\left(S_{i}\right)_{i=1}^{N}$ be a representation
of $\mathcal{O}_{N}$ acting in a fixed Hilbert space $\mathscr{H}$,
i.e., $\pi\in Rep\left(\mathcal{O}_{N},\mathscr{H}\right)$. For finite
words $f=\left(x_{1},\cdots,x_{n}\right)$ in the alphabet $A=\left\{ 1,2,\cdots,N\right\} $,
set 
\begin{equation}
P_{f}:=S_{x_{1}}S_{x_{2}}\cdots S_{x_{k}}S_{x_{k}}^{*}S_{x_{k-1}}^{*}\cdots S_{x_{1}}^{*}\label{eq:a71}
\end{equation}
with the conventions: 
\begin{equation}
P_{i}:=S_{i}S_{i}^{*},\quad\text{and}\quad P_{\emptyset}=0.\label{eq:a72}
\end{equation}
\begin{enumerate}
\item Then as $f$ varies over all finite non-empty words, the projections
$\text{\ensuremath{\left\{  P_{f}\right\} } }$ \textup{form an abelian
family. }
\item Moreover, 
\begin{equation}
\sum_{i=1}^{N}P_{\left(fi\right)}=P_{f},\label{eq:a73}
\end{equation}
and in particular, 
\begin{equation}
P_{\left(fg\right)}\leq P_{f}\label{eq:a74}
\end{equation}
for any pair of finite non-empty words $f$ and $g$. Here $\left(fg\right)$
denotes concatenation of the two words. 
\end{enumerate}
\end{lem}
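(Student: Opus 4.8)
The plan is to derive both parts from one elementary computation with products of the generators. For a finite non-empty word $f=(x_1,\dots,x_n)$ I will write $S_f:=S_{x_1}S_{x_2}\cdots S_{x_n}$, with the convention $S_\emptyset:=I$, so that $S_f^*=S_{x_n}^*\cdots S_{x_1}^*$ and $P_f=S_fS_f^*$ in the notation of \eqref{eq:a71}. First I would observe that iterating the relation $S_{x_i}^*S_{x_i}=I$ (the diagonal case of \eqref{eq:a7}) collapses the middle of $S_{x_n}^*\cdots S_{x_1}^*S_{x_1}\cdots S_{x_n}$ to give $S_f^*S_f=I$; consequently $P_f^*=P_f$ and $P_f^2=S_f(S_f^*S_f)S_f^*=P_f$, so each $P_f$ is genuinely a projection.

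The crucial step is a ``prefix formula'' for $S_f^*S_g$, which I would prove by induction on $\min(\lvert f\rvert,\lvert g\rvert)$ using $S_x^*S_y=\delta_{xy}I$ one letter at a time: $S_f^*S_g=0$ unless one of $f,g$ is a prefix of the other; if $g=fh$ then $S_f^*S_g=S_h$; and if $f=gh$ then $S_f^*S_g=S_h^*$. In the inductive step, a single application of $S_{x_1}^*S_{y_1}=\delta_{x_1y_1}I$ either kills the product (when $x_1\ne y_1$, which is also precisely when $f,g$ are incomparable) or reduces it to the same expression for the tails $(x_2,\dots)$ and $(y_2,\dots)$, to which the hypothesis applies.

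Granting this formula, part (i) becomes a three-case inspection of $P_fP_g=S_f(S_f^*S_g)S_g^*$. When $f,g$ are incomparable, $S_f^*S_g=0$ gives $P_fP_g=0$, and since $S_g^*S_f=(S_f^*S_g)^*=0$ also $P_gP_f=0$, so the two commute. When $f=g$ there is nothing to check; otherwise one of them is a proper prefix of the other, say $g=fh$ with $h$ non-empty, and then $S_f^*S_g=S_h$ together with $S_fS_h=S_g$ yields $P_fP_g=S_gS_g^*=P_g$, while $P_gP_f=(P_fP_g)^*=P_g^*=P_g$ as well --- so again they commute, and in fact $P_g=P_fP_g$ records $P_g\le P_f$ via Lemma~\ref{lem:pp}. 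The remaining case, $g$ a prefix of $f$, is symmetric. For part (ii), the splitting relation \eqref{eq:a73} follows instantly from the Cuntz identity $\sum_iS_iS_i^*=I$ in \eqref{eq:a7}, since $\sum_iP_{(fi)}=S_f\bigl(\sum_iS_iS_i^*\bigr)S_f^*=S_fS_f^*=P_f$; and because $(fi)$ and $(fj)$ are incomparable for $i\ne j$, part (i) makes the $P_{(fi)}$ mutually orthogonal, so $P_f=\sum_iP_{(fi)}$ is an orthogonal decomposition and each $P_{(fi)}\le P_f$ by Lemma~\ref{lem:pp}. Iterating this over the letters of $g$ and using transitivity of $\le$ then gives $P_{(fg)}\le P_f$, i.e.\ \eqref{eq:a74} (equivalently, the prefix case of part (i) already gives $P_fP_{(fg)}=P_{(fg)}$).

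I do not expect a real obstacle here: the only delicate point is the bookkeeping in the prefix formula for $S_f^*S_g$ --- keeping the three cases straight (incomparable, $f$ a prefix of $g$, $g$ a prefix of $f$) and being consistent about the empty-tail convention $S_\emptyset=I$. Once that formula is stated cleanly, the rest of the lemma is only a few lines, and the same formula is exactly what makes $\{P_f\}$ behave as the system of cylinder-set projections indexed by the tree of finite words over the alphabet $A$.
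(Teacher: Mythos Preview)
Your argument is correct and is exactly the detailed verification the paper has in mind when it simply writes that the lemma ``is an application of (\ref{eq:a7}), and the details are left for the reader.'' One small imprecision: in your inductive step the parenthetical ``$x_1\ne y_1$, which is also precisely when $f,g$ are incomparable'' is not literally true (the words can share a first letter and still be incomparable), but your induction handles that case correctly by passing to the tails, so there is no actual gap.
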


\begin{proof}
This is an application of (\ref{eq:a7}), and the details are left
for the reader. 
\end{proof}
\begin{thm}
\label{thm:LP}Let $N$, $\mathscr{H}$, $\mathcal{O}_{N}$, and $\pi\in Rep\left(\mathcal{O}_{N},\mathscr{H}\right)$
be as above. 
\begin{enumerate}
\item \label{enu:s1}Then there is a unique random variable $X$ (projection-valued,
see (\ref{eq:a4})) such that 
\begin{equation}
X\left(\omega\right)=\lim_{k\rightarrow\infty}S_{\omega|_{k}}S_{\omega|_{k}}^{*},\;\omega\in\Omega_{N},\label{eq:a8}
\end{equation}
where 
\begin{equation}
S_{\omega|_{k}}=S_{x_{1}}S_{x_{2}}\cdots S_{x_{k}}\label{eq:a9}
\end{equation}
and $\omega|_{k}$ is the corresponding truncated word as in (\ref{eq:a3}). 
\end{enumerate}
\begin{enumerate}[resume]
\item \label{enu:s2}Moreover, the following relations hold: 

If $a\in A$, and $\omega=\left(x_{1},x_{2},x_{3},x_{4},\cdots\right)\in\Omega_{N}$,
then 
\begin{align}
S_{a}X\left(\omega\right)S_{a}^{*} & =X\left(a\omega\right),\label{eq:b1}
\end{align}
and
\begin{equation}
S_{a}^{*}X\left(\omega\right)S_{a}=\delta_{a,\pi_{1}\left(\omega\right)}X\left(\sigma\left(\omega\right)\right),\label{eq:b2}
\end{equation}
where
\[
\sigma\left(\omega\right)=\left(x_{2},x_{3},x_{4},\cdots\right),\;\text{and}\quad a\omega=\left(a,x_{1},x_{2},x_{3},x_{4},\cdots\right).
\]
 
\item \label{enu:s3}Finally, we have: 
\begin{equation}
X\left(\omega\right)X\left(\omega'\right)=\delta_{\omega,\omega'}X\left(\omega\right),\label{eq:a10}
\end{equation}
for all $\omega,\omega'\in\Omega_{N}$. 
\end{enumerate}
\end{thm}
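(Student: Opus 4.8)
The plan is to reduce all three parts to the monotone-convergence facts of Lemma~\ref{lem:mp}, the order characterization of Lemma~\ref{lem:pp}, and the commuting-family structure of the $\{P_f\}$ from Lemma~\ref{lem:con}; essentially no new idea is needed. For part~(\ref{enu:s1}), fix $\omega=(x_1,x_2,\dots)\in\Omega_N$ and note that, since $\omega|_{k+1}$ is the concatenation of $\omega|_k$ with the letter $x_{k+1}$, the estimate (\ref{eq:a74}) makes $\big(P_{\omega|_k}\big)_{k\in\mathbb{N}}$, with $P_{\omega|_k}=S_{\omega|_k}S_{\omega|_k}^{*}$, a monotone \emph{decreasing} sequence of projections. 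The second half of Lemma~\ref{lem:mp} then yields its strong limit, which I take as the definition of $X(\omega)$ --- the projection onto $\bigcap_k\mathscr{F}_{P_{\omega|_k}}$ --- and uniqueness is automatic, strong limits being unique. Measurability follows because $\omega\mapsto P_{\omega|_k}$ depends only on the first $k$ coordinates, hence is locally constant and in particular Borel; as $\mathscr{H}$ is separable the strong operator topology is metrizable on the unit ball, so the pointwise strong limit $X$ is again Borel, i.e. a projection-valued random variable in the sense of (\ref{eq:a4}).

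For part~(\ref{enu:s2}) I would push the bounded operators through the limit in (\ref{eq:a8}). Relation (\ref{eq:b1}) follows from $S_aS_{\omega|_k}=S_{(a\omega)|_{k+1}}$, so that $S_aP_{\omega|_k}S_a^{*}=\big(S_aS_{\omega|_k}\big)\big(S_aS_{\omega|_k}\big)^{*}=P_{(a\omega)|_{k+1}}\to X(a\omega)$. Relation (\ref{eq:b2}) uses the Cuntz relation $S_a^{*}S_{x_1}=\delta_{a,x_1}I$ from (\ref{eq:a7}): the product $S_a^{*}S_{\omega|_k}$ is $0$ when $a\ne\pi_1(\omega)$ and equals $S_{\sigma(\omega)|_{k-1}}$ when $a=\pi_1(\omega)$, so $S_a^{*}P_{\omega|_k}S_a=\delta_{a,\pi_1(\omega)}P_{\sigma(\omega)|_{k-1}}$ for $k\ge2$, and I pass to the limit $k\to\infty$.

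For part~(\ref{enu:s3}), the case $\omega=\omega'$ is immediate since $X(\omega)^2=X(\omega)$. If $\omega\ne\omega'$, let $m$ be the first coordinate at which they differ, write $f=\omega|_{m-1}=\omega'|_{m-1}$ and $i=\pi_m(\omega)\ne j=\pi_m(\omega')$, so $\omega|_m=fi$ and $\omega'|_m=fj$. The family $\{P_g\}$ is abelian (Lemma~\ref{lem:con}) and $X(\omega),X(\omega')$ are strong limits from it, so $X(\omega),X(\omega'),P_{fi},P_{fj}$ all commute; and from (\ref{eq:a74}) (passing to the limit) together with Lemma~\ref{lem:pp} one gets $X(\omega)=P_{fi}X(\omega)$ and $X(\omega')=P_{fj}X(\omega')$, whence $X(\omega)X(\omega')=P_{fi}P_{fj}\,X(\omega)X(\omega')$. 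The content is then the single computation
\[
P_{fi}P_{fj}=S_fS_iS_i^{*}S_f^{*}S_fS_jS_j^{*}S_f^{*}=S_fS_i\big(S_i^{*}S_j\big)S_j^{*}S_f^{*}=0,
\]
where $S_f=S_{x_1}\cdots S_{x_{m-1}}$, using the telescoping identity $S_f^{*}S_f=I$ and $S_i^{*}S_j=\delta_{ij}I=0$; hence $X(\omega)X(\omega')=0$, which is (\ref{eq:a10}).

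Beyond the routine algebra, the two steps I expect to need genuine care are the measurability assertion in part~(\ref{enu:s1}), which really does use separability of $\mathscr{H}$ so that a pointwise strong limit of Borel maps stays Borel, and the bookkeeping in part~(\ref{enu:s3}) guaranteeing that $X(\omega),X(\omega'),P_{fi},P_{fj}$ all commute --- only then may $P_{fi}$ and $P_{fj}$ be inserted and collected at the front before the vanishing $P_{fi}P_{fj}=0$ is applied.
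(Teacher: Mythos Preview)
Your proposal is correct and follows essentially the same route as the paper: monotone decrease of $P_{\omega|_k}$ via Lemma~\ref{lem:con}, strong limit via Lemma~\ref{lem:mp}, and passage to the limit through the finite-word Cuntz identities for parts~(\ref{enu:s2}) and~(\ref{enu:s3}). Two minor differences are worth noting: for part~(\ref{enu:s3}) the paper simply computes $P_{\omega|_k}P_{\omega'|_k}=\delta_{\omega|_k,\omega'|_k}P_{\omega|_k}$ at each finite level and takes the limit, whereas you instead bound $X(\omega)\le P_{fi}$, $X(\omega')\le P_{fj}$ at the first splitting index and use $P_{fi}P_{fj}=0$ (your commutativity bookkeeping is in fact unnecessary, since $X(\omega)=X(\omega)P_{fi}$ and $X(\omega')=P_{fj}X(\omega')$ already give $X(\omega)X(\omega')=X(\omega)P_{fi}P_{fj}X(\omega')=0$); and you supply the measurability argument for $X$, which the paper leaves implicit.
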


\begin{proof}
Let $N\in\mathbb{N}$, $N\geq2$, and let $\mathscr{H}$ be a separable
Hilbert space. Let $\mathcal{O}_{N}$ denote the Cuntz algebra with
$N$ generators $\left\{ s_{i}\right\} _{1}^{N}$, and let $\pi\in Rep\left(\mathcal{O}_{N},\mathscr{H}\right)$,
$\pi\left(s_{i}\right):=S_{i}$, $1\leq i\leq N$, be fixed. 

Let $\Omega_{N}$ be as in (\ref{eq:a1}). For $\omega=\left(i_{1},i_{2},i_{3},\cdots\right)\in\Omega_{N}$,
and $k\in\mathbb{N}$, set $\omega|_{k}=\left(i_{1},i_{2},\cdots,i_{k}\right)$,
the \emph{truncated word}; see (\ref{eq:a3}). Set $S_{\omega|_{k}}=S_{i_{1}}\cdots S_{i_{k}}$,
and 
\begin{equation}
P\left(\omega|_{k}\right):=S_{\omega|_{k}}S_{\omega|_{k}}^{*}.\label{eq:a16}
\end{equation}
Then $\left\{ P\left(\omega|_{k}\right)\right\} _{k\in\mathbb{N}}$
in (\ref{eq:a16}) is a monotone decreasing family of projections,
i.e., 
\begin{equation}
P\left(\omega|_{1}\right)\geq P\left(\omega|_{2}\right)\geq\cdots\geq P\left(\omega|_{k}\right)\geq P\left(\omega|_{k+1}\right)\geq\cdots;\label{eq:a17}
\end{equation}
and so, by Lemma \ref{lem:mp}, the following limit projection exists:
\[
X\left(\omega\right):=\lim_{k\rightarrow\infty}P\left(\omega|_{k}\right).
\]
Specifically, the limit exists in the strong operator topology on
$\mathscr{B}\left(\mathscr{H}\right)$, and $X\left(\omega\right)$
is the projection onto the intersection of the closed subspaces in
$\mathscr{H}$ given by 
\[
S_{\omega|_{k}}\mathscr{H}=S_{i_{1}}S_{i_{2}}\cdots S_{i_{k}}\mathscr{H}
\]
as $k$ varies over $\mathbb{N}$. 

The proof of \emph{monotonicity} in (\ref{eq:a17}) is the following
estimate in the order of projections (see Lemma \ref{lem:pp}): 
\[
\underset{P\left(\omega|_{k+1}\right)}{\underbrace{S_{i_{1}}\cdots S_{i_{k}}S_{i_{k+1}}S_{i_{k+1}}^{*}S_{i_{k}}^{*}\cdots S_{i_{1}}^{*}}}\leq\underset{P\left(\omega|_{k}\right)}{\underbrace{S_{i_{1}}\cdots S_{i_{k}}S_{i_{k}}^{*}\cdots S_{i_{1}}^{*}}}.
\]
See also Lemma \ref{lem:con}. Once the limits are established, conclusion
(\ref{enu:s1}) in the theorem is clear. 

The remaining conclusions (\ref{enu:s2})-(\ref{enu:s3}) follow from
passing to the limit $k\rightarrow\infty$ as follows. Here $\omega,\omega'\in\Omega_{N}$
are fixed infinite words, $\omega=\left(i_{1},i_{2},i_{3},\cdots\right)$,
$\omega'=\left(j_{1},j_{2},j_{3},\cdots\right)$, and let $a\in A=\left\{ 1,2,\cdots,N\right\} $. 

Conclusion (\ref{eq:b1}) follows from: 
\[
S_{a}S_{i_{1}}\cdots S_{i_{k}}S_{i_{k}}^{*}\cdots S_{i_{1}}^{*}S_{a}^{*}=S_{a\omega|_{k}}S_{a\omega|_{k}}^{*},
\]
and then passing to the limit, $k\rightarrow\infty$, using (\ref{enu:s1}). 

Conclusion (\ref{eq:b2}) follows from 
\[
S_{a}^{*}S_{i_{1}}\cdots S_{i_{k}}S_{i_{k}}^{*}\cdots S_{i_{1}}^{*}S_{a}=\delta_{a,i_{1}}S_{i_{2}}\cdots S_{i_{k}}S_{i_{k}}^{*}\cdots S_{i_{2}}^{*}.
\]
Here we used (\ref{eq:a7}), since $\pi$ is given to be in $Rep\left(\mathcal{O}_{N},\mathscr{H}\right)$.
Now (\ref{eq:b2}) follows from taking the limit $k\rightarrow\infty$,
and using again Lemma \ref{lem:mp}.

Finally, conclusion (\ref{enu:s3}) in the theorem follows from the
computation: 
\[
S_{i_{1}}\cdots S_{i_{k}}\left(S_{i_{k}}^{*}\cdots S_{i_{1}}^{*}S_{j_{1}}\cdots S_{j_{k}}\right)S_{j_{k}}^{*}\cdots S_{j_{1}}^{*}=\delta_{i_{1}j_{1}}\delta_{i_{2}j_{2}}\cdots\delta_{i_{k}j_{k}}S_{i_{1}}\cdots S_{i_{k}}S_{i_{k}}^{*}\cdots S_{i_{1}}^{*}.
\]
Passing to the limit $k\rightarrow\infty$, the desired conclusion
(\ref{enu:s3}) now follows.
\end{proof}

\section{\label{sec:PVM}A Projection Valued Path-Space Measure}

Let $\mathscr{H}$ be a separable Hilbert space, and fix $N\geq2$,
and $\pi\in Rep\left(\mathcal{O}_{N},\mathscr{H}\right)$. We shall
be concerned with two tools directly related to the study of representations
of $\mathcal{O}_{N}$ on $\mathscr{H}$. (Also see \cite{MR1222649,MR2362879,MR2976663}.) 

With $\pi\left(s_{i}\right)=S_{i}$, $1\leq i\leq N$ fixed, set $\beta=\beta_{\pi}\in End\left(\mathscr{B}\left(\mathscr{H}\right)\right)$,
\emph{endomorphism}, and $\mathbb{Q}=\mathbb{Q}_{\pi}$, a canonical
\emph{projection-valued path-space measure}. Before giving the precise
details, we shall need a few facts about the path space, 
\begin{equation}
\Omega_{N}=\left\{ 1,2,\cdots,N\right\} ^{\mathbb{N}}.\label{eq:e1}
\end{equation}
This version of path-space is chosen for simplicity: We have taken
as alphabet the set $A:=\left\{ 1,2,\cdots,N\right\} $, but the fixed
alphabet could be any finite set $A$ with $\left|A\right|=N$; and
so $\Omega_{N}=A^{\mathbb{N}}=\underset{\aleph_{0}}{\underbrace{A\times A\times\cdots}}$,
the infinite Cartesian product; see Section \ref{sec:Pre}.
\begin{defn}
\label{def:end}For $T\in\mathscr{B}\left(\mathscr{H}\right)$, set
\begin{equation}
\beta_{\pi}\left(T\right)=\sum_{i=1}^{N}S_{i}TS_{i}^{*}.\label{eq:e2}
\end{equation}
Then $\beta_{\pi}\in End\left(\mathscr{B}\left(\mathscr{H}\right)\right)$,
i.e., 
\begin{align}
\beta_{\pi}\left(TT'\right) & =\beta_{\pi}\left(T\right)\beta_{\pi}\left(T'\right),\;\forall T,T'\in\mathscr{B}\left(\mathscr{H}\right);\label{eq:e3}\\
\beta_{\pi}\left(T^{*}\right) & =\beta_{\pi}\left(T\right)^{*},\;\text{and}\label{eq:e4}\\
\beta_{\pi}\left(I\right) & =I.\label{eq:e5}
\end{align}
\end{defn}

Given a representation $\pi\in Rep\left(\mathcal{O}_{N},\mathscr{H}\right)$,
then the corresponding endomorphism, 
\begin{equation}
\beta_{\pi}:\mathscr{B}\left(\mathscr{H}\right)\longrightarrow\mathscr{B}\left(\mathscr{H}\right)\label{eq:e5a}
\end{equation}
plays an important role in representation theory. For example, for
decompositions of $\pi$, by Schur, we will need the commutant $\left\{ \pi\right\} '$,
defined as follows: 
\begin{equation}
\left\{ \pi\right\} ':=\left\{ T\in\mathscr{B}\left(\mathscr{H}\right)\mathrel{{;}}T\,\pi\left(A\right)=\pi\left(A\right)T,\;\forall A\in\mathcal{O}_{N}\right\} .\label{eq:e5b}
\end{equation}

\begin{lem}
\label{lem:com}Let $\pi$ and $\beta_{\pi}$ be as in (\ref{eq:e2})
and (\ref{eq:e5a}); then 
\begin{equation}
\left\{ \pi\right\} '=\left\{ T\in\mathscr{B}\left(\mathscr{H}\right)\mathrel{{;}}\beta_{\pi}\left(T\right)=T\right\} \left(=\text{Fix}\ensuremath{\left(\beta_{\pi}\right)}.\right)\label{eq:e5c}
\end{equation}
\end{lem}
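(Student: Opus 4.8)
The plan is to prove the two inclusions in (\ref{eq:e5c}) separately; each is a one-line manipulation of the identity $\beta_{\pi}\left(T\right)=\sum_{i=1}^{N}S_{i}TS_{i}^{*}$ using only the Cuntz relations (\ref{eq:a7}).

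First I would check $\left\{ \pi\right\} '\subseteq\text{Fix}\left(\beta_{\pi}\right)$. If $T$ lies in the commutant, then in particular $T$ commutes with every $S_{i}^{*}=\pi\left(s_{i}^{*}\right)$, so $S_{i}TS_{i}^{*}=S_{i}S_{i}^{*}T$ for each $i$; summing over $i$ and invoking $\sum_{i}S_{i}S_{i}^{*}=I$ yields $\beta_{\pi}\left(T\right)=T$, i.e., $T\in\text{Fix}\left(\beta_{\pi}\right)$.

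For the reverse inclusion, assume $\beta_{\pi}\left(T\right)=\sum_{i}S_{i}TS_{i}^{*}=T$. Multiplying this identity on the right by $S_{j}$ and using $S_{i}^{*}S_{j}=\delta_{ij}I$ collapses the sum to $TS_{j}=S_{j}T$; multiplying instead on the left by $S_{j}^{*}$ and using $S_{j}^{*}S_{i}=\delta_{ji}I$ gives $S_{j}^{*}T=TS_{j}^{*}$. Hence $T$ commutes with every generator $S_{i}$ and with every $S_{i}^{*}$, and since $\mathcal{O}_{N}$ is generated as a $C^{*}$-algebra by $\left\{ s_{i},s_{i}^{*}\right\} _{i=1}^{N}$, it follows that $T$ commutes with all of $\pi\left(\mathcal{O}_{N}\right)$, i.e., $T\in\left\{ \pi\right\} '$. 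Combining the two inclusions gives (\ref{eq:e5c}).

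There is essentially no obstacle here: the argument is purely computational. The only point worth flagging is that landing in the \emph{full} commutant requires commutation with the $S_{i}$ and with the $S_{i}^{*}$ separately, which is exactly what the two one-sided multiplications above supply; alternatively one may note that $\text{Fix}\left(\beta_{\pi}\right)$ is automatically a $*$-algebra by (\ref{eq:e3})--(\ref{eq:e4}), but this observation is not needed.
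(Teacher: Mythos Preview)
Your proof is correct and follows essentially the same approach as the paper: both arguments reduce the equality to the Cuntz relations via one-sided multiplication of $\beta_{\pi}(T)=T$ by the generators. The paper packages this as a chain of bi-implications using only left multiplication by $S_{i}^{*}$ (which already forces $\beta_{\pi}(T)=T$ and hence, by right multiplication, commutation with $S_{j}$), while you make both one-sided multiplications explicit; the content is identical.
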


\begin{proof}
We have the following bi-implications: 
\begin{eqnarray*}
\beta_{\pi}\left(T\right) & = & T\\
 & \Updownarrow\\
S_{i}^{*}\beta_{\pi}\left(T\right) & = & S_{i}^{*}T,\quad1\leq i\leq N\\
 & \Updownarrow & \left(\text{by \ensuremath{\left(\ref{eq:e2}\right)}}\right)\\
TS_{i}^{*} & = & S_{i}^{*}T,\quad1\leq i\leq N\\
 & \Updownarrow\\
T & \in & \left\{ \pi\right\} '.
\end{eqnarray*}
\end{proof}
In applications to \emph{statistical mechanics}, given $\pi\in Rep\left(\mathcal{O}_{N},\mathscr{H}\right)$,
it is important to determine the closed subspaces $\mathscr{K}\subset\mathscr{H}$,
invariant under the operators $S_{i}^{*}$, $1\leq i\leq N$. 

\emph{Notation}: When $\mathscr{K}$ is a closed subspace, we shall
denote the corresponding projection by $P\left(=P_{\mathscr{K}}\right)$
(see Lemma \ref{lem:pp}.)
\begin{lem}
\label{lem:com2}Let $\left(\pi,\mathscr{H},\mathscr{K}\left(\text{with projection }P\right)\right)$
be as above; then TFAE:
\begin{enumerate}
\item $S_{i}^{*}\mathscr{K}\subseteq\mathscr{K}$, $1\leq i\leq N$;
\item $PS_{i}^{*}P=S_{i}^{*}P$, $1\leq i\leq N$;
\item $P\leq\beta_{\pi}\left(P\right)$, in the order of projections (see
Section \ref{sec:Pre}); and 
\item $P\leq\beta_{\pi}\left(P\right)\leq\cdots\leq\beta_{\pi}^{k}\left(P\right)\leq\beta_{\pi}^{k+1}\left(P\right)\leq\cdots$.
\end{enumerate}
\end{lem}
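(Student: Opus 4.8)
\emph{Proof proposal.} The plan is to run the cycle $(i)\Leftrightarrow(ii)$, then $(ii)\Leftrightarrow(iii)$, then $(iii)\Leftrightarrow(iv)$, using Lemma~\ref{lem:pp} to pass freely between the several equivalent forms of the projection order, and using the Cuntz relation $\sum_{i=1}^{N}S_iS_i^*=I$ — equivalently the identity $\|h\|^2=\sum_i\|S_i^*h\|^2$ for all $h\in\mathscr H$ — at the one place where something nontrivial happens. For $(i)\Leftrightarrow(ii)$ I would invoke the standard fact that a closed subspace with projection $P$ is invariant under a bounded operator $T$ iff $PTP=TP$, applied with $T=S_i^*$. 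It is worth recording at the outset that taking adjoints in $(ii)$ gives $PS_iP=PS_i$ (equivalently, $\mathscr K^{\perp}$ is invariant under each $S_i$); this identity is used below.

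For $(ii)\Rightarrow(iii)$: since $\beta_\pi$ is a $*$-endomorphism (see (\ref{eq:e3})--(\ref{eq:e5})), $\beta_\pi(P)$ is again a projection, and using $PS_iP=PS_i$, $PS_i^*P=S_i^*P$ and $\sum_i S_iS_i^*=I$ one computes
\[
P\,\beta_\pi(P)\,P=\sum_{i=1}^{N}PS_iPS_i^*P=\sum_{i=1}^{N}(PS_i)(S_i^*P)=P\Big(\sum_{i=1}^{N}S_iS_i^*\Big)P=P .
\]
Hence every $v\in\mathscr F_P$ satisfies $\|\beta_\pi(P)v\|^2=\langle v,P\beta_\pi(P)Pv\rangle=\|v\|^2$, forcing $\beta_\pi(P)v=v$, so $\mathscr F_P\subseteq\mathscr F_{\beta_\pi(P)}$, i.e.\ $P\le\beta_\pi(P)$ by Lemma~\ref{lem:pp}. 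Conversely, for $(iii)\Rightarrow(ii)$, from $P\le\beta_\pi(P)$ Lemma~\ref{lem:pp} gives $P=P\beta_\pi(P)P=\sum_i PS_iPS_i^*P$, so for each $h\in\mathscr H$
\[
\|Ph\|^2=\langle h,Ph\rangle=\sum_{i=1}^{N}\|PS_i^*Ph\|^2\le\sum_{i=1}^{N}\|S_i^*Ph\|^2=\|Ph\|^2 ,
\]
and equality forces $\|PS_i^*Ph\|=\|S_i^*Ph\|$ for every $i$; since $P$ is an orthogonal projection this means $S_i^*Ph\in\mathscr F_P$, i.e.\ $PS_i^*P=S_i^*P$, which is $(ii)$.

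For $(iii)\Leftrightarrow(iv)$: the implication $(iv)\Rightarrow(iii)$ is just the first inequality, and $(iii)\Rightarrow(iv)$ follows by iterating $\beta_\pi$, which maps projections to projections and is order preserving on self-adjoint operators (if $B-A=C^*C$ then $\beta_\pi(B)-\beta_\pi(A)=\beta_\pi(C)^*\beta_\pi(C)\ge 0$); thus $P\le\beta_\pi(P)$ yields $\beta_\pi^{k}(P)\le\beta_\pi^{k+1}(P)$ for all $k$, and these chain together to give $(iv)$. The only genuinely substantive step is the squeeze appearing in $(ii)\Leftrightarrow(iii)$: there one upgrades a bare order relation of projections to the exact intertwining identity $S_i^*P=PS_i^*P$, and the mechanism is entirely the Cuntz identity $\|h\|^2=\sum_i\|S_i^*h\|^2$, which turns the displayed inequalities into equalities. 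Everything else is formal bookkeeping with the endomorphism $\beta_\pi$ and with Lemmas~\ref{lem:pp} and~\ref{lem:mp}.
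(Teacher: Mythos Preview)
Your proof is correct. The paper itself gives essentially no proof, saying only that ``the argument is the same as that used in the proof of Lemma~\ref{lem:com}.'' Unpacking that reference, the intended route for $(ii)\Leftrightarrow(iii)$ is the bi-implication chain
\[
P\le\beta_\pi(P)\;\Longleftrightarrow\;\beta_\pi(P)P=P\;\Longleftrightarrow\;S_i^*\beta_\pi(P)P=S_i^*P\ \ (1\le i\le N)\;\Longleftrightarrow\;PS_i^*P=S_i^*P,
\]
where the last step uses the identity $S_i^*\beta_\pi(T)=TS_i^*$ (valid for every $T$), and the middle equivalence comes from left-multiplying by $S_i^*$ in one direction and by $S_i$ followed by summation in the other. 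This is a one-line algebraic manipulation, exactly parallel to the proof of Lemma~\ref{lem:com}.

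Your argument reaches the same conclusion by a different mechanism: you sandwich $\beta_\pi(P)$ between two copies of $P$ and then run a norm-squeeze using $\|h\|^2=\sum_i\|S_i^*h\|^2$ to force $PS_i^*P=S_i^*P$ term by term. That is perfectly valid and makes the role of the Cuntz relation very explicit, but it is more work than necessary: the squeeze can be bypassed entirely by the single left-multiplication by $S_i^*$ above. Your treatment of $(i)\Leftrightarrow(ii)$ and $(iii)\Leftrightarrow(iv)$ matches what the paper would do.
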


\begin{proof}
The argument is the same as that used in the proof of Lemma \ref{lem:com}. 
\end{proof}
\begin{lem}
Let $\left(\pi,\mathscr{H},\mathscr{K}\left(\text{with projection }P\right)\right)$
be as in Lemma \ref{lem:com2}, and set 
\begin{equation}
Q=\bigvee_{k=1}^{\infty}\beta_{\pi}^{k}\left(P\right);\label{eq:e5d}
\end{equation}
then $Q\in\left\{ \pi\right\} '$, and $Q$ is the smallest projection
in $\left\{ \pi\right\} '$ satisfying $P\leq Q$. 
\end{lem}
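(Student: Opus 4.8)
The plan is to first observe that the sequence $\beta_\pi^k(P)$, $k\ge 1$, is monotone increasing by Lemma \ref{lem:com2}(iv), so by Lemma \ref{lem:mp} the supremum $Q=\bigvee_{k\ge1}\beta_\pi^k(P)$ in \eqref{eq:e5d} exists as a strong-operator-topology limit, $Q=\lim_{k\to\infty}\beta_\pi^k(P)$, and is the projection onto the closed span of the subspaces $\beta_\pi^k(P)\mathscr{H}$. This identification of $Q$ as an honest strong limit is what makes everything else go through.

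Next I would show $Q\in\{\pi\}'$, i.e.\ $\beta_\pi(Q)=Q$ by Lemma \ref{lem:com}. The idea is that $\beta_\pi$ is strongly continuous on bounded sets: since each $S_i$ is a bounded operator and $\|\beta_\pi^k(P)\|\le1$, if $\beta_\pi^k(P)\to Q$ strongly then $\beta_\pi^{k+1}(P)=\beta_\pi(\beta_\pi^k(P))=\sum_{i=1}^N S_i\beta_\pi^k(P)S_i^*\to\sum_{i=1}^N S_i Q S_i^*=\beta_\pi(Q)$ strongly (each summand $h\mapsto S_i\beta_\pi^k(P)S_i^*h$ converges because $S_i^*h$ is a fixed vector and $\beta_\pi^k(P)\to Q$ strongly, then apply the bounded operator $S_i$). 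On the other hand $\beta_\pi^{k+1}(P)\to Q$ as well, being a subsequence of the same convergent net. Hence $\beta_\pi(Q)=Q$, so $Q\in\{\pi\}'$. Note $P\le\beta_\pi(P)\le Q$ is immediate from \eqref{eq:e5d}.

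For minimality, suppose $R\in\{\pi\}'$ is any projection with $P\le R$. Since $R\in\{\pi\}'=\mathrm{Fix}(\beta_\pi)$ we have $\beta_\pi(R)=R$, and $\beta_\pi$ is order-preserving on positive operators (it is a positive map, being a sum of maps $T\mapsto S_iTS_i^*$), so $P\le R$ gives $\beta_\pi^k(P)\le\beta_\pi^k(R)=R$ for every $k\ge1$. Taking the supremum over $k$ yields $Q=\bigvee_k\beta_\pi^k(P)\le R$. Therefore $Q$ is the smallest projection in $\{\pi\}'$ dominating $P$.

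The one point requiring a little care — the "main obstacle," though it is minor — is the strong continuity step used to deduce $\beta_\pi(Q)=Q$: one must pass the limit through the finite sum $\sum_i S_i(\cdot)S_i^*$, which is legitimate precisely because the sum is finite and each $S_i$, $S_i^*$ is bounded, so termwise strong convergence suffices; no uniform boundedness subtlety arises beyond $\|\beta_\pi^k(P)\|\le1$. An alternative that sidesteps continuity entirely is to argue at the level of subspaces: the range of $Q$ is the closed span $\overline{\bigcup_k\beta_\pi^k(P)\mathscr{H}}$, and one checks directly from $S_i^*S_j=\delta_{ij}I$, $\sum_i S_iS_i^*=I$ that this span is invariant under every $S_i$ and every $S_i^*$, hence reducing for $\pi$, giving $Q\in\{\pi\}'$; minimality then follows as above since any reducing projection $R\supseteq P$ must contain $S_\mu P S_\mu^*\mathscr{H}$ for all finite words $\mu$, whose span is exactly $Q\mathscr{H}$.
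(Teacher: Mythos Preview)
Your proof is correct and follows essentially the same strategy as the paper: realize $Q$ as the strong limit of the monotone sequence $\beta_\pi^k(P)$, pass to the limit in an algebraic identity to place $Q$ in $\{\pi\}'$, and obtain minimality from the order-preservation of $\beta_\pi$ together with $\beta_\pi(R)=R$ for $R\in\{\pi\}'$. The only cosmetic difference is that the paper passes to the limit in the intertwining relation $S_i^*\beta_\pi^{k+1}(P)=\beta_\pi^k(P)S_i^*$ to obtain $S_i^*Q=QS_i^*$ directly, while you pass the limit through $\beta_\pi$ to get $\beta_\pi(Q)=Q$ and then invoke Lemma~\ref{lem:com}.
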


\begin{proof}
The conclusion is immediate from the formula:
\[
S_{i}^{*}\beta_{\pi}^{k+1}\left(P\right)=\beta_{\pi}^{k}\left(P\right)S_{i}^{*},\;1\leq i\leq N.
\]
Assuming (\ref{eq:e5d}), we then get 
\[
S_{i}^{*}Q=QS_{i}^{*},\;1\leq i\leq N,
\]
and by taking adjoints 
\[
QS_{i}=S_{i}Q;
\]
so $Q\in\left\{ \pi\right\} '$. The remaining parts of the proof
are immediate.
\end{proof}
\begin{defn}
We shall use the standard $\sigma$-algebra $\mathscr{C}$ of subsets
of $\Omega_{N}$ (the path-space). The $\sigma$-algebra is generated
by \emph{cylinder sets} $E_{f}$. Here $f=\left(i_{1},i_{2},\cdots,i_{k}\right)$
is a finite word, $\left|f\right|=k$; and
\begin{equation}
E_{f}:=\left\{ \omega\in\Omega_{N}\mid\omega_{j}=i_{j},\;1\leq j\leq k\right\} ;\label{eq:e6}
\end{equation}
is one of the \emph{basic cylinder sets} (see Fig \ref{fig:cyl}).
The $\sigma$-algebra $\mathscr{C}$ is the smallest $\sigma$-algebra
containing the sets $E_{f}$ as $f$ varies over all finite words
in the fixed alphabet $A$. 
\end{defn}

\begin{figure}[H]
\includegraphics[width=0.8\textwidth]{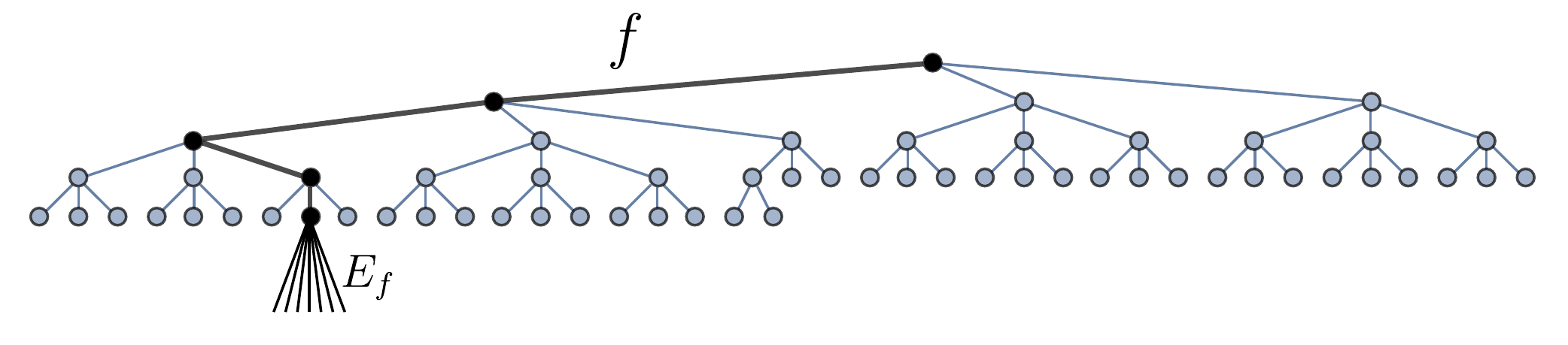}

\caption{\label{fig:cyl}A basic cylinder set.}

\end{figure}

\begin{defn}[Operators on path-space]
 We recall the shift operators on $\Omega_{N}$, as follows: If $\omega=\left(i_{1},i_{2},i_{3},\cdots\right)\in\Omega_{N}$
, set 
\begin{align}
\sigma\left(\omega\right) & :=\left(i_{2},i_{3},i_{4},\cdots\right),\;\text{and}\label{eq:e7}\\
\hat{\tau}_{j}\left(\omega\right) & :=\left(j,i_{1},i_{2},i_{3},\cdots\right).\label{eq:e8}
\end{align}
If $E\subset\Omega_{N}$ is a subset, and $f$ is a finite word, we
set 
\begin{align}
\sigma\left(E\right) & =\left\{ \sigma\left(\omega\right)\mid\omega\in E\right\} ,\label{eq:e9}\\
\hat{\tau}_{j}\left(E\right) & =\left\{ \hat{\tau}_{j}\left(\omega\right)\mid\omega\in E\right\} ,\label{eq:e10}\\
\sigma^{-1}\left(E\right) & =\left\{ \omega\in\Omega_{N}\mid\sigma\left(\omega\right)\in E\right\} ,\;\text{and}\label{eq:e11}\\
fE & =\big\{\underbrace{f\omega}\mid\omega\in E\big\}\label{eq:e12}\\
 & \qquad\text{concatination of words}.\nonumber 
\end{align}
Note 
\begin{equation}
\sigma^{-1}\left(E\right)=\bigcup_{j=1}^{N}\hat{\tau}_{j}\left(E\right).\label{eq:e13}
\end{equation}
\end{defn}

\begin{lem}
~
\begin{enumerate}
\item The sample space $\Omega_{N}=\left\{ 1,2,\cdots,N\right\} ^{\mathbb{N}}$
is a compact metric space when equipped with the metric $d_{N}$ as
follows: Given $\omega,\omega'\in\Omega_{N}$, and set 
\[
k:=\sup\left\{ j\in\mathbb{N}\mid\omega_{i}=\omega_{i}',\;1\leq i\leq j\right\} 
\]
with the convention that $k=\infty$ iff $\omega=\omega'$. Then 
\begin{equation}
d_{N}\left(\omega,\omega'\right)=N^{-k}.\label{eq:e13a}
\end{equation}
\item The shift maps $\left\{ \hat{\tau}_{j}\right\} _{j=1}^{N}$ in (\ref{eq:e8})
are contractive as follows: 
\[
d_{N}\left(\hat{\tau}_{j}\left(\omega\right),\hat{\tau}_{j}\left(\omega'\right)\right)\leq N^{-1}d_{N}\left(\omega,\omega'\right)
\]
for all $1\leq j\leq N$, $\forall\omega,\omega'\in\Omega_{N}$. 
\end{enumerate}
\end{lem}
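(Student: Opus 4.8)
The plan is to verify the two claims directly from the definition $d_N(\omega,\omega')=N^{-k}$, where $k$ is the length of the longest common prefix of $\omega$ and $\omega'$. For part (i), I would first check that $d_N$ is a metric: symmetry and the condition $d_N(\omega,\omega')=0\iff\omega=\omega'$ are immediate from the definition (with the convention $N^{-\infty}=0$), and the triangle inequality follows from the stronger \emph{ultrametric} inequality $d_N(\omega,\omega'')\le\max\{d_N(\omega,\omega'),d_N(\omega',\omega'')\}$: if $\omega$ agrees with $\omega'$ in the first $k_1$ coordinates and $\omega'$ agrees with $\omega''$ in the first $k_2$ coordinates, then $\omega$ agrees with $\omega''$ in at least the first $\min\{k_1,k_2\}$ coordinates, so the common-prefix length of $\omega,\omega''$ is $\ge\min\{k_1,k_2\}$, giving $d_N(\omega,\omega'')\le N^{-\min\{k_1,k_2\}}=\max\{N^{-k_1},N^{-k_2}\}$.

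Next, for compactness, I would invoke sequential compactness (the space is metric, so this suffices) together with a diagonal argument: given a sequence $\left(\omega^{(m)}\right)_{m\in\mathbb{N}}$ in $\Omega_N$, since the alphabet $A=\{1,\dots,N\}$ is finite, some letter $a_1\in A$ occurs as the first coordinate of infinitely many $\omega^{(m)}$; pass to that subsequence, then within it some $a_2$ occurs as the second coordinate infinitely often, and so on. The usual diagonal subsequence then converges coordinatewise to $\omega:=(a_1,a_2,\dots)$, and coordinatewise convergence is exactly convergence in $d_N$ since $d_N\left(\omega^{(m)},\omega\right)\le N^{-k}$ as soon as the first $k$ coordinates agree. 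Alternatively one can note that $\Omega_N$ is a countable product of the finite discrete space $A$ and apply Tychonoff, checking that the product topology coincides with the $d_N$-topology because the cylinder sets $E_f$ (which generate the product topology) are precisely the open balls of $d_N$; I would probably mention this as a remark but give the self-contained diagonal argument as the proof.

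For part (ii), the contractivity of the prefixing maps $\hat\tau_j$ is a one-line computation: if $\omega,\omega'$ have longest common prefix of length $k$, then $\hat\tau_j(\omega)=(j,i_1,i_2,\dots)$ and $\hat\tau_j(\omega')=(j,i'_1,i'_2,\dots)$ agree in the first coordinate (both equal $j$) and then agree for exactly $k$ further coordinates, so their longest common prefix has length $k+1$; hence $d_N\left(\hat\tau_j(\omega),\hat\tau_j(\omega')\right)=N^{-(k+1)}=N^{-1}d_N(\omega,\omega')$, which is even an equality, in particular $\le N^{-1}d_N(\omega,\omega')$. (The edge case $\omega=\omega'$, i.e. $k=\infty$, is trivial since both sides vanish.) I do not anticipate a genuine obstacle here; the only point requiring a little care is the identification of the metric topology with the product topology if one wants to cite Tychonoff, so I will sidestep it by proving sequential compactness directly via the diagonal argument above.
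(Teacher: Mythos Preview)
Your argument is correct and complete; the paper itself does not give a proof, stating only that it ``uses standard facts about infinite products, and is left to the reader.'' Your direct verification of the ultrametric inequality, the diagonal-argument proof of sequential compactness, and the one-line computation for contractivity are exactly the standard facts the authors have in mind, so there is nothing to compare.
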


\begin{proof}
Uses standard facts about infinite products, and is left to the reader. 
\end{proof}
When discussing measures $\mathbb{Q}$ on $\left(\Omega_{N},\mathscr{C}\right)$,
we refer to $\sigma$-additivity; i.e., if $\left\{ E_{j}\right\} _{j\in\mathbb{N}}$,
$E_{j}\in\mathscr{C}$, $E_{i}\cap E_{j}=\emptyset$, $i\neq j$,
is given, we require that 
\[
\mathbb{Q}\left(\bigcup_{j=1}^{\infty}E_{j}\right)=\sum_{j=1}^{\infty}\mathbb{Q}\left(E_{j}\right).
\]

We say that a measure $\mathbb{Q}$ on $\left(\Omega_{N},\mathscr{C}\right)$
is \emph{projection valued}, i.e., $\mathbb{Q}\left(E\right)$ is
a projection in $\mathscr{H}$, for all $E\in\mathscr{C}$, and 
\begin{equation}
\mathbb{Q}\left(\Omega_{N}\right)=I,\quad\text{and}\quad\mathbb{Q}\left(\emptyset\right)=0.\label{eq:e15}
\end{equation}

Let $\mathbb{Q}$ be the projection valued measure on $\left(M,\mathscr{C}\right)$
taking values in $\mathscr{B}\left(\mathscr{H}\right)$ for a fixed
Hilbert space $\mathscr{H}$, and let $\psi\in\mathscr{H}$; we then
get a \emph{scalar valued} measure 
\[
\mu_{\psi}\left(E\right):=\left\langle \psi,\mathbb{Q}\left(E\right)\psi\right\rangle _{\mathscr{H}}=\left\Vert \mathbb{Q}\left(E\right)\psi\right\Vert _{\mathscr{H}}^{2},\;\forall E\in\mathscr{C}.
\]
Conversely, $\mathbb{Q}\left(\cdot\right)$ is determined by these
measures. 

In the discussion below, the projection valued measures will depend
on a prescribed (fixed) representation $\pi\in Rep\left(\mathcal{O}_{N},\mathscr{H}\right)$: 
\begin{thm}
\label{thm:pv}Given $\pi\in Rep\left(\mathcal{O}_{N},\mathscr{H}\right)$,
then there is a unique projection valued measure $\mathbb{Q}=\mathbb{Q}_{\pi}$
on $\left(\Omega_{N},\mathscr{C}\right)$ which is specified on the
basic cylinder sets $E_{f}$, $f=\left(i_{1},\cdots,i_{k}\right)$,
as follows: 
\begin{equation}
\mathbb{Q}_{\pi}\left(E_{f}\right)=S_{f}S_{f}^{*}=S_{i_{1}}\cdots S_{i_{k}}S_{i_{k}}^{*}\cdots S_{i_{1}}^{*}.\label{eq:e16}
\end{equation}
The measure satisfies the following properties: 
\begin{equation}
\beta_{\pi}\left(\mathbb{Q}_{\pi}\left(E\right)\right)=\mathbb{Q}_{\pi}\left(\sigma^{-1}\left(E\right)\right),\;\forall E\in\mathscr{C};\label{eq:e17}
\end{equation}
(see (\ref{eq:e2}) for the definition of $\beta_{\pi}$.)
\begin{equation}
S_{i}\mathbb{Q}_{\pi}\left(E\right)=\mathbb{Q}\left(iE\right)S_{i},\;\forall i\in A,\:\forall E\in\mathscr{C};\label{eq:e18}
\end{equation}
and 
\begin{equation}
S_{i}^{*}\mathbb{Q}_{\pi}\left(E\right)=\delta_{i,\pi_{1}\left(E\right)}\mathbb{Q}_{\pi}\left(\sigma\left(E\right)\right)S_{i}^{*}.\label{eq:e19}
\end{equation}
\end{thm}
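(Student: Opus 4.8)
The plan is to obtain $\mathbb{Q}_{\pi}$ by a Kolmogorov-type extension argument, run first at the scalar level and then promoted to a projection-valued measure. The cylinder sets $\{E_{f}\}$, together with $\emptyset$ and $\Omega_{N}$, form a semiring generating $\mathscr{C}$, and their finite disjoint unions form an algebra $\mathscr{A}$. The splitting $E_{f}=\bigsqcup_{i=1}^{N}E_{fi}$ together with the identity $\sum_{i=1}^{N}S_{fi}S_{fi}^{*}=S_{f}S_{f}^{*}$ from Lemma \ref{lem:con} (equation (\ref{eq:a73})) shows that the prescription $E_{f}\mapsto S_{f}S_{f}^{*}$ is consistent under refinement; since any two cylinder decompositions of a set in $\mathscr{A}$ admit a common refinement (refine to words of equal length), $\mathbb{Q}_{\pi}$ is well defined and finitely additive on $\mathscr{A}$. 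Furthermore, if $f$ and $g$ are finite words with neither a prefix of the other, iterating $S_{i}^{*}S_{j}=\delta_{ij}I$ gives $S_{f}^{*}S_{g}=0$, so $S_{f}S_{f}^{*}$ and $S_{g}S_{g}^{*}$ are mutually orthogonal projections; hence on $\mathscr{A}$ the set function $\mathbb{Q}_{\pi}$ is projection-valued and additive on disjoint unions.

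Next I would promote finite additivity to $\sigma$-additivity using compactness: $\Omega_{N}$ is compact and every member of $\mathscr{A}$ is clopen, hence compact, so any countable partition of an element of $\mathscr{A}$ into elements of $\mathscr{A}$ is necessarily finite. Consequently, for each fixed $\psi\in\mathscr{H}$ the premeasure $\mu_{\psi}(E):=\langle\psi,\mathbb{Q}_{\pi}(E)\psi\rangle=\|\mathbb{Q}_{\pi}(E)\psi\|^{2}$ is automatically countably additive on $\mathscr{A}$ with total mass $\|\psi\|^{2}$, and the Hahn--Kolmogorov (Carath\'eodory) theorem extends it uniquely to a positive Borel measure on $(\Omega_{N},\mathscr{C})$ with $0\le\mu_{\psi}(E)\le\|\psi\|^{2}$. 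Polarizing in $\psi$ yields, for every $E\in\mathscr{C}$, a bounded sesquilinear form of norm at most $1$; the associated operator $\mathbb{Q}_{\pi}(E)$ satisfies $0\le\mathbb{Q}_{\pi}(E)\le I$, restricts to $S_{f}S_{f}^{*}$ on cylinders, and, because each $\mu_{\psi}$ is countably additive, is countably additive in the weak operator topology.

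The remaining point is that this positive operator-valued measure is actually projection-valued, and here I expect the main obstacle. I would establish multiplicativity $\mathbb{Q}_{\pi}(E\cap F)=\mathbb{Q}_{\pi}(E)\mathbb{Q}_{\pi}(F)$ for all $E,F\in\mathscr{C}$. On cylinders it is a direct check: if $g$ is a prefix of $f$, say $f=gh$, then $S_{g}S_{g}^{*}\cdot S_{f}S_{f}^{*}=S_{g}S_{h}S_{h}^{*}S_{g}^{*}=S_{f}S_{f}^{*}$, while if neither word is a prefix of the other the product vanishes, matching $\mathbb{Q}_{\pi}(\emptyset)=0$. To pass to general sets, fix a cylinder $E$; then $F\mapsto\langle\psi,\mathbb{Q}_{\pi}(E\cap F)\phi\rangle$ and $F\mapsto\langle\mathbb{Q}_{\pi}(E)\psi,\mathbb{Q}_{\pi}(F)\phi\rangle$ are complex measures on $\mathscr{C}$ agreeing on the $\pi$-system of cylinders, hence on all of $\mathscr{C}$; repeating with $F$ a cylinder and $E$ arbitrary gives multiplicativity on $\mathscr{C}\times\mathscr{C}$. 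Taking $E=F$ and using that $\mathbb{Q}_{\pi}(E)$ is positive gives $\mathbb{Q}_{\pi}(E)=\mathbb{Q}_{\pi}(E)^{*}=\mathbb{Q}_{\pi}(E)^{2}$, so $\mathbb{Q}_{\pi}(E)$ is a projection; with $\mathbb{Q}_{\pi}(\Omega_{N})=I$ and $\mathbb{Q}_{\pi}(\emptyset)=0$ this is the asserted projection-valued measure. Uniqueness follows because any such measure induces the scalar premeasures $\mu_{\psi}$, whose extensions are unique, and a bounded operator is determined by its quadratic form.

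Finally, the covariance identities (\ref{eq:e17})--(\ref{eq:e19}) are verified first on cylinder sets from the explicit formula (\ref{eq:e16}) and the relations $\hat{\tau}_{j}(E_{f})=E_{jf}$, $\sigma(E_{f})=E_{\sigma f}$, and $\sigma^{-1}(E)=\bigsqcup_{j}\hat{\tau}_{j}(E)$ of (\ref{eq:e13}): for instance $\beta_{\pi}(S_{f}S_{f}^{*})=\sum_{i}S_{i}S_{f}S_{f}^{*}S_{i}^{*}=\sum_{i}S_{if}S_{if}^{*}=\mathbb{Q}_{\pi}\big(\bigsqcup_{i}E_{if}\big)=\mathbb{Q}_{\pi}(\sigma^{-1}(E_{f}))$, $S_{i}S_{f}S_{f}^{*}=S_{if}S_{if}^{*}S_{i}=\mathbb{Q}_{\pi}(iE_{f})S_{i}$, and $S_{i}^{*}S_{f}S_{f}^{*}=\delta_{i,\pi_{1}(E_{f})}S_{\sigma f}S_{\sigma f}^{*}S_{i}^{*}=\delta_{i,\pi_{1}(E_{f})}\mathbb{Q}_{\pi}(\sigma(E_{f}))S_{i}^{*}$. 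For each fixed $i$ both sides of each identity are weak-operator countably additive set functions of $E$ agreeing on the generating algebra $\mathscr{A}$, hence they agree on $\mathscr{C}$ by the same $\pi$-system uniqueness argument, which completes the proof.
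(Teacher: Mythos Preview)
Your argument is correct and follows the same overall skeleton as the paper: define $\mathbb{Q}_{\pi}$ on cylinders by (\ref{eq:e16}), check the Kolmogorov-type consistency $\sum_{i}S_{fi}S_{fi}^{*}=S_{f}S_{f}^{*}$, extend, and then verify (\ref{eq:e17})--(\ref{eq:e19}) on cylinders and pass to $\mathscr{C}$ by an inductive-limit/uniqueness argument. The paper carries out the extension in Section~\ref{subsec:kol} via a functional route---Stone--Weierstrass on $C(\Omega_{N})$ plus the Riesz representation theorem applied to the positive functionals $\mathbb{Q}_{\pi}^{(k)}$---and postpones multiplicativity (orthogonality) to Corollary~\ref{cor:QO}.

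Your implementation differs in two places worth noting. First, your compactness shortcut (elements of $\mathscr{A}$ are clopen, so any countable $\mathscr{A}$-partition of an $\mathscr{A}$-set is finite) is a clean way to upgrade finite to countable additivity on the algebra, after which Carath\'eodory on each scalar measure $\mu_{\psi}$ does the rest; the paper instead works with the nested function spaces $\mathscr{F}_{k}$ and appeals to Riesz. Second, you fold the proof that $\mathbb{Q}_{\pi}$ is projection-valued into the main argument by establishing multiplicativity $\mathbb{Q}_{\pi}(E\cap F)=\mathbb{Q}_{\pi}(E)\mathbb{Q}_{\pi}(F)$ via a $\pi$-system argument, whereas the paper obtains this separately as Corollary~\ref{cor:QO}. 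Both routes are standard; yours is slightly more self-contained, while the paper's functional approach makes the martingale viewpoint (\ref{eq:f28})--(\ref{eq:f29}) more transparent.
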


\begin{proof}
We begin with $\mathbb{Q}_{\pi}$ defined initially only on the basic
cylinder sets $E_{f}$, $f\in\left\{ \text{finite words}\right\} $;
see (\ref{eq:e16}). To show that it extends to the full $\sigma$-algebra
$\mathscr{C}$, we make use of \emph{Kolmogorov's consistency principle}
(see \cite{MR735967,MR562914}). Specifically, we must check from
(\ref{eq:e16}) that 
\begin{equation}
\mathbb{Q}_{\pi}\left(E_{f}\right)=\sum_{i=1}^{N}\mathbb{Q}_{\pi}\left(E_{\left(fi\right)}\right)\label{eq:e20}
\end{equation}
where $E_{f}$ is one of basic cylinder-sets. But (\ref{eq:e20})
is immediate from: 
\[
S_{f}S_{f}^{*}=\sum_{i=1}^{N}S_{f}S_{i}S_{i}^{*}S_{f}^{*}=\sum_{i=1}^{N}S_{\left(fi\right)}S_{\left(fi\right)}^{*}.
\]
The Kolmogorov extension also implies that the values $\mathbb{Q}_{\pi}\left(E\right)$,
$E\in\mathscr{C}$, are determined by those on $E_{f}$, $f$ finite;
this is a standard inductive limit argument; see e.g., \cite{MR562914,MR735967,MR2393052,MR1278486,MR863346,MR0443014}. 

Hence, to verify that these three conditions (\ref{eq:e17})-(\ref{eq:e19})
in the theorem, we may restrict the checking to the case when $E$
has the form $E_{f}$, for some finite word $f=\left(i_{1},\cdots,i_{k}\right)$
fixed. 

The argument for (\ref{eq:e17}) is: 
\[
\sum_{i=1}^{N}S_{i}S_{f}S_{f}^{*}S_{i}^{*}=\sum_{i=1}^{N}S_{\left(if\right)}S_{\left(if\right)}^{*}=\mathbb{Q}_{\pi}\left(\sigma^{-1}\left(E_{f}\right)\right).
\]

The argument for (\ref{eq:e18}) is: 
\[
S_{i}\left(S_{f}S_{f}^{*}\right)=\left(S_{\left(if\right)}S_{\left(if\right)}^{*}\right)S_{i};
\]
and finally the argument for (\ref{eq:e19}) is: 
\[
S_{j}^{*}S_{f}S_{f}^{*}=\delta_{ji_{1}}S_{\left(i_{2},\cdots,i_{k}\right)}S_{\left(i_{2},\cdots,i_{k}\right)}^{*}S_{j}^{*}.
\]

When these identities are combined with the \emph{Kolmogorov consistency}
/ inductive limit arguments \cite{MR735967,MR562914}, the conclusions
of the theorem now follow. We turn to the details of this in Section
\ref{subsec:kol} below.
\end{proof}
\begin{defn}
A projection-valued measure $\mathbb{Q}$ on $\left(\Omega_{N},\mathscr{C}\right)$,
taking values in $\mathscr{B}\left(\mathscr{H}\right)$, is said to
be \emph{orthogonal }iff (Def.) 
\begin{equation}
\mathbb{Q}\left(E\cap E'\right)=\mathbb{Q}\left(E\right)\mathbb{Q}\left(E'\right)\label{eq:f1}
\end{equation}
for all sets $E$ and $E'$ in $\mathscr{C}$. 
\end{defn}

\begin{rem}
The condition in (\ref{eq:f1}) is called \emph{orthogonality} because
of the following: If (\ref{eq:f1}) is satisfied, and if $E\cap E'=\emptyset$
where $E$ and $E'$ are picked from $\mathscr{C}$ (the $\sigma$-algebra),
then 
\begin{equation}
\mathbb{Q}_{\pi}\left(E\right)\mathscr{H}\perp\mathbb{Q}_{\pi}\left(E'\right)\mathscr{H}.\label{eq:f2}
\end{equation}
To see this, compute the inner products of vectors $h,h'\in\mathscr{H}$:
\begin{align*}
\left\langle \mathbb{Q}_{\pi}\left(E\right)h,\mathbb{Q}_{\pi}\left(E'\right)h'\right\rangle  & =\left\langle h,\mathbb{Q}_{\pi}\left(E\right)\mathbb{Q}_{\pi}\left(E'\right)h'\right\rangle \\
 & =\langle h,\mathbb{Q}_{\pi}(\underset{=\emptyset}{\underbrace{E\cap E'})}h'\rangle=\left\langle h,0h'\right\rangle =0,
\end{align*}
which is the orthogonality (\ref{eq:f2}).
\end{rem}

\subsection{\label{subsec:kol}The Kolmogorov consistency construction}

Fix $N>1$, and a Hilbert space $\mathscr{H}$. Let $\pi\in Rep\left(\mathcal{O}_{N},\mathscr{H}\right)$,
$\pi\left(s_{i}\right)=S_{i}$, $1\leq i\leq N$. Let $\Omega=\Omega_{N}:=\left(\mathbb{Z}_{N}\right)^{\mathbb{N}}$
(= the infinite Cartesian product). For $\omega\in\Omega$, and $k\in\mathbb{N}$,
set $\omega|_{k}=\left(\omega_{1},\omega_{2},\cdots,\omega_{k}\right)$,
the truncated word. Let $C\left(\Omega\right):=$ all continuous functions
on $\Omega$. Set 
\begin{equation}
\mathscr{F}_{k}=\left\{ F\in C\left(\Omega\right)\mid F\left(\omega\right)=F\left(\omega|_{k}\right)\right\} ,\label{eq:f21}
\end{equation}
i.e., $\mathscr{F}_{k}$ consists of functions depending on only the
first $k$ coordinates. $\mathscr{F}_{0}=$ the constant functions
on $\Omega$. Finally, we set 
\begin{equation}
\mathscr{F}_{\infty}:=\bigcup_{k=0}^{\infty}\mathscr{F}_{k}.\label{eq:f22}
\end{equation}

\begin{lem}
With the notation from above, $\mathscr{F}_{\infty}$ is a dense subalgebra
in $C\left(\Omega\right)$, i.e., dense in the uniform norm on $C\left(\Omega\right)$. 
\end{lem}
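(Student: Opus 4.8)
The plan is to deduce this from the Stone--Weierstrass theorem applied to the compact metric space $\left(\Omega_{N},d_{N}\right)$. First I would verify that $\mathscr{F}_{\infty}$ is a unital subalgebra. The key structural remark is that the family $\left\{ \mathscr{F}_{k}\right\}$ is nested, $\mathscr{F}_{k}\subseteq\mathscr{F}_{k+1}$, since a function of the first $k$ coordinates is in particular a function of the first $k+1$; hence $\mathscr{F}_{\infty}$ is a directed union, so any finite collection of its members lies in a single $\mathscr{F}_{k}$. Each $\mathscr{F}_{k}$ is just the (finite-dimensional) algebra of all functions $A^{k}\to\mathbb{C}$ pulled back along $\omega\mapsto\omega|_{k}$, hence is closed under sums, products, scalar multiples, and complex conjugation; and $\mathscr{F}_{0}\subseteq\mathscr{F}_{\infty}$ is the constants, so $1\in\mathscr{F}_{\infty}$ and $\mathscr{F}_{\infty}$ vanishes nowhere.

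Next I would check that $\mathscr{F}_{\infty}$ separates points. Given $\omega\neq\omega'$ in $\Omega_{N}$, let $j$ be the smallest index with $\omega_{j}\neq\omega_{j}'$ and set $f:=\omega|_{j}$; the indicator $\chi_{E_{f}}$ of the cylinder set $E_{f}$ of (\ref{eq:e6}) depends only on the first $j$ coordinates, so $\chi_{E_{f}}\in\mathscr{F}_{j}\subseteq\mathscr{F}_{\infty}$, and it equals $1$ at $\omega$ and $0$ at $\omega'$. Stone--Weierstrass then yields that $\mathscr{F}_{\infty}$ is dense in $C\left(\Omega_{N}\right)$ in the uniform norm.

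As an alternative that makes the approximation rate explicit, one can argue directly. Fix a section $s\colon A^{k}\to\Omega_{N}$, say $s\left(i_{1},\dots,i_{k}\right)=\left(i_{1},\dots,i_{k},1,1,1,\dots\right)$, and define the restriction map $\left(R_{k}F\right)\left(\omega\right):=F\bigl(s\left(\omega|_{k}\right)\bigr)$, so that $R_{k}F\in\mathscr{F}_{k}$. For $F\in C\left(\Omega_{N}\right)$ and $\varepsilon>0$, compactness makes $F$ uniformly continuous, giving $\delta>0$ with $\left|F\left(\omega\right)-F\left(\omega'\right)\right|<\varepsilon$ whenever $d_{N}\left(\omega,\omega'\right)<\delta$; choosing $k$ with $N^{-k}<\delta$ and noting that $\omega$ and $s\left(\omega|_{k}\right)$ agree in their first $k$ coordinates, hence $d_{N}\bigl(\omega,s\left(\omega|_{k}\right)\bigr)\leq N^{-k}<\delta$, we obtain $\left\Vert F-R_{k}F\right\Vert _{\infty}\leq\varepsilon$.

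There is no genuine obstacle here; the one point that must be used rather than guessed is the interplay between uniform continuity on the compact space $\Omega_{N}$ and the ultrametric $d_{N}$ of (\ref{eq:e13a}): the statement ``$F$ depends only on the first $k$ coordinates'' is precisely ``$F$ is constant on every ball of radius $N^{-k}$,'' and this is exactly what converts a modulus-of-continuity estimate into membership in some $\mathscr{F}_{k}$.
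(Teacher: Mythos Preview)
Your proof is correct and follows essentially the same approach as the paper: both invoke Stone--Weierstrass after verifying that $\mathscr{F}_{\infty}$ is a unital subalgebra that separates points, the only cosmetic difference being that the paper uses the coordinate projection $\pi_{k}$ as the separating function while you use the cylinder indicator $\chi_{E_{f}}$. Your additional direct approximation argument via $R_{k}F$ is a nice bonus that the paper does not include.
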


\begin{proof}[Proof sketch]
 The conclusion follows from the Stone-Weierstrass theorem \cite{MR0243367}:
We only need to show that $\mathscr{F}_{\infty}$ is an algebra, contains
the constant function $\mathbbm{1}$, and separates points in $\Omega$. 

But the properties are immediate from (\ref{eq:f21}). Indeed, if
$\omega,\omega'\in\Omega$, and $\omega\neq\omega'$. Pick $k$ such
that $\omega_{k}\neq\omega'_{k}$; then take $F=\pi_{k}$ (= the coordinate
projection); it is in $\mathscr{F}_{k}$, and satisfies $F\left(\omega\right)\neq F\left(\omega'\right)$. 
\end{proof}
\begin{proof}[Proof of Theorem (\ref{thm:pv}) continued]
 We now turn to the projection-valued measure $\mathbb{Q}_{\pi}$,
defined initially only for $\mathscr{F}_{\infty}$. We define $\mathbb{Q}_{\pi}$
as a positive linear functional, taking values in the projections
in $\mathscr{H}$; see Fig \ref{fig:proj}. 

\begin{figure}[H]
\begin{tabular}{|c|c|c|c|c|c|c|c|}
\hline 
\multicolumn{8}{|c|}{$\mathscr{H}$}\tabularnewline
\hline 
\multicolumn{4}{|c|}{$P_{0}$} & \multicolumn{4}{c|}{$P_{1}$}\tabularnewline
\hline 
\multicolumn{2}{|c|}{$P_{00}$} & \multicolumn{2}{c|}{$P_{01}$} & \multicolumn{2}{c|}{$P_{10}$} & \multicolumn{2}{c|}{$P_{11}$}\tabularnewline
\hline 
$P_{000}$ & $P_{001}$ & $P_{001}$ & $P_{011}$ & $P_{100}$ & $P_{101}$ & $P_{110}$ & $P_{111}$\tabularnewline
\hline 
\multicolumn{1}{c}{$\vdots$} & \multicolumn{1}{c}{$\vdots$} & \multicolumn{1}{c}{$\vdots$} & \multicolumn{1}{c}{$\vdots$} & \multicolumn{1}{c}{$\vdots$} & \multicolumn{1}{c}{$\vdots$} & \multicolumn{1}{c}{$\vdots$} & \multicolumn{1}{c}{$\vdots$}\tabularnewline
\end{tabular}
\[
P\left(i_{1},i_{2},\cdots,i_{k}\right)=S_{i_{1}}\cdots S_{i_{k}}S_{i_{k}}^{*}\cdots S_{i_{1}}=S_{I}S_{I}^{*}
\]

\caption{\label{fig:proj}Multiresolution as a nested family of projections.}
\end{figure}

In detail: If $F\in\mathscr{F}_{k}$, set 
\begin{equation}
\mathbb{Q}_{\pi}^{\left(k\right)}\left(F\right)=\sum_{\substack{I=\left(i_{1},\cdots,i_{k}\right)\\
\in\mathbb{Z}_{N}^{k}
}
}F\left(I\right)S_{I}S_{I}^{*}.\label{eq:f23}
\end{equation}
To show that $\mathbb{Q}_{\pi}^{\left(k\right)}$, as defined in (\ref{eq:f23})
is positive, we need to check that 
\begin{equation}
\mathbb{Q}_{\pi}^{\left(k\right)}\left(F^{2}\right)\geq0.\label{eq:f24}
\end{equation}
(Recall, we have restricted the checking to real valued functions,
but this can easily be modified to apply to the complex valued case.
In that case, we must consider $\mathbb{Q}_{\pi}^{\left(k\right)}\left(\left|F\right|^{2}\right)$
in (\ref{eq:f24}).) 

For $I,J\in\left(\mathbb{Z}_{N}\right)^{k}$, we have 
\begin{equation}
S_{I}S_{I}^{*}S_{J}S_{J}^{*}=\delta_{I,J}S_{I}S_{I}^{*}\label{eq:f25}
\end{equation}
where $\delta_{I,J}=\prod_{l=1}^{k}\delta_{i_{l},j_{l}}$. Now, combining
(\ref{eq:f23}) and (\ref{eq:f25}), we get 
\begin{align*}
\mathbb{Q}_{\pi}^{\left(k\right)}\left(F\right)^{2} & =\sum_{I}\sum_{J}F\left(F\right)F\left(J\right)S_{I}S_{I}^{*}S_{J}S_{J}^{*}\\
 & =\sum_{I}F\left(I\right)^{2}S_{I}S_{I}^{*}=\mathbb{Q}_{\pi}^{\left(k\right)}\left(F^{2}\right),
\end{align*}
and the desired positivity (\ref{eq:f24}) follows. 

To get the desired \emph{Kolmogorov extension} (see \cite{MR735967,MR562914}),
we only need to check \emph{consistency}: Let $F\in\mathscr{F}_{k}\subseteq\mathscr{F}_{k+1}$,
i.e., $F$ is considered as a function on $\left(\mathbb{Z}_{N}\right)^{k+1}$,
but constant in the last variable $i_{k+1}$. 

We now have:
\begin{equation}
\mathbb{Q}_{\pi}^{\left(k+1\right)}\left(F\right)=\mathbb{Q}_{\pi}^{\left(k\right)}\left(F\right).\label{eq:f26}
\end{equation}
Indeed, 
\begin{align*}
\text{LHS}_{\left(\ref{eq:f26}\right)} & =\sum_{I\in\left(\mathbb{Z}_{N}\right)^{k}}\sum_{j\in\mathbb{Z}_{N}}F\left(I\right)S_{I}S_{j}S_{j}^{*}S_{I}^{*}\\
 & =\sum_{I\in\left(\mathbb{Z}_{N}\right)^{k}}F\left(I\right)S_{I}S_{I}^{*}=\text{RHS}_{\left(\ref{eq:f26}\right)},
\end{align*}
since $\sum_{j}S_{j}S_{j}^{*}=I$ by (\ref{eq:a7}).

Now Kolmogorov consistency, and an application of the Riesz representation
theorem (see \cite{MR0243367}), yields the final conclusion: The
projection valued measure $\mathbb{Q}_{\pi}$ arises as a projective
limit of the individual measures $\left(\mathbb{Q}_{\pi}^{\left(k\right)}\left(\cdot\right),\mathscr{F}_{k}\right)$
introduced above in (\ref{eq:f23}).
\end{proof}
\begin{rem}
Consider the family $\left\{ \mathscr{F}_{k}\right\} _{k\in\mathbb{N}_{0}}$
in (\ref{eq:f21}). By abuse of notation, we may also consider this
\emph{as a family of $\sigma$-algebras}, i.e., 
\begin{equation}
\mathscr{F}_{k}=\left(\text{the \ensuremath{\sigma}-algebra generated by \ensuremath{\left\{  \pi_{1},\pi_{2},\cdots,\pi_{k}\right\} } }\right),\label{eq:f27}
\end{equation}
see (\ref{eq:a2}). Moreover, $\mathscr{C}=\bigvee_{k}\mathscr{F}_{k}$,
where we use the lattice operation for $\sigma$-algebras. 

From (\ref{eq:f26}), we obtain the projection valued measure $\mathbb{Q}_{\pi}$
as a solution to the problem
\begin{equation}
\mathbb{Q}_{\pi}^{\left(k\right)}\left(\cdot\cdot\right)=\mathbb{Q}_{\pi}\left(\cdot\cdot\mid\mathscr{F}_{k}\right)\label{eq:f28}
\end{equation}
where ``$\quad\mid\mathscr{F}_{k}$'' refers to conditional expectation. 

Hence the solution $\mathbb{Q}_{\pi}\left(\cdot\right)$ may be viewed
as a \emph{martingale limit}: We have for all $k,l\in\mathbb{N}$,
$k<l$: 
\begin{equation}
\mathbb{Q}_{\pi}\left(\cdot\cdot\mid\mathscr{F}_{k}\right)=\mathbb{Q}_{\pi}\left(\cdot\cdot\mid\mathscr{F}_{l}\mid\mathscr{F}_{k}\right);\label{eq:f29}
\end{equation}
and for all measurable functions $F$ on $\left(\Omega,\mathscr{C}\right)$,
we have 
\[
\mathbb{Q}_{\pi}\left(F\right)=\lim_{k\rightarrow\infty}\mathbb{Q}_{\pi}^{\left(k\right)}\left(F\right)=\lim_{k\rightarrow\infty}\mathbb{Q}_{\pi}\left(F\mid\mathscr{F}_{k}\right)
\]
where 
\[
\mathbb{Q}_{\pi}\left(F\right):=\int_{\Omega}F\left(\omega\right)\mathbb{Q}_{\pi}\left(d\omega\right).
\]
\end{rem}

\begin{rem}
Let $E\subset\Omega$, and assume $E\in\mathscr{F}_{k}=\sigma$-algebra($\left\{ \pi_{i}\right\} _{i=1}^{k}$): 

Let $j\in\mathbb{Z}_{N}$. Then $Ej:=\bigcup_{e\in E}\left(ej\right)\in\mathscr{F}_{k+1}$,
and 
\begin{equation}
\mathbb{Q}_{\pi}\left(E\right)=\sum_{j\in\mathbb{Z}_{N}}\mathbb{Q}_{\pi}\left(Ej\right);
\end{equation}
but, in general,
\begin{equation}
\sum_{i\in\mathbb{Z}_{N}}\mathbb{Q}_{\pi}\left(iE\right)\neq\mathbb{Q}_{\pi}\left(E\right).\label{eq:f2a}
\end{equation}

Note in general, 
\begin{equation}
\bigcup_{i\in\mathbb{Z}_{N}}iE=\sigma^{-1}\left(E\right),
\end{equation}
(as a disjoint union on the left hand side) where $\sigma$ is the
\emph{shift} in $\Omega$; see (\ref{eq:e7}) and (\ref{eq:e12}).
So the assertion in (\ref{eq:f2a}) above is that, in general, we
may have: 
\[
\mathbb{Q}_{\pi}\left(\sigma^{-1}E\right)\neq\mathbb{Q}_{\pi}\left(E\right).
\]
\end{rem}

\begin{cor}
\label{cor:QO}Let $\pi\in Rep\left(\mathcal{O}_{N},\mathscr{H}\right)$,
and let $\mathbb{Q}_{\pi}$ be the corresponding projection valued
measure introduced in Theorem \ref{thm:pv}. Then $\mathbb{Q}_{\pi}$
is orthogonal, i.e., (\ref{eq:f1}) holds. 
\end{cor}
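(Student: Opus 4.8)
The plan is to reduce the orthogonality identity $\mathbb{Q}_{\pi}(E\cap E')=\mathbb{Q}_{\pi}(E)\mathbb{Q}_{\pi}(E')$ first to the basic cylinder sets $E_{f}$, $E_{g}$, then to bootstrap to all of $\mathscr{C}$ via a monotone-class / Dynkin-system argument. The key algebraic fact is already recorded in \eqref{eq:f25}: for two finite words $f,g$ of the \emph{same length} $k$,
\[
S_{f}S_{f}^{*}S_{g}S_{g}^{*}=\delta_{f,g}\,S_{f}S_{f}^{*}.
\]
First I would observe that it suffices to treat cylinder sets, because given arbitrary finite words $f,g$ of lengths $k\le l$ one can, using the consistency relation \eqref{eq:e20} (equivalently \eqref{eq:a73}), write $E_{f}=\bigsqcup_{|h|=l-k}E_{fh}$, so $\mathbb{Q}_{\pi}(E_{f})=\sum_{|h|=l-k}S_{fh}S_{fh}^{*}$; this expresses both $\mathbb{Q}_{\pi}(E_{f})$ and $\mathbb{Q}_{\pi}(E_{g})$ as sums of rank-comparable projections indexed by words of the common length $l$. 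Then \eqref{eq:f25} gives $\mathbb{Q}_{\pi}(E_{f})\mathbb{Q}_{\pi}(E_{g})=\sum S_{fh}S_{fh}^{*}$ where the sum is over those $h$ with $|h|=l-k$ for which $fh$ extends $g$ (there is at most one such family, depending on whether $g$ is a prefix of $f$ or vice versa), and one checks directly that this equals $\mathbb{Q}_{\pi}(E_{f}\cap E_{g})$: indeed $E_{f}\cap E_{g}$ is either empty (when neither of $f,g$ is a prefix of the other) or equals $E_{f}$ (if $g\preceq f$) or $E_{g}$ (if $f\preceq g$).

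Having established \eqref{eq:f1} on the generating family of cylinder sets, I would promote it to the full $\sigma$-algebra in two sweeps. Fix a cylinder set $E_{f}$ and consider $\mathscr{D}_{f}:=\{E'\in\mathscr{C}\mid \mathbb{Q}_{\pi}(E_{f}\cap E')=\mathbb{Q}_{\pi}(E_{f})\mathbb{Q}_{\pi}(E')\}$. Using $\sigma$-additivity of $\mathbb{Q}_{\pi}$ in the strong operator topology (Theorem \ref{thm:pv}) together with $\mathbb{Q}_{\pi}(\Omega_{N})=I$, one verifies that $\mathscr{D}_{f}$ is closed under complements (since $\mathbb{Q}_{\pi}(E_{f}\cap E'^{c})=\mathbb{Q}_{\pi}(E_{f})-\mathbb{Q}_{\pi}(E_{f}\cap E')$ and likewise $\mathbb{Q}_{\pi}(E_{f})\mathbb{Q}_{\pi}(E'^{c})=\mathbb{Q}_{\pi}(E_{f})(I-\mathbb{Q}_{\pi}(E'))$) and under countable disjoint unions; since it contains the cylinder sets, which form a $\pi$-system generating $\mathscr{C}$, the Dynkin $\pi$–$\lambda$ theorem gives $\mathscr{D}_{f}=\mathscr{C}$. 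Now fix an arbitrary $E'\in\mathscr{C}$ and run the same argument in the other slot: the family $\{E\in\mathscr{C}\mid\mathbb{Q}_{\pi}(E\cap E')=\mathbb{Q}_{\pi}(E)\mathbb{Q}_{\pi}(E')\}$ is a $\lambda$-system (here one also uses that $\mathbb{Q}_{\pi}(E')$ commutes with every $\mathbb{Q}_{\pi}(E_{f})$, which follows from the cylinder-set case just proved, to handle complements) containing all cylinder sets, hence is all of $\mathscr{C}$. This yields \eqref{eq:f1} in full generality.

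The main obstacle is the complementation step in the Dynkin argument: subtracting projections, $\mathbb{Q}_{\pi}(E_{f})-\mathbb{Q}_{\pi}(E_{f}\cap E')$, only produces a projection again because of the ordering $\mathbb{Q}_{\pi}(E_{f}\cap E')\le\mathbb{Q}_{\pi}(E_{f})$ (Lemma \ref{lem:con}), and the product $\mathbb{Q}_{\pi}(E_{f})\mathbb{Q}_{\pi}(E'^{c})$ is only manifestly a projection once we know $\mathbb{Q}_{\pi}(E_{f})$ and $\mathbb{Q}_{\pi}(E')$ commute — which is exactly the orthogonality we are proving. The way around this is precisely the two-sweep structure above: first prove commutativity $[\mathbb{Q}_{\pi}(E_{f}),\mathbb{Q}_{\pi}(E')]=0$ for all cylinder $E_{f}$ and all $E'\in\mathscr{C}$ (a consequence of the first sweep, since $\mathbb{Q}_{\pi}(E_{f})\mathbb{Q}_{\pi}(E')=\mathbb{Q}_{\pi}(E_{f}\cap E')=\mathbb{Q}_{\pi}(E')\mathbb{Q}_{\pi}(E_{f})$), then bootstrap commutativity to all pairs, and only then carry out the second Dynkin sweep for the full product identity. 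Everything else is the routine strong-operator-topology continuity of $\mathbb{Q}_{\pi}$ packaged into the $\lambda$-system verification.
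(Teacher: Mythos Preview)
Your proof is correct and follows the same two–step scheme as the paper: verify \eqref{eq:f1} on basic cylinder sets, then extend to all of $\mathscr{C}$. The cylinder-set computation differs only cosmetically: the paper argues directly by the prefix dichotomy (if $f\subseteq g$ then $g=fh$ and $S_f(S_f^*S_f)S_hS_h^*S_f^*=S_gS_g^*$; otherwise $E_f\cap E_g=\emptyset$), whereas you pad to a common length via \eqref{eq:e20} and invoke \eqref{eq:f25}, which of course reduces to the same case split. For the extension step the paper simply appeals to ``the Kolmogorov extension principle'' without further detail, while you spell out a careful two-sweep Dynkin $\pi$--$\lambda$ argument; your version is more explicit and arguably more honest about what is being used.

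One minor remark: the ``obstacle'' you flag in the complementation step is not really there. In verifying that $\mathscr{D}_f$ (or its second-sweep analogue) is closed under complements you only need the \emph{identity} $\mathbb{Q}_\pi(E_f)\mathbb{Q}_\pi(E'^c)=\mathbb{Q}_\pi(E_f)-\mathbb{Q}_\pi(E_f)\mathbb{Q}_\pi(E')$, which is just linearity; you never need to know in advance that either side is a projection, nor do you need commutativity for this step. Commutativity does fall out of the first sweep (since $\mathbb{Q}_\pi(E_f)\mathbb{Q}_\pi(E')=\mathbb{Q}_\pi(E_f\cap E')$ is self-adjoint), but it is not required to run the $\lambda$-system verification itself. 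So your argument can be streamlined slightly, though as written it is sound.
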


\begin{proof}
Because of the Kolmogorov-consistency construction, it is enough to
verify the orthogonality (\ref{eq:f1}) for $\mathbb{Q}_{\pi}$ in
the special case when the two sets have the form $E_{f}$, $E_{g}$,
where $f$ and $g$ are \emph{finite} words in the alphabet, say $f=\left(i_{1},i_{2},\cdots,i_{k}\right)$
and $g=\left(j_{1},j_{2},\cdots,j_{l}\right)$ where $k$ and $l$
denote the respective word lengths. We say that containment holds
for the two words iff one of the two contains the other in the following
manner: say $f\subseteq g$, if $k\leq l$ and $i_{1}=j_{1}$, $\cdots$,
$i_{k}=j_{k}$. In this case $g=\left(fh\right)$ where $h$ is the
tail end in the word $g$. (There is a symmetric condition when instead
$g\subseteq f$.)

When $f\subseteq g$, then 
\begin{equation}
E_{f}\cap E_{g}=E_{g}.\label{eq:f3}
\end{equation}
Hence we must verify that, in this case, 
\begin{equation}
\mathbb{Q}_{\pi}\left(E_{g}\right)=\mathbb{Q}_{\pi}\left(E_{f}\right)\mathbb{Q}_{\pi}\left(E_{g}\right).\label{eq:f4}
\end{equation}
But using $g=\left(fh\right)$ for some finite word $h$, we get for
the RHS in (\ref{eq:f4}): 
\begin{align*}
\text{RHS}_{\left(\ref{eq:f4}\right)} & =S_{f}\left(S_{f}^{*}S_{f}\right)S_{h}S_{h}^{*}S_{f}^{*}=S_{f}S_{h}S_{h}^{*}S_{f}^{*}\\
 & =S_{g}S_{g}^{*}\quad\left(\text{since }g=fh\right)\\
 & =\mathbb{Q}_{\pi}\left(E_{g}\right)=\text{LHS}_{\left(\ref{eq:f4}\right)}
\end{align*}
and the desired conclusion follows.

The remaining case is, if none of the possible containment holds,
i.e., $f$ not contained in $g$, and $g$ not contained in $f$.
In this case, $E_{f}\cap E_{g}=\emptyset$, and so both sides in equation
(\ref{eq:f4}) are zero. See also Fig \ref{fig:cyl2}.

Having verified that $\mathbb{Q}_{\pi}$ satisfies condition (\ref{eq:f1})
for basic cylinder-sets, it now follows that it must also hold for
all pairs of sets $E,E'\in\mathscr{C}$. This is an application of
the Kolmogorov extension principle. The proof of the theorem is concluded.
\end{proof}
\begin{cor}
Let the setting be as above, $\pi\in Rep\left(\mathcal{O}_{N},\mathscr{H}\right)$,
and let $\mathbb{Q}_{\pi}\left(\cdot\right)$ be the corresponding
projection valued measure. 
\begin{enumerate}
\item \label{enu:m1}For $\omega\in\Omega_{N}$, and $k\in\mathbb{N}$,
set $Z_{k}\left(\omega\right)=\omega_{k}\left(\in A_{N}\simeq\left\{ 1,2,\cdots,N\right\} \right)$,
then the following projection-valued Markov property holds: Let $k>1$,
then 
\begin{equation}
\text{\ensuremath{\mathbb{P}}rob}^{\left(\pi\right)}\left(Z_{k+1}=j\mid Z_{k}=i\right)=\beta_{\pi}^{k-1}\left(S_{i}S_{j}S_{j}^{*}S_{i}^{*}\right),\label{eq:f4a}
\end{equation}
where $\beta_{\pi}$ is the endomorphism in Definition \ref{def:end}
(eq. (\ref{eq:e2})).
\item \label{enu:m2}If $\psi\in\mathscr{H}$, $\left\Vert \psi\right\Vert =1$,
let $\mu_{\psi}\left(\cdot\right):=\left\langle \psi,\mathbb{Q}_{\pi}\left(\cdot\right)\psi\right\rangle _{\mathscr{H}}$
be the corresponding scalar valued measure. Then the associated transition
probabilities are 
\begin{align}
\text{\ensuremath{\mathbb{P}}rob}^{\left(\mu_{\psi}\right)}\left(Z_{k+1}=j\mid Z_{k}=i\right) & =\frac{\left\langle \psi,\beta_{\pi}^{k-1}\left(S_{i}S_{j}S_{j}^{*}S_{i}^{*}\right)\psi\right\rangle _{\mathscr{H}}}{\left\langle \psi,\beta_{\pi}^{k-1}\left(S_{i}S_{i}^{*}\right)\psi\right\rangle _{\mathscr{H}}}\nonumber \\
 & =\frac{\left\Vert \beta_{\pi}^{k-1}\left(S_{j}^{*}S_{i}^{*}\right)\psi\right\Vert _{\mathscr{H}}^{2}}{\left\Vert \beta_{\pi}^{k-1}\left(S_{i}^{*}\right)\psi\right\Vert _{\mathscr{H}}^{2}}.\label{eq:f4b}
\end{align}
\item \label{enu:m3}The Markov property holds for the process in (\ref{enu:m2})
if and only if $\beta_{\pi}$-invariance holds, in the following sense:
\[
\left\langle \psi,\beta_{\pi}\left(\mathbb{Q}_{\pi}\left(\cdot\right)\right)\psi\right\rangle _{\mathscr{H}}=\left\langle \psi,\mathbb{Q}_{\pi}\left(\cdot\right)\psi\right\rangle _{\mathscr{H}}=\mu_{\psi}\left(\cdot\right).
\]
\end{enumerate}
\end{cor}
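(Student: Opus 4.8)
The plan is to prove (i) directly from the cylinder description of the relevant events and the formula $(\ref{eq:e16})$ for $\mathbb{Q}_{\pi}$, and then to obtain (ii) and (iii) from (i) together with the $*$-endomorphism properties of $\beta_{\pi}$. For (i), the first step is to write the conditioning event and the joint event as finite disjoint unions of basic cylinder sets,
\[
\left\{ Z_{k}=i\right\} =\bigsqcup_{f\in A^{k-1}}E_{\left(fi\right)},\qquad\left\{ Z_{k}=i,\,Z_{k+1}=j\right\} =\bigsqcup_{f\in A^{k-1}}E_{\left(fij\right)}.
\]
Since $\mathbb{Q}_{\pi}$ is additive and, by Corollary \ref{cor:QO}, orthogonal, applying it and using $(\ref{eq:e16})$ gives $\mathbb{Q}_{\pi}\left(\left\{ Z_{k}=i\right\} \right)=\sum_{f\in A^{k-1}}S_{f}S_{i}S_{i}^{*}S_{f}^{*}$ and the analogous sum for the joint event. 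A straightforward induction on $k$, using $(\ref{eq:e2})$, shows $\sum_{f\in A^{k-1}}S_{f}TS_{f}^{*}=\beta_{\pi}^{k-1}\left(T\right)$ for every $T\in\mathscr{B}\left(\mathscr{H}\right)$, so $\mathbb{Q}_{\pi}\left(\left\{ Z_{k}=i\right\} \right)=\beta_{\pi}^{k-1}\left(S_{i}S_{i}^{*}\right)$ and $\mathbb{Q}_{\pi}\left(\left\{ Z_{k}=i,\,Z_{k+1}=j\right\} \right)=\beta_{\pi}^{k-1}\left(S_{i}S_{j}S_{j}^{*}S_{i}^{*}\right)$. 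Since the joint event is contained in the conditioning event, orthogonality yields $\mathbb{Q}_{\pi}\left(\left\{ Z_{k}=i\right\} \right)\mathbb{Q}_{\pi}\left(\left\{ Z_{k}=i,\,Z_{k+1}=j\right\} \right)=\mathbb{Q}_{\pi}\left(\left\{ Z_{k}=i,\,Z_{k+1}=j\right\} \right)$, so the projection-valued conditional probability reduces to $\mathbb{Q}_{\pi}\left(\left\{ Z_{k}=i,\,Z_{k+1}=j\right\} \right)=\beta_{\pi}^{k-1}\left(S_{i}S_{j}S_{j}^{*}S_{i}^{*}\right)$; this is $(\ref{eq:f4a})$.

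For (ii), evaluate the two operator identities from (i) in the vector state $\left\langle \psi,\cdot\,\psi\right\rangle _{\mathscr{H}}$: this gives $\mu_{\psi}\left(Z_{k}=i\right)=\left\langle \psi,\beta_{\pi}^{k-1}\left(S_{i}S_{i}^{*}\right)\psi\right\rangle$ and $\mu_{\psi}\left(Z_{k}=i,\,Z_{k+1}=j\right)=\left\langle \psi,\beta_{\pi}^{k-1}\left(S_{i}S_{j}S_{j}^{*}S_{i}^{*}\right)\psi\right\rangle$, and the first line of $(\ref{eq:f4b})$ is their quotient, by the definition of scalar conditional probability. For the second line, use that $\beta_{\pi}$ -- hence also every power $\beta_{\pi}^{k-1}$ -- is a $*$-endomorphism of $\mathscr{B}\left(\mathscr{H}\right)$ (Definition \ref{def:end}), so $\beta_{\pi}^{k-1}\left(T^{*}T\right)=\beta_{\pi}^{k-1}\left(T\right)^{*}\beta_{\pi}^{k-1}\left(T\right)$ and therefore $\left\langle \psi,\beta_{\pi}^{k-1}\left(T^{*}T\right)\psi\right\rangle =\left\Vert \beta_{\pi}^{k-1}\left(T\right)\psi\right\Vert ^{2}$; apply this with $T=S_{j}^{*}S_{i}^{*}$ in the numerator and $T=S_{i}^{*}$ in the denominator.

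For (iii), I would first record that the displayed ``$\beta_{\pi}$-invariance'' condition is exactly shift-invariance of $\mu_{\psi}$: by $(\ref{eq:e17})$ one has $\beta_{\pi}\left(\mathbb{Q}_{\pi}\left(E\right)\right)=\mathbb{Q}_{\pi}\left(\sigma^{-1}E\right)$, so the condition reads $\mu_{\psi}\left(\sigma^{-1}E\right)=\mu_{\psi}\left(E\right)$ for all $E\in\mathscr{C}$. The direction ``$\beta_{\pi}$-invariance $\Rightarrow$ Markov'' I would argue as follows. Let $\mathscr{D}$ be the closed linear span of $\left\{ S_{f}S_{f}^{*}:f\text{ a finite word}\right\}$; it is $\beta_{\pi}$-invariant because $\beta_{\pi}\left(S_{f}S_{f}^{*}\right)=\sum_{a}S_{\left(af\right)}S_{\left(af\right)}^{*}\in\mathscr{D}$. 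By linearity and norm-continuity, $\beta_{\pi}$-invariance forces $\left\langle \psi,\beta_{\pi}\left(D\right)\psi\right\rangle =\left\langle \psi,D\psi\right\rangle$ for all $D\in\mathscr{D}$, and then, iterating, $\left\langle \psi,\beta_{\pi}^{k-1}\left(D\right)\psi\right\rangle =\left\langle \psi,D\psi\right\rangle$ for all $k\ge1$. Plugging $D=S_{i}S_{j}S_{j}^{*}S_{i}^{*}$ and $D=S_{i}S_{i}^{*}$ into $(\ref{eq:f4b})$ removes the $k$-dependence, so the one-step transition probabilities equal the fixed stochastic matrix $p\left(i,j\right)=\left\langle \psi,S_{i}S_{j}S_{j}^{*}S_{i}^{*}\psi\right\rangle /\left\langle \psi,S_{i}S_{i}^{*}\psi\right\rangle$ and the process is a time-homogeneous Markov chain.

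The converse is where I expect the main obstacle. From the Markov property one gets that the one-step kernel is the fixed matrix $P=\left(p\left(i,j\right)\right)$ with $p\left(i,j\right)=\mu_{\psi}\left(E_{\left(i,j\right)}\right)/\mu_{\psi}\left(E_{\left(i\right)}\right)$, that every cylinder mass factors as $\mu_{\psi}\left(E_{\left(i_{1},\ldots,i_{n}\right)}\right)=\mu_{\psi}\left(E_{\left(i_{1}\right)}\right)\prod_{l=1}^{n-1}p\left(i_{l},i_{l+1}\right)$ by the chain rule, and that the marginals satisfy $\mu_{\psi}\left(Z_{k+1}=j\right)=\sum_{i}p\left(i,j\right)\mu_{\psi}\left(Z_{k}=i\right)$ (from $\beta_{\pi}\left(S_{j}S_{j}^{*}\right)=\sum_{i}S_{i}S_{j}S_{j}^{*}S_{i}^{*}$). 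The remaining task is to upgrade these one- and two-step relations to $\sum_{a}\mu_{\psi}\left(E_{\left(af\right)}\right)=\mu_{\psi}\left(E_{f}\right)$ for \emph{all} finite words $f$, i.e.\ to full shift-invariance, hence to $\beta_{\pi}$-invariance. I expect this to hinge on the precise reading of ``the Markov property holds for the process in (ii)'' -- in particular on the chain being run from an equilibrium marginal, so that $\mu_{\psi}$ is genuinely $\sigma$-stationary and not merely Markov with a time-homogeneous kernel -- and it is at this point, rather than in the bookkeeping behind (i) and (ii), that the real content of (iii) lies.
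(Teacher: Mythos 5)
Your treatment of (i) and (ii) is essentially the paper's own argument: the paper likewise writes $\mathbb{P}\text{rob}^{(\pi)}(Z_{k+1}=j\mid Z_k=i)$ as $\sum_{I\in A^{k-1}}\mathbb{Q}_\pi(E(Iij))=\sum_I S_I S_i S_j S_j^* S_i^* S_I^*$ and identifies this with $\beta_\pi^{k-1}(S_iS_jS_j^*S_i^*)$ by iterating (\ref{eq:e2}); your extra step justifying via orthogonality why the projection-valued ``conditional probability'' is simply the measure of the joint event, and your use of $\beta_\pi^{k-1}(T^*T)=\beta_\pi^{k-1}(T)^*\beta_\pi^{k-1}(T)$ to get the norm-squared form of (\ref{eq:f4b}), are correct and merely make explicit what the paper leaves implicit.

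On (iii) the comparison is more interesting, because the paper offers no argument at all (``Parts (ii) and (iii) follow immediately from this''), whereas you supply a genuine one and, in doing so, correctly locate the soft spot. Two remarks. First, even your forward direction proves slightly less than you announce: showing that $\langle\psi,\beta_\pi^{k-1}(D)\psi\rangle=\langle\psi,D\psi\rangle$ on the closed span of the $S_fS_f^*$ gives $k$-independence of the one-step conditionals in (\ref{eq:f4b}), i.e.\ time-homogeneity of the two-point structure; the full Markov property (conditional independence of $Z_{k+1}$ from $Z_1,\dots,Z_{k-1}$ given $Z_k$) requires the analogous computation on higher cylinders, $\mu_\psi(E_{(f i j)})/\mu_\psi(E_{(f i)})$ versus $p(i,j)$, which your lemma $\sum_{f\in A^{k-1}}S_fTS_f^*=\beta_\pi^{k-1}(T)$ does let you carry out but which you should actually do. Second, your diagnosis of the converse is right: stationarity of $\mu_\psi$ under $\sigma$ is strictly stronger than time-homogeneity of the transition kernel (a homogeneous chain started off equilibrium satisfies the Markov property but is not shift-invariant), so the stated ``if and only if'' cannot hold under the standard reading of ``Markov property'' and must be read as asserting that the process is a \emph{stationary} Markov chain, or that the conditionals in (\ref{eq:f4b}) together with the marginals are $k$-independent. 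This is not a defect of your proposal relative to the paper --- the paper simply does not engage with the point --- but you should state the intended reading explicitly and then close the converse under that reading (from $m_{k+1}=m_kP$ and $m_k=m_1$ for all $k$ one gets $m_1P=m_1$, and shift-invariance on all cylinders then follows from the product formula for cylinder masses).
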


\begin{proof}
For (\ref{eq:f4a}), we have 
\begin{align*}
\text{\ensuremath{\mathbb{P}}rob}^{\left(\pi\right)}\left(Z_{k+1}=j\mid Z_{k}=i\right) & =\sum_{I\in\mathbb{Z}_{N}^{k-1}}\mathbb{Q}_{\pi}\left(E\left(Iij\right)\right)\\
 & =\sum_{I\in\mathbb{Z}_{N}^{k-1}}S_{I}S_{i}S_{j}S_{j}^{*}S_{i}^{*}S_{I}^{*}\underset{\left({\scriptscriptstyle \text{by \ensuremath{\left(\ref{eq:e2}\right)}}}\right)}{=}\beta_{\pi}^{k-1}\left(S_{i}S_{j}S_{j}^{*}S_{i}^{*}\right).
\end{align*}

Parts (\ref{enu:m2}) and (\ref{enu:m3}) follow immediately from
this. 
\end{proof}
\begin{flushleft}
\textbf{Monic Representations.}
\par\end{flushleft}

Let $\pi\in Rep\left(\mathcal{O}_{N},\mathscr{H}\right)$, and let
$\mathbb{Q}_{\pi}$ be the corresponding projection valued measure.
Let $\mathfrak{M}_{\pi}$ be the abelian $*$-algebra generated by
$\mathbb{Q}_{\pi}$, i.e., the operators 
\begin{equation}
\int_{\Omega_{N}}f\left(\omega\right)\mathbb{Q}_{\pi}\left(d\omega\right)\label{eq:f5}
\end{equation}
where $f$ ranges over the measurable functions on $\left(\Omega_{N},\mathscr{C}\right)$. 

Following \cite{MR3217056}, we make the following:
\begin{defn}
We say that $\pi$ is \emph{monic} iff (Def.) there is a vector $\psi_{0}\in\mathscr{H}$,
$\left\Vert \psi_{0}\right\Vert =1$, such that 
\begin{equation}
\left[\mathfrak{M}_{\pi}\psi_{0}\right]=\mathscr{H},\label{eq:f6}
\end{equation}
i.e., $\psi_{0}$ is $\mathfrak{M}_{\pi}$-cyclic. 
\end{defn}

Starting with $\mathbb{Q}_{\pi}$ and (\ref{eq:f6}), we use the construction
outlined before Theorem \ref{thm:pv}, to get a scalar measure via:
\begin{equation}
\mu_{0}\left(E\right)=\left\langle \psi_{0},\mathbb{Q}_{\pi}\left(E\right)\psi_{0}\right\rangle _{\mathscr{H}},\;E\in\mathscr{C}.
\end{equation}
Using \cite{MR3217056}, we then get a\emph{ random variable} $Y:\Omega_{N}\rightarrow M$
for a measure space $\left(M,\mathscr{B}\right)$ such that the measure
$\mu:=\mu_{0}\circ Y^{-1}$ satisfies the conditions listed below: 

It was proved in \cite{MR3217056} that a representation $\pi\left(\in Rep\left(\mathcal{O}_{N},\mathscr{H}\right)\right)$
is monic iff it is unitarily equivalent to one realized in $L^{2}\left(M,\mu\right)$
as follows for some measure space $\left(M,\mu\right)$: 

There are endomorphisms $\left(\left\{ \tau_{i}\right\} _{i=1}^{N},\sigma\right)$,
such that $\sigma\circ\tau_{i}=id_{M}$, $\mu\circ\tau_{i}^{-1}\ll\mu$,
and $L^{2}\left(\mu\right)$-function $f_{i}$ on $M$, such that
\begin{align}
\frac{d\left(\mu\circ\tau_{i}^{-1}\right)}{d\mu} & =\left|f_{i}\right|^{2},\;1\leq i\leq N,\label{eq:f7}\\
f_{i}\neq0 & \quad a.e.\;\mu\;\text{in }\tau_{i}\left(M\right).\label{eq:f8}
\end{align}
Then the isometries $S_{i}$ are as follows: 
\begin{equation}
S_{i}^{\left(\mu\right)}\varphi=f_{i}\left(\varphi\circ\sigma\right),\;1\leq i\leq N,\label{eq:f9}
\end{equation}
i.e., $\left\{ S_{i}^{\left(\mu\right)}\right\} _{i=1}^{N}\in Rep\left(\mathcal{O}_{N},L^{2}\left(\mu\right)\right)$;
see (\ref{eq:a7}).

\subsection{Atoms of the Path-Space Measure $\mathbb{Q}_{\pi}\left(\cdot\right)$}

In Section \ref{sec:PVR} we introduced a random variable $X$ on
path space $\left(\Omega_{N},\mathscr{C}\right)$; and in Section
\ref{sec:PVM}, a \emph{path-space-measure} $\mathbb{Q}_{\pi}\left(\cdot\right)$.
The starting point in both cases is a fixed $\pi\in Rep\left(\mathcal{O}_{N},\mathscr{H}\right)$.
Specifically, a separable Hilbert space $\mathscr{H}$ is given, $N\in\mathbb{N}$,
$N\geq2$, fixed; and $\pi$ is a representation of $\mathcal{O}_{N}$,
$\pi\in Rep\left(\mathcal{O}_{N},\mathscr{H}\right)$, $\pi\left(s_{i}\right)=:S_{i}$,
$1\leq i\leq N$, with the isometries $S_{i}:\mathscr{H}\rightarrow\mathscr{H}$
satisfying (\ref{eq:a7}). To summarize, the random variable $X=X_{\pi}$
is specified in (\ref{eq:a8}) in Theorem \ref{thm:LP}, and the path-space
measure $\mathbb{Q}_{\pi}$ in Theorem \ref{thm:pv}. Both take values
in the projections in $\mathscr{H}$, see Section \ref{sec:Pre},
and also \cite{MR3402823,MR3687240,MR3796644}.

The question addressed here is: \emph{What are the atoms of $\mathbb{Q}_{\pi}$?} 

We say that a sample path $\omega\in\Omega_{N}$ is an atom if the
singleton $\left\{ \omega\right\} $ satisfies $\mathbb{Q}\left(\left\{ \omega\right\} \right)>0$;
so the closed subspace $\mathbb{Q}_{\pi}\left(\left\{ \omega\right\} \right)\mathscr{H}=\mathscr{H}_{\omega}$
is non-zero. The answer to the question is given in the corollary
below where we prove the following: 
\begin{equation}
\mathbb{Q}_{\pi}\left(\left\{ \omega\right\} \right)=X_{\pi}\left(\omega\right),\;\forall\omega\in\Omega_{N}.\label{eq:h1}
\end{equation}

We note that (\ref{eq:h1}) holds even if one of the two sides (and
hence both) is zero. 

\emph{Details: }
\begin{cor}
We have: 
\begin{equation}
X_{\pi}\left(\omega\right)=\mathbb{Q}_{\pi}\left(\left\{ \omega\right\} \right),\;\forall\omega\in\Omega_{N}.\label{eq:h2}
\end{equation}
\end{cor}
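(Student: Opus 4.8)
The plan is to show the equality of the two projections $X_\pi(\omega)$ and $\mathbb{Q}_\pi(\{\omega\})$ by realizing both as monotone limits of the same decreasing sequence of cylinder-set projections. Fix $\omega = (i_1, i_2, i_3, \dots) \in \Omega_N$, and for $k \in \mathbb{N}$ let $f_k := \omega|_k = (i_1, \dots, i_k)$, so that $E_{f_k}$ is the basic cylinder set of length $k$ around $\omega$. The two key facts I would invoke are: first, by Theorem \ref{thm:pv} (equation (\ref{eq:e16})), $\mathbb{Q}_\pi(E_{f_k}) = S_{f_k} S_{f_k}^* = P(\omega|_k)$, the very projection appearing in the proof of Theorem \ref{thm:LP}; and second, $\{\omega\} = \bigcap_{k=1}^\infty E_{f_k}$ as a set-theoretic identity in $\Omega_N$, with the $E_{f_k}$ forming a decreasing nested sequence.

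Next I would assemble these as follows. From the proof of Theorem \ref{thm:LP} we already know $\{P(\omega|_k)\}_k$ is monotone decreasing with strong-operator limit $X_\pi(\omega)$, and that $X_\pi(\omega)$ is the projection onto $\bigcap_k P(\omega|_k)\mathscr{H}$ (by Lemma \ref{lem:mp}). On the other side, since $\mathbb{Q}_\pi$ is a (countably additive, orthogonal) projection-valued measure — countable additivity by construction in Theorem \ref{thm:pv}, orthogonality by Corollary \ref{cor:QO} — the decreasing set sequence $E_{f_k} \downarrow \{\omega\}$ forces $\mathbb{Q}_\pi(E_{f_k}) \downarrow \mathbb{Q}_\pi(\{\omega\})$ in the strong operator topology. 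The cleanest way to see this last step: for any $\psi \in \mathscr{H}$, the scalar measure $\mu_\psi(\cdot) = \langle \psi, \mathbb{Q}_\pi(\cdot)\psi\rangle$ is an ordinary (positive, finite) measure, so continuity from above of $\mu_\psi$ gives $\langle \psi, \mathbb{Q}_\pi(E_{f_k})\psi\rangle \to \langle\psi, \mathbb{Q}_\pi(\{\omega\})\psi\rangle$; since the $\mathbb{Q}_\pi(E_{f_k})$ are decreasing projections with a projection limit (Lemma \ref{lem:mp} again), convergence of the diagonal quadratic forms upgrades to strong-operator convergence, and the limit projection is forced to be $\mathbb{Q}_\pi(\{\omega\})$. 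Therefore both $X_\pi(\omega)$ and $\mathbb{Q}_\pi(\{\omega\})$ equal $\lim_k P(\omega|_k)$, hence they coincide.

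A slightly more structural alternative that avoids the measure-theoretic continuity argument: use $\{\omega\} = \bigcap_k E_{f_k}$ together with orthogonality of $\mathbb{Q}_\pi$ (Corollary \ref{cor:QO}) to note that $\mathbb{Q}_\pi(\{\omega\}) = \mathbb{Q}_\pi(E_{f_k} \cap \{\omega\}) = \mathbb{Q}_\pi(E_{f_k})\mathbb{Q}_\pi(\{\omega\})$ for every $k$, i.e.\ $\mathbb{Q}_\pi(\{\omega\}) \le P(\omega|_k)$ for all $k$; by Lemma \ref{lem:mp} this gives $\mathbb{Q}_\pi(\{\omega\}) \le X_\pi(\omega)$. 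For the reverse inequality one shows $X_\pi(\omega)$ is dominated by $\mathbb{Q}_\pi$ of any measurable superset of $\{\omega\}$ that one can separate, which is essentially the statement that $X_\pi(\omega)$ has no mass outside $\{\omega\}$; concretely, if $\omega' \neq \omega$ pick $k$ with $\omega'|_k \neq \omega|_k$, then $X_\pi(\omega) \le P(\omega|_k)$ while the complement of $E_{\omega|_k}$ is a $\mathbb{Q}_\pi$-measurable neighborhood of $\omega'$ orthogonal to $P(\omega|_k)$ — packaging this into a clean inequality $X_\pi(\omega) \le \mathbb{Q}_\pi(\{\omega\})$ is where a little care is needed.

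The main obstacle I anticipate is the reverse inequality $X_\pi(\omega) \le \mathbb{Q}_\pi(\{\omega\})$: the inequality $\mathbb{Q}_\pi(\{\omega\}) \le X_\pi(\omega)$ is almost immediate from orthogonality as above, but to go the other way one genuinely needs the countable additivity of $\mathbb{Q}_\pi$ (so that $\mathbb{Q}_\pi(\Omega_N \setminus \{\omega\}) = \sup$ of $\mathbb{Q}_\pi$ over an increasing exhaustion by cylinder-complements, each of which is orthogonal to $P(\omega|_k)$ for $k$ large). Thus I would lean on the first approach — deriving strong-operator continuity from above of $\mathbb{Q}_\pi$ via the scalar measures $\mu_\psi$ and Lemma \ref{lem:mp} — since that is the least fussy route and it makes the identification $\mathbb{Q}_\pi(\{\omega\}) = \lim_k P(\omega|_k) = X_\pi(\omega)$ transparent. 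Everything else (nestedness of the $E_{f_k}$, the identity $\{\omega\} = \bigcap_k E_{f_k}$, the formula $\mathbb{Q}_\pi(E_{f_k}) = P(\omega|_k)$) is routine and can be stated without detailed verification.
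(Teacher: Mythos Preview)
Your proposal is correct and follows essentially the same approach as the paper: the paper's proof consists solely of verifying the set-theoretic identity $\{\omega\} = \bigcap_{k=1}^\infty E(\omega|_k)$ and then tacitly invoking the same limit identifications you spell out. In fact you have supplied considerably more detail than the paper does---your explicit appeal to continuity from above of the scalar measures $\mu_\psi$ and to Lemma \ref{lem:mp} fills in exactly the step the paper leaves to the reader.
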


\begin{proof}
Fix $\omega\in\Omega_{N}$, then 
\begin{equation}
\left\{ \omega\right\} =\bigcap_{k=1}^{\infty}E\left(\omega|_{k}\right);\label{eq:h3}
\end{equation}
see also Fig \ref{fig:cyl2}. To see (\ref{eq:h3}), note that 
\begin{equation}
E\left(\omega|_{k}\right)=\left\{ \xi\in\Omega_{N}\mid\xi_{i}=\omega_{i},\;1\leq i\leq k\right\} ,\label{eq:h4}
\end{equation}
so that $\xi\in\bigcap_{k}E\left(\omega|_{k}\right)\Longleftrightarrow\xi=\omega$
(see Fig \ref{fig:cyl2}).
\end{proof}
\begin{rem}
The notation in (\ref{eq:h3}) is consistent with \emph{convergence
along a filter} (see e.g. \cite{MR1726779,MR2048350}) as follows:
Let $\Omega_{N}=A^{\mathbb{N}}$ be as above where $A$ is a given
(and fixed) alphabet. If $f=\left(x_{1},x_{2},\cdots,x_{k}\right)$,
$x_{i}\in A$, is a finite path, we introduced the sets $E_{f}$ (or
$E\left(f\right)$) where 
\begin{equation}
E_{f}:=\left\{ \omega\in\Omega_{N}\mid\omega_{i}=x_{i},\;1\leq i\leq k\right\} .\label{eq:h5}
\end{equation}
In particular, if $\omega\in\Omega_{N}$, $k\in\mathbb{N}$, set $f=\omega|_{k}=\left(\omega_{1},\omega_{2},\cdots,\omega_{k}\right)$
and we get the sets in (\ref{eq:h4}). 

Now consider the \emph{filter} $\mathscr{F}$ of subsets of $\Omega_{N}$,
defined as follows: 
\begin{equation}
\text{A subset \ensuremath{E} is in \ensuremath{\mathscr{F}} iff (Def.) \ensuremath{\big[}\ensuremath{\ensuremath{\exists}f} (a finite word) such that \ensuremath{E_{f}\subseteq E}\big].}\label{eq:h6}
\end{equation}
(We say that $\left\{ E_{f}\right\} $ forms a \emph{filter basis}.)

The sets in $\mathscr{F}$ satisfy the two filter axioms: 
\end{rem}

\begin{enumerate}
\item If $E_{i}$, $i=1,2$, are in $\mathscr{F}$, then $\exists F\in\mathscr{F}$
such that $F\subseteq E_{i}$, for $i=1,2$. 
\item If $E\in\mathscr{F}$, and $F$ is a subset of $\Omega_{N}$ such
that $E\subseteq F$, then $F\in\mathscr{F}$. 
\end{enumerate}
Now condition (\ref{eq:h3}) above is the assertion that 
\begin{equation}
\lim_{k\:\mathscr{F}}\omega|_{k}=\omega;\label{eq:h7}
\end{equation}
stating that $E\left(\omega|_{k}\right)$, $k\in\mathbb{N}$, \emph{converges}
to $\left\{ \omega\right\} $ \emph{along the filter} $\mathscr{F}$. 

\subsection{\label{subsec:ss}Symbol Space Representations as Groups}

In the study of iterated function systems (IFSs), and more generally,
in symbolic dynamics, we consider a fixed finite alphabet $A$, as
well as words in $A$. Both \emph{finite} as well as \emph{infinite}
words are needed. For many purposes, it is helpful to give $A$ in
the form of a cyclic group $\mathbb{Z}\big/N\mathbb{Z}\simeq\left\{ 0,1,2,\cdots,N-1\right\} $.
In this case both the finite words $\Omega_{N}^{*}$, as well as infinite
words $\Omega_{N}:=A^{\mathbb{N}}$ become groups. In the representation
below, we identify $\Omega_{N}^{*}$, and $\Omega_{N}$, as a pair
of abelian groups in duality. Since $\Omega_{N}^{*}$ (finite words)
is discrete, we get $\Omega_{N}$ realized as a compact abelian group. 
\begin{lem}
\label{lem:DU}Let $N\in\mathbb{N}$, $N\geq2$, be fixed, and let
$\Omega_{N}^{*}$, resp. $\Omega_{N}$, denote the finite, resp.,
infinite words in $\mathbb{Z}_{N}\simeq\mathbb{Z}\big/N\mathbb{Z}$. 
\begin{enumerate}
\item If $x=\left(x_{j}\right)_{j=1}^{\infty}\in\Omega_{N}$, and $y=\left(y_{j}\right)_{j=1}^{finite}\in\Omega_{N}^{*}$,
are fixed, then set 
\begin{equation}
\left\langle x,y\right\rangle :=\prod_{k=1}^{\infty}\exp\left(i2\pi\left(\frac{x_{k}y_{k}}{N^{k}}\right)\right),\label{eq:j1}
\end{equation}
so we have 
\begin{align}
\left\langle x+x',y\right\rangle  & =\left\langle x,y\right\rangle \left\langle x',y\right\rangle ,\;\text{and}\label{eq:j2}\\
\left\langle x,y+y'\right\rangle  & =\left\langle x,y\right\rangle \left\langle x,y'\right\rangle ,\label{eq:j3}
\end{align}
for all $x,x'\in\Omega_{N}$, and $y,y'\in\Omega_{N}^{*}$. 
\item In the category of abelian groups, we get 
\begin{alignat}{2}
dual\left(\Omega_{N}\right) & =\Omega_{N}^{*}, & \; & \text{and}\label{eq:j4}\\
dual\left(\Omega_{N}^{*}\right) & =\Omega_{N}, &  & \text{where}\label{eq:j5}
\end{alignat}
``dual'' refers to Pontryagin duality. Note $\Omega_{N}^{*}=\bigcup_{k=1}^{\infty}N^{-k}\mathbb{Z}$;
and 
\begin{equation}
\mathbb{Z}\subset N^{-1}\mathbb{Z}\subset N^{-2}\mathbb{Z}\subset\cdots\subset N^{-k}\mathbb{Z}\subset N^{-\left(k+1\right)}\mathbb{Z}\subset\cdots.\label{eq:j51}
\end{equation}
\item The Haar measure on $\Omega_{N}$ is the infinite product norm on
$\left(\mathbb{Z}_{N}\right)^{\mathbb{N}}$ with weights $\left(\frac{1}{N},\frac{1}{N},\cdots,\frac{1}{N}\right)$
on each factor. 
\end{enumerate}
\end{lem}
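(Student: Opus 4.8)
The plan is to establish the three assertions of Lemma \ref{lem:DU} in turn, using standard Pontryagin duality machinery applied to the inverse/direct limit structure of $\Omega_N$ and $\Omega_N^*$. First I would verify the bilinearity identities \eqref{eq:j2}--\eqref{eq:j3}. Note that although $y=(y_j)_{j=1}^{\text{finite}}\in\Omega_N^*$ has only finitely many nonzero entries, the product in \eqref{eq:j1} is in fact a finite product, so convergence is not an issue; the bilinearity then follows immediately from the functional equation $\exp(i2\pi(a+b))=\exp(i2\pi a)\exp(i2\pi b)$ applied coordinatewise, together with the fact that addition in $\Omega_N=(\mathbb{Z}_N)^{\mathbb{N}}$ and in $\Omega_N^*$ is coordinatewise modulo $N$, and $\exp(i2\pi(x_ky_k/N^k))$ depends on $x_ky_k$ only modulo $N^k$ when... more precisely one must check that if $x_k$ is replaced by $x_k+N$ the factor is unchanged, which holds since $\exp(i2\pi(Ny_k/N^k))=\exp(i2\pi(y_k/N^{k-1}))$ — here one has to be slightly careful, so the cleanest route is to regard $x_k\in\mathbb{Z}_N=\mathbb{Z}/N\mathbb{Z}$ and $y_k\in\mathbb{Z}_N$ and observe the pairing $\mathbb{Z}_N\times\mathbb{Z}_N\to\mathbb{T}$ in the $k$-th slot is $(x_k,y_k)\mapsto\exp(i2\pi x_ky_k/N)$ up to the normalization $N^{-k}$; I will pin down the correct interpretation so that \eqref{eq:j2}--\eqref{eq:j3} become a triviality.

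Next I would prove the duality statements \eqref{eq:j4}--\eqref{eq:j5}. The key structural observation is $\Omega_N^*=\bigcup_{k=1}^\infty N^{-k}\mathbb{Z}$ with the nested chain \eqref{eq:j51}, exhibiting $\Omega_N^*$ as a direct limit (increasing union) of copies of $\mathbb{Z}$, hence a countable discrete abelian group; dually $\Omega_N=(\mathbb{Z}_N)^{\mathbb{N}}$ is the inverse limit of the finite groups $(\mathbb{Z}_N)^k\cong\mathbb{Z}/N^k\mathbb{Z}$ (via the truncation maps $\omega\mapsto\omega|_k$ and the identification of finite words of length $k$ with $\mathbb{Z}/N^k\mathbb{Z}$ through base-$N$ expansion), hence a compact abelian group. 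I would then invoke the general principle that Pontryagin duality converts inverse limits of finite groups into direct limits of their (finite) duals and vice versa: $\widehat{\varprojlim G_k}=\varinjlim\widehat{G_k}$. Since $\widehat{\mathbb{Z}/N^k\mathbb{Z}}\cong\mathbb{Z}/N^k\mathbb{Z}\cong N^{-k}\mathbb{Z}/\mathbb{Z}$ with the connecting maps matching \eqref{eq:j51}, one gets $\widehat{\Omega_N}\cong\Omega_N^*$, realized concretely by the pairing \eqref{eq:j1}; and then $\widehat{\Omega_N^*}\cong\Omega_N$ follows by Pontryagin reflexivity (or symmetrically, $\widehat{\varinjlim}=\varprojlim\widehat{}$ for discrete groups). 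The main point to check is that the explicit pairing \eqref{eq:j1} is nondegenerate and induces a topological isomorphism, not merely an abstract one — i.e. that every continuous character of $\Omega_N$ is of the form $x\mapsto\langle x,y\rangle$ for a unique $y\in\Omega_N^*$. A continuous character, being continuous on the compact group $\Omega_N$ with its product topology, must factor through some finite truncation $\Omega_N\to(\mathbb{Z}_N)^k$, and characters of $(\mathbb{Z}_N)^k$ are exactly given by $k$-tuples $(y_1,\dots,y_k)$ via $\exp(i2\pi\sum_{j\le k}x_jy_j/N^j)$ — this is the one genuinely substantive verification.

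Finally, for the Haar measure assertion (iii), I would argue that the infinite product measure $\bigotimes_{j=1}^\infty(\tfrac1N,\dots,\tfrac1N)$ on $(\mathbb{Z}_N)^{\mathbb{N}}$ is translation invariant: translation by a fixed element $a\in\Omega_N$ acts coordinatewise as a permutation (addition by $a_j$) of the uniform measure on $\mathbb{Z}_N$ in each slot, hence preserves each factor and therefore the product; by uniqueness of Haar measure on a compact group (normalized to total mass $1$), this product measure is the Haar measure. I expect the main obstacle to be not any single hard estimate but rather the bookkeeping in part (i)/(ii): making the identification between finite words in $\mathbb{Z}_N$, the group $N^{-k}\mathbb{Z}/\mathbb{Z}$, and $\mathbb{Z}/N^k\mathbb{Z}$ completely precise so that the pairing \eqref{eq:j1} manifestly restricts correctly along the chain \eqref{eq:j51} and the connecting maps on both sides are genuinely dual to one another; once that dictionary is set up, the duality is a formal consequence of the standard limit-duality theorem for locally compact abelian groups, and I would simply cite a standard reference (e.g. a text on harmonic analysis / Pontryagin duality) rather than reprove it.
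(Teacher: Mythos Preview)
Your proposal is correct and follows essentially the same route as the paper: the paper's own proof consists of the single sentence ``The lemma follows from results in the literature \ldots and is left to the reader,'' together with the explicit identification $\tilde y = y_1/N + \cdots + y_k/N^k \in N^{-k}\mathbb{Z}$ of a finite word with an element of the direct-limit group. Your outline---bilinearity from the exponential law, duality via $\widehat{\varprojlim G_k}=\varinjlim\widehat{G_k}$ applied to the chain $\mathbb{Z}/N^k\mathbb{Z}$, and Haar measure by coordinatewise translation invariance of the uniform product measure---is exactly the standard argument those references supply, and you are right that the only genuine work is the bookkeeping needed to make the pairing \eqref{eq:j1} well defined with respect to the group laws on both sides; the paper does not carry this out either.
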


\begin{proof}
The lemma follows from results in the literature (see \cite{MR3250475,MR3394108}),
and is left to the reader. 

Identify a finite word $y=\left(y_{1},\cdots,y_{k}\right)\in\Omega_{N}^{*}$
($y_{j}\in\mathbb{Z}_{N}=\left\{ 0,1,\cdots,N-1\right\} $) with 
\begin{align}
\tilde{y} & =\frac{y_{1}N^{k-1}+\cdots+y_{k-1}N+y_{k}}{N^{k}}\label{eq:j52}\\
 & =y_{1}/N+\cdots+y_{k}/N^{k}\in N^{-k}\mathbb{Z};\nonumber 
\end{align}
see (\ref{eq:j51}). Set 
\begin{equation}
S_{y}=S_{y_{1}}S_{y_{2}}\cdots S_{y_{k}};\label{eq:j53}
\end{equation}
and if $x=\left(x_{j}\right)_{j=1}^{\infty}\in\Omega_{N}$ ($x_{j}\in\mathbb{Z}_{N}$),
define an automorphism action $\alpha\left(x\right)$ of $\mathcal{O}_{N}$
by its values on generators $S_{y}$ as follows: 
\begin{equation}
\alpha\left(x\right)S_{y}=\left\langle x,y\right\rangle S_{y};\label{eq:j54}
\end{equation}
called the gauge-action. 

In particular, 
\begin{equation}
\alpha\left(x\right)\left(S_{y}S_{y}^{*}\right)=S_{y}S_{y}^{*}.\label{eq:j55}
\end{equation}
The abelian $*$-subalgebra $\mathfrak{M}_{N}$ in $\mathcal{O}_{N}$
generated by the projections $\left\{ S_{y}S_{y}^{*}\right\} _{y\in\Omega_{N}^{*}}$
($\Omega_{N}^{*}=$ finite words) is $\mathfrak{M}_{N}=\left\{ M\in\mathcal{O}_{N}\mid\alpha\left(x\right)M=M,\;\forall x\in\Omega_{N}\right\} $. 
\end{proof}
\begin{rem}
It follows from Lemma \ref{lem:DU} that the projection valued measures
from Theorems \ref{thm:LP} and \ref{thm:pv} may be realized on the
compact group $\Omega_{N}$. 

For the study of Markov chains, the following extension of the lemma
will be useful: 
\end{rem}

\begin{lem}
Let $M$ be a fixed $N\times N$ matrix over $\mathbb{Z}$, and assume
its eigenvalues $\lambda_{j}$ satisfy $\left|\lambda_{j}\right|>1$. 

From the nested chain of groups we then obtain inductive, and projective
limits, in the form of discrete groups $\Omega_{M}^{*}$, and compact
dual $\Omega_{M}$. 

Case 1 (inductive)
\begin{equation}
\mathbb{Z}^{N}\big/M^{k+1}\mathbb{Z}^{N}\hookrightarrow\mathbb{Z}^{N}\big/M^{k}\mathbb{Z}^{N}\label{eq:j7}
\end{equation}
and the dual projective group formed from the groups
\begin{equation}
\left(M^{T}\right)^{k}\mathbb{Z}^{N}\label{eq:j8}
\end{equation}
where $M^{T}$ denotes the transposed matrix:
\[
\Omega_{M}^{*}=\bigcup_{k=1}^{\infty}M^{-k}\left(\mathbb{Z}_{N}\right);
\]
and note $\mathbb{Z}_{N}\subset M^{-1}\mathbb{Z}_{N}\subset M^{-2}\mathbb{Z}_{N}\subset\cdots\subset M^{-k}\mathbb{Z}_{N}\subset M^{-\left(k+1\right)}\mathbb{Z}_{N}\subset\cdots$. 

When $M$ is fixed, and pair $x=\left(x_{j}\right)$ and $y=\left(y_{j}\right)$
are infinite, resp., finite, words in $\mathbb{Z}^{N}\big/M\mathbb{Z}^{N}$,
then the Pontryagin duality is then 
\begin{equation}
\left\langle x,y\right\rangle _{M}:=\prod_{k=1}^{\infty}\exp\left(i2\pi\left(M^{T}\right)^{-k}x_{j}\cdot y_{j}\right).\label{eq:j9}
\end{equation}
\end{lem}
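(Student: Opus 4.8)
The plan is to imitate, almost verbatim, the construction already carried out in Lemma~\ref{lem:DU}, but with the scalar $N$ replaced throughout by the integer matrix $M$. First I would record the purely algebraic fact underlying the inductive tower~\eqref{eq:j7}: since $M$ has integer entries, $M\mathbb{Z}^{N}\subseteq\mathbb{Z}^{N}$, and since $\det M=\prod_{j}\lambda_{j}$ has modulus $>1$, in particular $M$ is nonsingular, so multiplication by $M$ descends to an \emph{injective} homomorphism $\mathbb{Z}^{N}/M^{k+1}\mathbb{Z}^{N}\hookrightarrow\mathbb{Z}^{N}/M^{k}\mathbb{Z}^{N}$ (the map $[v]\mapsto[Mv]$ is well defined and has trivial kernel by nonsingularity of $M$). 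Taking the union of this increasing tower of finite abelian groups gives the discrete group $\Omega_{M}^{*}=\bigcup_{k\ge1}M^{-k}(\mathbb{Z}^{N}/M\mathbb{Z}^{N})$, realized concretely as a subgroup of $(\mathbb{Q}\,{\rm localized\ at\ }\det M)^{N}/\mathbb{Z}^{N}$ — the matrix analogue of the dyadic-type rationals $\bigcup_{k}N^{-k}\mathbb{Z}$ in~\eqref{eq:j51}.

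Next I would dualize. Applying the Pontryagin functor to the inductive system of finite groups yields a projective system of finite dual groups, and because duality turns injections of finite groups into surjections, the transposed matrix $M^{T}$ governs the projective tower; its limit $\Omega_{M}:=\varprojlim \mathbb{Z}^{N}/(M^{T})^{k}\mathbb{Z}^{N}$ is a compact abelian group, and $\mathrm{dual}(\Omega_{M}^{*})=\Omega_{M}$, $\mathrm{dual}(\Omega_{M})=\Omega_{M}^{*}$ by continuity of Pontryagin duality on inductive/projective limits in the category of locally compact abelian groups. The explicit pairing~\eqref{eq:j9} is then checked to be well defined: for fixed finite $y$ only finitely many factors $\exp(i2\pi\,(M^{T})^{-k}x_{j}\cdot y_{j})$ differ from $1$ after reduction $\bmod\ \mathbb{Z}$, so the product converges (it is in fact a finite product once one unravels the reductions), and bilinearity — the matrix version of~\eqref{eq:j2}--\eqref{eq:j3} — follows from bilinearity of the dot product and multiplicativity of $\exp$. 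Nondegeneracy of the pairing, which is what identifies the two groups as each other's duals, comes from the fact that $\bigcap_{k}(M^{T})^{k}\mathbb{Z}^{N}=\{0\}$, valid precisely because every eigenvalue of $M^{T}$ (equivalently of $M$) exceeds $1$ in modulus, so $\|(M^{T})^{k}v\|\to\infty$ for $v\ne0$.

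I expect the main obstacle to be the eigenvalue hypothesis and where exactly it is used: it is \emph{not} needed for $M\mathbb{Z}^{N}\subseteq\mathbb{Z}^{N}$ (that is automatic for integer $M$) nor for injectivity of the tower maps (that needs only $\det M\ne0$), but it \emph{is} essential to make $\Omega_{M}$ a genuine projective limit with the separation property $\bigcap_{k}(M^{T})^{k}\mathbb{Z}^{N}=\{0\}$, and hence for the pairing to be nondegenerate and the duality to hold on the nose rather than up to a quotient. A secondary subtlety is bookkeeping the two transposes — the inductive side runs on $M$, the projective/dual side on $M^{T}$, and the pairing~\eqref{eq:j9} must be written so that $x$ (infinite word, in $\Omega_{M}$) pairs against $y$ (finite word, in $\Omega_{M}^{*}$) with $(M^{T})^{-k}$ acting on the $x$-component, exactly so that the pairing is $\mathbb{Z}$-valued in the exponent modulo $1$. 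Everything else is a routine transcription of Lemma~\ref{lem:DU}: the final remark that the projection-valued measures of Theorems~\ref{thm:LP} and~\ref{thm:pv} transport to $\Omega_{M}$ is then immediate, since $\Omega_{M}\cong(\mathbb{Z}^{N}/M\mathbb{Z}^{N})^{\mathbb{N}}$ as a measure space with the product Haar measure, putting us back in the setting of $\Omega_{N}$ with $N$ replaced by $|\det M|$.
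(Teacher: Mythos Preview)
Your proposal is considerably more detailed than the paper's own proof, which consists only of a citation to \cite{MR1869063,MR1889566,MR2030387} together with the single compatibility identity
\[
(M^{T})^{-k}x\cdot y=(M^{T})^{-(k+1)}x\cdot My
\]
in the quotient group, recorded precisely to show that the pairing \eqref{eq:j9} is well defined on the inductive limit. Your sketch is in the same spirit --- transcribe Lemma~\ref{lem:DU} with the scalar $N$ replaced by the matrix $M$ --- and essentially fills in what the cited references would contain; in particular your remark about where the hypothesis $|\lambda_{j}|>1$ is actually needed (for $\bigcap_{k}(M^{T})^{k}\mathbb{Z}^{N}=\{0\}$ and hence nondegeneracy of the pairing) is a point the paper leaves entirely to the references.

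One genuine slip: you assert that $[v]\mapsto[Mv]$ is an \emph{injection} $\mathbb{Z}^{N}/M^{k+1}\mathbb{Z}^{N}\hookrightarrow\mathbb{Z}^{N}/M^{k}\mathbb{Z}^{N}$, but this is impossible on cardinality grounds, since $|\det M|^{k+1}>|\det M|^{k}$. The arrow direction in \eqref{eq:j7} as printed appears to be a typo in the paper; the inductive tower should run the other way, $\mathbb{Z}^{N}/M^{k}\mathbb{Z}^{N}\hookrightarrow\mathbb{Z}^{N}/M^{k+1}\mathbb{Z}^{N}$ via $[v]\mapsto[Mv]$ (well defined because $M\cdot M^{k}\mathbb{Z}^{N}=M^{k+1}\mathbb{Z}^{N}$, and injective because $Mv\in M^{k+1}\mathbb{Z}^{N}$ forces $v\in M^{k}\mathbb{Z}^{N}$ by nonsingularity of $M$). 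With that correction the rest of your argument stands, and the paper's displayed identity above is exactly the check that the pairing respects this corrected inductive map.
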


\begin{proof}
See, e.g., \cite{MR1869063,MR1889566,MR2030387}. 

Note that if $x$ and $y$ $\in\mathbb{Z}^{N}$, and $k\in\mathbb{N}$,
then in the quotient group we have 
\[
\left(M^{T}\right)^{-k}x\cdot y=\left(M^{T}\right)^{-\left(k+1\right)}x\cdot My.
\]
 
\end{proof}

\section{\label{sec:IFS}Iterated Function Systems (IFS), and $Rep\left(\mathcal{O}_{N},\mathscr{H}\right)$}

Recall, when $N$ is a fixed integer, at least 2, the corresponding
Cuntz algebra $\mathcal{O}_{N}$ has a rich family of representations
(see, e.g., \cite{Gli60,Gli61,MR0467330,MR1913212,MR2030387}). They
are studied in the previous two sections, with the use of the associated
projection-valued measures. As noted in Section \ref{subsec:ss},
some of the $\mathcal{O}_{N}$ representations correspond to iterated
function systems (IFSs), where the iteration of branching laws is
given by a system of $N$ prescribed endomorphisms in a measure space.
One reason the use of IFSs is powerful is that the framework allows
one to make precise iteration of \emph{self-similarity} in Cantor-dynamics,
and, more generally, in \emph{non-reversible dynamics}, as well as
the corresponding \textquotedblleft chaos-limits.\textquotedblright{}
(See \cite{MR625600,MR1656855,MR3217056,MR3687240}.) The setting
of IFS-systems includes a rich class of fractals, e.g., those corresponding
to affine IFSs, and others to complex dynamics.

Two themes are addressed in this section: (i) We present the correspondence
between \emph{representations of the Cuntz algebra} $\mathcal{O}_{N}$,
on one hand, and IFSs with $N$ generating endomorphisms, on the other.
(ii) Our focus will be a use of the $\mathcal{O}_{N}$ representations
\emph{in a realization of generalized Martin boundaries} for the IFSs
under consideration. For this purpose, it will be convenient to first
fix an alphabet $A$, of size $N$. We then consider kernels indexed
by both finite words in $A$, as well as by infinite words; see Section
\ref{sec:PVM} for details.

In Theorem \ref{thm:qint} below, we show that such a boundary theory
may be derived from the random variables $Y$ which we introduced
in Section \ref{sec:PVR}. In broad outline, our boundary representations
will be obtained as limits of kernels indexed initially by finite
words in the alphabet A; -- the limit referring to finite vs infinite
words in the symbolic representations. This theme will be expanded
further in Section \ref{sec:uh} below.

The present section concludes with a number of explicit examples.\\

Let $\left(M,d\right)$ be a compact metric space, $N\in\mathbb{N}$
fixed, $N\geq2$, 
\begin{equation}
p_{1},\cdots,p_{N},\;,p_{i}>0,\;\sum_{i=1}^{N}p_{i}=1,\text{ fixed.}\label{eq:d1}
\end{equation}
Let $\tau_{i}:M\rightarrow M$, $1\leq i\leq N$, be a system of strict
contractions in $\left(M,d\right)$. Let $\Omega_{N}=\left\{ 1,2,\cdots,N\right\} ^{\mathbb{N}}$,
and let 
\begin{equation}
\mathbb{P}=\vartimes{}_{1}^{\infty}p=\underset{\aleph_{0}\text{ product measure}}{\underbrace{p\times p\times p\cdots\cdots}}\label{eq:d2}
\end{equation}
(see \cite{MR0014404,MR562914}.)

In this section, we construct random variables $Y$ with values in
$M$ (some measure space $\left(M,\mathscr{B}_{M}\right)$), so $Y:\Omega\rightarrow M$,
such that the corresponding distribution $\mu:=\mathbb{P}\circ Y^{-1}$
satisfies 
\[
\mu=\sum_{i=1}^{N}p_{i}\mu\circ\tau_{i}^{-1}.
\]
Here $\mathbb{P}$ is the infinite -product measure (\ref{eq:d2}).
\begin{example}[A Julia construction]
Although the early analysis (e.g., \cite{MR625600}) of many of the
iterated function systems (IFSs) focused on iteration of systems of
affine maps in some ambient $\mathbb{R}^{d}$, there is also a rich
literature dealing with complex dynamics, and iteration of conformal
maps, see e.g., \cite{MR2193309}. Also in these cases, there are
IFS measures, see Theorem \ref{thm:pp} \ref{enu:pp2}. In the simplest
cases these Julia iteration limits arise from an iteration of branches
of the inverse of complex polynomials. The corresponding IFS limits
are typically Julia sets; named after Gaston Julia. Examples are included
in Fig \ref{fig:ju}.
\end{example}

\begin{figure}[H]
\subfloat[$c=0.125+0.625i$]{\includegraphics[width=0.35\textwidth]{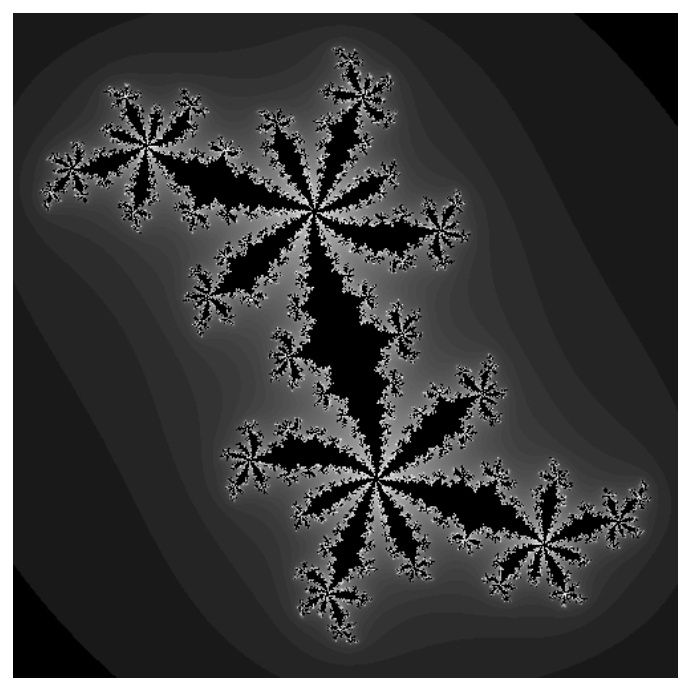}

}\qquad\subfloat[$c=0.375-0.125i$]{\includegraphics[width=0.35\textwidth]{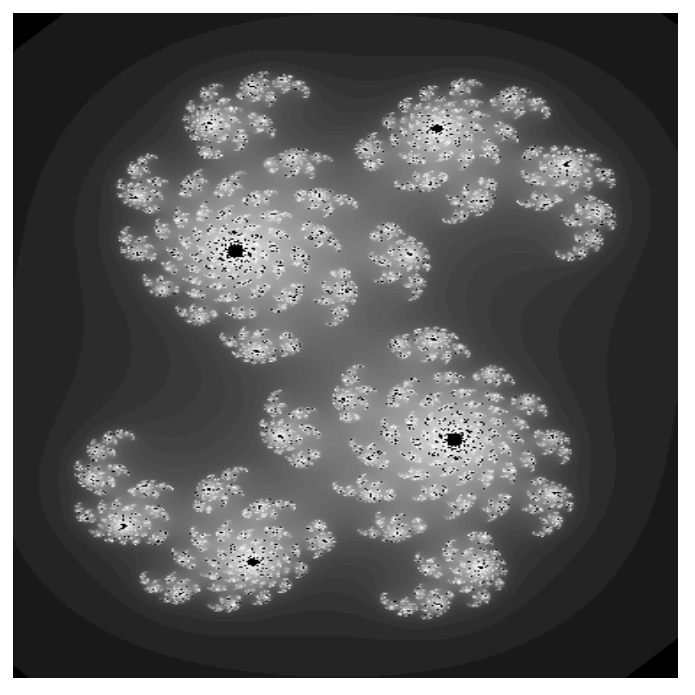}

}\caption{\label{fig:ju}$\mathbb{C}\ni z\rightarrow z^{2}+c$ ($c\in\mathbb{C}\backslash\left\{ 0\right\} $
fixed), $\tau_{\pm}:z\rightarrow\pm\sqrt{z-c}$.}

\end{figure}

\begin{thm}
\label{thm:pp}For points $\omega=\left(i_{1},i_{2},i_{3},\cdots\right)\in\Omega_{N}$
and $k\in\mathbb{N}$, set 
\begin{align}
\omega\big|_{k} & =\left(i_{1},i_{2},\cdots,i_{k}\right),\;\text{and}\label{eq:d3}\\
\tau_{\omega|_{k}} & =\tau_{i_{1}}\circ\tau_{i_{2}}\circ\cdots\circ\tau_{i_{k}}.\label{eq:d4}
\end{align}
Then $\bigcap_{k=1}^{\infty}\tau_{\omega|_{k}}$$\left(M\right)$
is a singleton, say $\left\{ x\left(\omega\right)\right\} $. Set
$Y\left(\omega\right)=x\left(\omega\right)$, i.e., 
\begin{equation}
\left\{ Y\left(\omega\right)\right\} =\bigcap_{k=1}^{\infty}\tau_{\omega|_{k}}\left(M\right);\label{eq:d5}
\end{equation}
then: 
\begin{enumerate}
\item $Y:\Omega_{N}\rightarrow M$ is an $\left(M,d\right)$-valued random
variable.
\item \label{enu:pp2}The distribution of $Y$, i.e., the measure 
\begin{equation}
\mu=\mathbb{P}\circ Y^{-1}\label{eq:d6}
\end{equation}
is the unique Borel probability measure on $\left(M,d\right)$ satisfying:
\begin{equation}
\mu=\sum_{i=1}^{N}p_{i}\mu\circ\tau_{i}^{-1};\label{eq:d7}
\end{equation}
equivalently, 
\begin{equation}
\int_{M}fd\mu=\sum_{i=1}^{N}p_{i}\int_{M}\left(f\circ\tau_{i}\right)d\mu,\label{eq:d8}
\end{equation}
holds for all Borel functions $f$ on $M$.
\item The support $M_{\mu}=supp\left(\mu\right)$ is the minimal closed
set (IFS), $\neq\emptyset$, satisfying 
\begin{equation}
M_{\mu}=\bigcup_{i=1}^{\infty}\tau_{i}\left(M_{\mu}\right).\label{eq:d9}
\end{equation}
\end{enumerate}
\end{thm}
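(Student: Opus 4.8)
\textbf{Proof plan for Theorem \ref{thm:pp}.}
The plan is to exploit the contractivity of the maps $\tau_i$ together with compactness of $M$ to produce the random variable $Y$, and then to identify its distribution as the unique self-similar (Hutchinson) measure. First I would establish that $\bigcap_{k}\tau_{\omega|_{k}}(M)$ is a singleton for each $\omega\in\Omega_N$. Since each $\tau_i$ is a strict contraction on the compact metric space $(M,d)$, there is a common Lipschitz constant $c<1$ with $d(\tau_i(x),\tau_i(y))\le c\, d(x,y)$ for all $i$; hence $\operatorname{diam}\tau_{\omega|_{k}}(M)\le c^{k}\operatorname{diam}(M)\to 0$. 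The sets $\tau_{\omega|_{k}}(M)$ are nonempty, compact, and nested decreasing because $\tau_{\omega|_{k+1}}(M)=\tau_{\omega|_{k}}\bigl(\tau_{i_{k+1}}(M)\bigr)\subseteq\tau_{\omega|_{k}}(M)$. By Cantor's intersection theorem the intersection is a nonempty compact set of diameter $0$, i.e.\ a single point $x(\omega)$; this defines $Y(\omega)=x(\omega)$. Measurability of $Y$ (claim (i)) follows because $Y$ is a uniform limit of the simple $\mathscr{C}$-measurable functions $\omega\mapsto$ (some chosen point of $\tau_{\omega|_{k}}(M)$), which are constant on the cylinder sets $E_{\omega|_{k}}$; alternatively $Y$ is continuous from the compact metric space $\Omega_N$ to $M$, since $d(Y(\omega),Y(\omega'))\le c^{k}\operatorname{diam}(M)$ whenever $\omega|_{k}=\omega'|_{k}$.

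Next I would prove the self-similarity identity (\ref{eq:d7}). The key structural observation is the factorization $Y\circ\hat\tau_i=\tau_i\circ Y$ on $\Omega_N$, where $\hat\tau_i(\omega)=(i,i_1,i_2,\dots)$ is the prefix-insertion map from (\ref{eq:e8}): indeed $\tau_{(\hat\tau_i\omega)|_{k+1}}(M)=\tau_i\bigl(\tau_{\omega|_{k}}(M)\bigr)$, and intersecting over $k$ and using continuity of $\tau_i$ gives $\{Y(\hat\tau_i\omega)\}=\tau_i(\{Y(\omega)\})$. Combined with the product structure $\mathbb{P}=p\times p\times\cdots$, which satisfies $\mathbb{P}=\sum_{i=1}^{N}p_i\,\mathbb{P}\circ\hat\tau_i^{-1}$ (decomposing $\Omega_N$ according to its first coordinate), pushing forward through $Y$ yields
\begin{equation}
\mu=\mathbb{P}\circ Y^{-1}=\sum_{i=1}^{N}p_i\,\mathbb{P}\circ\hat\tau_i^{-1}\circ Y^{-1}=\sum_{i=1}^{N}p_i\,\mathbb{P}\circ(Y\circ\hat\tau_i)^{-1}=\sum_{i=1}^{N}p_i\,\mathbb{P}\circ Y^{-1}\circ\tau_i^{-1}=\sum_{i=1}^{N}p_i\,\mu\circ\tau_i^{-1},
\end{equation}
which is (\ref{eq:d7}); the equivalent integral form (\ref{eq:d8}) is just a change of variables.

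For uniqueness in claim (ii), I would run the standard Hutchinson contraction argument on the space $\mathcal{P}(M)$ of Borel probability measures equipped with the Monge--Kantorovich (Wasserstein) metric $W$, which is complete because $(M,d)$ is compact. The map $\nu\mapsto\sum_i p_i\,\nu\circ\tau_i^{-1}$ is a contraction with ratio $c<1$ in $W$ — this follows by coupling: given an optimal coupling of $\nu,\nu'$, transport it through the $\tau_i$'s with weights $p_i$ — so by the Banach fixed point theorem it has a unique fixed point, which must be $\mu$. Finally, for claim (iii), applying (\ref{eq:d7}) to the support shows $\operatorname{supp}(\mu)=\overline{\bigcup_i\tau_i(\operatorname{supp}\mu)}$; conversely, if $K\neq\emptyset$ is closed with $K=\bigcup_i\tau_i(K)$ then $K$ is compact and $Y(\Omega_N)\subseteq K$ (each $\tau_{\omega|_{k}}(M)$ eventually shrinks into any neighborhood of $K$ once one checks $\tau_i(K)\subseteq K$ propagates; more cleanly, $K$ is $\hat\tau$-invariant so contains the attractor), whence $\operatorname{supp}\mu=\overline{Y(\Omega_N)}\subseteq K$, giving minimality. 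I expect the main obstacle to be purely expository: being careful that the intersection in (\ref{eq:d9}) should be over $i=1,\dots,N$ (the excerpt prints $\bigcup_{i=1}^{\infty}$, which I would treat as a typo), and making the Wasserstein-completeness and contraction estimates precise without belaboring them, since they are classical (Hutchinson, Barnsley).
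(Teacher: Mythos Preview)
Your proposal is correct and follows essentially the same route as the paper: nested compact sets with diameters shrinking to zero for the singleton claim, the intertwining relation $Y\circ\hat\tau_i=\tau_i\circ Y$ combined with the product-measure decomposition $\mathbb{P}=\sum_i p_i\,\mathbb{P}\circ\hat\tau_i^{-1}$ for (\ref{eq:d7}). Where you supply explicit arguments (continuity of $Y$, the Hutchinson contraction on $\mathcal{P}(M)$ for uniqueness, and the support characterization), the paper simply cites standard IFS references, so your write-up is in fact more self-contained; your observation that $\bigcup_{i=1}^{\infty}$ in (\ref{eq:d9}) should read $\bigcup_{i=1}^{N}$ is also correct.
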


\begin{figure}[h]
\[
E\left(\omega|_{1}\right)\supset E\left(\omega|_{2}\right)\supset\cdots\supset E\left(\omega|_{k}\right)\supset E\left(\omega|_{k+1}\right)\supset\cdots
\]

\includegraphics[width=0.5\textwidth]{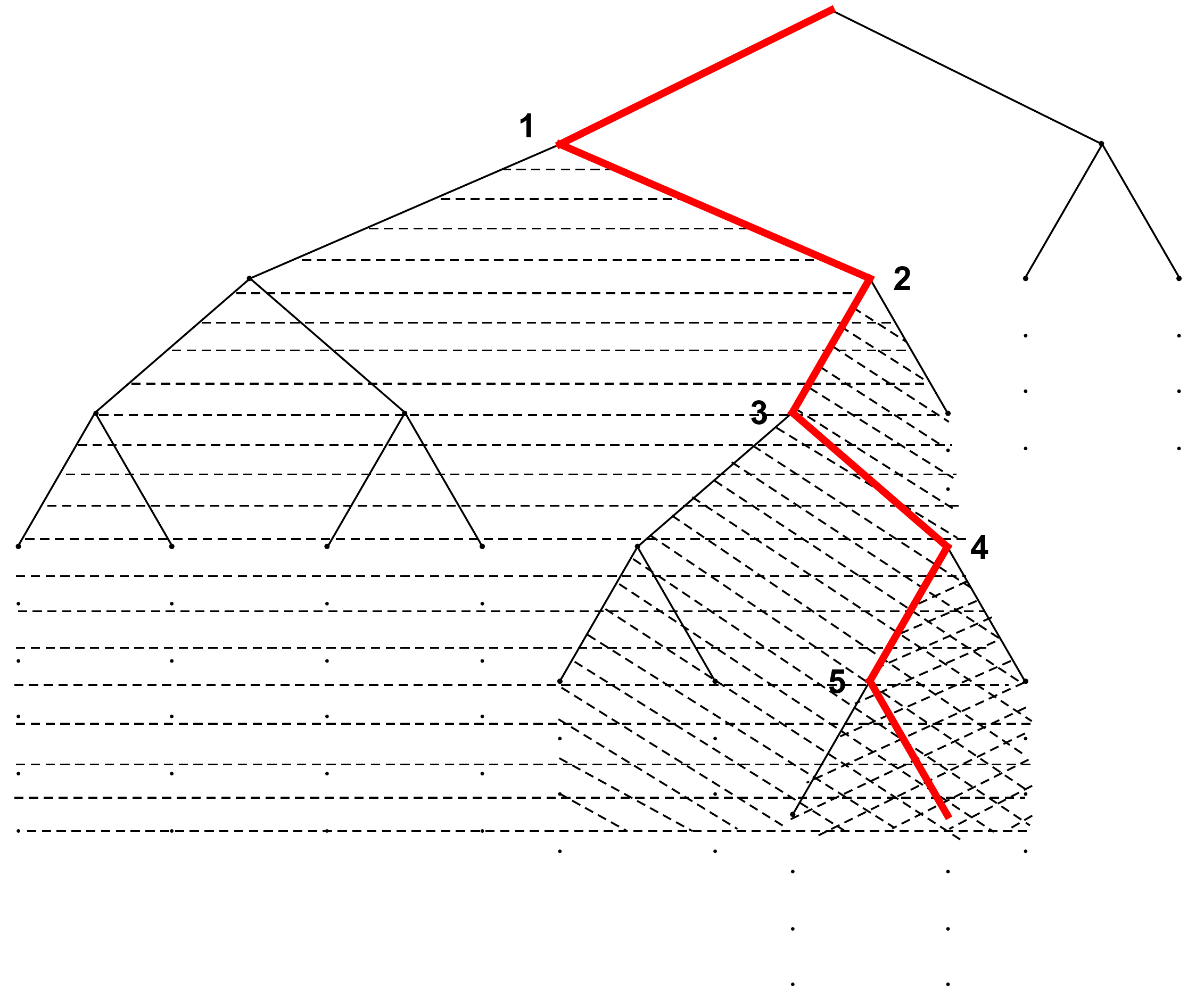}

\caption{\label{fig:cyl2}$\left\{ \omega\right\} =\bigcap_{k=1}^{\infty}E\left(\omega|_{k}\right)$.
Monotone families of \emph{tail sets}. Let $\Omega_{N}$ be the set
of all infinite words, i.e., the infinite Cartesian product. Start
with a fixed infinite word $\omega$, so $\omega$ in $\Omega_{N}$
(highlighted in \ref{fig:cyl2}.) For every positive $k$, we truncate
$\omega$, thus forming a finite word $\omega|_{k}$. Then the set
$E\left(\omega|_{k}\right)$ is the set of all infinite words that
begin with $\omega|_{k}$, but unrestricted after $k$. The intersection
in $k$ of all these sets $E\left(\omega|_{k}\right)$ is then the
singleton $\left\{ \omega\right\} $.}
\end{figure}

\begin{proof}
We shall make use of standard facts from the theory of iterated function
systems (IFS), and their measures; see e.g., \cite{MR625600,MR2431670,MR1656855}.

\emph{Proof of (\ref{eq:d5})}. We use that when $\omega\in\Omega_{N}$
is fixed then the sets $\tau_{\omega|_{k}}\left(M\right)$ is a monotone
family of compact subsets 
\begin{equation}
\tau_{\omega|_{k+1}}\left(M\right)\subset\tau_{\omega|_{k}}\left(M\right),\label{eq:d10}
\end{equation}
and since $\tau_{i}$ is strictly contractive for all $i$, we get
\begin{equation}
\lim_{k\rightarrow\infty}diameter\left(\tau_{\omega|_{k}}\left(M\right)\right)=0,\label{eq:d11}
\end{equation}
and so (\ref{eq:d5}) follows; i.e., the intersection $\bigcap_{i=1}^{\infty}$
is a singleton depending only on $\omega$. 

\emph{Monotonicity}: This conclusion again follows from the assumptions
placed on $\left\{ \tau_{i}\right\} _{i=1}^{N}$, but we shall specify
the respective $\sigma$-algebras, the one on $\Omega_{N}$ and the
one on $M$. 

The $\sigma$-algebra of subsets of $\Omega_{N}$ will be generated
by cylinder sets: If $f=\left(i_{1},i_{2},\cdots,i_{k}\right)$ is
a \emph{finite word}, the corresponding cylinder set $E\left(f\right)\subset\Omega_{N}$
is 
\begin{equation}
E\left(f\right)=\left\{ \omega\in\Omega_{N}\mid\omega_{j}=i_{j},\;1\leq j\leq k\right\} .\label{eq:d12}
\end{equation}
On $M$, we pick the Borel $\sigma$-algebra determined from the fixed
metric $d$ on $M$. The measure $\mathbb{P}=\mathbb{P}_{p}$ is specified
by its values on cylinder sets; i.e, set 
\begin{equation}
\mathbb{P}\left(E\left(f\right)\right)=p_{i_{1}}p_{i_{2}}\cdots p_{i_{k}}=:p_{f}\label{eq:d13}
\end{equation}
where the numbers $p_{1},\cdots,p_{N}$ are as in (\ref{eq:d1}). 

\emph{Proof of (\ref{eq:d7})}. The argument is based on the following:
On $\Omega_{N}$, introduce the \emph{shifts} $\hat{\tau}_{b}\left(i_{1},i_{2},i_{3},\cdots\right)=\left(b,i_{1},i_{2},i_{3},\cdots\right)$,
$b\in\left\{ 1,2,\cdots,N\right\} $, and let $Y$ be as in (\ref{eq:d5})-(\ref{eq:d6}).
Then 
\begin{equation}
\tau_{b}Y=Y\hat{\tau}_{b},\label{eq:d14}
\end{equation}
or equivalently, 
\[
\xymatrix{\Omega_{N}\ar[rr]^{Y}\ar[d]_{\hat{\tau}_{b}} &  & M\ar[d]^{\tau_{b}}\\
\Omega_{N}\ar[rr]_{Y} &  & M
}
\]
\begin{equation}
\tau_{b}\left(Y\left(\omega\right)\right)=Y\left(\hat{\tau}_{b}\left(\omega\right)\right),\;\forall\omega\in\Omega_{N}.\label{eq:d15}
\end{equation}
Now (\ref{eq:d15}) is immediate from (\ref{eq:d5}).

We now show (\ref{eq:d8}), equivalently (\ref{eq:d7}). Let $f$
be a Borel function on $M$, then 
\begin{alignat*}{2}
\int_{M}f\,d\mu & =\int_{\Omega_{N}}\left(f\circ Y\right)d\mathbb{P} & \quad & \text{\ensuremath{\left(\text{by }\left(\ref{eq:d6}\right)\right)}}\\
 & =\sum_{i=1}^{N}p_{i}\int_{\Omega_{N}}f\circ Y\circ\hat{\tau}_{i}\:d\mathbb{P} &  & \left(\begin{matrix}\text{since \ensuremath{\mathbb{P}} is the product}\\
\text{ measure \ensuremath{\vartimes_{1}^{\infty}p}, see \ensuremath{\left(\ref{eq:d13}\right)}}
\end{matrix}\right)\\
 & =\sum_{i=1}^{N}p_{i}\int_{\Omega_{N}}f\circ\tau_{i}\circ Y\:d\mathbb{P} &  & \text{\ensuremath{\left(\text{by }\left(\ref{eq:d14}\right)\right)}}\\
 & =\sum_{i=1}^{N}p_{i}\int_{M}f\circ\tau_{i}\:d\mu &  & \text{\ensuremath{\left(\text{by }\left(\ref{eq:d6}\right)\right)}}
\end{alignat*}
which is the desired conclusion. 
\end{proof}
\begin{figure}
\includegraphics[width=0.8\textwidth]{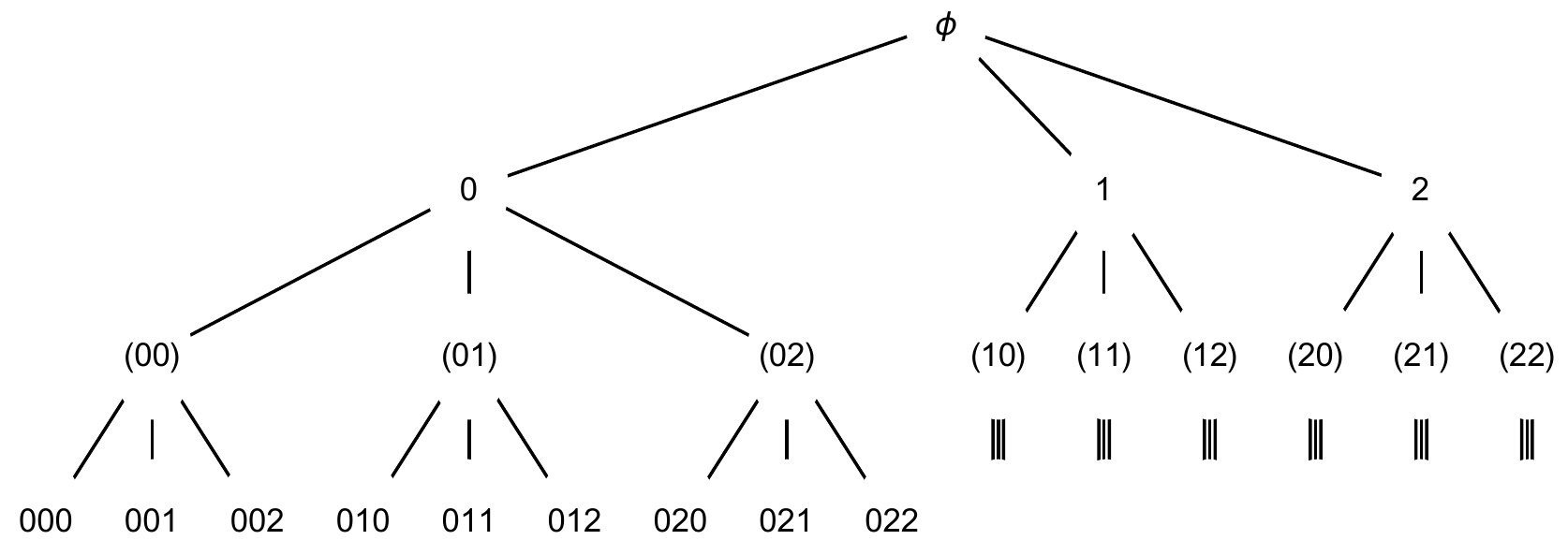}

\vspace{1em}

\includegraphics[width=0.5\textwidth]{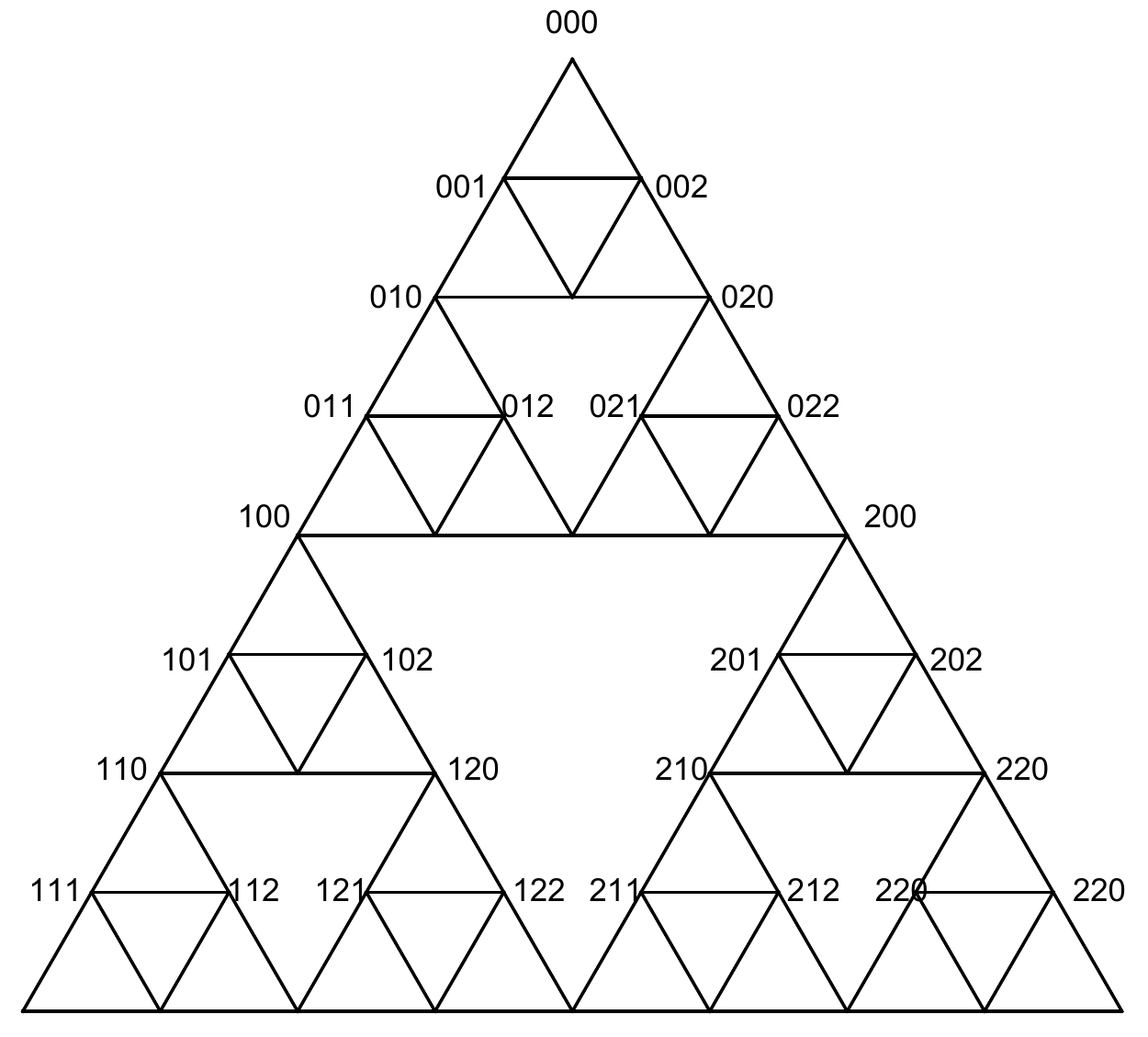}

\caption{\label{fig:sg}\emph{Encoding of words} into IFS. Infinite words $\omega\in\Omega$
$\protect\longrightarrow$ singletons in the Sierpinski gasket. }
\end{figure}

Using $\Omega_{N}=\left\{ 1,2,\cdots,N\right\} ^{\mathbb{N}}$ for
encoding iterated function systems (IFS).
\begin{example}[\emph{Sierpinski gasket}]
$M=\left[0,1\right]\times\left[0,1\right]$ with the usual metric,
\[
\tau_{0}\left(x,y\right)=\left(\frac{x}{2},\frac{y}{2}\right),\quad\tau_{1}\left(x,y\right)=\left(\frac{x+1}{2},\frac{y}{2}\right),\quad\tau_{2}\left(x,y\right)=\left(\frac{x}{2},\frac{y+1}{2}\right),
\]
and so the Sierpinski gasket $M_{Si}$ satisfies 
\[
M_{Si}=\tau_{0}\left(M_{Si}\right)\bigcup\tau_{1}\left(M_{Si}\right)\bigcup\tau_{2}\left(M_{Si}\right).
\]
See Figure \ref{fig:sg}. 
\end{example}

\subsection{The Projection Valued Path Space Measure Corresponding to IFS Representations}

In Theorem \ref{thm:pp}, we introduced the class of contractive iterated
function systems (IFSs), $\left\{ \tau_{i}\right\} _{i=1}^{N}$. We
shall point out that, for each of these IFSs, there is a \emph{natural
representation} of $\mathcal{O}_{N}$; as follows: 

For simplicity, we shall assume in addition to the conditions listed
in Theorem \ref{thm:pp}, that we also have non-overlap as follows:
If $i\neq j$, then we assume 
\begin{equation}
\mu\left(\tau_{i}\left(M\right)\cap\tau_{j}\left(M\right)\right)=0.\label{eq:g1}
\end{equation}

We also fix weights $\left\{ p_{i}\right\} _{i=1}^{N}$, and we let
$\mu$ be the corresponding IFS-measure, see (\ref{eq:d7}) in the
theorem. Also we recall the associated endomorphism $\sigma$ in $\left(M,d\right)$
satisfying: 
\begin{equation}
\sigma\circ\tau_{i}=id_{M},\;\forall i=1,\cdots,N.\label{eq:g2}
\end{equation}
Once (\ref{eq:g1}) is assumed, it is easy to construct $\sigma$
such that (\ref{eq:g2}) holds, i.e., the system $\left\{ \tau_{i}\right\} _{i=1}^{N}$
constitutes branches of the inverse for $\sigma$, and 
\begin{equation}
\sigma^{-1}\left(E\right)=\bigcup_{i=1}^{N}\tau_{i}\left(E\right)\label{eq:g3}
\end{equation}
for all Borel subsets $E\subset M$. 
\begin{prop}
\label{prop:ifs}Let $\left(\left\{ \tau_{i}\right\} _{i=1}^{N},\left\{ p_{i}\right\} _{i=1}^{N},\mu\right)$
be as stated; and set $\mathscr{H}=L^{2}\left(M,\mu\right)$. Then
the following operators $\left\{ S_{i}\right\} _{i=1}^{N}$, $\left\{ S_{i}^{*}\right\} _{i=1}^{N}$
constitute a representation 
\begin{equation}
\pi\in Rep\left(\mathcal{O}_{N},L^{2}\left(\mu\right)\right).\label{eq:g4}
\end{equation}
We set, for $f\in L^{2}\left(\mu\right)$: 
\begin{equation}
S_{i}f=\frac{1}{\sqrt{p_{i}}}\chi_{\tau_{i}\left(M\right)}f\circ\sigma\label{eq:g5}
\end{equation}
and 
\begin{equation}
S_{i}^{*}f=\sqrt{p_{i}}f\circ\tau_{i}.\label{eq:g6}
\end{equation}
\end{prop}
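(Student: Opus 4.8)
The plan is to verify the defining relations \eqref{eq:a7} directly from the formulas \eqref{eq:g5} and \eqref{eq:g6}, using the non-overlap hypothesis \eqref{eq:g1}, the branching relation \eqref{eq:g3}, and the fixed-point identity \eqref{eq:d7} for the measure $\mu$. First I would record the change-of-variables identity that flows from \eqref{eq:d7}: for Borel $h\geq0$ on $M$ one has $\int_{M}h\,d\mu=\sum_{i}p_{i}\int_{M}h\circ\tau_{i}\,d\mu$, and since $\sigma\circ\tau_{i}=\mathrm{id}_{M}$ this specializes, when $h=|f|^{2}$, to control of $\|S_{i}f\|^{2}$ and $\|S_{i}^{*}f\|^{2}$. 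Concretely, $\|S_{i}f\|_{L^{2}(\mu)}^{2}=\frac{1}{p_{i}}\int_{\tau_{i}(M)}|f\circ\sigma|^{2}\,d\mu=\frac{1}{p_{i}}\cdot p_{i}\int_{M}|f|^{2}\,d\mu=\|f\|^{2}$ after substituting $\mu|_{\tau_{i}(M)}$ by $p_{i}(\mu\circ\tau_{i}^{-1})$ and using $\sigma\circ\tau_{i}=\mathrm{id}$; so each $S_{i}$ is an isometry, which gives $S_{i}^{*}S_{i}=I$.

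Next I would check $S_{i}^{*}S_{j}=\delta_{ij}I$ for $i\neq j$. Compute $S_{i}^{*}S_{j}f=\sqrt{p_{i}}\,(S_{j}f)\circ\tau_{i}=\sqrt{p_{i}}\cdot\frac{1}{\sqrt{p_{j}}}\,(\chi_{\tau_{j}(M)}\circ\tau_{i})\,(f\circ\sigma\circ\tau_{i})=\sqrt{p_{i}/p_{j}}\,(\chi_{\tau_{j}(M)}\circ\tau_{i})\,f$, using $\sigma\circ\tau_{i}=\mathrm{id}$. The function $\chi_{\tau_{j}(M)}\circ\tau_{i}$ is the indicator of $\tau_{i}^{-1}(\tau_{j}(M))$, whose image under $\tau_{i}$ is $\tau_{i}(M)\cap\tau_{j}(M)$; by the non-overlap assumption \eqref{eq:g1} this set is $\mu$-null, and since $\mu\circ\tau_{i}^{-1}\ll\mu$ (a consequence of \eqref{eq:d7}) the set $\tau_{i}^{-1}(\tau_{j}(M))$ is also $\mu$-null. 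Hence $S_{i}^{*}S_{j}=0$ in $L^{2}(\mu)$, which together with the isometry computation gives the first Cuntz relation.

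For the second relation $\sum_{i}S_{i}S_{i}^{*}=I$, I would compute $S_{i}S_{i}^{*}f=\frac{1}{\sqrt{p_{i}}}\chi_{\tau_{i}(M)}\,(S_{i}^{*}f)\circ\sigma=\frac{1}{\sqrt{p_{i}}}\chi_{\tau_{i}(M)}\,\sqrt{p_{i}}\,(f\circ\tau_{i}\circ\sigma)=\chi_{\tau_{i}(M)}\,(f\circ\tau_{i}\circ\sigma)$. On $\tau_{i}(M)$ the map $\tau_{i}\circ\sigma$ acts as the identity $\mu$-a.e.\ (it is a retraction onto $\tau_{i}(M)$ and $\sigma$ is a left inverse of $\tau_{i}$), so $S_{i}S_{i}^{*}f=\chi_{\tau_{i}(M)}f$, i.e.\ $S_{i}S_{i}^{*}$ is multiplication by $\chi_{\tau_{i}(M)}$. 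Summing over $i$ and invoking \eqref{eq:g3} with $E=M$, together with the non-overlap \eqref{eq:g1}, gives $\sum_{i}\chi_{\tau_{i}(M)}=\chi_{\bigcup_{i}\tau_{i}(M)}=\chi_{M}=1$ in $L^{2}(\mu)$, since $\mu$ is supported on $\bigcup_{i}\tau_{i}(M)=M_{\mu}$ by part (iii) of Theorem \ref{thm:pp}. This yields $\sum_{i}S_{i}S_{i}^{*}=I$ and completes the verification that $\pi\in Rep(\mathcal{O}_{N},L^{2}(\mu))$.

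The main obstacle I anticipate is bookkeeping the almost-everywhere statements carefully: one must confirm that $S_{i}^{*}$ as defined in \eqref{eq:g6} is genuinely the Hilbert-space adjoint of $S_{i}$ in \eqref{eq:g5} (a short pairing computation $\langle S_{i}f,g\rangle=\langle f,S_{i}^{*}g\rangle$ using the same change of variables), and that $\tau_{i}\circ\sigma=\mathrm{id}$ holds $\mu$-a.e.\ on $\tau_{i}(M)$ rather than everywhere, which is where \eqref{eq:g1} and the absolute continuity $\mu\circ\tau_{i}^{-1}\ll\mu$ are essential. Everything else is a direct substitution once these measure-theoretic points are pinned down.
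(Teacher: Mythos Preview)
Your proposal is correct and follows essentially the same approach as the paper: verify the Cuntz relations directly by computing $S_{i}^{*}S_{j}$ via the identity $\chi_{\tau_{j}(M)}\circ\tau_{i}=\delta_{ij}\mathbbm{1}$ (the paper's eq.~\eqref{eq:g7}), check the adjoint relation $\langle S_{i}f,g\rangle=\langle f,S_{i}^{*}g\rangle$ using the IFS identity \eqref{eq:d7}, and obtain $S_{i}S_{i}^{*}=$ multiplication by $\chi_{\tau_{i}(M)}$ so that the sum is the identity. Your write-up is in fact more explicit than the paper's about the measure-theoretic a.e.\ issues (absolute continuity $\mu\circ\tau_{i}^{-1}\ll\mu$, and $\tau_{i}\circ\sigma=\mathrm{id}$ only $\mu$-a.e.\ on $\tau_{i}(M)$), which the paper leaves implicit.
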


\begin{proof}
Let $S_{i}$, $S_{i}^{*}$ denote the system of $2N$ operators in
$L^{2}\left(\mu\right)$, given in (\ref{eq:g5})-(\ref{eq:g6}).
It is immediate that $S_{i}^{*}S_{j}=\delta_{ij}I_{L^{2}\left(\mu\right)}$.
For this we use that 
\begin{equation}
\chi_{\tau_{i}\left(M\right)}\circ\tau_{j}=\delta_{ij}\mathbbm{1},\label{eq:g7}
\end{equation}
the constant function $\mathbbm{1}$ in $L^{2}\left(\mu\right)$.

A direct computation using (\ref{eq:d7}) in Theorem \ref{thm:pp}
yields
\begin{equation}
\int_{M}f\,\left(S_{i}g\right)d\mu=\int_{M}\left(S_{i}^{*}f\right)g\,d\mu,\label{eq:g8}
\end{equation}
valid for all $f,g\in L^{2}\left(\mu\right)$.

Moreover, we have 
\begin{equation}
S_{i}S_{i}^{*}=\text{multiplication by \ensuremath{\chi_{\tau_{i}\left(M\right)}} in \ensuremath{L^{2}\left(M\right)}. }\label{eq:g9}
\end{equation}
Since $\bigcup_{i=1}^{N}\tau_{i}\left(M\right)=M$, as a disjoint
union, we also get 
\begin{equation}
\sum_{i=1}^{N}S_{i}S_{i}^{*}=I_{L^{2}\left(\mu\right)},\label{eq:g10}
\end{equation}
and so $\pi\in Rep\left(\mathcal{O}_{N},L^{2}\left(\mu\right)\right)$
as asserted in the Proposition. 
\end{proof}
\begin{cor}
\label{cor:ifs1}Let $\left\{ \tau_{i}\right\} _{i=1}^{N}$ and $\pi\in Rep\left(\mathcal{O}_{N},L^{2}\left(\mu\right)\right)$
be as in Proposition \ref{prop:ifs}; then for the projection-valued
path space measure $\mathbb{Q}_{\pi}$ in Theorem \ref{thm:pv}, we
have the following formula:

Let $f=\left(i_{1},\cdots,i_{N}\right)$ be a finite word, and let
$E_{f}$ denote the corresponding basic cylinder subset, $E_{f}\subseteq\Omega_{N}$,
then 
\begin{align}
\mathbb{Q}_{\pi}\left(E_{f}\right) & =\text{multiplication by the indicator }\nonumber \\
 & \quad\quad\text{function \ensuremath{\chi_{\tau_{i_{1}}\tau_{i_{2}}\cdots\tau_{i_{k}}\left(M\right)}}}.\label{eq:g11}
\end{align}
\end{cor}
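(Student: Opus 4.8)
The plan is to reduce the statement to a direct computation on basic cylinder sets, using the explicit formulas for $S_i$ and $S_i^{*}$ from Proposition~\ref{prop:ifs}. First I would recall from Theorem~\ref{thm:pv}, equation~(\ref{eq:e16}), that $\mathbb{Q}_{\pi}\left(E_{f}\right) = S_{f}S_{f}^{*} = S_{i_{1}}\cdots S_{i_{k}}S_{i_{k}}^{*}\cdots S_{i_{1}}^{*}$ for a finite word $f = \left(i_{1},\cdots,i_{k}\right)$, so the task is purely to identify this operator in the concrete representation on $L^{2}\left(M,\mu\right)$. From~(\ref{eq:g9}) we already know the length-one case: $S_{i}S_{i}^{*}$ is multiplication by $\chi_{\tau_{i}\left(M\right)}$. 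The main work is to show by induction on $k$ that $S_{f}S_{f}^{*}$ is multiplication by $\chi_{\tau_{i_{1}}\tau_{i_{2}}\cdots\tau_{i_{k}}\left(M\right)}$.

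For the inductive step, I would write $S_{f}S_{f}^{*} = S_{i_{1}}\left(S_{f'}S_{f'}^{*}\right)S_{i_{1}}^{*}$ where $f' = \left(i_{2},\cdots,i_{k}\right)$, and use the inductive hypothesis that $S_{f'}S_{f'}^{*}$ is multiplication by $\chi_{\tau_{i_{2}}\cdots\tau_{i_{k}}\left(M\right)} =: \chi_{B}$. Then for $g \in L^{2}\left(\mu\right)$, using~(\ref{eq:g5}) and~(\ref{eq:g6}),
\begin{align*}
S_{i_{1}}\left(\chi_{B}\cdot S_{i_{1}}^{*}g\right) & = \frac{1}{\sqrt{p_{i_{1}}}}\chi_{\tau_{i_{1}}\left(M\right)}\left(\left(\chi_{B}\cdot\sqrt{p_{i_{1}}}\,g\circ\tau_{i_{1}}\right)\circ\sigma\right)\\
 & = \chi_{\tau_{i_{1}}\left(M\right)}\cdot\left(\chi_{B}\circ\sigma\right)\cdot\left(g\circ\tau_{i_{1}}\circ\sigma\right).
\end{align*}
On the support of $\chi_{\tau_{i_{1}}\left(M\right)}$ we have $\tau_{i_{1}}\circ\sigma = \mathrm{id}$ by~(\ref{eq:g2}) (more precisely, $\sigma$ restricted to $\tau_{i_{1}}\left(M\right)$ is the inverse of $\tau_{i_{1}}$, valid $\mu$-a.e.\ thanks to the non-overlap condition~(\ref{eq:g1})), so $g\circ\tau_{i_{1}}\circ\sigma = g$ there, and $\chi_{\tau_{i_{1}}\left(M\right)}\cdot\left(\chi_{B}\circ\sigma\right) = \chi_{\tau_{i_{1}}\left(M\right)\cap\sigma^{-1}\left(B\right)}$. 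Since $\tau_{i_{1}}$ is injective and $\sigma\circ\tau_{i_{1}} = \mathrm{id}_{M}$, one has $\tau_{i_{1}}\left(M\right)\cap\sigma^{-1}\left(B\right) = \tau_{i_{1}}\left(B\right) = \tau_{i_{1}}\tau_{i_{2}}\cdots\tau_{i_{k}}\left(M\right)$, which closes the induction. Finally, the extension from basic cylinder sets to all of $\mathscr{C}$ is automatic: the projection-valued measure $\mathbb{Q}_{\pi}$ is uniquely determined by its values on the $E_{f}$ via the Kolmogorov-consistency construction of Theorem~\ref{thm:pv}, so no further argument is needed for the general statement.

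The step I expect to require the most care is the a.e.\ identity $\tau_{i_{1}}\circ\sigma = \mathrm{id}$ on $\tau_{i_{1}}\left(M\right)$: the map $\sigma$ is only a.e.-defined as a common left inverse of all the $\tau_{i}$, and one must invoke the non-overlap hypothesis~(\ref{eq:g1}) to know that the sets $\tau_{i}\left(M\right)$ partition $M$ up to $\mu$-null sets, so that $\sigma$ acts on $\tau_{i_{1}}\left(M\right)$ as $\tau_{i_{1}}^{-1}$ $\mu$-almost everywhere. Everything else is bookkeeping with indicator functions. I would present the argument as the short induction above, remarking at the outset that all equalities of multiplication operators are understood $\mu$-a.e.
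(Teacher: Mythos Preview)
Your proposal is correct and follows exactly the approach the paper has in mind: the paper's proof is the single line ``Immediate from Theorem~\ref{thm:pv},'' meaning that $\mathbb{Q}_{\pi}(E_f)=S_fS_f^{*}$ by~(\ref{eq:e16}), and the identification of $S_fS_f^{*}$ as multiplication by $\chi_{\tau_{i_1}\cdots\tau_{i_k}(M)}$ is precisely the induction you carry out from the base case~(\ref{eq:g9}). Your care with the a.e.\ identity $\tau_{i_1}\circ\sigma=\mathrm{id}$ on $\tau_{i_1}(M)$ via the non-overlap hypothesis~(\ref{eq:g1}) is the only subtlety, and you handle it correctly.
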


\begin{proof}
Immediate from Theorem \ref{thm:pv}.
\end{proof}
\needspace{5\baselineskip}
\begin{flushleft}
\textbf{A self-dual representation.}
\par\end{flushleft}
\begin{cor}
Let the setting be as in Proposition \ref{prop:ifs} and Corollary
\ref{cor:ifs1}. In particular, we fix $\big(N,\left\{ p_{i}\right\} _{i=1}^{N},\mu\big)$,
and the $\mathcal{O}_{N}$-representation $\pi^{\left(\mu\right)}:=\big\{ S_{i}^{\left(\mu\right)}\big\}_{i=1}^{N}$
as specified in (\ref{eq:g5})-(\ref{eq:g6}). This representation
is \uline{self-dual} in the following sense.

Let $\mathbbm{1}$ denote the constant function $\mathbbm{1}$ in
$L^{2}\left(M,\mu\right)$, and let $\mathbb{Q}_{\pi}$ be the corresponding
projection valued measure (see Corollary \ref{cor:QO}). Set 
\begin{equation}
\left\langle \mathbbm{1},\mathbb{Q}_{\pi}\left(\cdot\right)\mathbbm{1}\right\rangle _{L^{2}\left(\mu\right)}=\nu_{\pi}\left(\cdot\right),\label{eq:g12}
\end{equation}
as a measure on $\Omega_{N}$; then 
\begin{equation}
\nu_{\pi}\circ Y^{-1}=\mu.\label{eq:g13}
\end{equation}
\end{cor}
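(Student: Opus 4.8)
The plan is to compute the scalar measure $\nu_\pi$ explicitly on basic cylinder sets and then compare $\nu_\pi\circ Y^{-1}$ with $\mu$ via the characterizing fixed-point equation $(\ref{eq:d7})$ from Theorem \ref{thm:pp}. First I would invoke Corollary \ref{cor:ifs1}: for a finite word $f=(i_1,\dots,i_k)$ we have $\mathbb{Q}_\pi(E_f)=$ multiplication by $\chi_{\tau_{i_1}\cdots\tau_{i_k}(M)}$ on $L^2(\mu)$. Pairing against the constant function $\mathbbm{1}$ gives
\[
\nu_\pi(E_f)=\langle\mathbbm{1},\mathbb{Q}_\pi(E_f)\mathbbm{1}\rangle_{L^2(\mu)}=\int_M\chi_{\tau_{i_1}\cdots\tau_{i_k}(M)}\,d\mu=\mu\big(\tau_{i_1}\cdots\tau_{i_k}(M)\big).
\]
So $\nu_\pi$ is the measure on $\Omega_N$ determined on cylinders by $\nu_\pi(E_f)=\mu(\tau_f(M))$, where $\tau_f=\tau_{i_1}\circ\cdots\circ\tau_{i_k}$.

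Next I would push this measure forward through $Y$ and identify the result. The key point is that $Y(E_f)\subseteq\tau_f(M)$ by the defining formula $(\ref{eq:d5})$ (indeed $Y(E_f)=\overline{\tau_f(M_\mu)}$ up to the support), and more usefully that $Y^{-1}$ of a Borel set intersected appropriately behaves well under the grading by word length. The cleanest route is to show that $\lambda:=\nu_\pi\circ Y^{-1}$ satisfies the same self-similarity identity $\lambda=\sum_{i=1}^N p_i\,\lambda\circ\tau_i^{-1}$ that uniquely characterizes $\mu$. For this I would use the iterated relation $\sum_{i=1}^N\mathbb{Q}_\pi(iE)\cdot(\text{appropriately conjugated})$ — more concretely, the formula $(\ref{eq:e17})$, $\beta_\pi(\mathbb{Q}_\pi(E))=\mathbb{Q}_\pi(\sigma^{-1}(E))$, evaluated against $\mathbbm{1}$. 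Since $\beta_\pi(T)=\sum_i S_i T S_i^*$ and $S_i^*\mathbbm{1}=\sqrt{p_i}\,\mathbbm{1}$ (from $(\ref{eq:g6})$ applied to the constant function, using $\mathbbm{1}\circ\tau_i=\mathbbm{1}$), one gets
\[
\langle\mathbbm{1},\beta_\pi(\mathbb{Q}_\pi(E))\mathbbm{1}\rangle=\sum_{i=1}^N\langle S_i^*\mathbbm{1},\mathbb{Q}_\pi(E)S_i^*\mathbbm{1}\rangle=\sum_{i=1}^N p_i\,\nu_\pi(E),
\]
while the left side also equals $\nu_\pi(\sigma^{-1}(E))$; combined with $\sigma^{-1}(E)=\bigsqcup_i iE$ this yields the recursion $\nu_\pi(\bigsqcup_i iE)=\sum_i p_i\,\nu_\pi(E)$, which by a straightforward induction on cylinders shows $\nu_\pi(E_{if})=p_i\,\nu_\pi(E_f)$, i.e.\ $\nu_\pi=\mathbb{P}$, the product measure $\vartimes_1^\infty p$. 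Wait — I should double-check this: it would identify $\nu_\pi$ with $\mathbb{P}$ itself, and then $(\ref{eq:g13})$ becomes exactly the statement $\mathbb{P}\circ Y^{-1}=\mu$, which is $(\ref{eq:d6})$ of Theorem \ref{thm:pp}. So the whole corollary reduces to: (a) $\nu_\pi(E_f)=\mu(\tau_f(M))=p_f$ (this last equality itself follows from iterating $(\ref{eq:d7})$ together with the non-overlap hypothesis $(\ref{eq:g1})$), hence $\nu_\pi=\mathbb{P}$; and (b) apply Theorem \ref{thm:pp}$(\ref{enu:pp2})$.

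The main obstacle I anticipate is step (a)'s claim that $\mu(\tau_f(M))=p_f$: this is where the non-overlap assumption $(\ref{eq:g1})$ is essential. Iterating $\mu=\sum_i p_i\mu\circ\tau_i^{-1}$ gives $\mu=\sum_{|f|=k}p_f\,\mu\circ\tau_f^{-1}$, so $\mu(\tau_g(M))=\sum_{|f|=k}p_f\,\mu(\tau_f^{-1}\tau_g(M))$; for $f=g$ the term is $p_g$, and one must show all other terms vanish, which requires knowing $\mu(\tau_f(M)\cap\tau_g(M))=0$ for $f\neq g$ of the same length — an inductive consequence of $(\ref{eq:g1})$ and the injectivity of the $\tau_i$ on the relevant sets, together with $\sigma\circ\tau_i=\mathrm{id}_M$. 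I would carry this out as a lemma by induction on word length. Once $\nu_\pi=\mathbb{P}$ is established on the generating algebra of cylinder sets, uniqueness of the Kolmogorov extension (as used throughout Section \ref{sec:PVM}) upgrades it to equality of measures on all of $\mathscr{C}$, and $(\ref{eq:g13})$ follows immediately.
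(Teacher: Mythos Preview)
Your proposal is correct and, in fact, more transparent than the paper's own argument. The paper's proof runs as follows: from the Cuntz relation $\sum_i\|S_i^{(\mu)*}f\|^2=\|f\|^2$ and the explicit form \eqref{eq:g6} of $S_i^{(\mu)*}$ one recovers the IFS identity $\sum_i p_i\int|f|^2\circ\tau_i\,d\mu=\int|f|^2\,d\mu$; the paper then asserts, citing \eqref{eq:g9}, that $\int_M|f|^2\,d\mu=\int_{\Omega_N}(|f|^2\circ Y)\,d\nu_\pi$, and concludes. The passage from the one-step IFS relation to this last equality is not spelled out --- it implicitly requires iterating and passing to a limit, which is exactly the content you make explicit.

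Your route is different in that you isolate the intermediate fact $\nu_\pi=\mathbb{P}$ (the product measure $\vartimes_1^\infty p$), which the paper never states. You obtain it via Corollary~\ref{cor:ifs1}: $\nu_\pi(E_f)=\mu(\tau_f(M))$, and then argue $\mu(\tau_f(M))=p_f$ using non-overlap~\eqref{eq:g1} and the iterated form of~\eqref{eq:d7}. Once $\nu_\pi=\mathbb{P}$, the conclusion is literally~\eqref{eq:d6} of Theorem~\ref{thm:pp}. This buys you a sharper statement (the identification of $\nu_\pi$ itself, not just its pushforward) and a proof that is fully self-contained modulo results already established in the paper. One small remark on the lemma you plan to prove by induction: the cleanest way to get $\mu(\tau_f(M))=p_f$ is to note that $\mu\circ\tau_g^{-1}$ is supported on $\tau_g(M)$, so the $g=f$ term in $\mu(\tau_f(M))=\sum_{|g|=k}p_g\,\mu\circ\tau_g^{-1}(\tau_f(M))$ already gives $\mu(\tau_f(M))\ge p_f$; summing over $|f|=k$ and using non-overlap plus $\sum_f p_f=1$ forces equality. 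This sidesteps having to show preimages of null sets are null.
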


\begin{proof}
Since $\pi^{\left(\mu\right)}=\big\{ S_{i}^{\left(\mu\right)}\big\}_{i=1}^{N}$
in (\ref{eq:g5})-(\ref{eq:g6}) is in $Rep\left(\mathcal{O}_{N},L^{2}\left(\mu\right)\right)$,
we get from (\ref{eq:g10}): 
\begin{equation}
\sum_{i=1}^{N}\left\Vert S_{i}^{\left(\mu\right)*}f\right\Vert _{L^{2}\left(\mu\right)}^{2}=\left\Vert f\right\Vert _{L^{2}\left(\mu\right)}^{2},\;\forall f\in L^{2}\left(\mu\right).\label{eq:g14}
\end{equation}
Introducing (\ref{eq:g6}), we then conclude: 
\[
\sum_{i=1}^{N}p_{i}\int_{M}\left|f\right|^{2}\circ\tau_{i}\,d\mu=\int_{M}\left|f\right|^{2}d\mu,
\]
and so, by (\ref{eq:g9}), 
\[
\int_{M}\left|f\right|^{2}d\mu=\int_{\Omega_{N}}\left(\left|f\right|^{2}\circ Y\right)d\nu_{\pi}=\int_{M}\left|f\right|^{2}d\left(\nu_{\pi}\circ Y^{-1}\right),
\]
valid for all $f\in L^{2}\left(\mu\right)$. The desired conclusion
(\ref{eq:g13}) follows.
\end{proof}

\subsection{\label{subsec:br}Boundaries of Representations}

Let $M$ be a compact Hausdorff space, with Borel $\sigma$-algebra
$\mathscr{B}$, and let $A$ be a finite alphabet, $\left|A\right|=N$.
Let $\left\{ \tau_{i}\right\} _{i\in A}$ be a system of endomorphisms.
For every $\omega\in\Omega_{N}\left(=A^{\mathbb{N}}\right)$, and
$k\in\mathbb{N}$, set $\omega|_{k}=\left(\omega_{1},\cdots,\omega_{k}\right)$
(= the truncated finite word), and set 
\begin{equation}
\tau_{\omega|_{k}}=\tau_{\omega_{1}}\circ\cdots\circ\tau_{\omega_{k}}.\label{eq:k1}
\end{equation}

\begin{defn}
We say that $\left\{ \tau_{i}\right\} _{i\in A}$ is \emph{tight}
iff 
\begin{equation}
\bigcap_{k=1}^{\infty}\tau_{\omega|_{k}}\left(M\right)=\left\{ Y\left(\omega\right)\right\} \label{eq:k2}
\end{equation}
is a singleton for $\forall\omega\in\Omega_{N}$; and we define $Y:\Omega_{N}\rightarrow M$
by eq. (\ref{eq:k2}).
\end{defn}

\begin{thm}
\label{thm:qint}Let $\left(M,\left\{ \tau_{i}\right\} _{i\in A}\right)$
be as above, assume \uline{tight}. Let $\pi\in Rep\left(\mathcal{O}_{N},\mathscr{H}\right)$
for some Hilbert space, and let $\mathbb{Q}_{\pi}$ be the corresponding
projection-valued measure. Assume $\mathbb{Q}_{\pi}$ has one-dimensional
range; see Corollary \ref{cor:QO}; set 
\begin{equation}
\mu:=\mathbb{Q}_{\pi}\circ Y^{-1}.\label{eq:k3}
\end{equation}

Then for all $\omega\in\Omega_{N}=A^{\mathbb{N}}$, we have 
\begin{equation}
\mu\circ\tau_{\omega|_{k}}^{-1}\xrightarrow[\;k\rightarrow\infty\;]{}\delta_{Y\left(\omega\right)},\label{eq:k4}
\end{equation}
i.e., for all $f\in C\left(M\right)$, we have 
\begin{equation}
\lim_{k\rightarrow\infty}\int_{M}f\circ\tau_{\omega|_{k}}d\mu=f\left(Y\left(\omega\right)\right).\label{eq:k5}
\end{equation}
\end{thm}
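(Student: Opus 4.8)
The plan is to reduce the weak-$*$ convergence \eqref{eq:k4} to a statement about the projection-valued measure $\mathbb{Q}_{\pi}$ and then exploit the hypothesis that $\mathbb{Q}_{\pi}$ has one-dimensional range. First I would record the change-of-variables identity: since $\mu = \mathbb{Q}_{\pi}\circ Y^{-1}$, for any $f\in C(M)$ we have
\begin{equation}
\int_M f\circ\tau_{\omega|_k}\,d\mu = \int_{\Omega_N} \big(f\circ\tau_{\omega|_k}\circ Y\big)\,d\big(\mathbb{Q}_{\pi}(\cdot)\big),
\end{equation}
understood as a vector-valued integral against $\mathbb{Q}_{\pi}$, or equivalently — after choosing a unit vector $\psi$ spanning the one-dimensional range of $\mathbb{Q}_{\pi}(\Omega_N)=I$ on the relevant subspace — as an ordinary scalar integral against $\mu_\psi = \langle \psi,\mathbb{Q}_{\pi}(\cdot)\psi\rangle$. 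Next I would use the intertwining relation \eqref{eq:d14}/\eqref{eq:d15}, $\tau_b\circ Y = Y\circ\hat\tau_b$, iterated over the word $\omega|_k$, to rewrite $f\circ\tau_{\omega|_k}\circ Y = f\circ Y\circ\hat\tau_{\omega|_k}$, where $\hat\tau_{\omega|_k}$ is the corresponding finite composition of prepend-maps. Thus the integral becomes $\int_{\Omega_N} (f\circ Y)\circ\hat\tau_{\omega|_k}\,d\mu_\psi$, i.e., the pushforward $\mu_\psi\circ\hat\tau_{\omega|_k}^{-1}$ applied to $f\circ Y$.

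The second step is to identify this pushforward. The map $\hat\tau_{\omega|_k}$ sends $\Omega_N$ bijectively onto the cylinder set $E(\omega|_k)$, so $\mu_\psi\circ\hat\tau_{\omega|_k}^{-1}$ is $\mu_\psi$ restricted and concentrated on $E(\omega|_k)$. More precisely I would show that for $g\in C(\Omega_N)$,
\begin{equation}
\int_{\Omega_N} g\circ\hat\tau_{\omega|_k}\,d\mu_\psi = \big\langle \psi, \mathbb{Q}_{\pi}\big(E(\omega|_k)\big)\, g(\text{shift by }\omega|_k)\,\psi\big\rangle,
\end{equation}
which in the limit $k\to\infty$ is governed by $\mathbb{Q}_{\pi}(E(\omega|_k))\to \mathbb{Q}_{\pi}(\{\omega\}) = X_\pi(\omega)$ by the corollary on atoms together with Lemma \ref{lem:mp} on monotone decreasing projections. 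Since $\mathbb{Q}_{\pi}$ has one-dimensional range and $X_\pi(\omega)$ is a subprojection of each $\mathbb{Q}_{\pi}(E(\omega|_k))$, and since $\mathbb{Q}_{\pi}(\{\omega\})\mathscr H$ must in fact be the full (one-dimensional) range here, one concludes that as $k\to\infty$ the normalized measure $\mu_\psi\circ\hat\tau_{\omega|_k}^{-1}$ collapses to the point mass at the unique point of $\bigcap_k E(\omega|_k) = \{\omega\}$. Pulling this back through $Y$ and using continuity of $f\circ Y$ at $\omega$ (indeed $f\circ Y$ evaluated at $\omega$ gives $f(Y(\omega))$), we obtain \eqref{eq:k5}.

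The cleanest route to the collapse is to avoid worrying about whether $\mu_\psi$ is a probability measure globally: since $\mathbb{Q}_{\pi}$ is orthogonal (Corollary \ref{cor:QO}) and projection-valued with total mass $I$, on the one-dimensional range the scalar measure $\mu_\psi$ is a genuine Borel probability measure on $\Omega_N$, and $\mathbb{Q}_{\pi}(E(\omega|_k))$ acts as multiplication by $\mu_\psi(E(\omega|_k))$-type weights that telescope correctly via \eqref{eq:e20}. Then the statement $\mu_\psi\circ\hat\tau_{\omega|_k}^{-1}\to\delta_\omega$ weakly is just the assertion that the cylinders $E(\omega|_k)$ shrink to $\{\omega\}$, which is \eqref{eq:h3}, combined with the fact that $\mu_\psi(\{\omega\}) = \dim(X_\pi(\omega)\mathscr H) = 1$ in this one-dimensional situation — so essentially all the mass is already on $\{\omega\}$ and transporting it by $\hat\tau_{\omega|_k}$ just rescales. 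I expect the main obstacle to be bookkeeping the interplay between the vector-valued integral against $\mathbb{Q}_{\pi}$ and its scalarization: one must be careful that the one-dimensionality hypothesis is used to guarantee the limiting projection $X_\pi(\omega)$ is exactly rank one (not zero), since only then does the mass fail to escape and the Dirac limit hold. Once that is pinned down, \eqref{eq:k5} follows from dominated convergence applied to $f\circ\tau_{\omega|_k}$, which is uniformly bounded by $\|f\|_\infty$ because each $\tau_{\omega|_k}$ maps $M$ into itself.
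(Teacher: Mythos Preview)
Your approach is far more elaborate than what the paper does, and it contains a genuine error in the atom reasoning.

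The paper's proof is entirely elementary and uses no representation theory beyond the fact that $\mu$ is a probability measure on $M$. It runs as follows: $f$ is uniformly continuous on the compact space $M$; the tightness hypothesis $\bigcap_k \tau_{\omega|_k}(M)=\{Y(\omega)\}$ together with compactness forces $\tau_{\omega|_k}(M)$ to eventually lie in any prescribed neighborhood of $Y(\omega)$; hence $f\circ\tau_{\omega|_k}\to f(Y(\omega))$ \emph{uniformly on $M$}; integrating this uniform limit against the probability measure $\mu$ gives \eqref{eq:k5} immediately. No change of variables to $\Omega_N$, no intertwining relation, no atoms, no scalarization of $\mathbb{Q}_\pi$ are needed. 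The one-dimensionality hypothesis is used only to make $\mu=\mathbb{Q}_\pi\circ Y^{-1}$ a scalar probability measure; the proof itself appeals only to $\mu(M)=1$.

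Your route via $\Omega_N$ could be made to work, and indeed the observation that $(\hat\tau_{\omega|_k})_*\mu_\psi$ is a probability measure supported on the shrinking cylinder $E(\omega|_k)$, hence converges weakly to $\delta_\omega$, is essentially the paper's argument transported to $\Omega_N$. But you then overload this with an incorrect claim: you assert $\mu_\psi(\{\omega\})=\dim(X_\pi(\omega)\mathscr{H})=1$, i.e., that \emph{all} the mass of $\mu_\psi$ sits at the single point $\omega$. This cannot hold for every $\omega\in\Omega_N$ simultaneously, since $\mu_\psi$ is a fixed probability measure. Moreover $\mu_\psi(\{\omega\})=\|X_\pi(\omega)\psi\|^2$, not a dimension. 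The convergence $(\hat\tau_{\omega|_k})_*\mu_\psi\to\delta_\omega$ holds simply because the supports shrink to a point and the total mass is $1$; it has nothing to do with whether $\omega$ is an atom of $\mu_\psi$. Drop the atom discussion entirely and your argument becomes correct --- but then it is just a longer version of the paper's direct uniform-continuity proof on $M$.
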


\begin{proof}
Let $\varepsilon>0$. Since $f$ is uniformly continuous, there is
a neighborhood $O_{\omega}$ of $Y\left(\omega\right)$ such that
\begin{equation}
\left|f\left(y\right)-f\left(y'\right)\right|<\varepsilon\quad\text{for}\;\forall y,y'\in O_{\omega}.\label{eq:k6}
\end{equation}
Since by assumption $\mu\left(M\right)=1$, we conclude from (\ref{eq:k6})
and (\ref{eq:k2}), that for $\forall k,l\geq k_{0}$, we have 
\begin{equation}
\left|f\circ\tau_{\omega|_{k}}-f\circ\tau_{\omega|_{l}}\right|\leq\varepsilon;\label{eq:k7}
\end{equation}
as a uniform estimate on $M$. Since 
\begin{equation}
\int_{M}f\circ\tau_{\omega|_{k}}d\mu=\int_{M}f\,d\left(\mu\circ\tau_{\omega|_{k}}^{-1}\right),\label{eq:k8}
\end{equation}
a second application of (\ref{eq:k2}) now yields:
\[
\lim_{k\rightarrow\infty}\int_{M}f\circ\tau_{\omega|_{k}}d\mu=f\left(Y\left(\omega\right)\right)
\]
which is the desired conclusion. 
\end{proof}

\subsection{\label{subsec:3ex}Three Examples}

Below we give three examples of IFS-measures, as in Theorem \ref{thm:pp}:
(i) \emph{the} \emph{Lebesgue measure} restricted to the unit interval
$\left[0,1\right]$, (ii) \emph{the} \emph{middle-third Cantor measure}
$\mu_{3}$, and (iii) \emph{the $\nicefrac{1}{4}$-Cantor measure}
$\mu_{4}$ with two gaps. Their respective properties follow from
Theorem \ref{thm:pp}, and are summarized in Table \ref{tab:IFSmeas}.
Also see Figures \ref{fig:cantor}, \ref{fig:cum}, and \ref{fig:ex3}.

The difference in the graphs of the \emph{cumulative distributions}
in Ex 2 and Ex 3, is explained by the following: In Ex 3, we have
\emph{two omitted intervals in each iteration step}, as opposed to
just one in Ex 2, the Middle-third Cantor construction. See Fig \ref{fig:u4}.

\begin{table}[H]
\begin{tabular*}{1\textwidth}{@{\extracolsep{\fill}}|>{\centering}p{0.28\textwidth}|c|>{\centering}p{0.08\textwidth}|>{\raggedright}m{0.28\textwidth}|}
\hline 
$\left\{ \tau_{i}\right\} _{i=1}^{2}$ & $\sigma$ & $\left(p_{i}\right)_{i=1}^{2}$  & Scaling dimension (SD) of the IFS-measure $\left(\mu,M_{\mu}\right)$\tabularnewline
\hline 
$\tau_{0}\left(x\right)=\frac{x}{2}$, $\tau_{1}\left(x\right)=\frac{x+1}{2}$ & $\sigma\left(x\right)=2x$ mod 1 & $\left(\frac{1}{2},\frac{1}{2}\right)$ & $\mu=\lambda=$ Lebesgue measure, SD = 1\tabularnewline
\hline 
$\tau_{0}\left(x\right)=\frac{x}{3}$, $\tau_{1}\left(x\right)=\frac{x+2}{3}$ & $\sigma\left(x\right)=3x$ mod 1 & $\left(\frac{1}{2},\frac{1}{2}\right)$ & $\mu=\mu_{3}=$ middle-third Cantor measure, SD = $\frac{\ln2}{\ln3}$\tabularnewline
\hline 
$\tau_{0}\left(x\right)=\frac{x}{4}$, $\tau_{1}\left(x\right)=\frac{x+2}{4}$ & $\sigma\left(x\right)=4x$ mod 1 & $\left(\frac{1}{2},\frac{1}{2}\right)$ & $\mu=\mu_{4}=$ the $\nicefrac{1}{4}$-Cantor measure, SD = $\frac{1}{2}$\tabularnewline
\hline 
\end{tabular*}

\caption{\label{tab:IFSmeas}Three inequivalent examples, each with $\Omega_{N}=A^{\mathbb{N}}$,
$\left|A\right|=2$, and infinite product measure $\vartimes_{1}^{\infty}\left(\frac{1}{2},\frac{1}{2}\right)$.
See also Fig \ref{fig:ex3}. }
\end{table}

\begin{figure}[H]
\subfloat[The middle-third Cantor set.]{\includegraphics[width=0.45\textwidth]{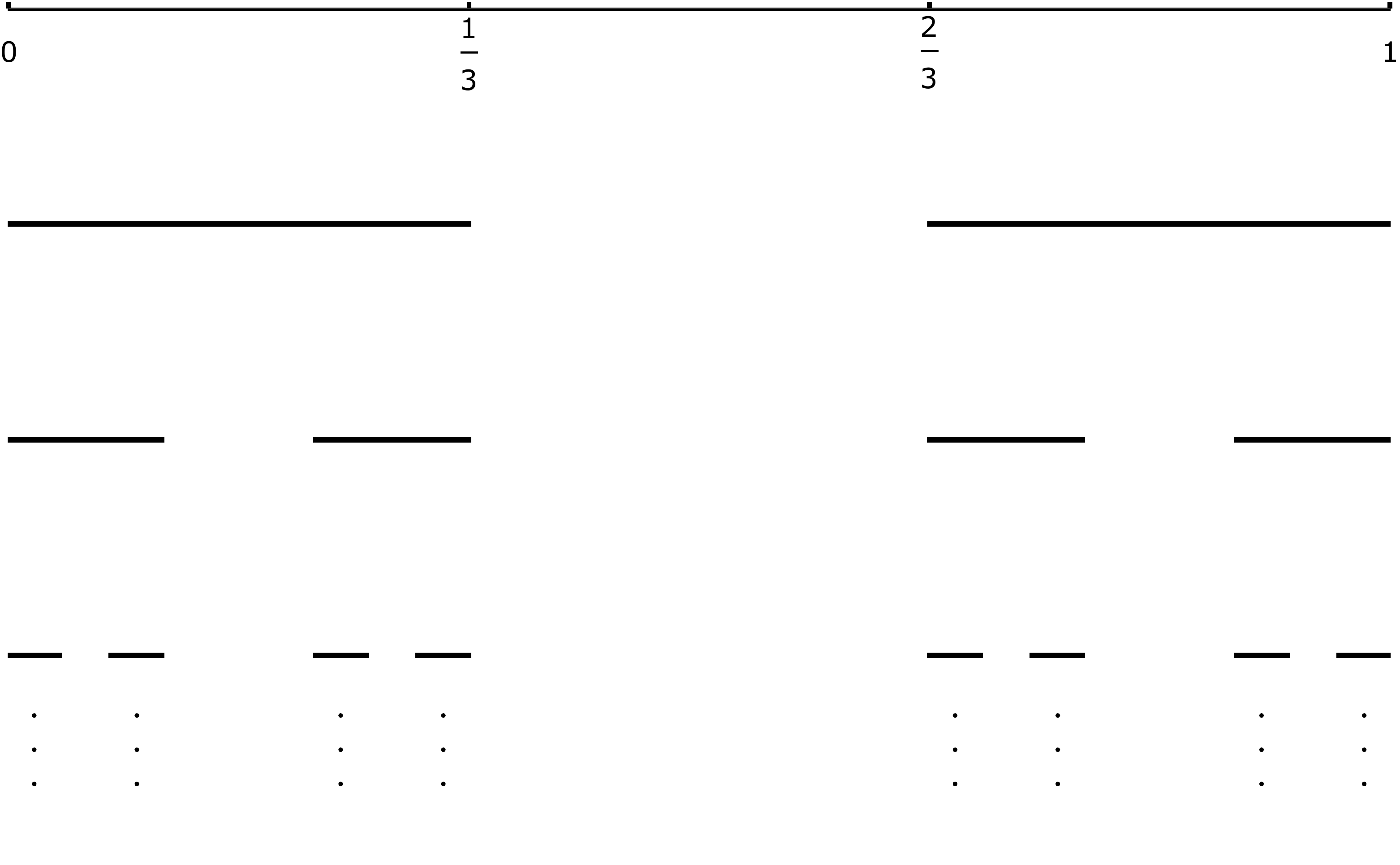}

}\qquad\subfloat[The $\nicefrac{1}{4}$-Cantor set.]{\includegraphics[width=0.45\textwidth]{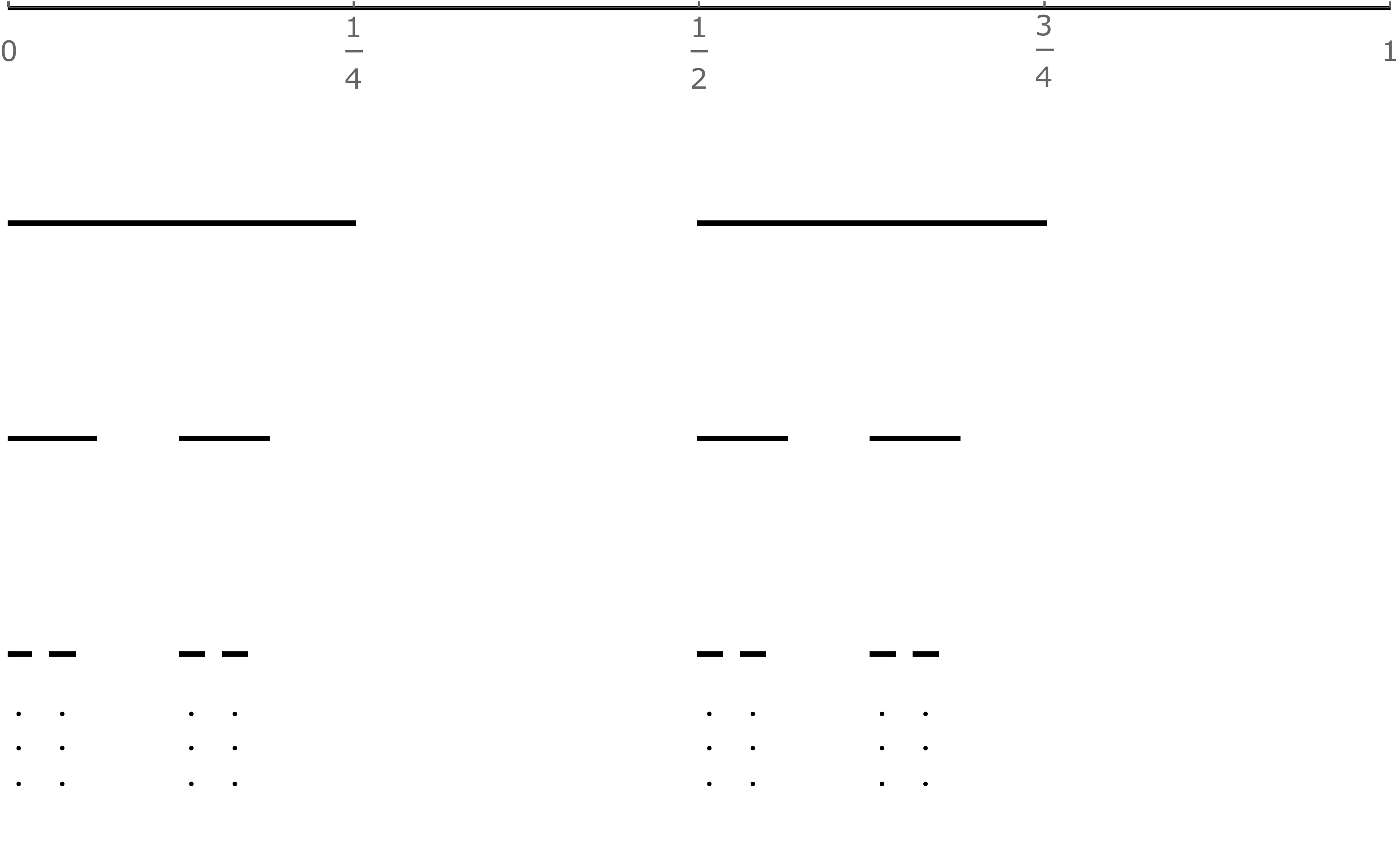}

}

\caption{\label{fig:cantor}Examples of Cantor sets.}
\end{figure}

In each of the three examples in Table \ref{tab:IFSmeas}, we give
the initial step in the IFS iteration. Each IFS-limit yields a measure,
and a support set. The second and the third examples are the fractal
limits known as the Cantor measure $\mu_{3}$, and the Cantor measure
$\mu_{4}$. The details of the iteration steps are outlined in the
subsequent figures and algorithms. Figures \ref{fig:cum} and \ref{fig:u4}
deal with the associated cumulative distribution $F\left(x\right):=\mu\left(\left[0,x\right]\right)$.
The latter will be used in Section \ref{subsec:fc} at the end of
our paper.

\begin{figure}[H]
\begin{tabular}{>{\raggedright}p{0.3\textwidth}>{\raggedright}p{0.3\textwidth}>{\raggedright}p{0.3\textwidth}}
\includegraphics[width=0.25\textwidth]{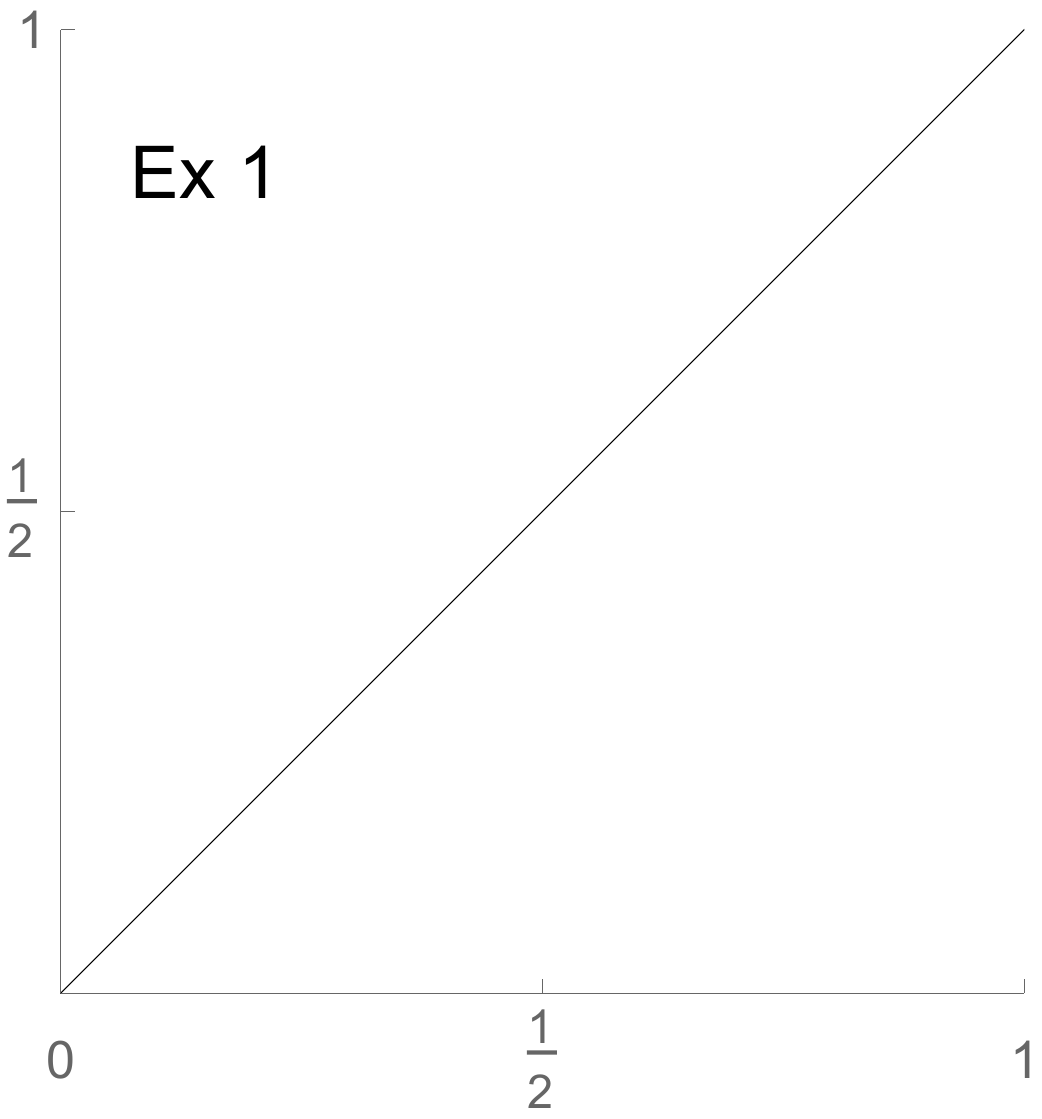} & \includegraphics[width=0.25\textwidth]{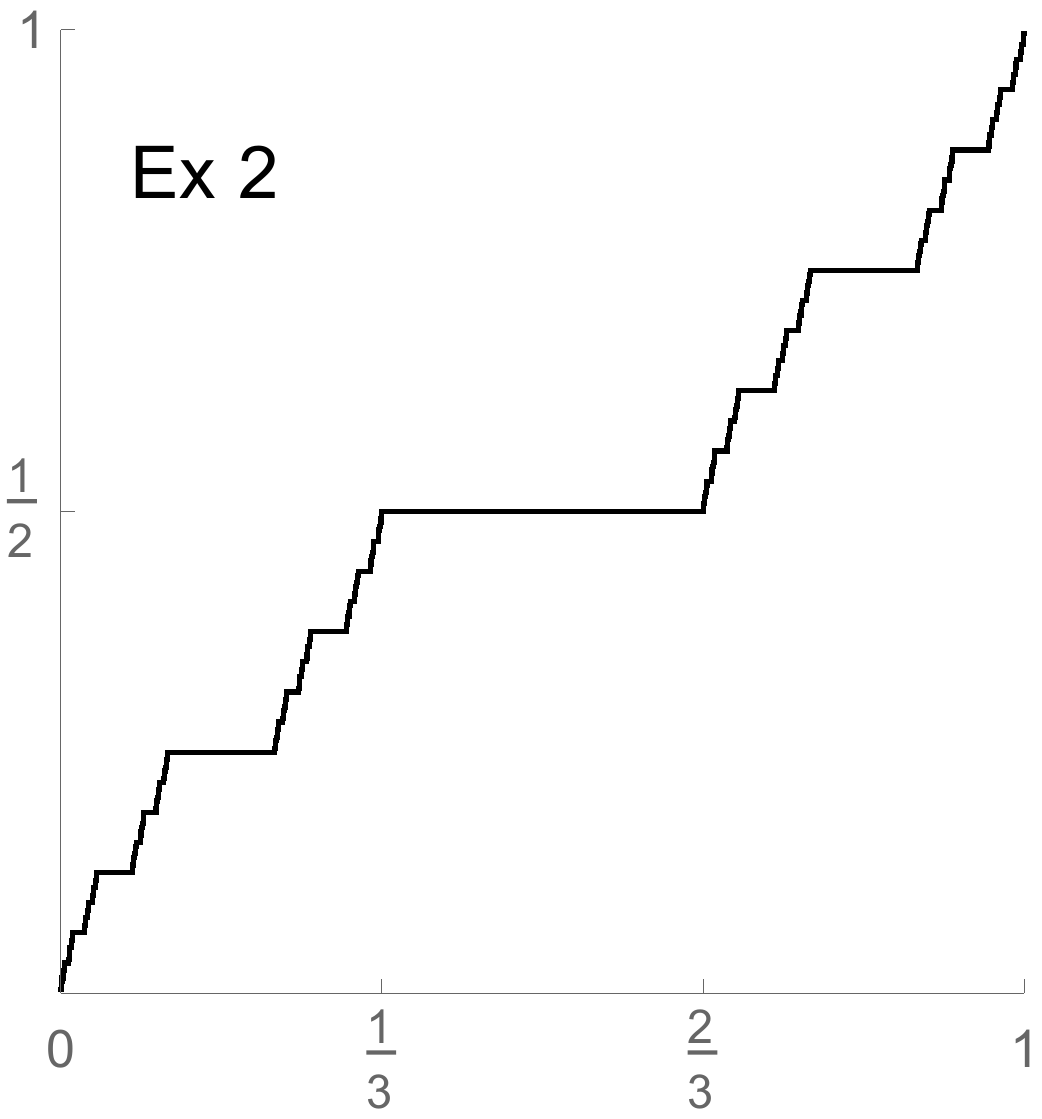} & \includegraphics[width=0.25\textwidth]{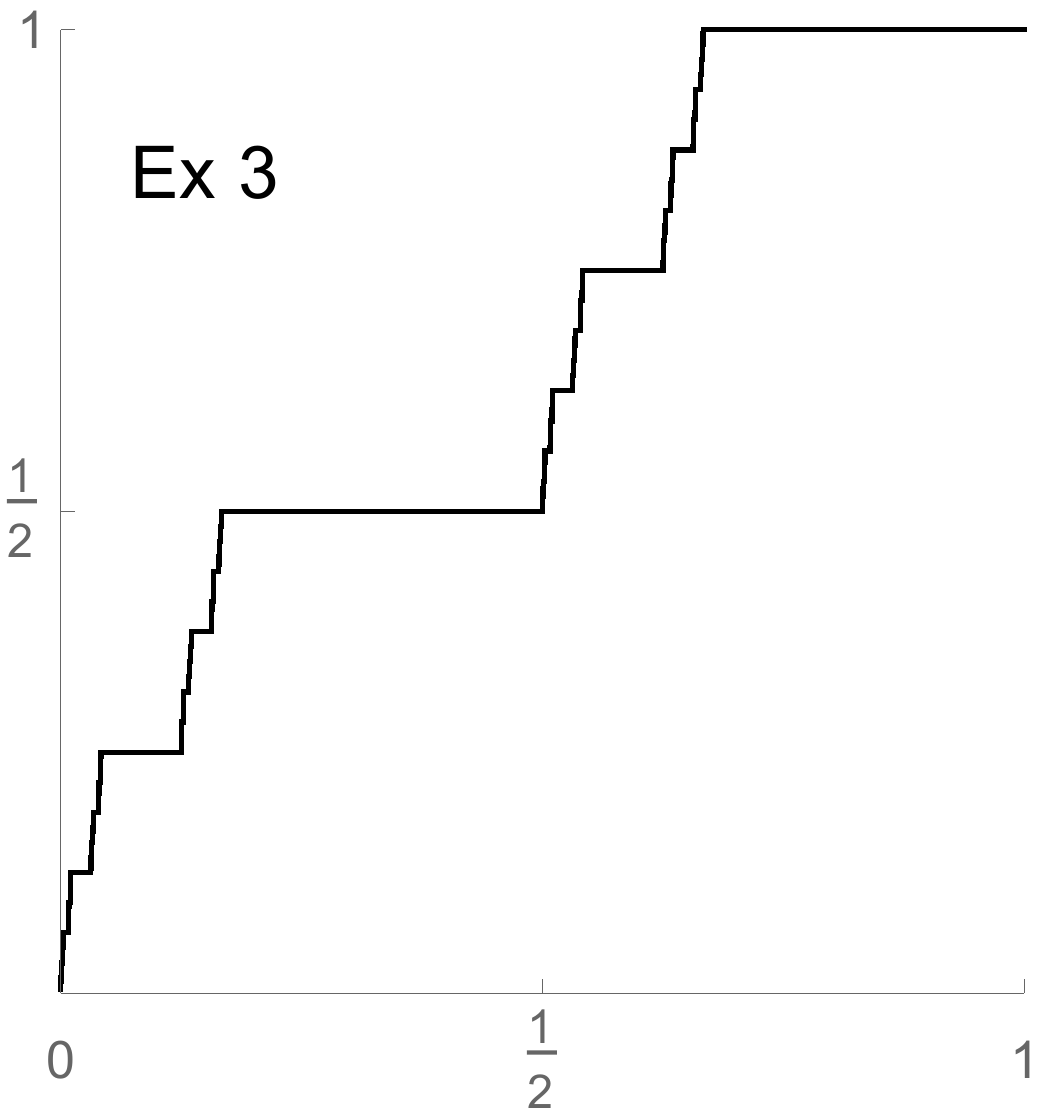}\tabularnewline
{\footnotesize{} $F_{\lambda}\left(x\right)=\lambda\left(\left[0,x\right]\right)$;
points of increase = the support of the normalized $\lambda$, so
the interval $[0,1]$. } & {\footnotesize{} $F_{\nicefrac{1}{3}}\left(x\right)=\mu_{3}\left(\left[0,x\right]\right)$;
points of increase = the support of $\mu_{3}$, so the middle third
Cantor set $C_{\nicefrac{1}{3}}$ (the Devil's staircase).} & {\footnotesize{} $F_{\nicefrac{1}{4}}\left(x\right)=\mu_{4}\left(\left[0,x\right]\right)$;
points of increase = the support of $\mu_{4}$, so the double-gap
Cantor set $C_{\nicefrac{1}{4}}$. }\tabularnewline
\end{tabular}

\caption{\label{fig:cum}The three \emph{cumulative distributions}. The three
support sets, $[0,1]$, $C_{\nicefrac{1}{3}}$, and $C_{\nicefrac{1}{4}}$
are IFSs, and they are also presented in detail inside Table \ref{tab:IFSmeas}
above.}
\end{figure}

\begin{figure}[H]
\includegraphics[width=0.65\textwidth]{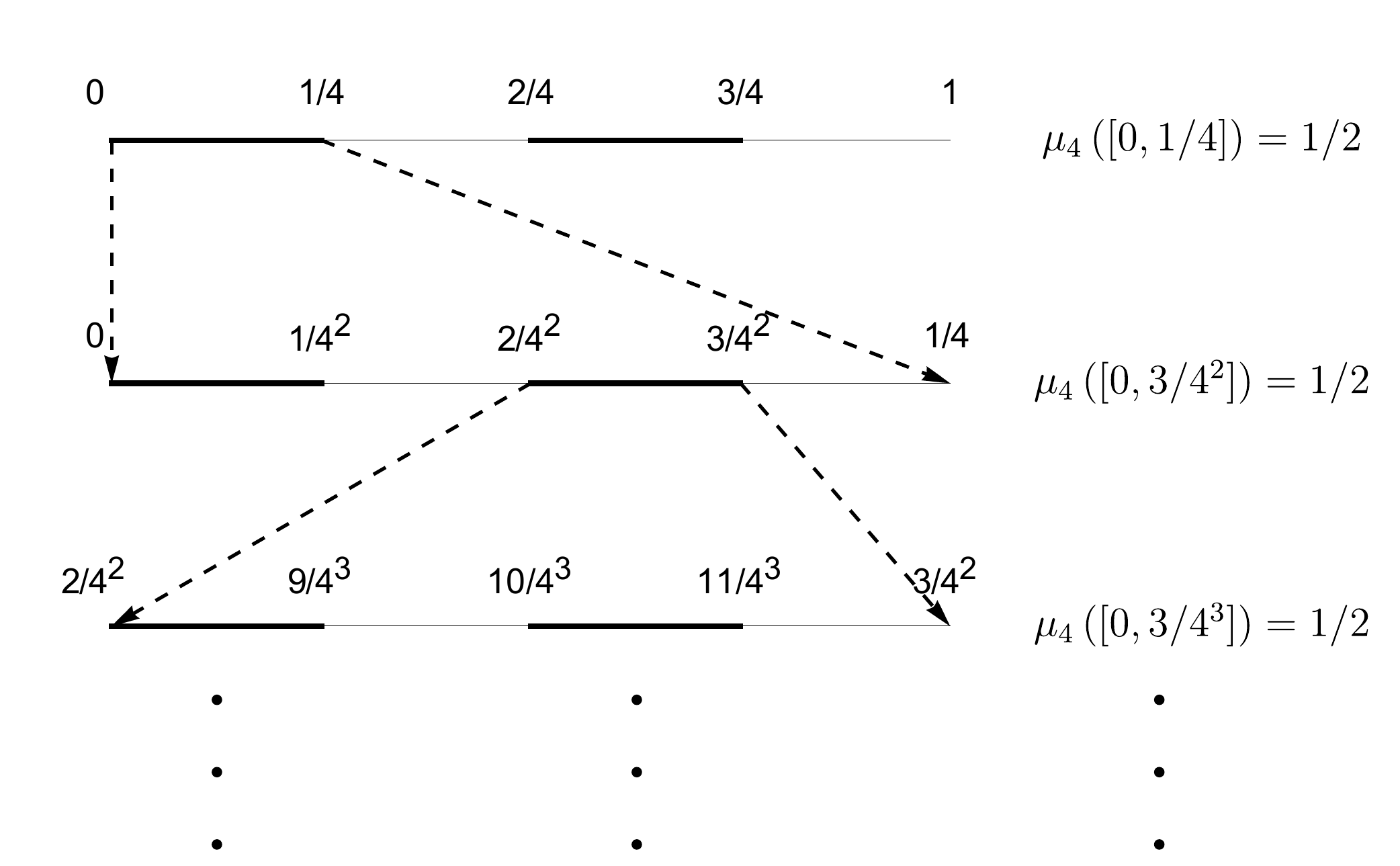}

\caption{\label{fig:u4}Illustration of $F_{\nicefrac{1}{4}}\left(x\right)=\mu_{4}\left(\left[0,x\right]\right)$
in Ex 3. Note that $\inf\{F_{\nicefrac{1}{4}}^{-1}\left(1/2\right)\}=\frac{1}{4}-\left(\sum_{n=2}^{\infty}\frac{1}{4^{n}}\right)=\frac{1}{6}$,
and $\inf\{F_{\nicefrac{1}{4}}^{-1}\left(1\right)\}=\frac{2}{3}$\@. }
\end{figure}

\begin{figure}[H]
\begin{tabular}{c>{\centering}m{0.25\textwidth}>{\centering}m{0.25\textwidth}>{\centering}m{0.25\textwidth}}
Ex 1 & \includegraphics[width=0.25\textwidth]{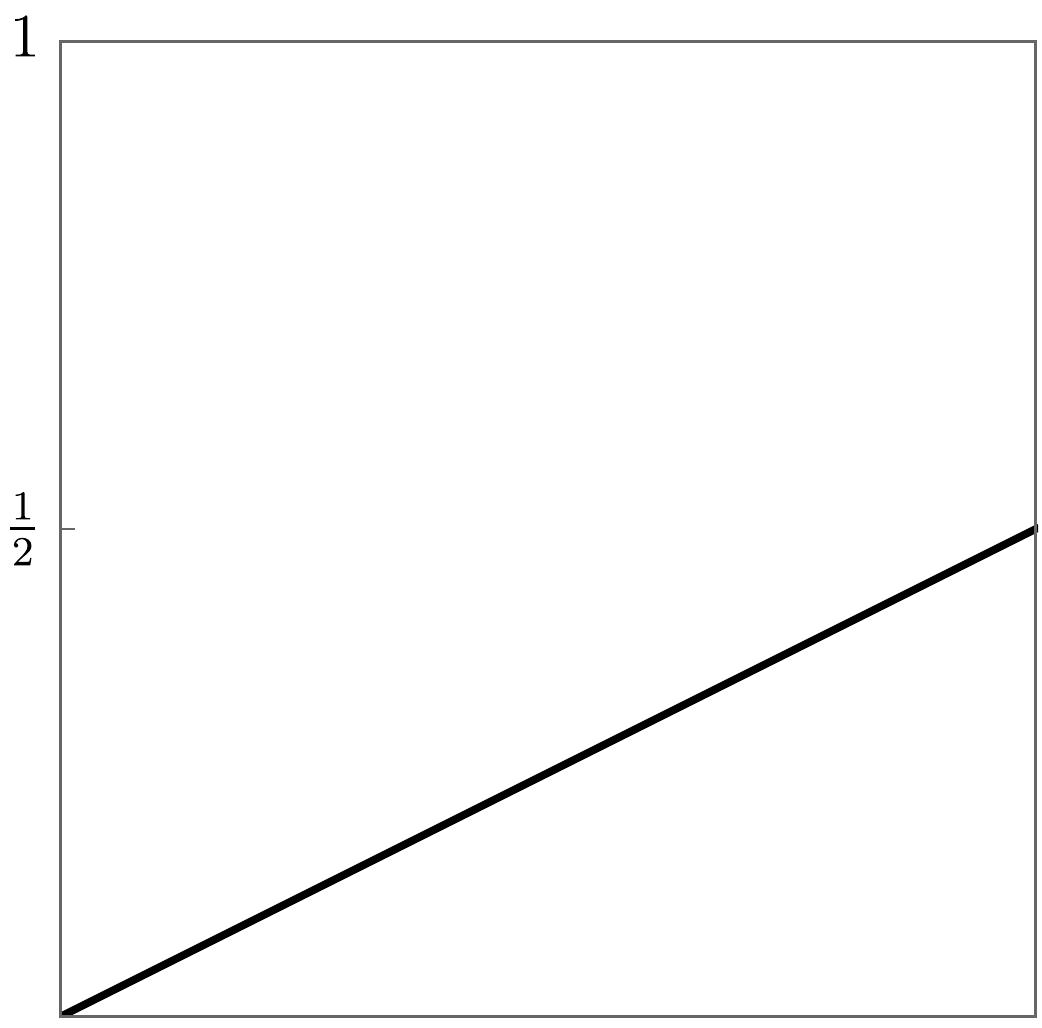} & \includegraphics[width=0.25\textwidth]{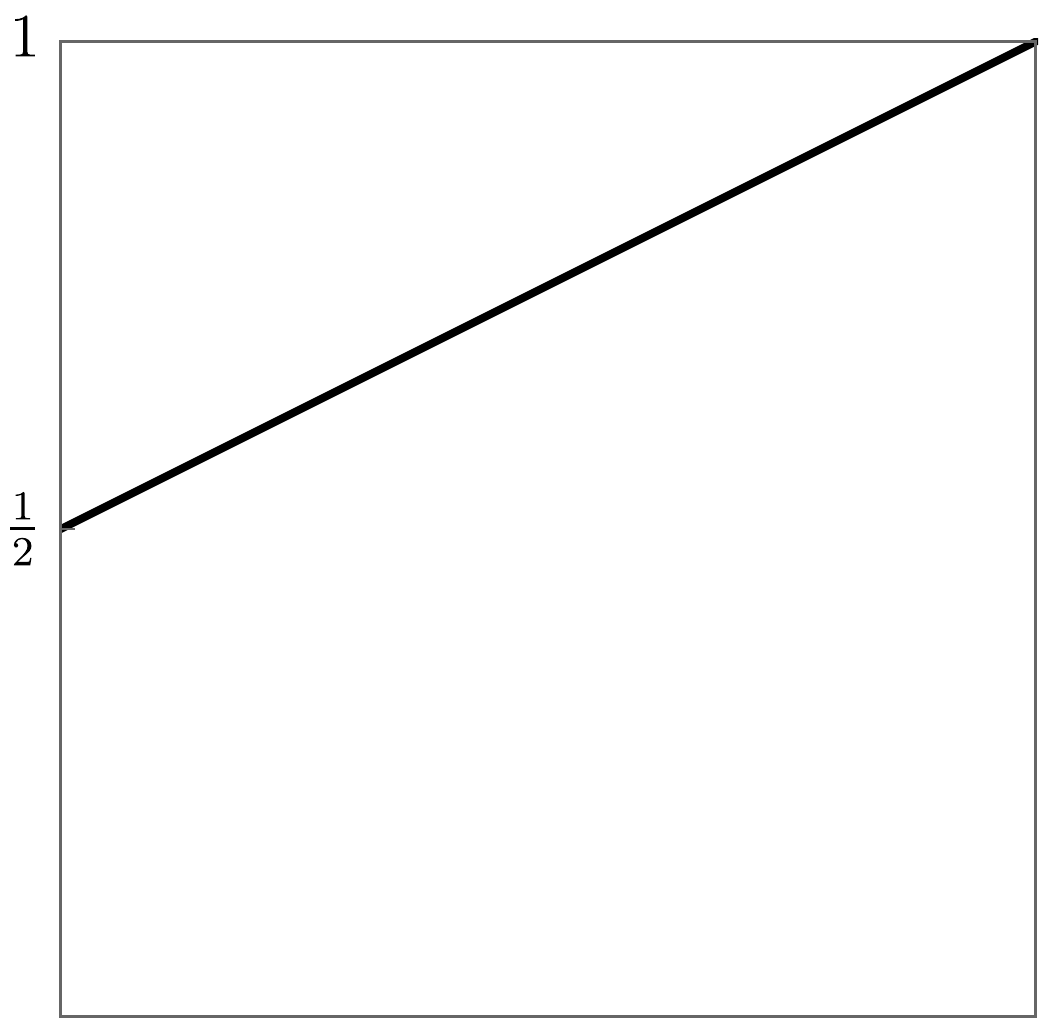} & \includegraphics[width=0.25\textwidth]{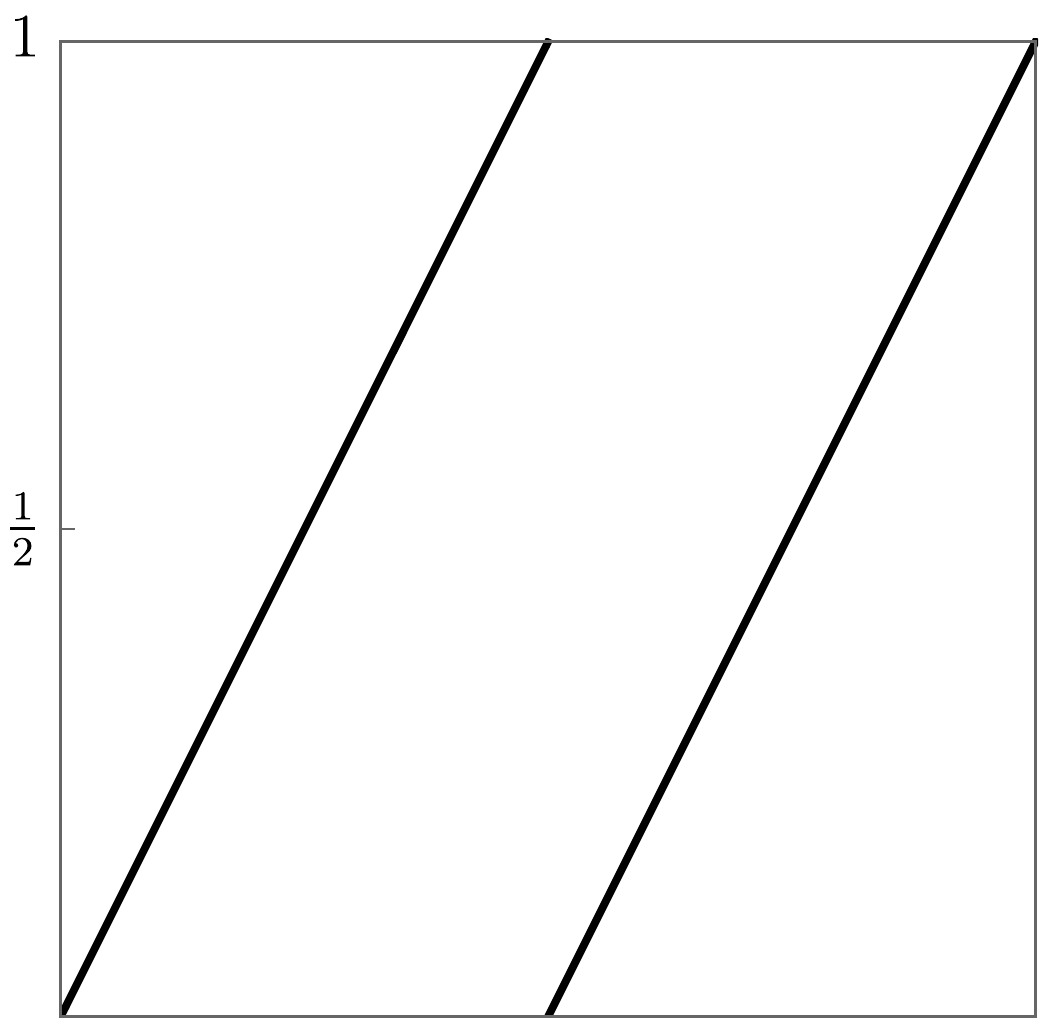}\tabularnewline
 & $\tau_{0}\left(x\right)=\frac{x}{2}$ & $\tau_{1}\left(x\right)=\frac{x+1}{2}$ & $\sigma\left(x\right)=2x$ mod 1\tabularnewline
 &  &  & \tabularnewline
Ex 2 & \includegraphics[width=0.25\textwidth]{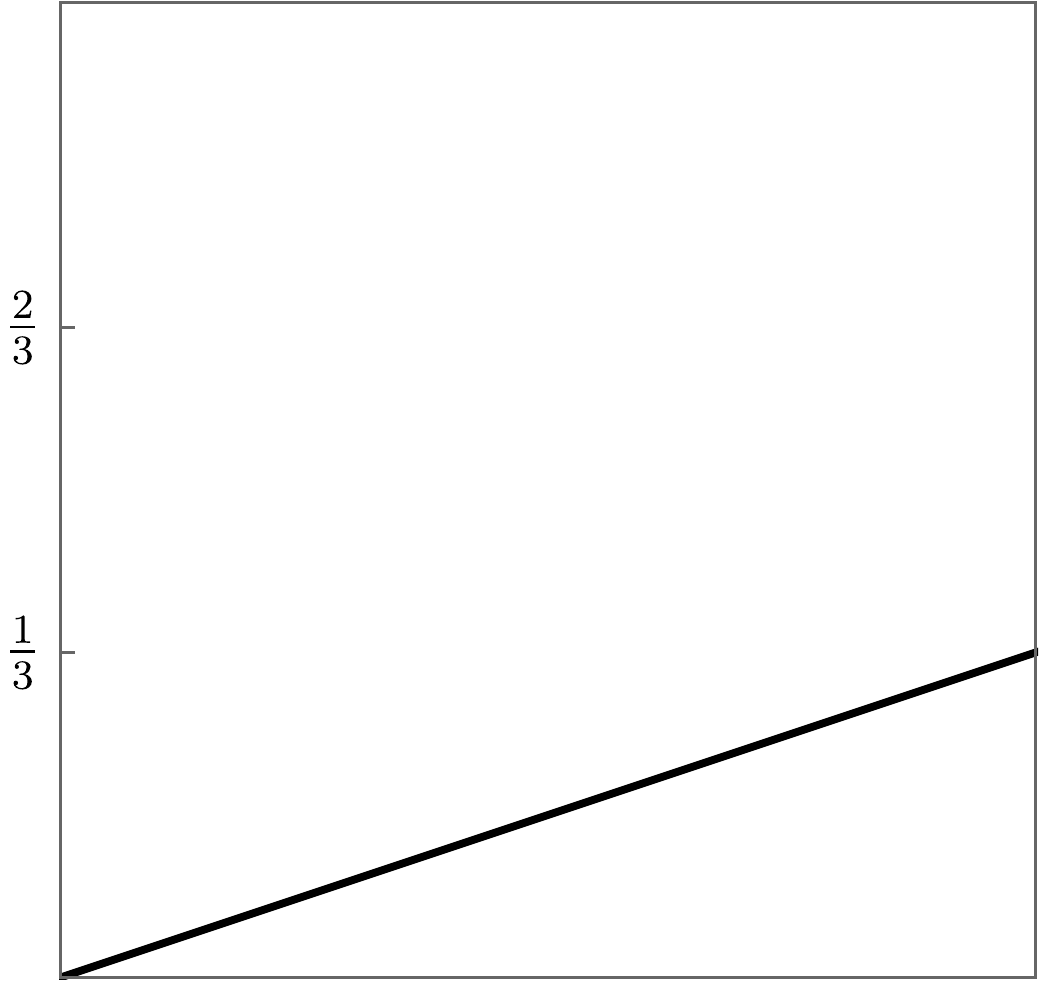} & \includegraphics[width=0.25\textwidth]{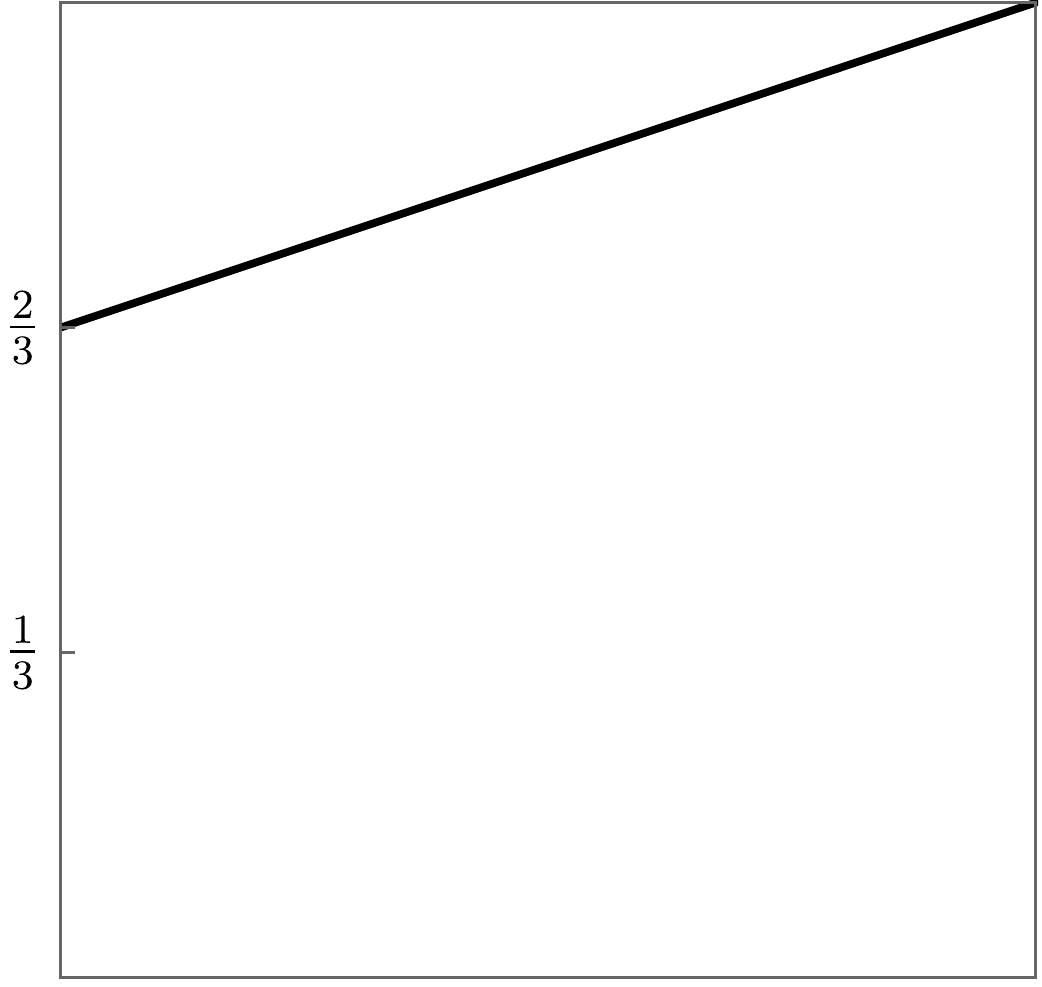} & \includegraphics[width=0.25\textwidth]{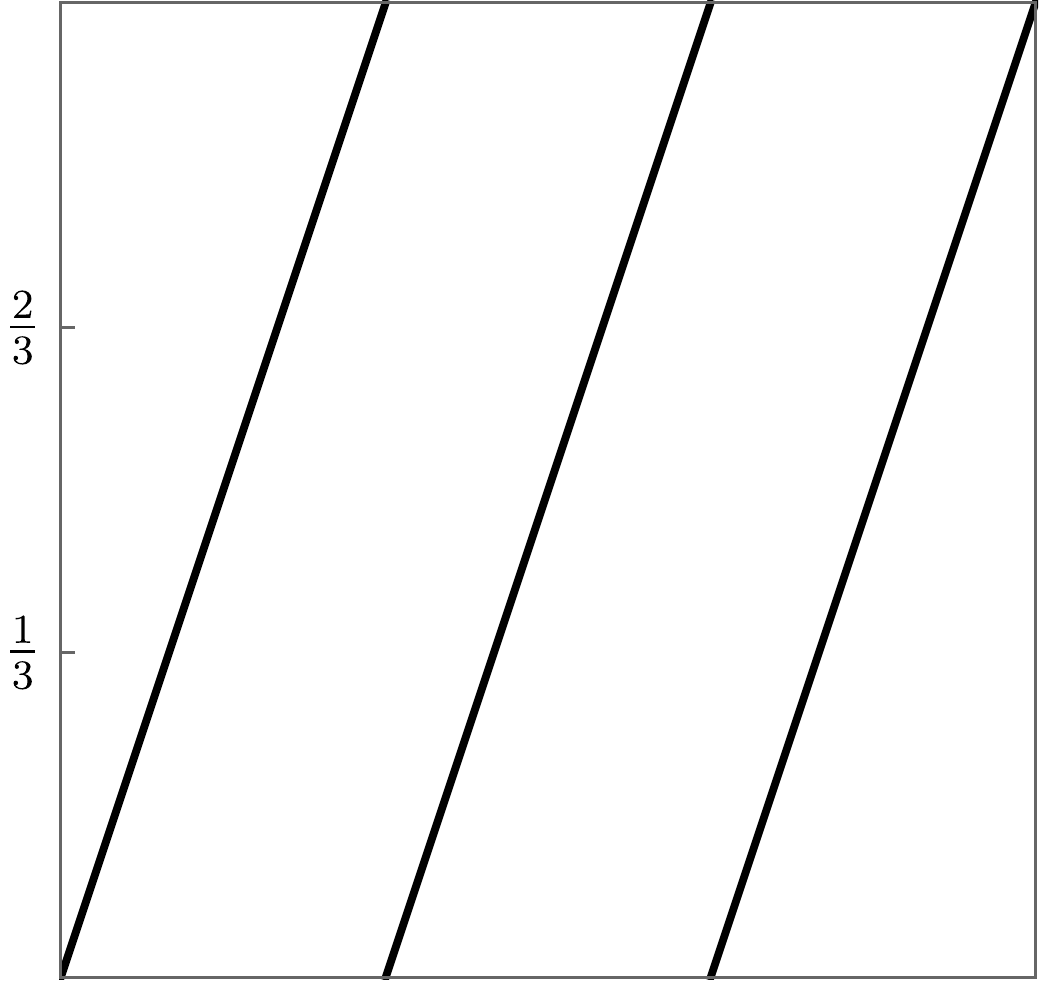}\tabularnewline
 & $\tau_{0}\left(x\right)=\frac{x}{3}$ & $\tau_{1}\left(x\right)=\frac{x+2}{3}$ & $\sigma\left(x\right)=3x$ mod 1\tabularnewline
 &  &  & \tabularnewline
Ex 3 & \includegraphics[width=0.25\textwidth]{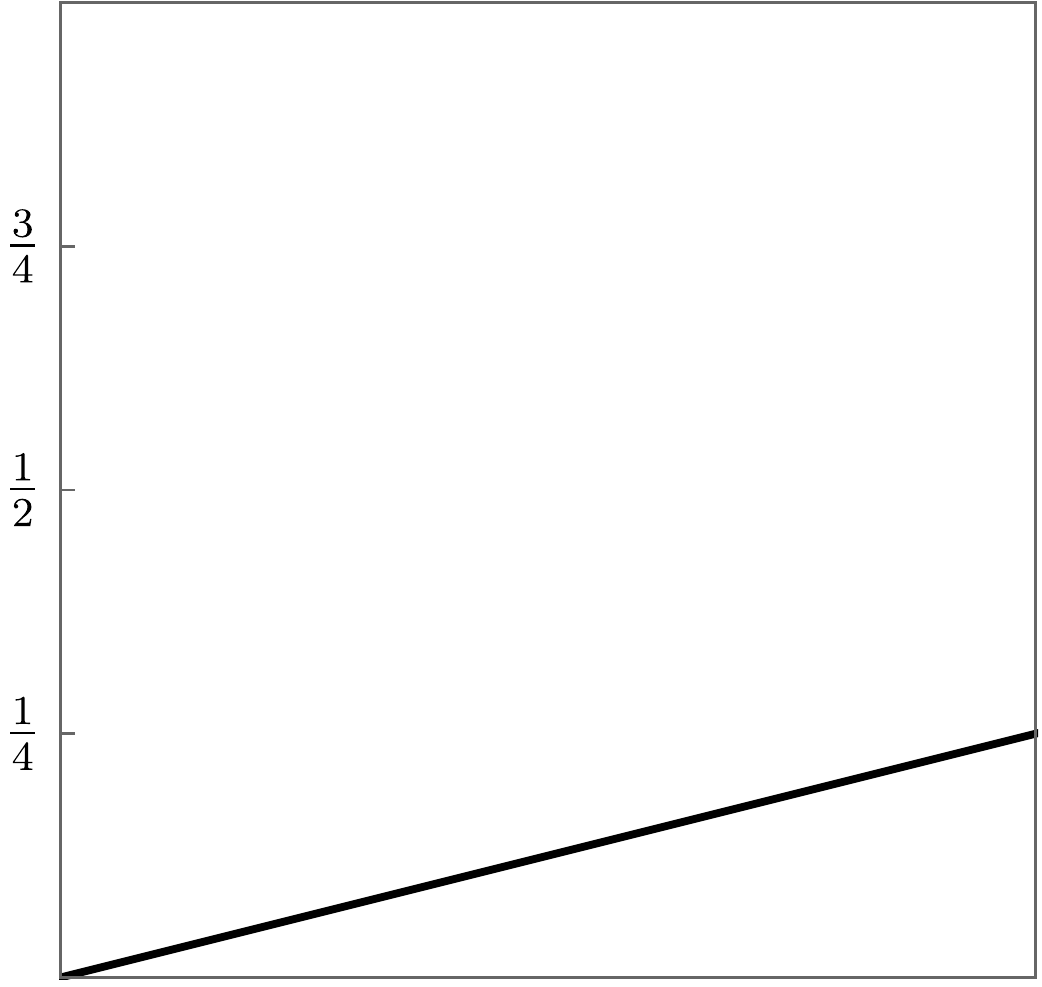} & \includegraphics[width=0.25\textwidth]{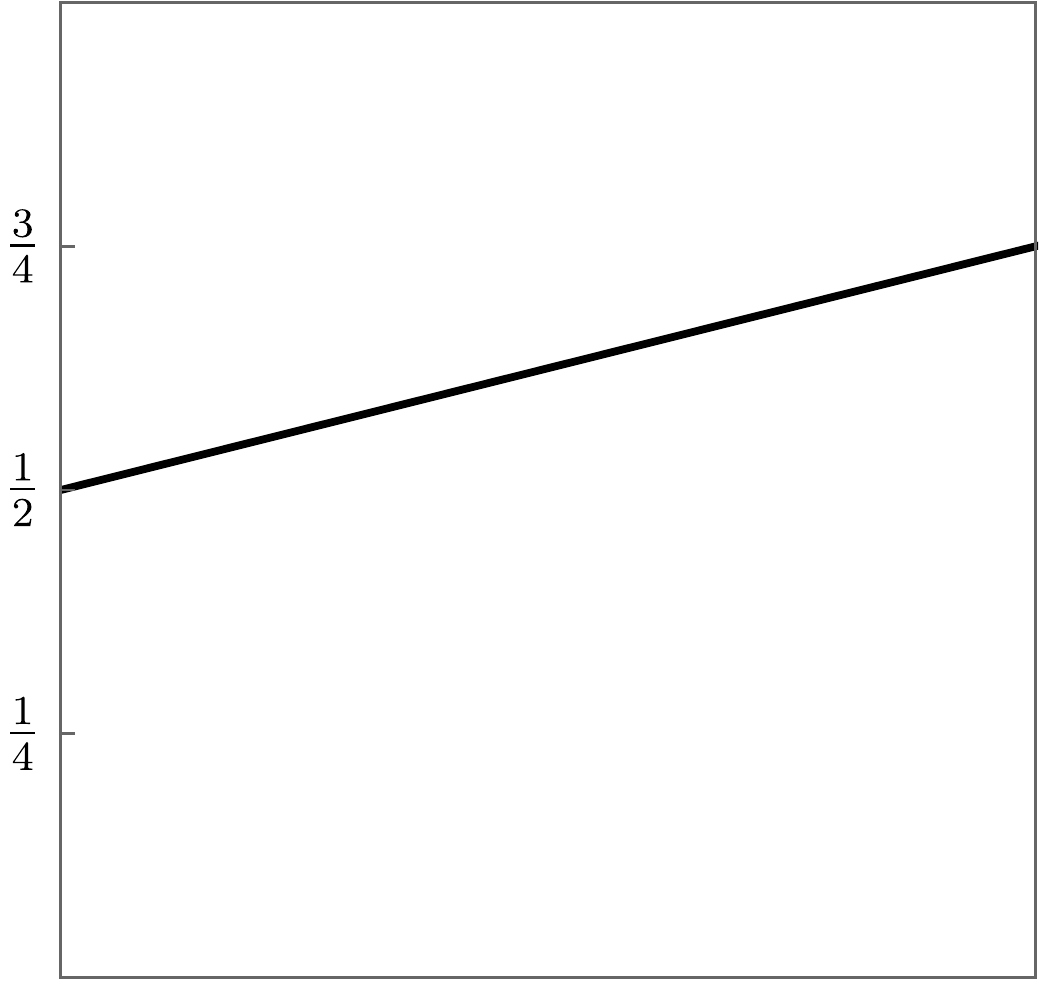} & \includegraphics[width=0.25\textwidth]{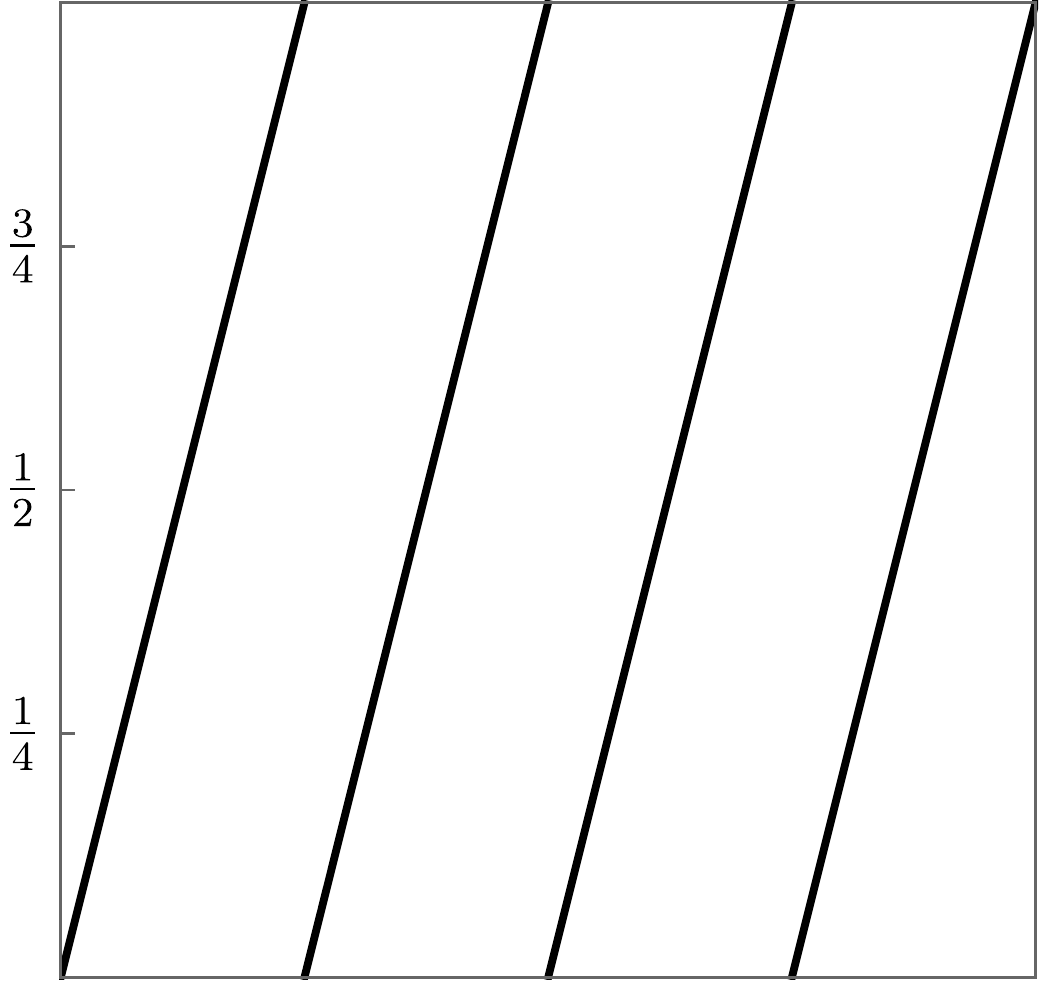}\tabularnewline
 & $\tau_{0}\left(x\right)=\frac{x}{4}$ & $\tau_{1}\left(x\right)=\frac{x+2}{4}$ & $\sigma\left(x\right)=4x$ mod 1\tabularnewline
 &  &  & \tabularnewline
\end{tabular}

\caption{\label{fig:ex3}The \emph{endomorphisms} in the three examples.}
\end{figure}

\begin{flushleft}
\textbf{Bit-representation of the respective IFSs in each of the three
examples.}
\par\end{flushleft}

In the three examples from Table \ref{tab:IFSmeas}, the associated
random variable $Y$ (from Theorem \ref{thm:pp}, eq (\ref{eq:d5}))
is as follows: 
\begin{alignat*}{2}
\text{Ex 1} &  & \qquad & Y_{\lambda}\left(\varepsilon_{i}\right)=\frac{1}{2}\sum_{i=1}^{\infty}\frac{\varepsilon_{i}}{2^{i}},\\
\text{Ex 2} &  &  & Y_{\mu_{3}}\left(\varepsilon_{i}\right)=\sum_{i=1}^{\infty}\frac{\varepsilon_{i}}{3^{i}},\quad\text{and}\\
\text{Ex 3} &  &  & Y_{\mu_{4}}\left(\varepsilon_{i}\right)=\sum_{i=1}^{\infty}\frac{\varepsilon_{i}}{4^{i}},\quad\varepsilon_{i}\in\left\{ 0,2\right\} ,\;\left(\varepsilon_{i}\right)\in\Omega_{2}.
\end{alignat*}

\begin{flushleft}
\textbf{Boundary Representation for the two measures $\mu_{3}$ and
$\mu_{4}$}, (see Theorem \ref{thm:pp}, and Figs \ref{fig:cantor}-\ref{fig:cum}.)
\par\end{flushleft}
\begin{defn}
Let $\mu$ be a (singular) measure with support contained in the interval
$I=\left[0,1\right]\simeq\partial\mathbb{D}$, the boundary of the
disk $\mathbb{D}=\left\{ z\in\mathbb{Z}\mathrel{{;}}\left|z\right|<1\right\} $.

A function $K:\mathbb{D}\times I\rightarrow\mathbb{C}$ is said to
be a \emph{boundary representation} iff (Def) the following four axioms
hold: 
\begin{enumerate}
\item \label{enu:cb1}$K\left(\cdot,x\right)$ is analytic in $\mathbb{D}$
for all $x\in I$; 
\item $K\left(z,\cdot\right)\in L^{2}\left(\mu\right)$, $\forall z\in\mathbb{D}$; 
\item Setting, for $f\in L^{2}\left(I,\mu\right)$, 
\begin{equation}
\left(Kf\right)\left(z\right)=\int_{0}^{1}f\left(x\right)K\left(z,x\right)d\mu\left(x\right),\label{eq:k9}
\end{equation}
then $Kf\in H_{2}\left(\mathbb{D}\right)$, the Hardy-space; and
\item \label{enu:cb4}The following limit exists in the $L^{2}\left(\mu\right)$-norm:
\[
\lim_{\substack{r\nearrow1\\
r<1
}
}\left(Kf\right)\left(r\,e\left(x\right)\right)=f\left(x\right),
\]
where $e\left(x\right):=e^{i2\pi x}$, $x\in I$. 
\end{enumerate}
We say that $K$ is \emph{self-reproducing} if there is a kernel $K^{\mathbb{C}}:\mathbb{D}\times\mathbb{D}\rightarrow\mathbb{C}$
satisfying 
\begin{equation}
\lim_{r\nearrow1}K^{\mathbb{C}}\left(z,r\,e\left(x\right)\right)=K\left(z,x\right),\;\forall z\in\mathbb{D},\:x\in I;\label{eq:k9a}
\end{equation}
and 
\begin{equation}
\int_{0}^{1}K\left(z,x\right)\overline{K\left(w,x\right)}d\mu\left(x\right)=K^{\mathbb{C}}\left(z,w\right),\;\forall\left(z,w\right)\in\mathbb{D}\times\mathbb{D}.\label{eq:k9b}
\end{equation}

\end{defn}

\begin{rem}
When $\left(K,K^{\mathbb{C}}\right)$ satisfy the two conditions (\ref{eq:k9a})-(\ref{eq:k9b}),
it is immediate that $K^{\mathbb{C}}$ is then a \emph{positive definite
kernel} on $\mathbb{D}\times\mathbb{D}$. We shall denote the corresponding
\emph{reproducing kernel Hilbert space} RKHS by $\mathscr{H}\left(K^{\mathbb{C}}\right)$;
see \cite{MR0044033,MR0088706}.

Furthermore, the assignment 
\begin{equation}
T_{\mu}:\underset{{\scriptstyle \text{as a function on \ensuremath{\mathbb{D}}}}}{\underbrace{K^{\mathbb{C}}\left(\cdot,z\right)}}\longmapsto\underset{{\scriptstyle \text{in \ensuremath{L^{2}\left(I,\mu\right)}}}}{\underbrace{K\left(z,\cdot\right)}}\label{eq:k9c}
\end{equation}
extends by linearity, and norm-closure to an \emph{isometry}: 
\begin{equation}
T_{\mu}:\mathscr{H}\left(K^{\mathbb{C}}\right)\longrightarrow L^{2}\left(\mu\right);\label{eq:k9d}
\end{equation}
with ``isometry'' relative to the respective Hilbert norms in (\ref{eq:k9d}).

Moreover, the adjoint operator
\begin{equation}
T_{\mu}^{*}:L^{2}\left(\mu\right)\longrightarrow\mathscr{H}\left(K^{\mathbb{C}}\right)\label{eq:k9e}
\end{equation}
is the original operator $Kf$ specified in (\ref{eq:k9}), i.e.,
for $\forall f\in L^{2}\left(\mu\right)$, we have: 
\begin{equation}
\left(T_{\mu}^{*}f\right)\left(z\right)=\int_{0}^{1}f\left(x\right)K\left(z,x\right)d\mu\left(x\right),\;\forall z\in\mathbb{D}.\label{eq:k9f}
\end{equation}
\end{rem}

\begin{cor}
If the measure $\mu$ (as above) has a self-reproducing kernel $K^{\mathbb{C}}$,
then the corresponding operator $K$ (see (\ref{eq:k9})) satisfies
\begin{equation}
KT_{\mu}=I_{\mathscr{H}\left(K^{\mathbb{C}}\right)}\label{eq:k9g}
\end{equation}
where the subscript refers to the identity operator in the RKHS $\mathscr{H}\left(K^{\mathbb{C}}\right)$. 
\end{cor}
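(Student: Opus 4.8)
The plan is to recognize that the entire content of (\ref{eq:k9g}) is already packaged in the Remark preceding the statement: the operator $K$ of (\ref{eq:k9}) is precisely the adjoint $T_{\mu}^{*}$ of the isometry $T_{\mu}$ of (\ref{eq:k9d}), and for an isometry $V$ one always has $V^{*}V = I$ on the initial space. So the whole proof amounts to the chain of identities $KT_{\mu} = T_{\mu}^{*}T_{\mu} = I_{\mathscr{H}(K^{\mathbb{C}})}$, and the work is in pinpointing which earlier assertions supply each equality.

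First I would make the identification $K = T_{\mu}^{*}$ explicit. By definition (\ref{eq:k9}), $(Kf)(z) = \int_{0}^{1} f(x)K(z,x)\,d\mu(x)$ for $f \in L^{2}(\mu)$ and $z \in \mathbb{D}$; comparing this with (\ref{eq:k9f}) shows that $K$ and $T_{\mu}^{*}$ are one and the same bounded operator $L^{2}(\mu) \to \mathscr{H}(K^{\mathbb{C}})$ -- which is exactly the content of (\ref{eq:k9e})-(\ref{eq:k9f}). Hence $KT_{\mu} = T_{\mu}^{*}T_{\mu}$ as operators on $\mathscr{H}(K^{\mathbb{C}})$.

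Second, I would invoke that $T_{\mu}$ is an isometry, which is the one place the self-reproducing hypothesis genuinely enters. The operator $T_{\mu}$ is built in (\ref{eq:k9c})-(\ref{eq:k9d}) by sending each kernel function $K^{\mathbb{C}}(\cdot,w)$ to $K(w,\cdot) \in L^{2}(\mu)$ and extending by linearity and norm-closure; identity (\ref{eq:k9b}) says precisely that the Gram data of the family $\{K(w,\cdot)\}_{w\in\mathbb{D}}$ in $L^{2}(\mu)$ coincide with the Gram data of the family $\{K^{\mathbb{C}}(\cdot,w)\}_{w\in\mathbb{D}}$ in $\mathscr{H}(K^{\mathbb{C}})$ (the latter being read off from the reproducing property of $K^{\mathbb{C}}$). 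Thus the assignment (\ref{eq:k9c}) is norm-preserving on the dense span of kernel functions and extends to an isometry $T_{\mu}:\mathscr{H}(K^{\mathbb{C}}) \to L^{2}(\mu)$, as asserted in (\ref{eq:k9d}). Since $V^{*}V = I$ on the domain of every isometry $V$, taking $V = T_{\mu}$ gives $T_{\mu}^{*}T_{\mu} = I_{\mathscr{H}(K^{\mathbb{C}})}$, and combining with the first step yields $KT_{\mu} = I_{\mathscr{H}(K^{\mathbb{C}})}$.

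I do not expect a real obstacle here: the corollary is a formal consequence of the two facts recorded in the Remark, and the hypothesis ``$\mu$ has a self-reproducing kernel $K^{\mathbb{C}}$'' is used only to guarantee that $T_{\mu}$ is genuinely isometric rather than merely bounded. For contrast it is worth noting that the reversed composition $T_{\mu}K = T_{\mu}T_{\mu}^{*}$ is only the orthogonal projection of $L^{2}(\mu)$ onto the closed subspace $T_{\mu}\mathscr{H}(K^{\mathbb{C}})$ -- the boundary analogue of a Szeg\H{o}-type projection -- and is in general not the identity; the asymmetry in (\ref{eq:k9g}) reflects exactly that $T_{\mu}$ is isometric but need not be unitary.
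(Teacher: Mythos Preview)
Your proposal is correct and is exactly the argument the paper intends: the corollary is stated without proof in the paper because it is an immediate consequence of the preceding Remark, namely $K=T_{\mu}^{*}$ from (\ref{eq:k9e})--(\ref{eq:k9f}) together with the isometry assertion (\ref{eq:k9d}), giving $KT_{\mu}=T_{\mu}^{*}T_{\mu}=I_{\mathscr{H}(K^{\mathbb{C}})}$.
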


\begin{prop}
Each of the measures $\mu_{3}$ and $\mu_{4}$ from Fig \ref{fig:cum}
has a boundary representation. 
\end{prop}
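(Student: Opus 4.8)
The plan is to realize the required kernel, for each of $\mu=\mu_{3}$ and $\mu=\mu_{4}$, as a \emph{normalized Cauchy transform}, and to read the four axioms of a boundary representation off the Aleksandrov--Clark theory of singular measures on the unit circle. Throughout I identify $I=[0,1]$ with $\partial\mathbb{D}$ by $e(x)=e^{i2\pi x}$; since $\mu_{3}$ and $\mu_{4}$ are atom-free probability measures carried by the Cantor sets $C_{1/3}$, $C_{1/4}$ of Lebesgue measure zero (Table~\ref{tab:IFSmeas}, Fig.~\ref{fig:cantor}), their push-forwards to $\partial\mathbb{D}$ are singular with respect to arc length, and that is the only input about them that the construction will use.

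First I would attach to such a $\mu$ its Herglotz transform $\Phi_{\mu}(z)=\int_{\partial\mathbb{D}}\frac{\zeta+z}{\zeta-z}\,d\mu(\zeta)$, with $\mathrm{Re}\,\Phi_{\mu}>0$ on $\mathbb{D}$ and $\Phi_{\mu}(0)=1$, together with its Cauchy transform $C_{\mu}(z)=\int_{\partial\mathbb{D}}\frac{d\mu(\zeta)}{1-\bar\zeta z}$; an elementary rearrangement gives $\Phi_{\mu}=2C_{\mu}-1$ and hence $1-\theta=1/C_{\mu}$ for $\theta=\theta_{\mu}:=(\Phi_{\mu}-1)/(\Phi_{\mu}+1)$. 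Because $\mu$ is singular, $\theta_{\mu}$ is an inner function on $\mathbb{D}$ with $\theta_{\mu}(0)=0$ whose Clark (Aleksandrov) measure at $\alpha=1$ is exactly $\mu$. I then set
\[
K(z,x):=\frac{1-\theta(z)}{1-\overline{e(x)}\,z},\qquad z\in\mathbb{D},\ x\in I,
\]
so that $Kf=(1-\theta)\,C(f\,d\mu)$ is the Clark operator $V$ of $\theta$ at $\alpha=1$.

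The axioms then follow in order. Axiom~(i) is immediate: $1-\theta\in H^{\infty}(\mathbb{D})$ and $z\mapsto 1-\overline{e(x)}\,z$ has no zero in $\mathbb{D}$, so $K(\cdot,x)$ is analytic on $\mathbb{D}$ for \emph{every} $x\in I$. Axiom~(ii) holds because $|K(z,x)|\le|1-\theta(z)|/(1-|z|)$ is bounded in $x$, whence $K(z,\cdot)\in L^{\infty}(\mu)\subseteq L^{2}(\mu)$. Axiom~(iii) is Clark's theorem: $V\colon L^{2}(\mu)\to K_{\theta}:=H_{2}(\mathbb{D})\ominus\theta H_{2}(\mathbb{D})$ is unitary, so $Kf=Vf\in K_{\theta}\subseteq H_{2}(\mathbb{D})$. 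Axiom~(iv) -- that $(Kf)(r\,e(x))\to f(x)$ in $L^{2}(\mu)$ as $r\nearrow1$ -- is the substantive point, and I expect it to be the main obstacle: the $\mu$-a.e.\ convergence of the radial limits of a normalized Cauchy transform to its $L^{1}$-density is Poltoratski's theorem, and its strengthening to $L^{2}(\mu)$-norm convergence follows from that together with the uniform $L^{2}(\mu)$-boundedness of the Aleksandrov operators $f\mapsto(Kf)(r\,\cdot)$ and a routine density argument. The other three axioms are either the explicit formula for $K$ or soft Hilbert-space facts.

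For the \emph{self-reproducing} refinement I would compute $K^{\mathbb{C}}(z,w):=\int_{I}K(z,x)\overline{K(w,x)}\,d\mu(x)$ directly, using the identity $\int_{\partial\mathbb{D}}\frac{d\mu(\zeta)}{(1-\bar\zeta z)(1-\zeta\bar w)}=\frac{C_{\mu}(z)+\overline{C_{\mu}(w)}-1}{1-z\bar w}$ together with $1-\theta=1/C_{\mu}$, which collapses to
\[
K^{\mathbb{C}}(z,w)=\frac{1-\theta(z)\overline{\theta(w)}}{1-z\bar w},
\]
the reproducing kernel of the model space $\mathscr{H}(K^{\mathbb{C}})=K_{\theta}$; the limit relation (\ref{eq:k9a}) then holds for $\mu$-a.e.\ $x$ -- which is optimal, since $\theta$ has nontangential boundary value $1$ precisely $\mu$-a.e.\ on $\partial\mathbb{D}$ -- and the relation $KT_{\mu}=I_{\mathscr{H}(K^{\mathbb{C}})}$ of (\ref{eq:k9g}) reflects the unitarity of $V$. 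Finally, for $\mu_{4}$ one can avoid Clark theory altogether: by Jorgensen--Pedersen, $L^{2}(\mu_{4})$ has the orthonormal basis $\{e(x)^{\lambda}\}_{\lambda\in\Lambda}$ with $\Lambda\subset\mathbb{N}_{0}$ the set of finite sums of distinct powers of $4$, and then $K(z,x)=\sum_{\lambda\in\Lambda}\overline{e(x)^{\lambda}}\,z^{\lambda}$ and $K^{\mathbb{C}}(z,w)=\sum_{\lambda\in\Lambda}z^{\lambda}\bar w^{\lambda}$ satisfy all four axioms -- and the self-reproducing property for every $x\in I$ -- by Parseval's identity and Abel summability alone. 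Since $\mu_{3}$ admits no such exponential basis, the Clark construction is the route that covers both cases uniformly.
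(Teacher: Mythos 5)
Your construction is correct, and for $\mu_{3}$ it is essentially the paper's: the paper's kernel $K_{3}(z,x)=\bigl(1-b(z)\overline{b(e(x))}\bigr)/\bigl(1-z\overline{e(x)}\bigr)$ coincides $\mu_{3}$-a.e.\ in $x$ with your normalized Cauchy transform $\bigl(1-\theta(z)\bigr)/\bigl(1-\overline{e(x)}z\bigr)$, since the Clark inner function at $\alpha=1$ has nontangential boundary value $1$ at $\mu$-a.e.\ point; your computation of $K^{\mathbb{C}}(z,w)=\bigl(1-\theta(z)\overline{\theta(w)}\bigr)/(1-z\bar{w})$ is exactly the model-space kernel the paper has in mind. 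For $\mu_{4}$ the paper does \emph{not} use Clark theory: it takes the lacunary product $K_{4}(z,x)=\prod_{n\ge 0}\bigl(1+(\overline{e(x)}z)^{4^{n}}\bigr)$, which is precisely your ``alternative'' $\sum_{\lambda\in\Lambda}\overline{e(x)^{\lambda}}z^{\lambda}$ built from the Jorgensen--Pedersen orthonormal basis of exponentials, so your Parseval/Abel argument is in fact a proof of the paper's construction, and a fully rigorous one (the paper itself only cites references). What your unified Clark route buys is that it treats $\mu_{3}$ and $\mu_{4}$ (indeed any singular probability measure on the circle) by one mechanism; what the paper's split buys is that the $\mu_{4}$ case becomes elementary and the self-reproducing identity and the limit (\ref{eq:k9a}) hold for \emph{every} $x$, not just $\mu$-a.e.

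One step in your sketch is stated too loosely to stand as written: axiom (\ref{enu:cb4}) in the Clark case does not follow from ``$\mu$-a.e.\ convergence (Poltoratski) plus uniform boundedness of $f\mapsto (Kf)(r\,\cdot)$ plus density,'' because the standard density argument needs \emph{norm} convergence on a dense subspace, not pointwise a.e.\ convergence. The fix is available -- e.g.\ run the density argument on the dense set of $f=V^{-1}F$ with $F\in K_{\theta}$ continuous on $\overline{\mathbb{D}}$ (Aleksandrov's density theorem), for which $F(r\,e(x))\to F(e(x))$ uniformly, hence in $L^{2}(\mu)$ -- but it should be said. Since the paper defers exactly this point to its references, this is a presentational gap rather than a mathematical one.
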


\begin{proof}
We shall refer the reader to the two papers \cite{MR1655831} and
\cite{MR3796641}. In the case of $\mu_{4}$, the construction is
as follows: 
\begin{equation}
K_{4}\left(z,x\right)=\prod_{n=0}^{\infty}\left(1+\left(\overline{e\left(x\right)}z\right)^{4^{n}}\right),\label{eq:k10}
\end{equation}
and we refer to \cite{MR1655831} for details. 

In the case of $\mu_{3}$, let $b$ be the \emph{inner function} corresponding
to $\mu_{3}$ via the Herglotz-formula; then 
\begin{equation}
K_{3}\left(z,x\right)=\frac{1-b\left(z\right)\overline{b\left(e\left(x\right)\right)}}{1-z\overline{e\left(x\right)}}.\label{eq:k11}
\end{equation}
For the proof details, showing that $K_{3}$ in (\ref{eq:k11}) satisfies
conditions (\ref{enu:cb1})-(\ref{enu:cb4}), readers are referred
to \cite{MR3796641}. 
\end{proof}
\begin{cor}
The two kernels $K_{4}$ and $K_{3}$ are self-reproducing.
\end{cor}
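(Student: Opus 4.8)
The plan is to exhibit, for each of $K_{3}$ and $K_{4}$, an explicit candidate $K^{\mathbb{C}}$ on $\mathbb{D}\times\mathbb{D}$ and to check the two defining identities \eqref{eq:k9a}--\eqref{eq:k9b}. For $K_{4}$ the natural candidate is the ``diagonalized'' product
\[
K_{4}^{\mathbb{C}}\left(z,w\right)=\prod_{n=0}^{\infty}\left(1+\left(z\overline{w}\right)^{4^{n}}\right),
\]
which converges absolutely and locally uniformly on $\mathbb{D}\times\mathbb{D}$ (since $\sum_{n}\left|z\overline{w}\right|^{4^{n}}<\infty$) and is analytic in $z$, conjugate-analytic in $w$. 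Condition \eqref{eq:k9a} is routine: $K_{4}^{\mathbb{C}}\left(z,re\left(x\right)\right)=\prod_{n}\bigl(1+r^{4^{n}}\bigl(z\,\overline{e\left(x\right)}\bigr)^{4^{n}}\bigr)$, the factors being dominated by $1+\left|z\right|^{4^{n}}$ uniformly in $r\in\left[0,1\right]$, so one passes to the limit $r\nearrow1$ factorwise and recovers $K_{4}\left(z,x\right)$ from \eqref{eq:k10}.

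The substance for $K_{4}$ is \eqref{eq:k9b}, and here I would invoke the Jorgensen--Pedersen spectral pair for $\mu_{4}$: the exponentials $e_{\gamma}\left(x\right)=e^{i2\pi\gamma x}$, with $\gamma$ ranging over the set $\Gamma$ of non-negative integers whose base-$4$ expansion has all digits in $\left\{ 0,1\right\} $, form an orthonormal basis of $L^{2}\left(\mu_{4}\right)$; see \cite{MR1655831}. Expanding the absolutely convergent product in \eqref{eq:k10} and matching exponents with the digit description of $\Gamma$ gives
\[
K_{4}\left(z,x\right)=\prod_{n=0}^{\infty}\bigl(1+z^{4^{n}}\,\overline{e\left(x\right)}^{4^{n}}\bigr)=\sum_{\gamma\in\Gamma}z^{\gamma}\,\overline{e_{\gamma}\left(x\right)},
\]
with $\sum_{\gamma\in\Gamma}\left|z\right|^{2\gamma}=\prod_{n}\bigl(1+\left|z\right|^{2\cdot4^{n}}\bigr)<\infty$, so $K_{4}\left(z,\cdot\right)\in L^{2}\left(\mu_{4}\right)$ and the series converges there. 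Parseval with respect to the orthonormal basis $\left\{ e_{\gamma}\right\} _{\gamma\in\Gamma}$ then yields $\int K_{4}\left(z,x\right)\overline{K_{4}\left(w,x\right)}\,d\mu_{4}\left(x\right)=\sum_{\gamma\in\Gamma}z^{\gamma}\overline{w^{\gamma}}=\prod_{n}\bigl(1+\left(z\overline{w}\right)^{4^{n}}\bigr)=K_{4}^{\mathbb{C}}\left(z,w\right)$, as required.

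For $K_{3}$, since $\mu_{3}$ is singular with respect to Lebesgue measure the inner function $b$ appearing in \eqref{eq:k11} is genuinely inner, and (after the normalization making it a probability measure) $\mu_{3}$ is the Clark measure of $b$ at the point $1$. The candidate is the de Branges--Rovnyak / model-space kernel
\[
K_{3}^{\mathbb{C}}\left(z,w\right)=\frac{1-b\left(z\right)\overline{b\left(w\right)}}{1-z\overline{w}},
\]
i.e.\ the reproducing kernel $k_{w}^{b}$ of $\mathscr{K}_{b}=H^{2}\ominus bH^{2}$, which is positive definite (already noted after \eqref{eq:k9b}). The first observation is the identity $K_{3}\left(z,x\right)=\overline{k_{z}^{b}\left(e\left(x\right)\right)}$, i.e.\ $K_{3}\left(z,\cdot\right)$ is (the conjugate of) the boundary restriction of $k_{z}^{b}$; with this, \eqref{eq:k9a} amounts to $k_{re\left(x\right)}^{b}\left(z\right)\to k_{e\left(x\right)}^{b}\left(z\right)$ as $r\nearrow1$, which holds at every $x$ where $b$ has a radial limit, in particular $\mu_{3}$-a.e. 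For \eqref{eq:k9b} I would use Clark's theorem in the form: the boundary-value map $f\mapsto f^{*}$ is a unitary of $\mathscr{K}_{b}$ onto $L^{2}\left(\mu_{3}\right)$; applying it to $f=k_{w}^{b}$ and $g=k_{z}^{b}$ gives
\[
\int K_{3}\left(z,x\right)\overline{K_{3}\left(w,x\right)}\,d\mu_{3}\left(x\right)=\langle k_{w}^{b},k_{z}^{b}\rangle_{L^{2}\left(\mu_{3}\right)}=\langle k_{w}^{b},k_{z}^{b}\rangle_{\mathscr{K}_{b}}=k_{w}^{b}\left(z\right)=K_{3}^{\mathbb{C}}\left(z,w\right).
\]
Here I would cite \cite{MR3796641} for the properties of $b$ and its Clark measure, and \cite{MR0044033,MR0088706} for the RKHS background.

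The main obstacle in both cases is precisely the $L^{2}\left(\mu\right)$-reproducing identity \eqref{eq:k9b}; everything else is bookkeeping about uniform convergence and radial limits. For $K_{4}$ it reduces to the Fourier-orthogonality of $L^{2}\left(\mu_{4}\right)$ with respect to the lacunary exponential system $\left\{ e_{\gamma}\right\} _{\gamma\in\Gamma}$ (the Jorgensen--Pedersen phenomenon), the only real point being to align the combinatorics of the infinite product with the digit description of $\Gamma$. For $K_{3}$, $\mu_{3}$ admits no such exponential basis, so the identity must instead be extracted from the Aleksandrov--Clark correspondence between $L^{2}$ of the Clark measure and the model space $\mathscr{K}_{b}$; the delicate part is fixing the normalization of $b$ (so $\mu_{3}$ is exactly its Clark measure, with the Herglotz/Cauchy-transform constant correct) and keeping the complex conjugates consistent with the formula \eqref{eq:k11}.
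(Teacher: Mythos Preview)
Your argument is correct, and in fact it supplies considerably more than the paper does: the paper states this corollary without proof, immediately after the proposition, implicitly relying on the two references \cite{MR1655831} and \cite{MR3796641} already cited there. Your route is exactly the one those references make available---the Jorgensen--Pedersen orthonormal exponential basis $\{e_{\gamma}\}_{\gamma\in\Gamma}$ for $L^{2}(\mu_{4})$ to verify \eqref{eq:k9b} for $K_{4}$, and the Aleksandrov--Clark unitary between $\mathscr{K}_{b}$ and $L^{2}(\mu_{3})$ for $K_{3}$---so there is no methodological difference to report; you have simply written out what the paper leaves implicit.

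One small remark on bookkeeping: in the $K_{3}$ computation you write the $L^{2}(\mu_{3})$ pairing as $\langle k_{w}^{b},k_{z}^{b}\rangle$, whereas with the paper's convention (conjugate-linear in the first slot, as in \eqref{eqIso.4}) the integral $\int K_{3}(z,x)\overline{K_{3}(w,x)}\,d\mu_{3}(x)=\int\overline{k_{z}^{b}}\,k_{w}^{b}\,d\mu_{3}$ is $\langle k_{z}^{b},k_{w}^{b}\rangle$. Either way the reproducing property gives $k_{w}^{b}(z)=K_{3}^{\mathbb{C}}(z,w)$, so the conclusion is unaffected; just keep the sesquilinearity convention aligned with the paper's if you incorporate this into the text.
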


\section{Endomorphisms and Invariance}

The purpose of this section is to make precise connections between
the following three tools from non-commutative analysis: The representations,
$Rep\left(\mathcal{O}_{N},\mathscr{H}\right)$; (ii) endomorphisms
in $\mathscr{B}\left(\mathscr{H}\right)$ of index $N$, and (iii)
certain unitary operators in $\mathscr{H}$.

These interconnections will play a role in the rest of the paper.
Some references relevant to (ii) are \cite{MR1978577,MR1444086,MR2030387}.
\begin{defn}
Let $\mathscr{H}$ be a separable Hilbert space, and let $\alpha:\mathscr{B}\left(\mathscr{H}\right)\longrightarrow\mathscr{B}\left(\mathscr{H}\right)$
be linear, satisfying (for all $A,B\in\mathscr{B}\left(\mathscr{H}\right)$): 
\begin{enumerate}
\item $\alpha\left(AB\right)=\alpha\left(A\right)\alpha\left(B\right)$, 
\item $\alpha\left(A^{*}\right)=\alpha\left(A\right)^{*}$, and 
\item $\alpha\left(I\right)=I$. 
\end{enumerate}
We then say that $\alpha$ is an \emph{endomorphism}.

\end{defn}

\begin{lem}
\label{lem:US}Fix an endomorphism $\alpha\in End\left(\mathscr{B}\left(\mathscr{H}\right)\right)$,
and let $\pi\in Rep\left(\mathcal{O}_{N},\mathscr{H}\right)$, $\pi\left(s_{i}\right):=S_{i}$,
see (\ref{eq:a5})-(\ref{eq:a7}). 
\begin{enumerate}
\item \label{enu:c1}Then a given unitary operator, $U:\mathscr{H}\rightarrow\mathscr{H}$,
satisfies 
\begin{equation}
US_{i}=\alpha\left(S_{i}\right),\;\forall i\in\left\{ 1,\cdots,N\right\} \label{eq:c1}
\end{equation}
if and only if 
\begin{equation}
U=U_{\alpha}=\sum_{i=1}^{N}\alpha\left(S_{i}\right)S_{i}^{*}.\label{eq:c2}
\end{equation}
\item \label{enu:c2}Conversely, if an operator $U_{\alpha}$ is given by
eq. (\ref{eq:c2}), then \uline{it is a unitary} operator in $\mathscr{H}$. 
\end{enumerate}
\end{lem}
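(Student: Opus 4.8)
The plan is to verify the two directions of \eqref{enu:c1} by direct computation with the Cuntz relations \eqref{eq:a7}, and then to derive \eqref{enu:c2} as a byproduct of the same algebra. First I would prove the ``only if'' part of \eqref{enu:c1}: assuming a unitary $U$ satisfies $US_i=\alpha(S_i)$ for all $i$, I multiply on the right by $S_i^*$ and sum over $i$, obtaining $\sum_{i=1}^N US_iS_i^* = \sum_{i=1}^N \alpha(S_i)S_i^*$; the left side collapses to $U\big(\sum_i S_iS_i^*\big)=U\cdot I=U$ by \eqref{eq:a7}, which is exactly \eqref{eq:c2}. For the ``if'' direction, I start from the formula \eqref{eq:c2} for $U_\alpha$ and compute $U_\alpha S_j = \big(\sum_i \alpha(S_i)S_i^*\big)S_j = \sum_i \alpha(S_i)\,\delta_{ij} I = \alpha(S_j)$, again using $S_i^*S_j=\delta_{ij}I$; this gives \eqref{eq:c1}.

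The remaining work is part \eqref{enu:c2}: the operator defined by \eqref{eq:c2} is automatically unitary. I would compute $U_\alpha U_\alpha^*$ and $U_\alpha^* U_\alpha$ separately. For $U_\alpha U_\alpha^* = \big(\sum_i \alpha(S_i)S_i^*\big)\big(\sum_j S_j \alpha(S_j)^*\big) = \sum_{i,j}\alpha(S_i)S_i^*S_j\alpha(S_j)^* = \sum_i \alpha(S_i)\alpha(S_i)^* = \alpha\big(\sum_i S_iS_i^*\big) = \alpha(I) = I$, where the second-to-last step uses multiplicativity and $*$-preservation of $\alpha$, and the last uses $\alpha(I)=I$. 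For $U_\alpha^* U_\alpha = \sum_{i,j} S_i \alpha(S_i)^*\alpha(S_j) S_j^* = \sum_{i,j} S_i \alpha(S_i^*S_j) S_j^* = \sum_{i,j} S_i \alpha(\delta_{ij}I) S_j^* = \sum_i S_i S_i^* = I$, using again that $\alpha$ is a unital $*$-endomorphism together with $S_i^*S_j=\delta_{ij}I$ and $\sum_i S_iS_i^*=I$. Hence $U_\alpha$ is unitary.

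There is no serious obstacle here; the only point requiring a little care is the use of the \emph{two} Cuntz relations in the right places — the orthogonality relation $S_i^*S_j=\delta_{ij}I$ to kill cross terms and the completeness relation $\sum_i S_iS_i^*=I$ to recover the identity — together with the fact that $\alpha$ is required to be a unital $*$-endomorphism (properties (1)--(3) of the preceding definition), which is what makes $\alpha$ commute past products and adjoints and fix $I$. I would present the computation for $U_\alpha U_\alpha^*$ in full and note that $U_\alpha^* U_\alpha = I$ follows by the symmetric argument, then conclude by combining the three displayed computations.
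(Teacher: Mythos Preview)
Your proposal is correct and follows essentially the same approach as the paper's own proof: both directions of part (\ref{enu:c1}) are obtained by right-multiplying by $S_i^*$ and summing (resp.\ right-multiplying by $S_j$) using the Cuntz relations, and part (\ref{enu:c2}) is verified by computing $U_\alpha U_\alpha^*$ and $U_\alpha^* U_\alpha$ exactly as you describe, invoking multiplicativity, $*$-preservation, and unitality of $\alpha$.
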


\begin{proof}
(\ref{eq:c1})$\Rightarrow$(\ref{eq:c2}). If (\ref{eq:c1}) holds,
right-multiply by $S_{i}^{*}$, and perform the summation $\sum_{i=1}^{N}$,
using (\ref{eq:a7}). 

(\ref{eq:c2})$\Rightarrow$(\ref{eq:c1}). If (\ref{eq:c2}) holds,
right-multiply by $S_{j}$, and use (\ref{eq:a7}). 

We now turn to (\ref{enu:c2}): Given $\alpha\in End\left(\mathscr{B}\left(\mathscr{H}\right)\right)$,
then set 
\begin{equation}
U_{\alpha}=\sum_{i=1}^{N}\alpha\left(S_{i}\right)S_{i}^{*};\label{eq:c3}
\end{equation}
it follows that:
\begin{align*}
U_{\alpha}U_{\alpha}^{*} & =\sum_{i}\sum_{j}\alpha\left(S_{i}\right)\underset{\delta_{ij}I}{\underbrace{S_{i}^{*}S_{j}}}\alpha\left(S_{j}^{*}\right)\\
 & =\sum_{i}\alpha\left(S_{i}\right)\alpha\left(S_{i}^{*}\right)=\alpha\left(\sum_{i}S_{i}S_{i}^{*}\right)=\alpha\left(I\right)=I.
\end{align*}
Similarly, 
\begin{align*}
U_{\alpha}^{*}U_{\alpha} & =\sum_{i}\sum_{j}S_{i}\alpha\left(S_{i}\right)^{*}\alpha\left(S_{j}\right)S_{j}^{*}\\
 & =\sum_{i}\sum_{j}S_{i}\alpha\big(\underset{\delta_{ij}I}{\underbrace{S_{i}^{*}S_{j}}}\big)S_{j}^{*}\\
 & =\sum_{i}S_{i}\alpha\left(I\right)S_{i}^{*}=\sum_{i}S_{i}IS_{i}^{*}=\sum_{i}S_{i}S_{i}^{*}=I,
\end{align*}
which is the desired conclusion. 
\end{proof}
\begin{thm}
Let $\mathscr{H}$ be a separable Hilbert space. Fix $N\in\mathbb{N}$,
$N>2$. Let $\alpha\in End\left(\mathscr{B}\left(\mathscr{H}\right)\right)$,
and $\pi\in Rep\left(\mathcal{O}_{N},\mathscr{H}\right)$, and 
\begin{equation}
X:\Omega_{N}\longrightarrow\mathfrak{M}\label{eq:c4}
\end{equation}
be the random variable from Theorem \ref{thm:LP}, and set $U=U_{\alpha}$
(see (\ref{eq:c2})). 

Then we have: 
\begin{equation}
\alpha\left(S_{\pi_{1}\left(\omega\right)}\right)X\left(\sigma\left(\omega\right)\right)\alpha\left(S_{\pi_{1}\left(\omega\right)}^{*}\right)=U_{\alpha}X\left(\omega\right)U_{\alpha}^{*}\label{eq:c5}
\end{equation}
for all $\omega\in\Omega_{N}$. (For terminology, see Theorem \ref{thm:LP}
above.)

The corresponding covariance formula for the projection valued measure
$\mathbb{Q}_{\pi}\left(\cdot\right)$ from Theorem \ref{thm:pv} is:
\begin{equation}
\alpha\left(S_{\pi_{1}\left(E\right)}\right)\mathbb{Q}_{\pi}\left(E\right)\alpha\left(S_{\pi_{1}\left(E\right)}^{*}\right)=U_{\alpha}\mathbb{Q}_{\pi}\left(E\right)U_{\alpha}^{*},\label{eq:c6}
\end{equation}
for all $E\in\mathscr{C}$ (the $\sigma$-algebra of subsets of $\Omega_{N}$).
\end{thm}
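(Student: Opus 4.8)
The plan is to derive both covariance formulas directly from the intertwining relation $U_\alpha S_i = \alpha(S_i)$ of Lemma~\ref{lem:US}, together with the transformation rules for $X$ and $\mathbb{Q}_\pi$ already established in Theorems~\ref{thm:LP} and~\ref{thm:pv}. The two statements (\ref{eq:c5}) and (\ref{eq:c6}) are parallel, so I would prove (\ref{eq:c5}) in detail and then observe that (\ref{eq:c6}) follows either by the identical argument applied to $\mathbb{Q}_\pi(E_f)$ on basic cylinder sets plus Kolmogorov consistency, or simply from (\ref{eq:h2}) of the Corollary identifying $\mathbb{Q}_\pi(\{\omega\})$ with $X(\omega)$ when $E$ is a singleton, extended to general $E$ by the consistency/martingale limit.

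First I would compute the left side of (\ref{eq:c5}). Write $a := \pi_1(\omega)$, so $\omega = a\,\sigma(\omega)$. By Lemma~\ref{lem:US}(\ref{enu:c1}) we have $U_\alpha S_a = \alpha(S_a)$ and $S_a^* U_\alpha^* = \alpha(S_a)^* = \alpha(S_a^*)$, hence
\begin{equation}
\alpha(S_a)\,X(\sigma(\omega))\,\alpha(S_a^*) = U_\alpha S_a\, X(\sigma(\omega))\, S_a^* U_\alpha^*.
\end{equation}
Now I invoke relation (\ref{eq:b1}) from Theorem~\ref{thm:LP}, which says $S_a X(\sigma(\omega)) S_a^* = X(a\,\sigma(\omega)) = X(\omega)$. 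Substituting this into the middle of the previous display gives exactly $U_\alpha X(\omega) U_\alpha^*$, which is the right side of (\ref{eq:c5}). That completes the first identity; the only inputs are the unitary intertwining of Lemma~\ref{lem:US} and the shift-covariance (\ref{eq:b1}).

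For (\ref{eq:c6}) I would run the same three-line computation with $X$ replaced by $\mathbb{Q}_\pi$, using (\ref{eq:e18}), i.e. $S_i \mathbb{Q}_\pi(E) = \mathbb{Q}_\pi(iE) S_i$, in place of (\ref{eq:b1}): for a basic cylinder set $E = E_f$ with $\pi_1(E) = i_1$ one gets $\alpha(S_{i_1})\mathbb{Q}_\pi(\sigma(E))\alpha(S_{i_1}^*) = U_\alpha S_{i_1}\mathbb{Q}_\pi(\sigma(E)) S_{i_1}^* U_\alpha^* = U_\alpha \mathbb{Q}_\pi(E) U_\alpha^*$, where the last step uses $S_{i_1}\mathbb{Q}_\pi(\sigma(E_f)) = \mathbb{Q}_\pi(E_f) S_{i_1}$ from (\ref{eq:e18}). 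Since both sides of (\ref{eq:c6}) are $\sigma$-additive in $E$ (the left side is a fixed conjugation of $\mathbb{Q}_\pi$, the right side likewise) and agree on the cylinder sets that generate $\mathscr{C}$, the Kolmogorov-consistency / inductive-limit argument of Theorem~\ref{thm:pv} extends the identity to all $E \in \mathscr{C}$.

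The only genuine subtlety — the "main obstacle" — is bookkeeping around the symbol $\pi_1(E)$ for a general set $E$: unlike the singleton case, a generic $E \in \mathscr{C}$ does not have a well-defined first coordinate, and the shift $\sigma$ is not injective, so the naive reading of (\ref{eq:c6}) needs care. The honest route is to prove (\ref{eq:c6}) first for cylinder sets $E_f$ (where $\pi_1(E_f)$ is literally the first letter of $f$), observe that for disjoint unions the relevant quantities add, and conclude by the $\sigma$-algebra extension; I would flag that (\ref{eq:c6}) should be read in this cylinder-then-extend sense, exactly as (\ref{eq:e17})--(\ref{eq:e19}) were established in Theorem~\ref{thm:pv}. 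No step requires more than the already-proved covariance relations together with the fact from Lemma~\ref{lem:US} that $U_\alpha$ is unitary and intertwines $S_i$ with $\alpha(S_i)$.
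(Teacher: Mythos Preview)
Your proof of (\ref{eq:c5}) is correct and follows essentially the same route as the paper: both arguments rest on the intertwining $U_\alpha S_i=\alpha(S_i)$ from Lemma~\ref{lem:US}. The only cosmetic difference is that the paper computes at the level of the finite approximants $U_\alpha S_f S_f^* U_\alpha^* = \alpha(S_{i_1})S_{i_2}\cdots S_{i_k}S_{i_k}^*\cdots S_{i_2}^*\alpha(S_{i_1}^*)$ and then passes to the limit $k\to\infty$ via (\ref{eq:a8}), whereas you invoke the already-packaged limit relation (\ref{eq:b1}) directly; the content is identical.

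For (\ref{eq:c6}) you actually go beyond the paper, whose proof addresses only (\ref{eq:c5}). Your cylinder-then-extend argument is the right way to make sense of the statement, and you are correct to flag that $\pi_1(E)$ is only literally defined on basic cylinders. Note, though, that the identity you prove on cylinders has $\mathbb{Q}_\pi(\sigma(E))$ on the left, matching the $X(\sigma(\omega))$ in (\ref{eq:c5}), while the displayed (\ref{eq:c6}) has $\mathbb{Q}_\pi(E)$ on both sides; your reading is the one that is actually parallel to (\ref{eq:c5}) and consistent with the finite-word computation, so this is best viewed as a typographical slip in the statement rather than a gap in your argument.
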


\begin{proof}
By (\ref{eq:a8}) in Theorem \ref{thm:LP}, it is enough to consider
finite words, e.g., $f=\left(i_{1},i_{2},\cdots,i_{k}\right)$, $i_{j}\in\left\{ 1,2,\cdots,N\right\} $.
Set $S_{f}:=S_{i_{1}}S_{i_{2}}\cdots S_{i_{k}}$. For the approximation
to the RHS in (\ref{eq:c5}), we have:
\begin{align*}
U_{\alpha}S_{f}S_{f}^{*}U_{\alpha}^{*} & =\alpha\left(S_{i_{1}}\right)S_{i_{2}}\cdots S_{i_{k}}S_{i_{k}}^{*}\cdots S_{i_{2}}^{*}\alpha\left(S_{i_{1}}^{*}\right)\\
 & \quad\left(\text{by \ensuremath{\left(\ref{eq:c1}\right)} in Lemma \ref{lem:US}}\right)
\end{align*}
and the desired conclusion (\ref{eq:c5}) now follows from (\ref{eq:a8})
in Theorem \ref{thm:LP}. 
\end{proof}

\section{\label{sec:uh}Representations in a Universal Hilbert Space}

Our starting point is a compact Hausdorff space $M$ and continuous
maps $\sigma\colon M\rightarrow M$, $\tau_{i}\colon M\rightarrow M$,
$i=1,\dots,N$, such that 
\begin{equation}
\sigma\circ\tau_{i}=id_{M}.\label{eqInt.6}
\end{equation}
It follows from (\ref{eqInt.6}) that $\sigma$ is onto, and that
each $\tau_{i}$ is one-to-one. We will be especially interested in
the case when there are distinct branches $\tau_{i}\colon M\rightarrow M$
such that
\begin{equation}
\bigcup_{i=1}^{N}\tau_{i}\left(M\right)=M.\label{eqInt.7}
\end{equation}
For such systems, we show that there is a \emph{universal} representation
of $\mathcal{O}_{N}$ in a Hilbert space $\mathscr{H}\left(M\right)$
which is functorial, is naturally defined, and contains every representation
of $\mathcal{O}_{N}$.

The elements in\emph{ the universal Hilbert space} $\mathscr{H}\left(M\right)$
are equivalence classes of pairs $\left(\varphi,\mu\right)$ where
$\varphi$ is a Borel function on $M$ and where $\mu$ is a positive
Borel measure on $M$. We will set $\varphi\sqrt{d\mu}:=\text{class}\left(\varphi,\mu\right)$
for reasons which we spell out below.

While our present methods do adapt to the more general framework when
the space $M$ of (\ref{eqInt.6})-(\ref{eqInt.7}) is not assumed
compact, but only $\sigma$-compact, we will still restrict the discussion
here to the compact case. This is for the sake of simplicity of the
technical arguments. But we encourage the reader to follow our proofs
below, and to formulate for him/her\-self the corresponding results
when $M$ is not necessarily assumed compact. Moreover, if $M$ is
not compact, then there is a variety of special cases to take into
consideration, various abstract notions of ``escape to infinity''.
We leave this wider discussion for a later investigation, and we only
note here that our methods allow us to relax the compactness restriction
on $M$.

There is a classical construction in operator theory which lets us
realize point transformations in Hilbert space. It is called the Koopman
representation; see, for example, \cite[p.~135]{MR1043174}. But this
approach only applies if the existence of invariant, or quasi-invariant,
measures is assumed. In general such measures are not available. We
propose a different way of realizing families of point transformations
in Hilbert space in a general context where no such assumptions are
made. Our Hilbert spaces are motivated by S. Kakutani \cite{MR0023331},
L. Schwartz, and E. Nelson \cite{MR0282379}, among others. The reader
is also referred to an updated presentation of the measure-class Hilbert
spaces due to Tsirelson \cite{MR2029632} and Arveson \cite[Chapter 14]{MR1978577}.

We say that $\left(\varphi,\mu\right)\sim\left(\psi,\nu\right)$ if
there is a third positive Borel measure $\lambda$ on $M$ such that
$\mu\ll\lambda$, $\nu\ll\lambda$, and
\begin{equation}
\varphi\sqrt{\frac{d\mu}{d\lambda}}=\psi\sqrt{\frac{d\nu}{d\lambda}},\quad\lambda\;\mathrm{a.e.}\text{ on }M,\label{eqInt.8}
\end{equation}
where $\ll$ denotes relative absolute continuity, and where $d\mu/d\lambda$
denotes the usual Radon-Nikodym derivative, i.e., $d\mu/d\lambda\in L^{1}\left(\lambda\right)$,
and $d\mu=\left(d\mu/d\lambda\right)d\lambda$.

One checks that $\sim$ for pairs $\left(\varphi,\mu\right)$, i.e.,
(function, measure), indeed defines an \emph{equivalence relation}.
Notation: $\text{class}\left(\varphi,\mu\right)=:\varphi\sqrt{d\mu}$.

We shall review some basic properties of the Hilbert space $\mathscr{H}\left(M\right)$.
This space is called the Hilbert space of $\sigma$-functions, or
square densities, and it was studied for different reasons in earlier
papers of L. Schwartz, E. Nelson \cite{MR0282379}, and W. Arveson
\cite{MR2026010}.
\begin{thm}
\label{thm:iso}Isometries $S_{i}\colon\mathscr{H}\left(M\right)\rightarrow\mathscr{H}\left(M\right)$
are defined by 
\begin{equation}
S_{i}\colon\left(\varphi,\mu\right)\longmapsto\left(\varphi\circ\sigma,\mu\circ\tau_{i}^{-1}\right),\label{eqInt.9}
\end{equation}
or equivalently, $S_{i}\colon\varphi\sqrt{d\mu}\mapsto\varphi\circ\sigma\sqrt{d\mu\circ\tau_{i}^{-1}}$,
and these operators satisfy the Cuntz relations.
\end{thm}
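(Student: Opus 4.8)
The plan is to take the Hilbert space $\mathscr{H}\left(M\right)$ and its inner product as already reviewed, recalling in particular that for any common dominating measure $\lambda$ (e.g.\ $\lambda=\mu+\nu$) one has $\left\langle \varphi\sqrt{d\mu},\psi\sqrt{d\nu}\right\rangle =\int_{M}\overline{\varphi}\,\psi\,\sqrt{(d\mu/d\lambda)(d\nu/d\lambda)}\,d\lambda$, independent of $\lambda$, so that $\|\varphi\sqrt{d\mu}\|^{2}=\int_{M}\left|\varphi\right|^{2}d\mu$. Two preliminary observations drive everything. First, since $M$ is compact Hausdorff and each $\tau_{i}$ is continuous and, by \eqref{eqInt.6}, injective, $\tau_{i}\left(M\right)$ is a closed (hence Borel) subset of $M$ and $\tau_{i}\colon M\to\tau_{i}\left(M\right)$ is a homeomorphism with inverse $\sigma|_{\tau_{i}\left(M\right)}$; thus pushforward by $\tau_{i}$ and ``restrict to $\tau_{i}\left(M\right)$, then pull back along $\sigma$'' are mutually inverse bijections between Borel measures on $M$ and Borel measures concentrated on $\tau_{i}\left(M\right)$. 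Second, \eqref{eqInt.7} will be used in the disjoint form $M=\bigsqcup_{i=1}^{N}\tau_{i}\left(M\right)$ appropriate to a system of inverse branches (disjointness of the images is genuinely needed for the relations below). I would then organize the proof into four steps: (a) $S_{i}$ is well defined on equivalence classes; (b) $S_{i}$ is isometric, so $S_{i}^{*}S_{i}=I$; (c) the ranges of $S_{i}$ and $S_{j}$ are orthogonal for $i\neq j$, so $S_{i}^{*}S_{j}=0$; and (d) $\sum_{i}S_{i}S_{i}^{*}=I$.

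For (a), suppose $\left(\varphi,\mu\right)\sim\left(\psi,\nu\right)$, witnessed by $\lambda\gg\mu,\nu$ with $\varphi\sqrt{d\mu/d\lambda}=\psi\sqrt{d\nu/d\lambda}$ $\lambda$-a.e. Pushing forward by $\tau_{i}$ gives $\mu\circ\tau_{i}^{-1}\ll\lambda\circ\tau_{i}^{-1}$, and from the measure isomorphism above one computes $\frac{d(\mu\circ\tau_{i}^{-1})}{d(\lambda\circ\tau_{i}^{-1})}=\frac{d\mu}{d\lambda}\circ\sigma$ on $\tau_{i}\left(M\right)$, both sides vanishing off $\tau_{i}\left(M\right)$ (and likewise with $\nu$). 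Hence $\left(\varphi\circ\sigma\right)\sqrt{\tfrac{d(\mu\circ\tau_{i}^{-1})}{d(\lambda\circ\tau_{i}^{-1})}}=\big(\varphi\sqrt{d\mu/d\lambda}\big)\circ\sigma=\big(\psi\sqrt{d\nu/d\lambda}\big)\circ\sigma=\left(\psi\circ\sigma\right)\sqrt{\tfrac{d(\nu\circ\tau_{i}^{-1})}{d(\lambda\circ\tau_{i}^{-1})}}$ $(\lambda\circ\tau_{i}^{-1})$-a.e., which is precisely $\left(\varphi\circ\sigma,\mu\circ\tau_{i}^{-1}\right)\sim\left(\psi\circ\sigma,\nu\circ\tau_{i}^{-1}\right)$. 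Linearity of $S_{i}$ is immediate from the formula.

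For (b), the change-of-variables formula for pushforwards and \eqref{eqInt.6} give $\|S_{i}(\varphi\sqrt{d\mu})\|^{2}=\int_{M}\left|\varphi\circ\sigma\right|^{2}d(\mu\circ\tau_{i}^{-1})=\int_{M}\left|\varphi\circ\sigma\circ\tau_{i}\right|^{2}d\mu=\int_{M}\left|\varphi\right|^{2}d\mu=\|\varphi\sqrt{d\mu}\|^{2}$, so $S_{i}^{*}S_{i}=I$. The same change of variables identifies the adjoint: $S_{i}^{*}\big(\psi\sqrt{d\nu}\big)=\left(\psi\circ\tau_{i}\right)\sqrt{d\nu_{i}}$, where $\nu_{i}\left(E\right):=\nu\left(\tau_{i}\left(E\right)\right)$ is $\nu|_{\tau_{i}\left(M\right)}$ pulled back by $\tau_{i}$. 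In particular the range of $S_{i}$ consists of classes $\varphi'\sqrt{d\mu'}$ with $\mu'$ concentrated on $\tau_{i}\left(M\right)$; since $\tau_{i}\left(M\right)\cap\tau_{j}\left(M\right)=\emptyset$ for $i\neq j$, any two such classes are orthogonal, whence $S_{i}^{*}S_{j}=0$. (This is also visible from composing the two formulas: $S_{i}^{*}S_{j}$ turns $\varphi\sqrt{d\mu}$ into $\varphi\sqrt{d(\mu|_{C})}$ with $C=\{x:\tau_{i}(x)=\tau_{j}(x)\}$, which is empty precisely by disjointness.)

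Finally, for (d), composing again, $S_{i}S_{i}^{*}\big(\psi\sqrt{d\nu}\big)=\psi\sqrt{d(\nu|_{\tau_{i}(M)})}$; indeed $\nu_{i}\circ\tau_{i}^{-1}=\nu|_{\tau_{i}\left(M\right)}$, and $\left(\psi\circ\tau_{i}\right)\circ\sigma=\psi$ on $\tau_{i}\left(M\right)$ while the measure ignores the complement. Thus $S_{i}S_{i}^{*}$ is the ``multiplication by $\chi_{\tau_{i}(M)}$'' projection, and since $M=\bigsqcup_{i}\tau_{i}\left(M\right)$ the measures $\nu|_{\tau_{i}\left(M\right)}$ are mutually singular and sum to $\nu$, giving $\sum_{i=1}^{N}S_{i}S_{i}^{*}\big(\psi\sqrt{d\nu}\big)=\psi\sqrt{d\nu}$, i.e.\ $\sum_{i}S_{i}S_{i}^{*}=I$. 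I expect the main friction to be the bookkeeping in step (a): getting the Radon--Nikodym derivative to transform correctly under pushforward, which rests entirely on the fact that $\sigma$ furnishes a genuine Borel inverse of $\tau_{i}$ on $\tau_{i}\left(M\right)$. Once that identification is made cleanly, steps (b)--(d) are short formal computations, and the only place hypothesis \eqref{eqInt.7} (and the disjointness of the branches) is used is the ``completeness'' relation $\sum_{i}S_{i}S_{i}^{*}=I$.
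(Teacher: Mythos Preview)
Your argument is correct and follows essentially the paper's approach: the same Radon--Nikodym transformation rule (your $\frac{d(\mu\circ\tau_i^{-1})}{d(\lambda\circ\tau_i^{-1})}=\frac{d\mu}{d\lambda}\circ\sigma$ on $\tau_i(M)$ is the paper's $k_\mu\circ\tau=d\mu/d\lambda$) drives well-definedness, linearity, and the isometry computation, and disjointness of the $\tau_i(M)$ yields orthogonal ranges exactly as in Lemma~\ref{LemIso.3}. Your step (d) is in fact sharper than what the paper supplies: the paper checks $\sum_i S_iS_i^{*}=I$ only in $L^{2}(\mu)$ for a single quasi-invariant measure (Corollary~\ref{CorRep.3}), whereas your direct computation $S_iS_i^{*}\big(\psi\sqrt{d\nu}\big)=\psi\sqrt{d(\nu|_{\tau_i(M)})}$ establishes the completeness relation in all of $\mathscr{H}(M)$.
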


\begin{proof}
Note that, at the outset, it is not even clear \emph{a priori} that
$S_{i}$ in (\ref{eqInt.9}) defines a transformation of $\mathscr{H}\left(M\right)$.
To verify this, we will need to show that if two equivalent pairs
are substituted on the left-hand side in (\ref{eqInt.9}), then they
produce equivalent pairs as output, on the right-hand side. Recalling
the definition (\ref{eqInt.8}) of the equivalence relation $\sim$,
there is no obvious or intuitive reason for why this should be so.

Before turning to the proof, we shall need some preliminaries and
lemmas. 
\end{proof}
To stress the intrinsic transformation rules of $\mathscr{H}\left(M\right)$,
the vectors in $\mathscr{H}\left(M\right)$ are usually denoted $\varphi\sqrt{d\mu}$
rather than $\left(\varphi,\mu\right)$. This suggestive notation
motivates the definition of the \emph{inner product} of $\mathscr{H}\left(M\right)$.
If $\varphi\sqrt{d\mu}$ and $\psi\sqrt{d\nu}$ are in $\mathscr{H}\left(M\right)$,
we define their Hilbert inner product by
\begin{equation}
\left\langle \varphi\sqrt{d\mu},\psi\sqrt{d\nu}\right\rangle :=\int_{M}\bar{\varphi}\psi\sqrt{\frac{d\mu}{d\lambda}}\sqrt{\frac{d\nu}{d\lambda}}d\lambda,\label{eqInt.10}
\end{equation}
where $\lambda$ is some positive Borel measure, chosen such that
$\mu\ll\lambda$ and $\nu\ll\lambda$. For example, we could take
$\lambda=\mu+\nu$. To be in $\mathscr{H}\left(M\right)$, $\varphi\sqrt{d\mu}$
must satisfy
\begin{equation}
\left\Vert \varphi\sqrt{d\mu}\right\Vert ^{2}=\int_{M}\left\vert \varphi\right\vert ^{2}\frac{d\mu}{d\lambda}d\lambda=\int_{M}\left\vert \varphi\right\vert ^{2}d\mu<\infty.\label{eqInt.11}
\end{equation}

\subsection{\label{subsec:IsoH}Isometries in $\mathscr{H}\left(M\right)$}

In this preliminary section we prove three general facts about the
process of \emph{inducing operators in the Hilbert space} $\mathscr{H}\left(M\right)$
from underlying point transformations in $M$. The starting point
is a given continuous mapping $\sigma\colon M\rightarrow M$, mapping
onto $M$. We will be concerned with the special case when $M$ is
a compact Hausdorff space, and when there is one or more continuous
branches $\tau_{i}\colon M\rightarrow M$ of the inverse, i.e., when
\begin{equation}
\sigma\circ\tau_{i}=id_{M}.\label{eqIsoNew.1}
\end{equation}
Recall that \emph{elements in} $\mathscr{H}\left(M\right)$ \emph{are
equivalence classes} of pairs $\left(\varphi,\mu\right)$ where $\varphi$
is a Borel function on $M$, $\mu$ is a positive Borel measure on
$M$, and $\int_{M}\left\vert \varphi\right\vert ^{2}d\mu<\infty$.
An equivalence class will be denoted $\varphi\sqrt{d\mu}$, and we
show that there are isometries 
\begin{equation}
S_{i}\colon\varphi\sqrt{d\mu}\longmapsto\varphi\circ\sigma\sqrt{d\mu\circ\tau_{i}^{-1}},\label{eqIsoNew.2}
\end{equation}
with orthogonal ranges in the Hilbert space $\mathscr{H}\left(M\right)$.
Moreover, we calculate an explicit formula for the adjoint co-isometries
$S_{i}^{\ast}$.

\begin{lem}
\label{LemIso.2}Let $M$ be a compact Hausdorff space, and let the
mapping $\sigma\colon M\rightarrow M$ be onto. Suppose $\tau\colon M\rightarrow M$
satisfies $\sigma\circ\tau=id_{M}$. Assume that both $\sigma$ and
$\tau$ are continuous. Let $\mathscr{H}=\mathscr{H}\left(M\right)$
be the Hilbert space of classes $\left(\varphi,\mu\right)$ where
$\varphi$ is a Borel function on $M$ and $\mu$ is a positive Borel
measure such that $\int\left\vert \varphi\right\vert ^{2}d\mu<\infty$.
The equivalence relation is defined in the usual way: two pairs $\left(\varphi,\mu\right)$
and $\left(\psi,\nu\right)$ are said to be equivalent, written $\left(\varphi,\mu\right)\sim\left(\psi,\nu\right)$,
if for some positive measure $\lambda$, $\mu\ll\lambda$, $\nu\ll\lambda$,
we have the following identity:
\begin{equation}
\varphi\sqrt{\frac{d\mu}{d\lambda}}=\psi\sqrt{\frac{d\nu}{d\lambda}}\quad(\mathrm{a.e.}\;\lambda).\label{eqIso.1}
\end{equation}
Then there is an isometry $S\colon\mathscr{H}\rightarrow\mathscr{H}$
which is well defined by the assignment
\begin{equation}
S\left(\left(\varphi,\mu\right)\right):=\left(\varphi\circ\sigma,\mu\circ\tau^{-1}\right),\label{eqIso.2}
\end{equation}
or
\[
S\colon\varphi\sqrt{d\mu}\longmapsto\varphi\circ\sigma\sqrt{d\mu\circ\tau^{-1}},
\]
where $\mu\circ\tau^{-1}\left(E\right):=\mu\left(\tau^{-1}\left(E\right)\right)$,
and $\tau^{-1}\left(E\right):=\left\{ x\in M\mid\tau\left(x\right)\in E\right\} $,
for $E\in\mathscr{B}\left(M\right)$. 
\end{lem}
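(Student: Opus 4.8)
The plan is to verify two things: that the assignment (\ref{eqIso.2}) descends to a well-defined map on equivalence classes, and that this map preserves the Hilbert norm (\ref{eqInt.11}). Both follow from a single elementary tool, the transfer formula for pushforward measures: for any positive Borel measure $\rho$ on $M$ and any nonnegative Borel function $g$ on $M$,
\begin{equation}
\int_{M}g\,d\left(\rho\circ\tau^{-1}\right)=\int_{M}\left(g\circ\tau\right)d\rho,\label{eq:transf}
\end{equation}
together with the hypothesis $\sigma\circ\tau=id_{M}$ from (\ref{eqIsoNew.1}). Observe first that $\varphi\circ\sigma$ is Borel and $\mu\circ\tau^{-1}$ is a positive Borel measure, since $\sigma$ and $\tau$ are continuous, so $\left(\varphi\circ\sigma,\mu\circ\tau^{-1}\right)$ is at least a legitimate candidate representative.

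I would dispatch the isometry (and, simultaneously, finiteness of the norm of the output) first, since it is the shorter computation: applying (\ref{eq:transf}) with $g=\left|\varphi\circ\sigma\right|^{2}$ and $\rho=\mu$, and then using $\sigma\circ\tau=id_{M}$,
\[
\left\Vert S\left(\varphi\sqrt{d\mu}\right)\right\Vert ^{2}=\int_{M}\left|\varphi\circ\sigma\right|^{2}d\left(\mu\circ\tau^{-1}\right)=\int_{M}\left|\varphi\circ\sigma\circ\tau\right|^{2}d\mu=\int_{M}\left|\varphi\right|^{2}d\mu=\left\Vert \varphi\sqrt{d\mu}\right\Vert ^{2}.
\]
In particular the right-hand side of (\ref{eqIso.2}) lies in $\mathscr{H}\left(M\right)$ whenever the left-hand side does.

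For well-definedness, suppose $\left(\varphi,\mu\right)\sim\left(\psi,\nu\right)$ via a dominating measure $\lambda$, so $\mu\ll\lambda$, $\nu\ll\lambda$, and (\ref{eqIso.1}) holds $\lambda$-a.e. I claim $\lambda\circ\tau^{-1}$ witnesses $\left(\varphi\circ\sigma,\mu\circ\tau^{-1}\right)\sim\left(\psi\circ\sigma,\nu\circ\tau^{-1}\right)$. Absolute continuity is immediate: if $\lambda\circ\tau^{-1}\left(N\right)=\lambda\left(\tau^{-1}N\right)=0$ then $\mu\left(\tau^{-1}N\right)=0$, i.e.\ $\mu\circ\tau^{-1}\left(N\right)=0$, and likewise for $\nu$. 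The crux is the Radon--Nikodym chain rule for pushforwards,
\[
\frac{d\left(\mu\circ\tau^{-1}\right)}{d\left(\lambda\circ\tau^{-1}\right)}=\left(\frac{d\mu}{d\lambda}\right)\circ\sigma\qquad\left(\lambda\circ\tau^{-1}\right)\text{-a.e.},
\]
which I would prove by testing against an arbitrary Borel set $E$: writing $h=d\mu/d\lambda$, one has $\mu\circ\tau^{-1}\left(E\right)=\int_{M}\left(\mathbbm{1}_{E}\circ\tau\right)h\,d\lambda$, whereas by (\ref{eq:transf}) and $\sigma\circ\tau=id_{M}$, $\int_{E}\left(h\circ\sigma\right)d\left(\lambda\circ\tau^{-1}\right)=\int_{M}\left(\mathbbm{1}_{E}\circ\tau\right)\left(h\circ\sigma\circ\tau\right)d\lambda=\int_{M}\left(\mathbbm{1}_{E}\circ\tau\right)h\,d\lambda$; uniqueness of the Radon--Nikodym derivative gives the claim, and integrability of $h\circ\sigma$ against $\lambda\circ\tau^{-1}$ is once more (\ref{eq:transf}). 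Consequently $\left(\varphi\circ\sigma\right)\sqrt{d\left(\mu\circ\tau^{-1}\right)/d\left(\lambda\circ\tau^{-1}\right)}=\big(\varphi\sqrt{h}\big)\circ\sigma$, and the analogous identity holds for $\psi,\nu$. Finally, a $\lambda$-null exceptional set $N$ for (\ref{eqIso.1}) pulls back to $\sigma^{-1}\left(N\right)$, and $\lambda\circ\tau^{-1}\left(\sigma^{-1}N\right)=\lambda\left(\tau^{-1}\sigma^{-1}N\right)=\lambda\left(N\right)=0$ using $\sigma\circ\tau=id_{M}$ yet again; so the $\lambda$-a.e.\ identity (\ref{eqIso.1}) composes with $\sigma$ to a $\left(\lambda\circ\tau^{-1}\right)$-a.e.\ identity, which is precisely (\ref{eqIso.1}) for the pushed-forward pairs. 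This proves $S$ well defined, and combined with the computation above, an isometry.

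The step I expect to be the main obstacle is the Radon--Nikodym chain rule for pushforward measures, together with the careful bookkeeping of ``almost everywhere'' statements under composition with $\sigma$: one must repeatedly invoke $\sigma\circ\tau=id_{M}$ to collapse $\tau^{-1}\sigma^{-1}\left(\cdot\right)$ to the identity, and bear in mind that $\sigma$ is only a left inverse of each branch $\tau_{i}$ and need not be globally injective, so all the pushforward measures occurring are carried by $\tau\left(M\right)$, which is where the identification is valid. Everything else reduces to the single formula (\ref{eq:transf}) and standard measure theory.
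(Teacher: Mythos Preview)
Your proof is correct and rests on the same core identity as the paper's: the Radon--Nikodym chain rule $d(\mu\circ\tau^{-1})/d(\lambda\circ\tau^{-1})=(d\mu/d\lambda)\circ\sigma$ a.e.\ $\lambda\circ\tau^{-1}$, which the paper records separately as an intermediate lemma (their equations (\ref{eqIso.7})--(\ref{eqIso.8}) and (\ref{eq:Iosa1})). Where you compose the original identity (\ref{eqIso.1}) with $\sigma$ and track the null set directly via $\tau^{-1}\sigma^{-1}(N)=N$, the paper instead runs a duality argument against an arbitrary bounded Borel test function to obtain $k_\mu\,(d\nu/d\lambda)\circ\sigma=k_\nu\,(d\mu/d\lambda)\circ\sigma$ and then cancels; your route is shorter and conceptually cleaner, though both unwind to the same transfer formula (\ref{eq:transf}).

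One small omission: you list only well-definedness and norm preservation, but ``isometry'' in the lemma means a \emph{linear} norm-preserving map, and the paper verifies additivity explicitly (their (\ref{eqIso.9})--(\ref{eqIso.10})). This is not a genuine gap, since your chain rule immediately yields $S\big((\varphi,\mu)+(\psi,\nu)\big)=\big((\varphi\sqrt{d\mu/d\lambda}+\psi\sqrt{d\nu/d\lambda})\circ\sigma,\lambda\circ\tau^{-1}\big)=S(\varphi,\mu)+S(\psi,\nu)$ once you use $\lambda\circ\tau^{-1}$ as the common dominating measure on the right; it would be worth one line to say so.
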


\begin{proof}
We leave the verification of the following four facts to the reader;
see also \cite{MR0282379}.
\begin{enumerate}
\item \label{LemIso.2proof(1)}If ${\displaystyle \varphi\sqrt{\frac{d\mu}{d\lambda}}=\psi\sqrt{\frac{d\nu}{d\lambda}}}$
for some $\lambda$ such that $\mu\ll\lambda$, $\nu\ll\lambda$,
and if some other measure $\lambda^{\prime}$ satisfies $\mu\ll\lambda^{\prime}$,
$\nu\ll\lambda^{\prime}$, then 
\[
\varphi\sqrt{\frac{d\mu}{d\lambda^{\prime}}}=\psi\sqrt{\frac{d\nu}{d\lambda^{\prime}}}\quad(\mathrm{a.e.}\,\lambda^{\prime}).
\]
\item \label{LemIso.2proof(2)}The ``vectors\textquotedblright{} in $\mathscr{H}$
are equivalence classes of pairs $\left(\varphi,\mu\right)$ as described
in the statement of the lemma. For two elements $\left(\varphi,\mu\right)$
and $\left(\psi,\nu\right)$ in $\mathscr{H}$, define the sum by
\begin{equation}
\left(\varphi,\mu\right)+\left(\psi,\nu\right):=\left(\phi\sqrt{\frac{d\mu}{d\lambda}}+\psi\sqrt{\frac{d\nu}{d\lambda}},\lambda\right),\label{eqIso.3}
\end{equation}
where $\lambda$ is a positive Borel measure satisfying $\mu\ll\lambda$,
$\nu\ll\lambda$. The sum in (\ref{eqIso.3}) is also written $\varphi\sqrt{d\mu}+\psi\sqrt{d\nu}$.
The definition of the sum (\ref{eqIso.3}) passes through the equivalence
relation $\sim$, i.e., we get an equivalent result on the right-hand
side in (\ref{eqIso.3}) if equivalent pairs are used as input on
the left-hand side. A similar conclusion holds for the definition
(\ref{eqIso.4}) below of the inner product $\left\langle \cdot,\cdot\right\rangle $
in the Hilbert space $\mathscr{H}$.
\item \label{LemIso.2proof(3)}Scalar multiplication, $c\in\mathbb{C}$,
is defined by $c\left(\varphi,\mu\right):=\left(c\mkern2mu \varphi,\mu\right)$,
and \emph{the Hilbert space inner product} is 
\begin{align}
\left\langle \varphi\sqrt{d\mu},\psi\sqrt{d\nu}\right\rangle =\left\langle \left(\varphi,\mu\right),\left(\psi,\nu\right)\right\rangle  & :=\int_{M}\overline{\varphi}\psi\sqrt{\frac{d\mu}{d\lambda}}\sqrt{\frac{d\nu}{d\lambda}}d\lambda\label{eqIso.4}
\end{align}
where $\mu\ll\lambda$, $\nu\ll\lambda$.
\item \label{LemIso.2proof(4)}It is known, see \cite{MR0282379}, that
$\mathscr{H}$ is a Hilbert space. In particular, it is \emph{complete}:
if a sequence $\left(\varphi_{n},\mu_{n}\right)$ in $\mathscr{H}$
satisfies 
\[
\lim_{n,m\rightarrow\infty}\left\Vert \left(\varphi_{n},\mu_{n}\right)-\left(\varphi_{m},\mu_{m}\right)\right\Vert ^{2}=0,
\]
then there is a pair $\left(\varphi,\mu\right)$ with
\begin{equation}
\int_{M}\left\vert \varphi\right\vert ^{2}\frac{d\mu}{d\lambda}d\lambda=\int_{M}\left\vert \varphi\right\vert ^{2}d\mu<\infty,\label{eqIso.5}
\end{equation}
where
\begin{equation}
\lambda:=\sum_{n=1}^{\infty}2^{-n}\mu_{n}\left(M\right)^{-1}\mu_{n},\label{eqIso.6}
\end{equation}
and $\left\Vert \left(\varphi,\mu\right)-\left(\varphi_{n},\mu_{n}\right)\right\Vert ^{2}\underset{n\rightarrow\infty}{\longrightarrow}0$. 
\end{enumerate}
Assuming that the expression in (\ref{eqIso.2}) defines an operator
$S$ in $\mathscr{H}$, it follows from (\ref{eqIso.3}) that $S$
is linear. To see this, let $\left(\varphi,\mu\right)$, $\left(\psi,\nu\right)$,
and $\lambda$ be as stated in the conditions below (\ref{eqIso.3}).
Then $\mu\circ\tau^{-1}\ll\lambda\circ\tau^{-1}$, and $\nu\circ\tau^{-1}\ll\lambda\circ\tau^{-1}$,
and a calculation shows that the following formula holds for the transformation
of the Radon-Nikodym derivatives: setting
\begin{equation}
\frac{d\mu\circ\tau^{-1}}{d\lambda\circ\tau^{-1}}=k_{\mu},\label{eqIso.7}
\end{equation}
we have
\begin{equation}
k_{\mu}\circ\tau=\frac{d\mu}{d\lambda}\quad(\mathrm{a.e.}\,\lambda).\label{eqIso.8}
\end{equation}
Similarly, ${\displaystyle k_{\nu}:=\frac{d\nu\circ\tau^{-1}}{d\lambda\circ\tau^{-1}}}$
satisfies
\begin{equation}
k_{\nu}\circ\tau=\frac{d\nu}{d\lambda}\quad(\mathrm{a.e.}\,\lambda).\label{eqIso.8half}
\end{equation}

The argument above yields: 
\begin{lem}
Let $\tau$ and $\sigma$ be endomorphisms in $M$ such that $\sigma\circ\tau=id_{M}$.
Let $\mu$, $\lambda$ be a pair of positive measures with $\mu\ll\lambda$,
and set $L:=d\mu/d\lambda$; then 
\begin{equation}
\frac{d\left(\mu\circ\tau^{-1}\right)}{d\left(\lambda\circ\tau^{-1}\right)}=L\circ\sigma,\label{eq:Iosa1}
\end{equation}
i.e., composition on the RHS in (\ref{eq:Iosa1}). 
\end{lem}

To show that $S$ is linear, we must calculate the sum
\begin{equation}
\left(\varphi\circ\sigma,\mu\circ\tau^{-1}\right)+\left(\psi\circ\sigma,\nu\circ\tau^{-1}\right),\label{eqIso.9}
\end{equation}
or, in expanded notation, we must verify that
\begin{equation}
\left(\varphi\circ\sigma\sqrt{k_{\mu}}+\psi\circ\sigma\sqrt{k_{\nu}},\lambda\circ\tau^{-1}\right)\sim\left(\left(\varphi\sqrt{\frac{d\mu}{d\lambda}}+\psi\sqrt{\frac{d\nu}{d\lambda}}\,\right)\circ\sigma,\lambda\circ\tau^{-1}\right).\label{eqIso.10}
\end{equation}
We get this class identity by an application of (\ref{eqIso.8}) as
follows:
\[
k_{\mu}\left(x\right)=k_{\mu}\left(\tau\left(\sigma\left(x\right)\right)\right)=\left.\left(\sqrt{\frac{d\mu}{d\lambda}}\circ\sigma\right)\right\vert _{\tau\left(M\right)}\left(x\right)\quad(\mathrm{a.e.}\,\lambda\circ\tau^{-1}).
\]
Similarly, for the other measure, we get
\begin{equation}
k_{\nu}=\left.\left(\sqrt{\frac{d\nu}{d\lambda}}\circ\sigma\right)\right\vert _{\tau\left(M\right)}\quad(\mathrm{a.e.}\,\lambda\circ\tau^{-1}).\label{eqIso.11}
\end{equation}

Assuming again that $S$ in (\ref{eqIso.2}) is well defined, we now
show that it is isometric, i.e., that $\left\Vert S\left(\varphi,\mu\right)\right\Vert ^{2}=\left\Vert \left(\varphi,\mu\right)\right\Vert ^{2}$,
referring to the norm of $\mathscr{H}$. In view of (\ref{eqIso.3})
and (\ref{eqIso.10}), it is enough to show that
\begin{equation}
\int_{M}\left\vert \varphi\circ\sigma\right\vert ^{2}k_{\mu}d\lambda\circ\tau^{-1}=\int_{M}\left\vert \varphi\right\vert ^{2}\frac{d\mu}{d\lambda}d\lambda.\label{eqIso.12}
\end{equation}
But, using (\ref{eqIso.8}), we get 
\begin{eqnarray*}
\int_{M}\left\vert \varphi\circ\sigma\right\vert ^{2}k_{\mu}d\lambda\circ\tau^{-1} & = & \int_{M}\left\vert \varphi\circ\sigma\circ\tau\right\vert ^{2}k_{\mu}\circ\tau\,d\lambda\\
 & \underset{\text{(\ref{eqIso.8})}}{=} & \int_{M}\left\vert \varphi\right\vert ^{2}\frac{d\mu}{d\lambda}d\lambda,
\end{eqnarray*}
which is the desired formula (\ref{eqIso.12}).

It remains to prove that $S$ is well defined, i.e., that the following
implication holds:
\begin{equation}
\left(\varphi,\mu\right)\sim\left(\psi,\nu\right)\Longrightarrow\left(\varphi\circ\sigma,\mu\circ\tau^{-1}\right)\sim\left(\psi\circ\sigma,\nu\circ\tau^{-1}\right).\label{eqIso.13}
\end{equation}
To do this, we go through a sequence of implications which again uses
the fundamental transformation rules (\ref{eqIso.8}) and (\ref{eqIso.11}).

\end{proof}
\begin{lem}
Pick some $\lambda$ such that $\mu\ll\lambda$ and $\nu\ll\lambda$.
We then have the following implication: 
\begin{equation}
\varphi\sqrt{\frac{d\mu}{d\lambda}}=\psi\sqrt{\frac{d\nu}{d\lambda}}\;\left(\mathrm{a.e.}\,\lambda\right)\Longrightarrow\left(\varphi\circ\sigma\right)\sqrt{k_{\mu}}=\left(\psi\circ\sigma\right)\sqrt{k_{\nu}}\;(\mathrm{a.e.}\,\lambda\circ\tau^{-1}),\label{eqIso.14}
\end{equation}
where ${\displaystyle k_{\mu}=\frac{d\mu\circ\tau^{-1}}{d\lambda\circ\tau^{-1}}}$
and ${\displaystyle k_{\nu}=\frac{d\nu\circ\tau^{-1}}{d\lambda\circ\tau^{-1}}}$.
(The desired conclusion (\ref{eqIso.13}) follows from this.)
\end{lem}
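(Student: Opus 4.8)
The plan is to push the hypothesised $\lambda$-a.e.\ identity forward under the substitution $x\mapsto\sigma(x)$, and then to rewrite the composed Radon--Nikodym derivatives $L_{\mu}\circ\sigma$, $L_{\nu}\circ\sigma$ (where $L_{\mu}:=d\mu/d\lambda$, $L_{\nu}:=d\nu/d\lambda$) as $k_{\mu}$, $k_{\nu}$ by means of the preceding Lemma. Everything reduces to one genuinely non-formal observation, namely the transport of $\lambda$-null sets through $\sigma$; the rest is bookkeeping.

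First I would record that observation. If $N\subseteq M$ is a Borel set with $\lambda(N)=0$, then
\[
(\lambda\circ\tau^{-1})\bigl(\sigma^{-1}(N)\bigr)=\lambda\bigl(\tau^{-1}(\sigma^{-1}(N))\bigr)=\lambda\bigl((\sigma\circ\tau)^{-1}(N)\bigr)=\lambda(N)=0,
\]
where the third equality is exactly the standing relation $\sigma\circ\tau=id_{M}$. This is the only place the hypothesis $\sigma\circ\tau=id_{M}$ enters, and it is the crux of the argument; I do not anticipate any real obstacle beyond getting this bookkeeping right, in particular making sure that each ``a.e.'' clause is taken with respect to $\lambda\circ\tau^{-1}$ in the conclusion rather than $\lambda$.

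Next, apply this to the hypothesis. Set $N_{0}:=\{x\in M:\varphi(x)\sqrt{L_{\mu}(x)}\neq\psi(x)\sqrt{L_{\nu}(x)}\}$; by assumption $N_{0}$ is Borel with $\lambda(N_{0})=0$. For $x\notin\sigma^{-1}(N_{0})$ one has $\sigma(x)\notin N_{0}$, hence $\varphi(\sigma(x))\sqrt{L_{\mu}(\sigma(x))}=\psi(\sigma(x))\sqrt{L_{\nu}(\sigma(x))}$; by the first step this holds for $(\lambda\circ\tau^{-1})$-a.e.\ $x$, i.e.\ $(\varphi\circ\sigma)\sqrt{L_{\mu}\circ\sigma}=(\psi\circ\sigma)\sqrt{L_{\nu}\circ\sigma}$ a.e.\ $\lambda\circ\tau^{-1}$. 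Finally, the preceding Lemma (eq.~(\ref{eq:Iosa1})), equivalently (\ref{eqIso.8}) read back through $\tau$, gives $k_{\mu}=L_{\mu}\circ\sigma$ and $k_{\nu}=L_{\nu}\circ\sigma$ as identities a.e.\ $\lambda\circ\tau^{-1}$; substituting $\sqrt{k_{\mu}}$ and $\sqrt{k_{\nu}}$ yields $(\varphi\circ\sigma)\sqrt{k_{\mu}}=(\psi\circ\sigma)\sqrt{k_{\nu}}$ a.e.\ $\lambda\circ\tau^{-1}$, which is (\ref{eqIso.14}). Since $\mu\circ\tau^{-1}\ll\lambda\circ\tau^{-1}$, $\nu\circ\tau^{-1}\ll\lambda\circ\tau^{-1}$, with $d(\mu\circ\tau^{-1})/d(\lambda\circ\tau^{-1})=k_{\mu}$ and $d(\nu\circ\tau^{-1})/d(\lambda\circ\tau^{-1})=k_{\nu}$, the identity (\ref{eqIso.14}) is precisely the defining relation for $(\varphi\circ\sigma,\mu\circ\tau^{-1})\sim(\psi\circ\sigma,\nu\circ\tau^{-1})$, giving the implication (\ref{eqIso.13}) and thereby the well-definedness of $S$ in Lemma~\ref{LemIso.2}.
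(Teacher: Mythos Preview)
Your proof is correct and is in fact cleaner than the paper's own argument. The paper proceeds indirectly: it first establishes, via integration against arbitrary bounded Borel test functions $f$, the cross identity $k_{\mu}\,(L_{\nu}\circ\sigma)=k_{\nu}\,(L_{\mu}\circ\sigma)$ a.e.\ $\lambda\circ\tau^{-1}$, then takes square roots, composes the hypothesis with $\sigma$, multiplies, and cancels the common factor $\sqrt{L_{\mu}\circ\sigma}\,\sqrt{L_{\nu}\circ\sigma}$. Your route bypasses this: you transport $\lambda$-null sets to $(\lambda\circ\tau^{-1})$-null sets via $(\lambda\circ\tau^{-1})(\sigma^{-1}(N))=\lambda(N)$, push the hypothesis forward through $\sigma$ directly, and then simply substitute $k_{\mu}=L_{\mu}\circ\sigma$, $k_{\nu}=L_{\nu}\circ\sigma$ from the preceding lemma~(\ref{eq:Iosa1}). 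This is shorter, isolates exactly where $\sigma\circ\tau=id_{M}$ enters, and avoids the paper's cancellation step (which is delicate on the set where $L_{\mu}\circ\sigma$ or $L_{\nu}\circ\sigma$ vanishes). The paper's test-function computation does have the minor advantage of re-deriving the relation between $k_{\mu}$ and $L_{\mu}\circ\sigma$ from scratch, but since that relation is already recorded in (\ref{eq:Iosa1}), your appeal to it is entirely legitimate.
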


\begin{proof}
We now turn to the proof of the implication (\ref{eqIso.14}). We
pick a third measure $\lambda$ as described, and assume the identity
\[
\varphi\sqrt{\frac{d\mu}{d\lambda}}=\psi\sqrt{\frac{d\nu}{d\lambda}}\quad\mathrm{a.e.}\,\lambda.
\]
Let $f$ be a bounded Borel function on $M$. In the following calculations,
all integrals are over the full space $M$, but the measures change
as we make transformations, and we use the definition of the Radon-Nikodym
formula. First note that
\begin{align*}
\int f\,k_{\mu}\left(\frac{d\nu}{d\lambda}\circ\sigma\right)d\lambda\circ\tau^{-1} & =\int f\,\left(\frac{d\nu}{d\lambda}\circ\sigma\right)d\mu\circ\tau^{-1}\\
 & =\int f\circ\tau\,\frac{d\nu}{d\lambda}d\mu=\int f\circ\tau\,\frac{d\nu}{d\lambda}\frac{d\mu}{d\lambda}d\lambda.
\end{align*}
But by symmetry, we also have
\[
\int f\,k_{\nu}\,\left(\frac{d\mu}{d\lambda}\circ\sigma\right)d\lambda\circ\tau^{-1}=\int f\circ\tau\frac{d\nu}{d\lambda}\frac{d\mu}{d\lambda}d\lambda.
\]
Putting the last two formulas together, we arrive at the following
identity:
\[
\int_{M}f\,k_{\mu}\left(\frac{d\nu}{d\lambda}\circ\sigma\right)d\lambda\circ\tau^{-1}=\int_{M}f\,k_{\nu}\left(\frac{d\mu}{d\lambda}\circ\sigma\right)d\lambda\circ\tau^{-1}.
\]
Since the function $f$ is arbitrary, we get 
\[
k_{\mu}\left(\frac{d\nu}{d\lambda}\circ\sigma\right)=k_{\nu}\left(\frac{d\mu}{d\lambda}\circ\sigma\right)\quad\mathrm{a.e.}\,\lambda\circ\tau^{-1}
\]
and, of course,
\[
\sqrt{k_{\mu}}\sqrt{\frac{d\nu}{d\lambda}}\circ\sigma=\sqrt{k_{\nu}}\sqrt{\frac{d\mu}{d\lambda}}\circ\sigma\quad\mathrm{a.e.}\,\lambda\circ\tau^{-1}.
\]
Using now the identity
\[
\varphi\sqrt{\frac{d\mu}{d\lambda}}=\psi\sqrt{\frac{d\nu}{d\lambda}}\quad\mathrm{a.e.}\,\lambda,
\]
we arrive at the formula
\[
\varphi\circ\sigma\sqrt{k_{\mu}}\sqrt{\frac{d\mu}{d\lambda}}\circ\sigma\sqrt{\frac{d\nu}{d\lambda}}\circ\sigma=\psi\circ\sigma\sqrt{k_{\nu}}\sqrt{\frac{d\mu}{d\lambda}}\circ\sigma\sqrt{\frac{d\nu}{d\lambda}}\circ\sigma,
\]
and by cancellation,
\[
\varphi\circ\sigma\sqrt{k_{\mu}}=\psi\circ\sigma\sqrt{k_{\nu}}\quad\mathrm{a.e.}\,\lambda\circ\tau^{-1}.
\]
This completes the proof of the implication (\ref{eqIso.14}), and
therefore also of (\ref{eqIso.13}). This means that, if the linear
operator $S$ is defined as in (\ref{eqIso.2}), then the result is
independent of which element is chosen \emph{in the equivalence class}
represented by the pair $\left(\varphi,\mu\right)$. Putting together
the steps in the proof, we conclude that $S\colon\mathscr{H}\rightarrow\mathscr{H}$
is an \emph{isometry}, and that it has the properties which are stated
in the lemma. 

Combining the lemmas, the proof of Theorem \ref{thm:iso} is now completed. 
\end{proof}
\begin{lem}
\label{LemIso.3}Let $M$ be a compact Hausdorff space, and let $\sigma$
be as in the statement of Lemma \textup{\ref{LemIso.2}}, i.e., $\sigma\colon M\rightarrow M$
is onto and continuous. Suppose $\sigma$ has two distinct branches
of the inverse, i.e., $\tau_{i}\colon M\rightarrow M$, $i=1,2$,
continuous, and satisfying $\sigma\circ\tau_{i}=id_{M}$, $i=1,2$.
Let $S_{i}\colon\mathscr{H}\rightarrow\mathscr{H}$ be the corresponding
isometries, i.e.,
\begin{equation}
S_{i}\left(\left(\varphi,\mu\right)\right):=\left(\varphi\circ\sigma,\mu\circ\tau_{i}^{-1}\right),\label{eqIso.15}
\end{equation}
or
\[
S_{i}\colon\varphi\sqrt{d\mu}\longmapsto\varphi\circ\sigma\;\sqrt{d\mu\circ\tau_{i}^{-1}}.
\]
Then the two isometries have orthogonal ranges, i.e.,
\begin{equation}
\left\langle \,S_{1}\left(\left(\varphi,\mu\right)\right),S_{2}\left(\left(\psi,\nu\right)\right)\right\rangle =0\label{eqIso.16}
\end{equation}
for all pairs of vectors in $\mathscr{H}$, i.e., all $\left(\varphi,\mu\right)\in\mathscr{H}$
and $\left(\psi,\nu\right)\in\mathscr{H}$. 
\end{lem}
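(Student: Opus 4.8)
The plan is to compute the Hilbert inner product $\left\langle S_{1}\big((\varphi,\mu)\big),\,S_{2}\big((\psi,\nu)\big)\right\rangle$ directly from the definition (\ref{eqIso.4}) and to show it vanishes by exploiting that the two pushed-forward measures $\mu\circ\tau_{1}^{-1}$ and $\nu\circ\tau_{2}^{-1}$ are carried by the disjoint compact sets $\tau_{1}(M)$ and $\tau_{2}(M)$. Since Lemma \ref{LemIso.2} already guarantees that each $S_{i}$ is well defined on equivalence classes, I may work throughout with the explicit representatives $(\varphi\circ\sigma,\mu\circ\tau_{1}^{-1})$ and $(\psi\circ\sigma,\nu\circ\tau_{2}^{-1})$ of the vectors $S_{1}\big((\varphi,\mu)\big)$ and $S_{2}\big((\psi,\nu)\big)$.

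First I would record the geometric input: because $\tau_{1}\neq\tau_{2}$ are branches of $\sigma^{-1}$, their ranges are disjoint, $\tau_{1}(M)\cap\tau_{2}(M)=\emptyset$ (this disjointness of the images, rather than any analytic estimate, is the real heart of the statement), and each $\tau_{i}(M)$ is compact as the continuous image of the compact space $M$, hence closed and Borel. Next comes the ``concentration'' observation: for any positive Borel measure $\rho$ on $M$ one has $\tau_{i}^{-1}\big(M\setminus\tau_{i}(M)\big)=\emptyset$, so $(\rho\circ\tau_{i}^{-1})\big(M\setminus\tau_{i}(M)\big)=\rho(\emptyset)=0$; that is, $\rho\circ\tau_{i}^{-1}$ is carried by $\tau_{i}(M)$, and therefore, against any dominating measure, its Radon--Nikodym derivative vanishes a.e. off $\tau_{i}(M)$.

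The computation then proceeds as follows. Choose the common dominating measure $\lambda:=\mu\circ\tau_{1}^{-1}+\nu\circ\tau_{2}^{-1}$ (any other admissible choice gives the same value, by part (\ref{LemIso.2proof(1)}) of Lemma \ref{LemIso.2}). By the previous step, $\sqrt{d(\mu\circ\tau_{1}^{-1})/d\lambda}=0$ $\lambda$-a.e. on $M\setminus\tau_{1}(M)$ and $\sqrt{d(\nu\circ\tau_{2}^{-1})/d\lambda}=0$ $\lambda$-a.e. on $M\setminus\tau_{2}(M)$; since $\tau_{1}(M)\cap\tau_{2}(M)=\emptyset$ we have $M=\big(M\setminus\tau_{1}(M)\big)\cup\big(M\setminus\tau_{2}(M)\big)$, so at $\lambda$-almost every point of $M$ at least one of the two square roots is $0$. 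Inserting the representatives above into (\ref{eqIso.4}), the integrand $\overline{\varphi\circ\sigma}\,(\psi\circ\sigma)\sqrt{d(\mu\circ\tau_{1}^{-1})/d\lambda}\,\sqrt{d(\nu\circ\tau_{2}^{-1})/d\lambda}$ is therefore $0$ $\lambda$-a.e., whence the inner product (\ref{eqIso.16}) vanishes for all pairs of vectors. Equivalently, this establishes $S_{1}^{\ast}S_{2}=0$.

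I do not expect a genuine obstacle here: the analytic content is light. The one point requiring care is the bookkeeping of Radon--Nikodym derivatives across the several pushforwards — in particular, making the ``carried by $\tau_{i}(M)$'' claim fully rigorous, checking that the argument is independent both of the chosen representatives and of the auxiliary measure $\lambda$, and pinning down the precise sense in which ``distinct branches'' delivers $\tau_{1}(M)\cap\tau_{2}(M)=\emptyset$. Once that disjointness is in hand, the orthogonality of the ranges is immediate from the mutual singularity of $\mu\circ\tau_{1}^{-1}$ and $\nu\circ\tau_{2}^{-1}$.
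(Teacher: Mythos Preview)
Your proposal is correct and follows essentially the same route as the paper: write out the inner product via (\ref{eqIso.4}) with a dominating measure $\lambda$, observe that each pushforward $\mu\circ\tau_{1}^{-1}$ and $\nu\circ\tau_{2}^{-1}$ is supported on $\tau_{1}(M)$ and $\tau_{2}(M)$ respectively, and use the disjointness $\tau_{1}(M)\cap\tau_{2}(M)=\emptyset$ to conclude the integrand vanishes $\lambda$-a.e. Your version is in fact more explicit about the measurability of $\tau_{i}(M)$ and the concentration argument than the paper's own proof, which simply asserts these points.
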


\begin{proof}
Note that in the statement (\ref{eqIso.16}) of the conclusion, we
use $\left\langle \cdot,\cdot\right\rangle $ to denote the inner
product of the Hilbert space $\mathscr{H}$, as it was defined in
(\ref{eqIso.4}).

With the two measures $\mu$ and $\nu$ given, then the expression
in (\ref{eqIso.16}) involves the transformed measures $\mu\circ\tau_{1}^{-1}$
and $\nu\circ\tau_{2}^{-1}$. Now pick some measure $\lambda$ such
that $\mu\circ\tau_{1}^{-1}\ll\lambda$ and $\nu\circ\tau_{2}^{-1}\ll\lambda$.
Then the expression in (\ref{eqIso.16}) is
\begin{equation}
\int_{M}\overline{\varphi\circ\sigma}\;\psi\circ\sigma\,\sqrt{\frac{d\mu\circ\tau_{1}^{-1}}{d\lambda}}\sqrt{\frac{d\nu\circ\tau_{2}^{-1}}{d\lambda}}d\lambda.\label{eqIso.17}
\end{equation}
But ${\displaystyle \frac{d\mu\circ\tau_{1}^{-1}}{d\lambda}}$ is
supported on $\tau_{1}\left(M\right)$, while ${\displaystyle \frac{d\nu\circ\tau_{2}^{-1}}{d\lambda}}$
is supported on $\tau_{2}\left(M\right)$. Since $\tau_{1}\left(M\right)\cap\tau_{2}\left(M\right)=\emptyset$
by the choice of distinct branches for the inverse of $\sigma$, we
conclude that the integral in (\ref{eqIso.17}) vanishes. 
\end{proof}
\begin{cor}
\label{CorRep.3}Let $M$ be a compact Hausdorff space, and let $N\in\mathbb{N}$,
$N\geq2$, be given. Let $\sigma\colon M\rightarrow M$ be continuous
and onto. Suppose there are $N$ distinct branches of the inverse,
i.e., continuous $\tau_{i}\colon M\rightarrow M$, $i=1,\dots,N$,
such that
\begin{equation}
\sigma\circ\tau_{i}=id_{M}.\label{eqRep.9bis}
\end{equation}
Suppose there is a positive Borel measure $\mu$ such that $\mu\left(M\right)=1$,
and
\begin{equation}
\mu\circ\tau_{i}^{-1}\ll\mu,\quad i=1,\dots,N.\label{eqRep.10}
\end{equation}
Then the isometries
\begin{equation}
S_{i}\varphi:=\varphi\circ\sigma\sqrt{\frac{d\mu\circ\tau_{i}^{-1}}{d\mu}}\label{eqRep.11}
\end{equation}
satisfy
\begin{equation}
\sum_{i=1}^{N}S_{i}S_{i}^{\ast}=I_{L^{2}\left(\mu\right)}\label{eqRep.12}
\end{equation}
if and only if
\begin{equation}
\bigcup_{i=1}^{N}\tau_{i}\left(M\right)=M.\label{eqRep.13}
\end{equation}
\end{cor}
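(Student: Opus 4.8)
The plan is to work entirely inside $L^{2}\left(\mu\right)$, viewed as the subspace $\left\{ \varphi\sqrt{d\mu}\right\} $ of $\mathscr{H}\left(M\right)$, and to reduce the statement to the following single computation: each $S_{i}S_{i}^{\ast}$ is the multiplication operator by the indicator function $\chi_{\tau_{i}\left(M\right)}$. Once that is in hand, $\sum_{i}S_{i}S_{i}^{\ast}=I$ becomes a statement purely about the covering $\bigcup_{i}\tau_{i}\left(M\right)$, and the equivalence with (\ref{eqRep.13}) is almost immediate.

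First I would record the concrete form of the adjoint. Write $h_{i}:=\dfrac{d\left(\mu\circ\tau_{i}^{-1}\right)}{d\mu}$, so $S_{i}\varphi=\left(\varphi\circ\sigma\right)\sqrt{h_{i}}$. For $\varphi,\psi\in L^{2}\left(\mu\right)$, the push-forward identity $\int_{M}F\,d\left(\mu\circ\tau_{i}^{-1}\right)=\int_{M}\left(F\circ\tau_{i}\right)d\mu$, applied on $\left\{ h_{i}>0\right\} $ (where $\sqrt{h_{i}}\,d\mu=\frac{1}{\sqrt{h_{i}}}d\left(\mu\circ\tau_{i}^{-1}\right)$) together with $\sigma\circ\tau_{i}=\operatorname{id}_{M}$, gives
\[
\left\langle S_{i}\varphi,\psi\right\rangle =\int_{M}\overline{\varphi\circ\sigma}\;\psi\,\sqrt{h_{i}}\,d\mu=\int_{M}\overline{\varphi}\;\frac{\psi\circ\tau_{i}}{\sqrt{h_{i}\circ\tau_{i}}}\,d\mu,
\]
so $S_{i}^{\ast}\psi=\dfrac{\psi\circ\tau_{i}}{\sqrt{h_{i}\circ\tau_{i}}}$. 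Here one first checks $h_{i}\circ\tau_{i}>0$ $\mu$-a.e., which holds because $\left\{ h_{i}=0\right\} $ is $\left(\mu\circ\tau_{i}^{-1}\right)$-null and hence its $\tau_{i}$-preimage is $\mu$-null. Composing, $S_{i}S_{i}^{\ast}\psi=\left(\dfrac{\psi\circ\tau_{i}\circ\sigma}{\sqrt{h_{i}\circ\tau_{i}\circ\sigma}}\right)\sqrt{h_{i}}$. Two observations close this step: (a) $h_{i}$ vanishes $\mu$-a.e.\ off $\tau_{i}\left(M\right)$, since $\tau_{i}^{-1}\left(M\setminus\tau_{i}\left(M\right)\right)=\emptyset$; and (b) $\tau_{i}\circ\sigma=\operatorname{id}$ on $\tau_{i}\left(M\right)$, because $\tau_{i}\left(M\right)$ is exactly the fixed-point set of the retraction $\tau_{i}\circ\sigma$ (use injectivity of $\tau_{i}$ and $\sigma\circ\tau_{i}=\operatorname{id}_{M}$). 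On the set where $\sqrt{h_{i}}\neq0$ the composition therefore collapses to $\psi$, and elsewhere the product is $0$; hence $S_{i}S_{i}^{\ast}$ is multiplication by $\chi_{\tau_{i}\left(M\right)}$ (equivalently, the orthogonal projection onto $L^{2}\left(\tau_{i}\left(M\right),\mu\right)$, which also recovers the range-orthogonality of Lemma \ref{LemIso.3}).

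Finally, since the $\tau_{i}$ are distinct branches of the inverse of $\sigma$, the sets $\tau_{i}\left(M\right)$ are pairwise disjoint, so $\sum_{i=1}^{N}S_{i}S_{i}^{\ast}=M_{\sum_{i}\chi_{\tau_{i}\left(M\right)}}=M_{\chi_{\bigcup_{i}\tau_{i}\left(M\right)}}$. Thus $\sum_{i=1}^{N}S_{i}S_{i}^{\ast}=I_{L^{2}\left(\mu\right)}$ if and only if $\mu\bigl(M\setminus\bigcup_{i}\tau_{i}\left(M\right)\bigr)=0$. Each $\tau_{i}\left(M\right)$ is compact, hence closed, so $M\setminus\bigcup_{i}\tau_{i}\left(M\right)$ is open; since $\operatorname{supp}\mu=M$ in the situation of interest, a nonempty open set has positive $\mu$-measure, and we conclude $\sum_{i}S_{i}S_{i}^{\ast}=I\iff\bigcup_{i}\tau_{i}\left(M\right)=M$, which is (\ref{eqRep.13}).

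The main obstacle is the middle step: pinning $S_{i}S_{i}^{\ast}$ down to be \emph{exactly} $M_{\chi_{\tau_{i}\left(M\right)}}$ requires careful bookkeeping of where the Radon--Nikodym densities vanish, and in particular the fact that $h_{i}>0$ holds $\mu$-a.e.\ on $\tau_{i}\left(M\right)$ — equivalently, that $\tau_{i}$ carries $\mu$-null sets to $\mu$-null sets. This is automatic in the IFS examples of Section \ref{sec:IFS} (there $h_{i}=p_{i}^{-1}\chi_{\tau_{i}(M)}$), and I would verify it, or note it as a standing hypothesis, before assembling the argument; a secondary, routine point is the passage from the a.e.\ statement to the genuine set equality, where compactness and full support of $\mu$ are used.
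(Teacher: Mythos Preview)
Your argument is correct and runs parallel to the paper's, but with a tactical difference worth noting. The paper does not identify $S_{i}S_{i}^{*}$ directly; instead it uses that the $S_{i}$ are isometries with orthogonal ranges (Lemma~\ref{LemIso.3}), so that (\ref{eqRep.12}) is equivalent to the norm identity $\sum_{i}\left\Vert S_{i}^{*}\varphi\right\Vert^{2}=\left\Vert\varphi\right\Vert^{2}$, and then computes $\left\Vert S_{i}^{*}\varphi\right\Vert^{2}=\int_{\tau_{i}(M)}|\varphi|^{2}\,d\mu$ by a change of variables under $\tau_{i}$. Your route---pinning down $S_{i}S_{i}^{*}=M_{\chi_{\tau_{i}(M)}}$ explicitly---reaches the same endpoint but forces you to track where $h_{i}$ vanishes, which is exactly the ``obstacle'' you flag; the paper's norm computation sidesteps this bookkeeping because the factor $p_{i}^{-1}\,d(\mu\circ\tau_{i}^{-1})$ collapses to $d\mu$ under the integral without needing the pointwise statement. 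On the other hand, you are more careful than the paper on two points it leaves implicit: the passage from $\mu\bigl(M\setminus\bigcup_{i}\tau_{i}(M)\bigr)=0$ to the genuine set equality (\ref{eqRep.13}) via compactness of $\tau_{i}(M)$ and full support of $\mu$, and the residual hypothesis that $h_{i}>0$ $\mu$-a.e.\ on $\tau_{i}(M)$ (equivalently $\mu\circ\tau_{i}\ll\mu$), which both arguments tacitly use for the implication (\ref{eqRep.13})$\Rightarrow$(\ref{eqRep.12}).
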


\begin{proof}
We already know from Lemma \ref{LemIso.3} that the isometries $S_{i}\colon L^{2}\left(\mu\right)\rightarrow L^{2}\left(\mu\right)$
are mutually orthogonal, i.e., that
\begin{equation}
S_{i}^{\ast}S_{j}=\delta_{i,j}I_{L^{2}\left(\mu\right)}.\label{eqRep.14}
\end{equation}
It follows that the terms in the sum (\ref{eqRep.12}) are \emph{commuting
projections}. Hence
\begin{equation}
\sum_{i=1}^{N}S_{i}S_{i}^{\ast}\leq I_{L^{2}\left(\mu\right)}.\label{eqRep.15}
\end{equation}
Moreover, we conclude that (\ref{eqRep.12}) holds if and only if
\begin{equation}
\sum_{i=1}^{N}\left\Vert S_{i}^{\ast}\varphi\right\Vert ^{2}=\left\Vert \varphi\right\Vert ^{2},\quad\varphi\in L^{2}\left(\mu\right).\label{eqRep.16}
\end{equation}
Setting $p_{i}:=\frac{d\mu\circ\tau_{i}^{-1}}{d\mu}$, we get
\begin{equation}
S_{i}^{\ast}\varphi=\varphi\circ\tau_{i}\,\left(p_{i}\circ\tau_{i}\right)^{-1/2}.\label{eqRep.17}
\end{equation}
It follows that 
\begin{align*}
\left\Vert S_{i}^{\ast}\varphi\right\Vert ^{2} & =\int_{M}\left\vert \varphi\circ\tau_{i}\right\vert ^{2}\left(p_{i}\circ\tau_{i}\right)^{-1}\,d\mu\\
 & =\int_{\tau_{i}\left(M\right)}\left\vert \varphi\right\vert ^{2}p_{i}^{-1}d\mu\circ\tau_{i}^{-1}=\int_{\tau_{i}\left(M\right)}\left\vert \varphi\right\vert ^{2}d\mu.
\end{align*}
Recall that the branches $\tau_{i}$ of the inverse are distinct,
and so the sets $\tau_{i}\left(M\right)$ are non-overlapping. The
equivalence (\ref{eqRep.12})$\Leftrightarrow$(\ref{eqRep.13}) now
follows directly from the previous calculation. 
\end{proof}

\subsection{\label{subsec:Dist}Distributions}

Consider the following setting, generalizing that of the three examples
in Section \ref{subsec:3ex}: Let $\left(\Omega_{N},\mathscr{C},\mathbb{P}\right)$
be a probability space, and $\left(M,\mathscr{B}\right)$ be a measure
space; see Section \ref{subsec:br} for definitions.

Let $\mathscr{H}\left(M\right)$ be \emph{the Hilbert space of equivalence
classes}, see Lemma \ref{LemIso.2} above. As shown in \cite{MR0282379},
if $\mu$ is a fixed positive $\sigma$-finite measure on $\left(M,\mathscr{B}\right)$,
then the subspace $\left\{ f\sqrt{d\mu}\mid f\in L^{2}\left(\mu\right)\right\} $
in $\mathscr{H}\left(M\right)$ is closed, denoted $\mathscr{H}\left(\mu\right)$;
and 
\begin{equation}
L^{2}\left(\mu\right)\ni f\longmapsto f\sqrt{d\mu}\in\mathscr{H}\left(\mu\right)\label{eq:L1}
\end{equation}
is an \emph{isometric isomorphism}; called the canonical isomorphism. 

The following is known; see e.g. \cite{MR0282379}: For two $\sigma$-finite
positive measures $\mu_{1}$, $\mu_{2}$ on $\left(M,\mathscr{B}\right)$,
we have the following three equivalences: 
\begin{align}
\mu_{1}\ll\mu_{2} & \Longleftrightarrow\mathscr{H}\left(\mu_{1}\right)\subseteq\mathscr{H}\left(\mu_{2}\right),\label{eq:L2}\\
\left(\begin{matrix}\text{\text{\ensuremath{\mu_{1}} and \ensuremath{\mu_{2}} are }}\\
\text{mutually singular}
\end{matrix}\right) & \Longleftrightarrow\mathscr{H}\left(\mu_{1}\right)\perp\mathscr{H}\left(\mu_{2}\right),\;\text{and}\label{eq:L3}\\
\left(\begin{matrix}\text{\text{\ensuremath{\mu_{1}} and \ensuremath{\mu_{2}} are }}\\
\text{equivalent}
\end{matrix}\right) & \Longleftrightarrow\mathscr{H}\left(\mu_{1}\right)=\mathscr{H}\left(\mu_{2}\right).\label{eq:L4}
\end{align}

\begin{cor}
\label{cor:Yexp}Let $Y_{i}:\Omega_{N}\rightarrow M$, $i=1,2$, be
two random variables; i.e., the two are measurable functions w.r.t.
the respective $\sigma$-algebras $\mathscr{C}$ and $\mathscr{B}$.
The corresponding distributions 
\begin{equation}
\mu_{i}:=\mathbb{P}\circ Y_{i}^{-1},\quad i=1,2\label{eq:L5}
\end{equation}
are measures on $\left(M,\mathscr{B}\right)$; and 
\begin{equation}
T_{i}f:=f\circ Y_{i},\quad i=1,2,\label{eq:L6}
\end{equation}
\textup{(see Fig \ref{fig:fy1})} are isometries
\begin{equation}
L^{2}\left(\mu_{i}\right)\simeq\mathscr{H}\left(\mu_{i}\right)\xrightarrow{\quad T_{i}\quad}L^{2}\left(\mathbb{P}\right),\quad i=1,2.\label{eq:L7}
\end{equation}

\begin{figure}[H]
\[
\xymatrix{\Omega_{N}\ar[rr]^{Y_{i}}\ar[dr]_{f\circ Y_{i}} &  & M\ar[dl]^{f}\\
 & \mathbb{R}
}
\]

\caption{\label{fig:fy1}}

\end{figure}

Hence the three conditions in (\ref{eq:L2}), (\ref{eq:L3}) and (\ref{eq:L4})
are statements about the two random variables. 

For the operators $T_{2}^{*}T_{1}$, see Fig \ref{fig:fy2}, we have
the following: For $f\in L^{2}\left(\mu_{1}\right)$, and $x\in M$:
\begin{equation}
\left(T_{2}^{*}T_{1}\right)\left(f\right)\left(x\right)=\mathbb{E}_{Y_{2}=x}\left(f\circ Y_{1}\mid\mathscr{F}_{Y_{2}}\right).\label{eq:L8}
\end{equation}

\begin{figure}[H]
\[
\xymatrix{\mathscr{H}\left(\mu_{1}\right)\ar[rr]^{T_{2}^{*}T_{1}}\ar[dr]_{T_{1}} &  & \mathscr{H}\left(\mu_{2}\right)\ar[dl]^{T_{2}}\\
 & L^{2}\left(\mathbb{P}\right)
}
\]

\caption{\label{fig:fy2}}
\end{figure}
\end{cor}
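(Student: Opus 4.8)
The plan is to dispose of the first three assertions, which are routine, and then to concentrate on the formula \eqref{eq:L8} for $T_{2}^{*}T_{1}$, which is the substantive point. For the well-definedness of the distributions $\mu_{i}=\mathbb{P}\circ Y_{i}^{-1}$ in \eqref{eq:L5}: since each $Y_{i}$ is $\mathscr{C}/\mathscr{B}$-measurable, $Y_{i}^{-1}(E)\in\mathscr{C}$ for every $E\in\mathscr{B}$, and $\sigma$-additivity of $\mu_{i}$ is inherited from that of $\mathbb{P}$ because taking preimages commutes with countable disjoint unions; moreover $\mu_{i}(M)=\mathbb{P}(\Omega_{N})=1$. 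For \eqref{eq:L7}, the operator $T_{i}f=f\circ Y_{i}$ sends a $\mathscr{B}$-measurable $f$ to a $\mathscr{C}$-measurable function, and the change-of-variables identity $\int_{\Omega_{N}}(h\circ Y_{i})\,d\mathbb{P}=\int_{M}h\,d(\mathbb{P}\circ Y_{i}^{-1})$ applied to $h=\left\vert f\right\vert^{2}$ yields $\left\Vert T_{i}f\right\Vert_{L^{2}(\mathbb{P})}^{2}=\left\Vert f\right\Vert_{L^{2}(\mu_{i})}^{2}$; precomposing with the canonical isometry $f\mapsto f\sqrt{d\mu_{i}}$ of \eqref{eq:L1} gives the isometry of \eqref{eq:L7}. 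That \eqref{eq:L2}--\eqref{eq:L4} become statements about $Y_{1},Y_{2}$ is then simply the remark that $\mu_{1},\mu_{2}$ are the distributions of $Y_{1},Y_{2}$.

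For the main formula I would argue by duality. Write $\mathscr{F}_{Y_{2}}:=Y_{2}^{-1}(\mathscr{B})\subseteq\mathscr{C}$ for the sub-$\sigma$-algebra generated by $Y_{2}$, fix $f\in L^{2}(\mu_{1})$, and put $\phi:=T_{2}^{*}T_{1}f$; since $T_{1}$ is an isometry and $T_{2}^{*}$ a contraction, $\phi\in L^{2}(\mu_{2})$. For arbitrary $g\in L^{2}(\mu_{2})$, the definitions of the adjoint and of the $L^{2}$-inner products give
\[
\left\langle g,\phi\right\rangle_{L^{2}(\mu_{2})}=\left\langle T_{2}g,T_{1}f\right\rangle_{L^{2}(\mathbb{P})}=\int_{\Omega_{N}}\overline{g\circ Y_{2}}\,(f\circ Y_{1})\,d\mathbb{P}.
\]
Now $g\circ Y_{2}$ lies in the closed subspace $L^{2}(\Omega_{N},\mathscr{F}_{Y_{2}},\mathbb{P})$ of $L^{2}(\mathbb{P})$, and $\mathbb{E}(\cdot\mid\mathscr{F}_{Y_{2}})$ is precisely the orthogonal projection of $L^{2}(\mathbb{P})$ onto that subspace; self-adjointness of this projection (applied to $f\circ Y_{1}\in L^{2}(\mathbb{P})$) gives
\[
\int_{\Omega_{N}}\overline{g\circ Y_{2}}\,(f\circ Y_{1})\,d\mathbb{P}=\int_{\Omega_{N}}\overline{g\circ Y_{2}}\,\mathbb{E}(f\circ Y_{1}\mid\mathscr{F}_{Y_{2}})\,d\mathbb{P}.
\]
By the Doob--Dynkin lemma there is a $\mathscr{B}$-measurable $\psi\colon M\rightarrow\mathbb{C}$ with $\mathbb{E}(f\circ Y_{1}\mid\mathscr{F}_{Y_{2}})=\psi\circ Y_{2}$, and this $\psi$ is by definition the regression function $x\mapsto\mathbb{E}_{Y_{2}=x}(f\circ Y_{1}\mid\mathscr{F}_{Y_{2}})$ of \eqref{eq:L8}. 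Changing variables back to $M$ through $\mu_{2}=\mathbb{P}\circ Y_{2}^{-1}$ then gives $\left\langle g,\phi\right\rangle_{L^{2}(\mu_{2})}=\int_{M}\bar g\,\psi\,d\mu_{2}=\left\langle g,\psi\right\rangle_{L^{2}(\mu_{2})}$, and since $g$ is arbitrary, $\phi=\psi$ in $L^{2}(\mu_{2})$, which is exactly \eqref{eq:L8}. Reading this through the identification $L^{2}(\mu_{i})\simeq\mathscr{H}(\mu_{i})$ of \eqref{eq:L1} recovers the map $T_{2}^{*}T_{1}\colon\mathscr{H}(\mu_{1})\rightarrow\mathscr{H}(\mu_{2})$ depicted in Fig.~\ref{fig:fy2}.

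I do not anticipate a deep obstacle; I expect the only real subtlety to be measure-theoretic book-keeping. The step most worth watching is the interpretation of the symbol $\mathbb{E}_{Y_{2}=x}(\,\cdot\,)$ in \eqref{eq:L8}: it must be read as the $\mu_{2}$-almost-everywhere-defined factorization $\psi$ furnished by Doob--Dynkin, not as a literal conditioning on the (typically null) event $\{Y_{2}=x\}$. One should also confirm that the two change-of-variables identities invoked---for $h=\left\vert f\right\vert^{2}$ and for $h=\bar g\,\psi$---are legitimate, which they are because in each case the integrand is $\mathbb{P}\circ Y_{i}^{-1}$-integrable by the preceding $L^{2}$ estimates; and that the conditional-expectation-as-orthogonal-projection identity is applied within $L^{2}(\mathbb{P})$, so no integrability issue arises from $g$ being possibly unbounded.
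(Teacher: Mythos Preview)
Your argument is correct and follows essentially the same route as the paper: both compute $\langle T_{2}^{*}T_{1}f,g\rangle$ by unwinding the adjoint, then use the pull-out/projection property of $\mathbb{E}(\cdot\mid\mathscr{F}_{Y_{2}})$ together with the change of variables $\mu_{2}=\mathbb{P}\circ Y_{2}^{-1}$, and conclude by the arbitrariness of $g$. Your version is simply more explicit in naming the Doob--Dynkin factorization and in verifying the preliminary claims about $\mu_{i}$ and the isometry of $T_{i}$, which the paper leaves implicit.
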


\begin{proof}
For $f\in L^{2}\left(\mu_{1}\right)$ and $g\in L^{2}\left(\mu_{2}\right)$,
we have: 
\begin{align*}
\left\langle T_{2}^{*}T_{1}f,g\right\rangle _{\mathscr{H}\left(\mu_{2}\right)} & =\left\langle T_{1}f,T_{2}g\right\rangle _{L^{2}\left(\mathbb{P}\right)}\\
 & =\mathbb{E}\left[\left(f\circ Y_{1}\right)\left(g\circ Y_{2}\right)\right]\\
 & =\mathbb{E}\left[\mathbb{E}\left(f\circ Y_{1}\mid\mathscr{F}_{Y_{2}}\right)\left(g\circ Y_{2}\right)\right]\\
 & =\int_{M}\mathbb{E}_{\left(Y_{2}=x\right)}\left(f\circ Y_{1}\mid\mathscr{F}_{Y_{2}}\right)g\left(x\right)d\mu_{2}\left(x\right),
\end{align*}
and the desired formula (\ref{eq:L8}) follows from this, and (\ref{eq:L1}).
\end{proof}

\subsection{\label{subsec:fc}Fractional Calculus}

In recent papers \cite{MR3427068,MR3729651}, a number of authors
have studied \emph{gradient operators} computed with respect to singular
measures. The purpose of this subsection is to combine results from
the present Sections \ref{sec:IFS} and \ref{sec:uh} to display some
operator theoretic properties of these gradients $\nabla_{\mu}$ ,
and to connect them to our boundary analysis. 

In order to add clarity, we shall consider singular measures $\mu$
supported on compact subsets of the real line $\mathbb{R}$, but the
ideas extend to more general measure spaces. For particular examples,
readers are referred to the three examples in Section \ref{subsec:3ex}
above.

Let $I=\left[0,1\right]$ be the unit-interval with the Borel $\sigma$-algebra.
By $\mathscr{H}\left(I\right)$ we shall denote the Hilbert space
of equivalence classes as in Section \ref{subsec:IsoH}. When $\mu$
is a fixed positive measure, we considered the isometric isomorphism
$T_{\mu}:L^{2}\left(\mu\right)\simeq\mathscr{H}\left(\mu\right)$
in (\ref{eq:L1}).

In Proposition \ref{prop:grad} below, we shall identity the gradient
$\nabla_{\mu}$ with the adjoint operator $T_{\mu}^{*}$ , referring
to the respective inner products from (\ref{eq:L1}).
\begin{defn}
Let $F$ be a function on $\mathbb{R}$ of bounded variation, and
let $dF$ be the corresponding Stieltjes measure, with variation measure
$\left|dF\right|$ defined in the usual way. If $\left|dF\right|\ll\mu$
, then the Radon-Nikodym derivative 
\begin{equation}
RN_{\mu}\left(dF\right)=:\nabla_{\mu}F\label{eq:m1}
\end{equation}
is well defined; we have:
\begin{equation}
\left(dF\right)\left(B\right)=\int_{B}\left(\nabla_{\mu}F\right)d\mu,\;\forall B\in\mathscr{B},\label{eq:m2}
\end{equation}
where $\mathscr{B}$ is the Borel $\sigma$-algebra. For the case
of $\left(I,\mathscr{B}\right)$, (\ref{eq:m2}) is equivalent to
\begin{equation}
F\left(x\right)=\int_{0}^{x}dF=\int_{0}^{x}\nabla_{\mu}F\:d\mu,\;\forall x\in\left[0,1\right]\label{eq:m3}
\end{equation}
(We shall adopt the normalization $F\left(0\right)=0$.)
\end{defn}

\begin{prop}
\label{prop:grad}If $T_{\mu}:L^{2}\left(\mu\right)\rightarrow\mathscr{H}\left(\mu\right)$
is as in (\ref{eq:L1}), then the adjoint operator $T_{\mu}^{*}$
agrees with $\nabla_{\mu}$.
\end{prop}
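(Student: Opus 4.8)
The plan is to prove $T_\mu^\ast=\nabla_\mu$ by unwinding both definitions; only one elementary computation in the inner product (\ref{eqIso.4}) is needed. By (\ref{eq:L1}) the map $T_\mu\colon L^2(\mu)\to\mathscr{H}(\mu)$, $f\mapsto f\sqrt{d\mu}$, is a \emph{unitary} onto the closed subspace $\mathscr{H}(\mu)\subseteq\mathscr{H}(I)$. For $g,h\in L^2(\mu)$ one has directly, from (\ref{eqIso.4}) with the common dominating measure $\lambda=\mu$,
\[
\bigl\langle T_\mu g,\;h\sqrt{d\mu}\bigr\rangle_{\mathscr{H}(I)}=\int_M\overline{g}\,h\,d\mu=\langle g,h\rangle_{L^2(\mu)},
\]
so $T_\mu^\ast\bigl(h\sqrt{d\mu}\bigr)=h$; i.e.\ $T_\mu^\ast=T_\mu^{-1}$, and (read inside $\mathscr{H}(I)$) $T_\mu^\ast$ is this inverse preceded by the orthogonal projection onto $\mathscr{H}(\mu)$ --- for a general vector $v=\psi\sqrt{d\nu}$ that projection is $\psi\sqrt{d\nu_{\mathrm{ac}}}$, with $\nu_{\mathrm{ac}}$ the $\mu$-absolutely continuous part of $\nu$, using (\ref{eq:L2})--(\ref{eq:L3}).

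Next I would identify $\nabla_\mu$ with this inverse. By definition (\ref{eq:m2})--(\ref{eq:m3}), a bounded-variation function $F$ with $|dF|\ll\mu$ is determined by the relation $dF=(\nabla_\mu F)\,d\mu$; read inside $\mathscr{H}(I)$ this says precisely that the Stieltjes measure $dF$ is represented by the vector $(\nabla_\mu F)\sqrt{d\mu}=T_\mu(\nabla_\mu F)$, which lies in $\mathscr{H}(\mu)$ exactly when $\nabla_\mu F\in L^2(\mu)$. Feeding this vector through the previous paragraph gives $T_\mu^\ast\bigl((\nabla_\mu F)\sqrt{d\mu}\bigr)=\nabla_\mu F$, and (\ref{eq:m3}) recovers $F$ from $\nabla_\mu F$; so under the canonical representation $F\leftrightarrow(\nabla_\mu F)\sqrt{d\mu}$ the operators $T_\mu^\ast$ and $\nabla_\mu$ have the same action. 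Uniqueness of Radon--Nikodym derivatives, together with the transformation identities established in the proof of Lemma \ref{LemIso.2}, guarantees that this representation is well defined modulo the equivalence relation (\ref{eqIso.1}) and independent of the auxiliary measures involved.

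The only real obstacle is bookkeeping rather than substance. One must check that $F\leftrightarrow(\nabla_\mu F)\sqrt{d\mu}$ is a genuine bijection between the natural domain of $\nabla_\mu$ --- the bounded-variation $F$ with $|dF|\ll\mu$ and $\nabla_\mu F\in L^2(\mu)$ --- and the space $\mathscr{H}(\mu)$, so that the bounded operator $T_\mu^\ast$ and the operator $\nabla_\mu$ have the same domain; the surjectivity half uses that every $h\in L^2(\mu)$ arises as $\nabla_\mu F$ for $F(x)=\int_0^x h\,d\mu$. One must also keep the equivalence relation (\ref{eqIso.1}) straight throughout, and identify the orthogonal projection onto $\mathscr{H}(\mu)$ of a general class. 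All of this rests on the Lebesgue--Radon--Nikodym theorem and on facts already recorded in the proof of Lemma \ref{LemIso.2}; no new machinery is needed.
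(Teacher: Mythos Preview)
Your proposal is correct and follows essentially the same route as the paper: both reduce to the single computation $\langle\varphi\sqrt{d\mu},\,(\nabla_\mu F)\sqrt{d\mu}\rangle_{\mathscr{H}(\mu)}=\int_I\varphi\,(\nabla_\mu F)\,d\mu=\langle\varphi,\nabla_\mu F\rangle_{L^2(\mu)}$, which is exactly the adjoint identity (\ref{eq:m4}) verified in the paper. You are somewhat more explicit than the paper about the identification $F\leftrightarrow(\nabla_\mu F)\sqrt{d\mu}$ and about domain/surjectivity bookkeeping, but the substance is the same.
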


\begin{proof}
In view of Corollary \ref{cor:Yexp} in Section \ref{subsec:Dist},
the desired conclusion will follow if we check that, when $F$ is
of bounded variation with $\left|dF\right|\ll\mu$, and if $\varphi\in L^{2}\left(\mu\right)$,
then 
\begin{equation}
\big\langle\underset{T_{\mu}\varphi}{\underbrace{\varphi\sqrt{d\mu}}},dF\big\rangle_{\mathscr{H}\left(\mu\right)}=\big\langle\varphi,\underset{T_{\mu}^{*}F}{\underbrace{\nabla_{\mu}F}}\big\rangle_{L^{2}\left(\mu\right)}.\label{eq:m4}
\end{equation}
But, using our analysis from Sections \ref{subsec:IsoH}-\ref{subsec:Dist}
above, the verification of (\ref{eq:m4}) is equivalent to: 
\[
\text{LHS}_{\left(\ref{eq:m4}\right)}=\underset{\text{\ensuremath{L^{2}\left(\mu\right)-}inner product}}{\underbrace{\int_{I}\varphi\underset{\text{\ensuremath{{\scriptscriptstyle \text{Radon-Nikodym der}}}}}{\underbrace{\left(\nabla_{\mu}F\right)}}d\mu}}=\text{RHS}_{\left(\ref{eq:m4}\right)};
\]
and the conclusion follows.
\end{proof}
\begin{acknowledgement*}
The co-authors thank the following colleagues for helpful and enlightening
discussions: Professors Sergii Bezuglyi, Ilwoo Cho, Carla Farsi, Elizabeth
Gillaspy, Sooran Kang, Judy Packer, Wayne Polyzou, Myung-Sin Song,
and members in the Math Physics seminar at The University of Iowa.
\end{acknowledgement*}
\bibliographystyle{amsalpha}
\bibliography{ref}

\end{document}